\definecolor{Chocolat}{rgb}{0.36, 0.2, 0.09}
\definecolor{BleuTresFonce}{rgb}{0.215, 0.215, 0.36}
\theoremstyle{plain}
\newtheorem{thm}{Theorem}[section]
\newtheorem*{thm*}{Theorem}
\newtheorem{lem}[thm]{Lemma}
\newtheorem{prop}[thm]{Proposition}
\newtheorem*{prop*}{Proposition}
\newtheorem{cor}[thm]{Corollary}
\theoremstyle{definition}
\newtheorem{defi}[thm]{Definition}
\newtheorem*{defi*}{Definition}
\newtheorem{defiprop}[thm]{Definition/Proposition}
\newtheorem{exmp}[thm]{Example}
\newtheorem{nonexmp}[thm]{Non-example}
\newtheorem{nota}[thm]{Notation}
\newtheorem{rem}[thm]{Remark}
\newcommand{\Z}{\mathbb{Z}}
\newcommand{\N}{\mathbb{N}}
\newcommand{\K}{\mathcal{K}}
\newcommand{\calP}{\mathcal{P}}
\newcommand{\id}{\mathrm{id}}
\newcommand{\F}{\mathcal{F}}
\newcommand{\scrF}{\mathscr{F}}
\renewcommand{\phi}{\varphi}
\newcommand{\As}{\mathcal{A}s}
\newcommand{\Com}{\mathcal{C}om}
\newcommand{\UAs}{\mathcal{UA}s}
\newcommand{\UCom}{\mathcal{UC}om}
\newcommand{\Lie}{\mathcal{L}ie}
\newcommand{\Pois}{\mathcal{P}ois}
\newcommand{\DLie}{{\mathcal{DL}ie}}
\newcommand{\DPois}{\mathcal{DP}ois}
\newcommand{\Ob}{\mathrm{Ob}\,}
\newcommand{\op}{^{\mathrm{op}}}
\newcommand{\ord}{^{\mathrm{or}}}
\newcommand{\Val}{^{\mathrm{Val}}}
\newcommand{\C}{\mathsf{C}}
\newcommand{\D}{\mathsf{D}}
\newcommand{\Set}{\mathsf{Set}}
\newcommand{\Vect}{\mathsf{Vect}}
\newcommand{\Ch}{\mathsf{Ch}}
\renewcommand{\mod}{\mathsf{mod}}
\newcommand{\Smod}{\mathfrak{S}\mbox{-}\mathsf{mod}}
\newcommand{\Smodred}{\mathfrak{S}\mbox{-}\mathsf{mod}^{\mathrm{red}}}
\newcommand{\Sbimod}{\mathfrak{S}\mbox{-}\mathsf{bimod}}
\newcommand{\Sbimodred}{\mathfrak{S}\mbox{-}\mathsf{bimod}^{\mathrm{red}}}
\newcommand{\Fin}{\mathsf{Fin}}
\newcommand{\Proto}{\mathsf{protoperads}}
\newcommand{\Prope}{\mathsf{properads}}
\newcommand{\Func}{\mathsf{Func}}
\newcommand{\Hom}{\mathrm{Hom}}
\newcommand{\Wall}{\mathcal{W}^{\mathrm{conn}}}
\newcommand{\Colo}{\mathcal{C}\mathrm{ol}}
\newcommand{\Ind}{\mathrm{Ind}}
\newcommand{\Res}{\mathrm{Res}}
\newcommand{\For}{\mathrm{For}}
\newcommand{\End}{\mathrm{End}}
\newcommand{\commu}{\circlearrowleft}
\newcommand{\pushout}{\mbox{\large{$\ulcorner$}}}
\newcommand{\DB}[2]{\{\hspace{-3pt}\{#1,#2\}\hspace{-3pt}\}}
\newcommand{\TB}[3]{\{\hspace{-3pt}\{#1,#2,#3\}\hspace{-3pt}\}}
\newcommand{\Ibox}{I_{\boxtimes}}
\newcommand{\croix}[1][]
{%
	\begin{tikzpicture}[baseline=(textbox.base),inner sep=0pt]
	\node[cross out,draw,text width=\dimexpr#1] (textbox) {\strut};
	\useasboundingbox (textbox);
	\end{tikzpicture}%
}
\title{Protoperads I: combinatorics and definitions}
\address{LAGA, Universit\'e Paris 13, 99 Avenue Jean Baptiste Cl\'ement 93430, Villetaneuse, France}
\email{leray@math.univ-paris13.fr}
\author{Johan \textsc{Leray}} 
\date{\today}
\keywords{Combinatorics, Species, Properad, Protoperad}
\subjclass[2010]{05E25,18D50,18G35,55U10}
\thanks{
	This article is the combinatorial part of the  PhD thesis of the author, supported by the project "Nouvelle \'Equipe", convention n$^\circ$2013-10203/10204 between La R\'egion des Pays de Loire and the University of Angers.  The author thanks the Centre Henri Lebesgue ANR-11-LABX-0020-01 for its stimulating mathematical research programs. This paper was finished at the University Paris 13, where the author was financed by a postdoctoral allocation given by DIM Math Innov. The author is indebted to G. Powell who has carefully read and corrected the first version of this paper.
}
\begin{document}
	
\begin{abstract}
	This paper is the first of two articles which develop the notion of protoperads. In this one, we construct a new monoidal product on the category of reduced $\mathfrak{S}$-modules. We study the associated monoids, called \emph{protoperads}, which are a generalization of operads. As operads encode algebraic operations with several inputs and one outputs, protoperads encode algebraic operations with the same number of inputs and outputs.
	We describe the underlying combinatorics of protoperads, and show that there exists a notion of free protoperad. We also show that the monoidal product introduced here is related to Vallette's one on the category of $\mathfrak{S}$-bimodules, via the induction functor.
\end{abstract}

\maketitle

\section*{Introduction}
This is the first of two papers in which the author develops the notion of \emph{protoperad}, which is a kind of properad (see \cite{Val03,Val07}), and the homotopy theory of these new objects (see \cite{Ler18ii}). Properad is an algebraic notion which encodes types of bialgebras, i.e. operations with several inputs and several outputs.

The motivation for this work is to determine what is a double Poisson bracket up to homotopy. Double Poisson structure, defined by Van den Bergh in  \cite{VdB08}, give a Poisson structure in noncommutative algebraic geometry (see \cite{Gin05,VdB08-2}) under the \emph{Kontsevich-Rosenberg principle}, i.e. if $A$ is a double Poisson algebra, then the family of affine representation schemes $\mathrm{Rep}_n(A)$, represented by commutative algebras $A_n$, has a Poisson structure.

Berest, et al define derived representation schemes (see  \cite{BKR13,BCER12}) as the left derived functor $\mathbb{L}(-)_n$. A natural question is the following: what is a double Poisson structure compatible with the derived side of the Berest's construction? What is a double Poisson bracket up to homotopy? Double Poisson structure is \emph{properadic} in nature. It is encoded by the properad $\DPois$ (see \cite{Ler18ii}); it gives us a good framework to study what is its version up to homotopy (cf. \cite{Val03,MV09I,MV09II}). The structure of double Poisson algebras up to homotopy is controlled by a cofibrant resolution $\DPois_\infty$ of the properad $\DPois$ (cf. \cite{MV09I,MV09II} for the model structure of the category of properads).  Properads are algebraic objects which encode some algebraic structures, there are included in a large family of such objects:
\begin{center}
	Associative alg. $\subset$ NS-Operads $\subset$  Operads $\subset$ Dioperads $\subset$ Properads $\subset$ Props.
\end{center}
Let $V$ be a $k$-vector space: algebras encode algebraic structures with one input and one output $V\rightarrow V$; non-symmetric and  (symmetric) operads encode algebraic structures with several input and one output $V^{\otimes m} \rightarrow V$;  dioperads, properads and props encodes algebraic structures with several input and outputs $V^{\otimes m} \rightarrow V^{\otimes n}$. We can resume that by expliciting in which categories these objects live and  there underlying combinatorics.
\begin{center} 
\begin{tabular}{*{7}{m{.15\linewidth}|}}
	& Associative Algebras  & NS-Operads &Operads & Dioperads & Properads & Props \\ \hline 
	Live in the category & $\Vect_k$ & $\mathbb{N}\mbox{-}\mathsf{mod}_k^{\mathrm{red}}$& $\Smodred_k $ & $\Sbimodred_k$ & $\Sbimodred_k$ & $\Sbimod_k$ \\ 
	Monoid  for the product & $\otimes_k$ & $\circ^{\mathrm{ns}}$ & $\circ$ & $\boxtimes_{c,\varnothing}^{\mathrm{Gan}}$ & $\boxtimes_c\Val$ & \croix[20mm] \\
	Generators &
	$\begin{tikzpicture}[scale=0.2,baseline=0]
	\draw (0,2) -- (0,-2);
	\draw[fill=black] (0,0) circle (6pt);
	\draw (0,0) node[left] {\tiny{$p$}};
	\end{tikzpicture}$
	&
	$\begin{tikzpicture}[scale=0.2,baseline=0]
	\draw (-2,2) -- (0,0);
	\draw (-1,2) -- (0,0);
	\draw (2,2) -- (0,0);
	\draw (0,0) -- (0,-1.5);
	\draw (0.5,2) node {\tiny{$\cdots$}};
	\draw[fill=black] (0,0) circle (6pt);
	\draw (0,0) node[left] {\tiny{$p$}};
	\end{tikzpicture}$ 
	&
	$\begin{tikzpicture}[scale=0.2,baseline=0]
	\draw (-2,2) -- (0,0);
	\draw (-2,2) node[above]  {\tiny{$1$}};
	\draw (-1,2) -- (0,0);
	\draw (-1,2) node[above]  {\tiny{$2$}};
	\draw (2,2) -- (0,0);
	\draw (2,2) node[above]  {\tiny{$m$}};
	\draw (0,0) -- (0,-1.5);
	\draw (0.5,2) node {\tiny{$\cdots$}};
	\draw[fill=black] (0,0) circle (6pt);
	\draw (0,0) node[left] {\tiny{$p$}};
	\end{tikzpicture}$
	& 
	$\begin{tikzpicture}[scale=0.2,baseline=0]
	\draw (-2,2) -- (0,0);
	\draw (-2,2) node[above]  {\tiny{$1$}};
	\draw (-1,2) -- (0,0);
	\draw (-1,2) node[above]  {\tiny{$2$}};
	\draw (2,2) -- (0,0);
	\draw (2,2) node[above]  {\tiny{$m$}};
	\draw (0,0) -- (-1,-2) node[below] {\tiny{$1$}};
	\draw (0,0) -- (1,-2) node[below] {\tiny{$n$}};
	\draw (0,-2) node {\tiny{$\cdots$}};
	\draw (0.5,2) node {\tiny{$\cdots$}};
	\draw[fill=black] (0,0) circle (6pt);
	\draw (0,0) node[left] {\tiny{$p$}};
	\end{tikzpicture}$
	&
	$\begin{tikzpicture}[scale=0.2,baseline=0]
	\draw (-2,2) -- (0,0);
	\draw (-2,2) node[above]  {\tiny{$1$}};
	\draw (-1,2) -- (0,0);
	\draw (-1,2) node[above]  {\tiny{$2$}};
	\draw (2,2) -- (0,0);
	\draw (2,2) node[above]  {\tiny{$m$}};
	\draw (0,0) -- (-1,-2) node[below] {\tiny{$1$}};
	\draw (0,0) -- (1,-2) node[below] {\tiny{$n$}};
	\draw (0,-2) node {\tiny{$\cdots$}};
	\draw (0.5,2) node {\tiny{$\cdots$}};
	\draw[fill=black] (0,0) circle (6pt);
	\draw (0,0) node[left] {\tiny{$p$}};
	\end{tikzpicture}$
	&
	$\begin{tikzpicture}[scale=0.2,baseline=0]
	\draw (-2,2) -- (0,0);
	\draw (-2,2) node[above]  {\tiny{$1$}};
	\draw (-1,2) -- (0,0);
	\draw (-1,2) node[above]  {\tiny{$2$}};
	\draw (2,2) -- (0,0);
	\draw (2,2) node[above]  {\tiny{$m$}};
	\draw (0,0) -- (-1,-2) node[below] {\tiny{$1$}};
	\draw (0,0) -- (1,-2) node[below] {\tiny{$n$}};
	\draw (0,-2) node {\tiny{$\cdots$}};
	\draw (0.5,2) node {\tiny{$\cdots$}};
	\draw[fill=black] (0,0) circle (6pt);
	\draw (0,0) node[left] {\tiny{$p$}};
	\end{tikzpicture}$ \\
	Composition controlled by &
	$\begin{tikzpicture}[scale=0.2,baseline=(n.base)]
	\node (n) at (0,1) {};	
	\draw (0,2) -- (0,-2);
	\draw[fill=black] (0,-1) circle (6pt)  node[left] {\tiny{$p$}};
	\draw[fill=black] (0,1) circle (6pt)  node[left] {\tiny{$q$}};
	\end{tikzpicture}$
	&
	\begin{tikzpicture}[scale=0.2,baseline=(n.base)]
	\node (n) at (1,5) {};
	\draw (-2,2) -- (0,0) -- (0,-1.5);
	\draw (0,0) -- (1,4) -- (-0.5,6);
	\draw (1,4) -- (2.5,6);
	\draw (0,0) -- (-0.5,2);
	\draw (1,6) node {\tiny{$\cdots $}};
	\draw (2,2) -- (0,0);
	\draw (0.5,2) node {\tiny{$\cdots$}};
	\draw[fill=black] (0,0) circle (6pt);
	\draw (0,0) node[left] {\tiny{$p$}};
	\draw[fill=black] (1,4) circle (6pt);
	\draw (1,4) node[right] {\tiny{$q$}};
	\end{tikzpicture}
	&
	\begin{tikzpicture}[scale=0.2,baseline=(n.base)]
	\node (n) at (1,6) {};
	\draw (-2,2) -- (0,0) -- (0,-1.5);
	\draw (-2,2) node[above]  {\tiny{$1$}};
	\draw (0,0) -- (1,4) -- (-0.5,6);
	\draw (1,4) -- (2.5,6);
	\draw (0,0) -- (-0.5,2);
	\draw (-0.5,6) node[above] {\tiny{$i$}};
	\draw (2.5,6) node[above] {\tiny{$i+n-1$}};
	\draw (1,6) node {\tiny{$\cdots $}};
	\draw (-0.5,2) node[above]  {\tiny{$i-1$}};
	\draw (2,2) -- (0,0);
	\draw (2,2) node[above]  {\tiny{$m$}};
	\draw (0.5,2) node {\tiny{$\cdots$}};
	\draw[fill=black] (0,0) circle (6pt);
	\draw (0,0) node[left] {\tiny{$p$}};
	\draw[fill=black] (1,4) circle (6pt);
	\draw (1,4) node[right] {\tiny{$q$}};
	\end{tikzpicture}
	&
	connected oriented graphs \emph{without genus}
	&
	connected oriented graphs with genus
	&
	oriented graphs with genus \\
	Example of encoded structures 
	& Chain complexes & 
	Associative algebras
	&
	Lie algebras
	&
	Involutive Lie bialgebras
	&
	Lie bialgebras
	& 
	Loday infinitesimal bialgebras
	\\
	A reference 
	&
	\cite[Chapter 1]{LV12}
	&
	\cite[Chapter 5]{LV12}
	& 
	\cite[Chapter 5]{LV12}
	&
	\cite{Gan03}
	&
	\cite{Val07}
	&
	\cite{Mac65}
\end{tabular}
~~
\end{center}

For such an object $\calP$, e.g. $\calP$ an operad, a properad, etc... , the notion of $\calP$-algebra up to homotopy is given by cofibrant resolution of $\calP$ (see the Homotopy Tranfert Theorem for operads \cite{LV12}). The homotopy theory of such objects is more or less complicated. 

These successive algebraic structures are increasingly more complicate since they encode more and more type of algebras. As a consequence, their homotopy theory is also more and more elaborate. If props encode the largest category of algebraic structures, we do not yet have the same homotopical tools, like the Koszul duality theory, that hold for properads, operads, etc. The chain complex structures are encoded by the algebra of dual numbers. The non-symmetric operad framework is the minimal one to encode associative algebras, symmetric operad framework is the minimal one to encode associative commutative algebras, and so on: dioperads to encode bigebras without genus in the underlying combinatorics, as involutive Lie bialgebras.




In general, it is a hard problem to determine a usable (minimal) cofibrant resolution of a properad. However, if a (quadratic) properad $\mathcal{P}$ is Koszul, (i.e. the homology of its bar construction is concentrated in a certain weight), then we have an explicit minimal resolution of $\mathcal{P}$: Vallette defines the Koszul duality for properads (see \cite{Val03,Val07,MV09I}). Recall that Koszul duality theory does not exists for props. Unfortunately, it is also a hard problem to show that a properad is Koszul. Almost all known examples of Koszul properads come from Koszulness results for operads. For operads, there are technical tools for showing that an operad is Koszul, including rewriting methods, PBW or Gr\"obner bases and distributive laws (see \cite{Hof10,LV12,DK10}). The only other known example of a Koszul properad which does not come from an operadic result is the Frobenius properad (and its Koszul dual) (see \cite{CMW16}).

A double Poisson bracket on an associative algebra $A$ is a double Lie bracket with some compatibilities with the product of $A$. The properad $ \DLie$, which encodes double Lie structure,  is defined by generators and relations, i.e. $\DLie = \scrF(V_{\DLie})/\langle R_{\mathcal{DJ}}\rangle$, with $V_\DLie$ concentrated in arity $(2,2)$:
\[
V_\DLie=
\begin{tikzpicture}[scale=0.2,baseline=-1]
\draw (0,0.5) node[above] {$\scriptscriptstyle{1}$};
\draw (2,0.5) node[above] {$\scriptscriptstyle{2}$};
\draw[fill=black] (-0.3,-0.5) rectangle (2.3,0);
\draw[thin] (0,-1) -- (0,0.5);
\draw[thin] (2,-1) -- (2,0.5);
\draw (0,-1) node[below] {$\scriptscriptstyle{1}$};
\draw (2,-1) node[below] {$\scriptscriptstyle{2}$};
\end{tikzpicture}~~\otimes sgn(\mathfrak{S}_2)\big\uparrow_{\mathfrak{S}_2^{op}}^{\mathfrak{S}_2\times \mathfrak{S}_2^{op}}
\]
and the relation
\[
R_{\mathcal{DJ}}=
\begin{tikzpicture}[scale=0.2,baseline=-1]
\draw (0,3) node[below] {$\scriptscriptstyle{1}$};
\draw (2,3) node[below] {$\scriptscriptstyle{2}$};
\draw (4,3) node[below] {$\scriptscriptstyle{3}$};
\draw[fill=black] (1.7,0.5) rectangle (4.3,1);
\draw[fill=black] (-0.3,-0.5) rectangle (2.3,0);
\draw[thin] (0,-1) -- (0,1.5);
\draw[thin] (2,-1) -- (2,1.5);
\draw[thin] (4,-1) -- (4,1.5);
\draw (0,-1) node[below] {$\scriptscriptstyle{1}$};
\draw (2,-1) node[below] {$\scriptscriptstyle{2}$};
\draw (4,-1) node[below] {$\scriptscriptstyle{3}$};
\end{tikzpicture}
~~+~~
\begin{tikzpicture}[scale=0.2,baseline=-1]
\draw (0,3) node[below] {$\scriptscriptstyle{2}$};
\draw (2,3) node[below] {$\scriptscriptstyle{3}$};
\draw (4,3) node[below] {$\scriptscriptstyle{1}$};
\draw[fill=black] (1.7,0.5) rectangle (4.3,1);
\draw[fill=black] (-0.3,-0.5) rectangle (2.3,0);
\draw[thin] (0,-1) -- (0,1.5);
\draw[thin] (2,-1) -- (2,1.5);
\draw[thin] (4,-1) -- (4,1.5);
\draw (0,-1) node[below] {$\scriptscriptstyle{2}$};
\draw (2,-1) node[below] {$\scriptscriptstyle{3}$};
\draw (4,-1) node[below] {$\scriptscriptstyle{1}$};
\end{tikzpicture}
~~+~~
\begin{tikzpicture}[scale=0.2,baseline=-1]
\draw (0,3) node[below] {$\scriptscriptstyle{3}$};
\draw (2,3) node[below] {$\scriptscriptstyle{1}$};
\draw (4,3) node[below] {$\scriptscriptstyle{2}$};
\draw[fill=black] (1.7,0.5) rectangle (4.3,1);
\draw[fill=black] (-0.3,-0.5) rectangle (2.3,0);
\draw[thin] (0,-1) -- (0,1.5);
\draw[thin] (2,-1) -- (2,1.5);
\draw[thin] (4,-1) -- (4,1.5);
\draw (0,-1) node[below] {$\scriptscriptstyle{3}$};
\draw (2,-1) node[below] {$\scriptscriptstyle{1}$};
\draw (4,-1) node[below] {$\scriptscriptstyle{2}$};
\end{tikzpicture}
\ .
\]
A double Lie bracket on a chain complex $A$ is given by a morphism of properads $\DLie \rightarrow \End_A$ where $\End_A$ is the properad of endomorphisms of $A$ (see \cite{Val07} for the definition).
The operad $\Pois\cong\Com\circ\Lie$, which encodes Poisson structures, is Koszul because the operads $\Com$ and $\Lie$, which respectively encode commutative and Lie algebras, are Koszul and satisfy a distributive law (see \cite[Sect. 8.6.3]{LV12}). We have an analogous statement for double Poisson structures: as the properad $\As$, which encodes associative algebras, is Koszul (see \cite{LV12}), the properad $\DPois\cong \As \boxtimes_c\Val \DLie$ is Koszul if, and only if, the properad $\DLie$ is Koszul (see \cite[Sect. 4]{Ler18ii}). As the study of the Koszulness of $\DLie$ is still difficult, the idea is to use the diagonal symmetry of the generator and the relation of this properad to simplify the problem. In fact, the $\mathfrak{S}$-bimodules (which are families of representations of permutation groups) $V_\DLie$ and $R_{\mathcal{DJ}}$ are respectively  induced from representations of the groups $\mathfrak{S}_2$ and $\mathfrak{S}_3$, by the diagonal morphism  
\begin{equation}\label{eq::induction_group_morphism} 
\begin{array}{ccc}
G& \longrightarrow & G\times G^{op} \\
g& \longmapsto &  g\times g^{-1}
\end{array} \ ,
\end{equation}
for $G$ in $\{\mathfrak{S}_2,\mathfrak{S}_3\}$ respectively. This fundamental observation allows us to reduce the problem to one in the category of $\mathfrak{S}$-modules, and to define the minimal framework, called \emph{protoperad}, which encode double Lie algebras. We will see (in \cite{Ler18ii}) that the homotopy theory of these new objects is more simpler than the homotopy of properads. We resume the most important results of this article in the following theorem.

\begin{thm*}[see Def. \ref{def::prod_connexe_Smod}, Thm. \ref{thm::Ind_monoidal_connexe}, Prop. \ref{prop::proto_libre}]
	The category $\Smodred_k$ of reduced $\mathfrak{S}$-modules, i.e. the full sub-category of functors $P:\Fin\op\rightarrow\Ch_k$ such that $P(\varnothing)=0$
	is monoidal for the connected composition product $\boxtimes_c$. The monoids in this category are called \textbf{protoperads}. There exists the free protoperad functor, denoted by $\scrF(-)$ and the functor
	\[
	\begin{array}{cccc}
		\Ind : & \big(\Smodred_k,\boxtimes_c\big) & \longrightarrow  & \big(\Sbimodred_k,\boxtimes_c^{\mathrm{Val}}\big) 
	\end{array} 
	\] 
	which is exact and satisfies $\Ind\circ \scrF=\scrF\Val\circ\Ind$, where $\scrF\Val$ is the functor of free properad.
\end{thm*}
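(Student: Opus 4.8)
The plan is to prove the three assertions essentially separately, their common ingredient being the combinatorial description of $\boxtimes_c$ assembled in the preceding sections. \emph{Monoidal structure.} First I would \emph{define} $\boxtimes_c$ on $\Smodred_k$ by copying the shape of Vallette's connected composition product on $\Sbimodred_k$ but forgetting the splitting of labels into inputs and outputs: for reduced $\mathfrak{S}$-modules $P,Q$ and a finite set $X$, let $(P\boxtimes_c Q)(X)$ be the colimit, over the groupoid of connected two-level configurations on $X$, of the obvious tensor products of values of $P$ on the lower blocks and of $Q$ on the upper blocks, where ``connected'' refers to the associated bipartite graph; the unit $\mathcal{I}$ is the $\mathfrak{S}$-module supported on singletons with value $k$. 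To check that $(\Smodred_k,\boxtimes_c,\mathcal{I})$ is monoidal I would produce one combinatorial model --- connected three-level configurations --- that simultaneously computes $(P\boxtimes_c Q)\boxtimes_c R$ and $P\boxtimes_c(Q\boxtimes_c R)$, the associator being the comparison of the two ways of reading it off two levels at a time, so that the pentagon and the unit triangle reduce to functoriality of these configuration groupoids. The one delicate point is bookkeeping: that ``connectedness'' is preserved under contracting a level, so that the three-level model really computes both iterated products, and that all the $\mathfrak{S}$-equivariance matches up --- this is exactly where the classification of connected configurations from the combinatorial part of the paper is used.

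\emph{Free protoperad.} Next I would obtain $\scrF(M)$ either abstractly, as the free monoid for $\boxtimes_c$ given by the standard construction in a monoidal category, for which it is enough to observe that $M\boxtimes_c(-)$ and $(-)\boxtimes_c M$ preserve filtered colimits and reflexive coequalizers --- immediate from the arity-wise colimit formula, each $(P\boxtimes_c Q)(X)$ being a colimit of a finite diagram of finite tensor products in $\Ch_k$ --- or concretely, as the $\mathfrak{S}$-module whose value on $X$ is the sum over connected decorated graphs with external legs $X$ and vertices labelled by $M$, with grafting as composition; the universal property is then a routine induction on the number of vertices.

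\emph{Induction functor.} In each arity $n$ the diagonal homomorphism $\mathfrak{S}_n\to\mathfrak{S}_n\times\mathfrak{S}_n\op$, $\sigma\mapsto(\sigma,\sigma^{-1})$, of \eqref{eq::induction_group_morphism} is injective of index $n!$, so $k[\mathfrak{S}_n\times\mathfrak{S}_n\op]$ is free of finite rank over $k[\mathfrak{S}_n]$; since induction along a finite-index subgroup agrees with coinduction, $\Ind$ is both a left and a right adjoint to the corresponding restriction functor, hence preserves all limits and colimits and in particular is exact, over any commutative ground ring. For the monoidal compatibility I would construct a natural transformation $\Ind(P)\boxtimes_c\Val\Ind(Q)\to\Ind(P\boxtimes_c Q)$ and prove it an isomorphism by matching the combinatorial models: applying the diagonal induction to a connected two-level $\mathfrak{S}$-module configuration unfolds it into the sum over all ways of orienting its unoriented, paired edges so as to produce a connected directed graph, which is exactly the indexing set for Vallette's $\boxtimes_c\Val$. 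Granting this strong monoidality, together with the preservation of coproducts and filtered colimits, the identity $\Ind\circ\scrF=\scrF\Val\circ\Ind$ follows formally from the colimit descriptions of the two free-monoid functors. I expect this last model comparison --- showing the comparison morphism is an isomorphism, not merely a natural transformation, and compatibly with all the symmetric group actions --- to be the main obstacle, being the precise incarnation of the slogan that ``the same number of inputs and outputs, plus a pairing'' reassembles into ``an arbitrary connected directed graph''.
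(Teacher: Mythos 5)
Your proposal follows essentially the same route as the paper: the product is defined by sums over connected pairs of partitions (the paper's functor $\mathcal{X}^{\mathrm{conn}}$), associativity is checked by contracting a three-level configuration (the paper's projection $\mathcal{K}$ and its associativity), the free monoid is obtained from the general construction for abelian monoidal categories whose product preserves reflexive coequalizers and sequential colimits (Vallette's machinery, justified here by split analyticity of $\boxtimes_c$) together with the explicit connected-walls description, exactness of $\Ind$ comes from freeness of $k[\mathfrak{S}_n\times\mathfrak{S}_n\op]$ over the diagonal copy of $k[\mathfrak{S}_n]$, strong monoidality of $\Ind$ is proved by matching the two combinatorial indexing sets, and $\Ind\circ\scrF\cong\scrF\Val\circ\Ind$ then follows formally from monoidality plus preservation of colimits. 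The only places where you are lighter than the paper are the bookkeeping steps it spells out: the identification of the abstract free monoid with the walls model via the height function and a choice of representatives for the quotients $\widetilde{V}_n$, and the unfolding of the diagonal induction into Vallette's connected permutations, but these are refinements of, not departures from, your argument.
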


Using the framework of protoperads is successful: we prove in \cite{Ler18ii} that there is a Koszul duality theory for protoperads which is compatible  with that for properads via $\Ind$, and we use it to prove that the properads $\DLie$ and hence $\DPois$ are Koszul.

\paragraph{\textsc{Section \ref{sect::bricks_and_walls} -- \nameref{sect::bricks_and_walls}}}
We develop the combinatorics for protoperads.
The combinatorics is controlled by \emph{walls}. A wall over a non-empty finite set $S$ is a set of subsets of $S$, equipped with a particular partial order and such that the union of these subsets is $S$. We represent a wall diagrammatically  as follows:  
\[
\begin{tikzpicture}[scale=0.4,baseline=1.4ex]
\draw (0,0) rectangle (2,0.5);
\draw (1,0.2) node {\tiny{$v_1$}};
\draw (2.25,0) rectangle (4.25,0.5);
\draw (3.25,0.2) node {\tiny{$v_2$}};
\draw (0,1.5) rectangle (2,2);
\draw (1,1.7) node {\tiny{$v_3$}};
\draw (1.25,0.75) rectangle (4.25,1.25);
\draw (2.75,0.95) node {\tiny{$v_4$}};
\draw (-1,0.75) rectangle (1,1.25);
\draw (0,0.95) node {\tiny{$v_5$}};
\end{tikzpicture}
\]
the width of each brick giving us the number of entries and exits of each of the operations $v_i$ which is represented. This is encoded by the functor $\Wall$ and certain  subfunctors. Let $S$ be a non-empty finite set and $n$ a natural number, a \emph{wall of $n$  bricks} over $S$ is the datum of a set $W=\{W_\alpha\}_{\alpha \in A}$ of $|A|=n$ non-empty subsets of $S$, which are called \emph{bricks of $W$}, satisfying:
\begin{itemize}
	\item the union of all bricks is $S$;
	\item for all element $s$ of $S$, the set of bricks $W_\alpha$ which contains the element $s$ is totally ordered
	\item \emph{a compatibility between orders}.
\end{itemize}
We define also the notion of \emph{connectedness} for a wall and denote by $\Wall(S)$, the set of connected walls over $S$.

\paragraph{\textsc{Section \ref{sect::product_on_Smod} -- \nameref{sect::product_on_Smod}}}
We review two monoidal products on $\Smodred_k:=\Func(\Fin\op,\Ch_k)^{\mathrm{red}}$  which is the full sub-category of functors $P: \Fin\op \rightarrow \Ch_k$ from the category $\Fin$ of finite sets with bijections, to the category $\Ch_k$ of $k$-chain complexes such that $P(\varnothing)=0$.
These product are \emph{the composition product} $\square$ , also called the Hadamard product, and \emph{ the concatenation product} $\otimes^{\mathrm{conc}}$ (see \Cref{subsect::prod_Smod}).

We also define the \emph{connected composition product} on $\Smodred_k$  (see \Cref{def::prod_connexe_Smod}), denoted by $\boxtimes_c$, which encodes algebraic structures which have the same number of inputs and outputs and a diagonal symmetry.
It is the bifunctor 
\[
- \boxtimes_c - : \Smodred_k \times \Smodred_k \longrightarrow \Smodred_k
\]
defined, for all reduced $\mathfrak{S}$-modules $P$ and $Q$ and for all non-empty finite sets $S$, by:	  
\[
~P \boxtimes_c Q (S):= \underset{(I,J)\in \mathcal{X}^{\mathrm{conn}}(S)}{\bigoplus}~\underset{\alpha}{\bigotimes}~P(I_\alpha) \otimes \underset{\beta}{\bigotimes}~Q(J_\beta).
\]

\emph{Protoperads} are the monoids for this monoidal structure. For a protoperad $\mathcal{P}$, we can view a homogeneous element $p$ of  $\mathcal{P}(S)$ as a labelled brick
\[
\begin{tikzpicture}[scale=0.2,baseline=1]
	\draw (0,0) -- (0,1.5);
	\draw (0,0) node[below] {\tiny{$s_1$}};
	\draw (0,1.5) node[above] {\tiny{$s_1$}};
	\draw (2,0) -- (2,1.5);
	\draw (2,0) node[below] {\tiny{$s_2$}};
	\draw (2,1.5) node[above] {\tiny{$s_2$}};
	\draw (5,0) node[below] {\tiny{$\cdots$}};
	\draw (5,1.5) node[above] {\tiny{$\cdots$}};
	\draw	(8,0) -- (8,1.5);
	\draw (8,0) node[below] {\tiny{$s_m$}};
	\draw (8,1.5) node[above] {\tiny{$s_m$}};
	\draw[fill=white] (-0.3,0.4) rectangle (8.3,1.1);
	\draw (4,0.7) node {\tiny{$p$}};
\end{tikzpicture},~
\]
and the product $\mathcal{P}\boxtimes_c \mathcal{P}(S) \rightarrow \mathcal{P} (S)$ as the composition of two rows of bricks which are connected:
\[
\begin{tikzpicture}[scale=0.4,baseline=2ex]
	\draw (0,2) node[above] {\tiny{$s_1$}} to[out=270,in=90] (0,0) node[below]{\tiny{$s_1$}};
	\draw (2,2) node[above] {\tiny{$s_2$}} to[out=270,in=90] (2,0)node[below]{\tiny{$s_2$}};
	\draw (4,2) to[out=270,in=90] (4,0);
	\draw (6,2) to[out=270,in=90] (6,0);
	\draw (8,2) to[out=270,in=90] (8,0);
	\draw (9,2) to[out=270,in=90] (9,0);
	\draw (10,2) node[above] {\tiny{$s_m$}} to[out=270,in=90] (10,0) node[below]{\tiny{$s_m$}};
	\draw[fill=white] (-0.3,1.8) rectangle (2.3,2.2);
	\draw[fill=white] (3.7,1.8) rectangle (6.3,2.2); 
	\draw[fill=white] (7.7,1.8) rectangle (10.3,2.2); 
	\draw (3.1,2) node {\tiny{$\cdots$}};
	\draw (7,2) node {\tiny{$\cdots$}};
	\draw (1,2) node {\tiny{$p_1$}};
	\draw (5,2) node {\tiny{$p_j$}};
	\draw (9,2) node {\tiny{$p_s$}};
	\draw[fill=white] (-0.3,-0.2) rectangle (4.3,0.2);
	\draw[fill=white] (5.7,-0.2) rectangle (10.3,0.2);
	\draw (5,0) node {\tiny{$\cdots$}};
	\draw (2,0) node {\tiny{$p'_1$}};
	\draw (8,0) node {\tiny{$p'_r$}};
\end{tikzpicture}
~~\mapsto
\begin{tikzpicture}[scale=0.2,baseline=1]
	\draw (0,0) -- (0,1.5);
	\draw (0,0) node[below] {\tiny{$s_1$}};
	\draw (0,1.5) node[above] {\tiny{$s_1$}};
	\draw (2,0) -- (2,1.5);
	\draw (2,0) node[below] {\tiny{$s_2$}};
	\draw (2,1.5) node[above] {\tiny{$s_2$}};
	\draw (5,0) node[below] {\tiny{$\cdots$}};
	\draw (5,1.5) node[above] {\tiny{$\cdots$}};
	\draw	(8,0) -- (8,1.5);
	\draw (8,0) node[below] {\tiny{$s_m$}};
	\draw (8,1.5) node[above] {\tiny{$s_m$}};
	\draw[fill=white] (-0.3,0.4) rectangle (8.3,1.1);
	\draw (4,0.7) node {\tiny{$p$}};
\end{tikzpicture}
\]

\paragraph{\textsc{Section \ref{sect::bimodule} -- \nameref{sect::bimodule}}}
In this section, we also recall three monoidal structures on the category of reduced $\mathfrak{S}$-bimodules which are analogous to ones on the $\mathfrak{S}$-module category: the concatenation product, the composition product, and the connected composition product $\boxtimes_c^{\mathrm{Val}}$, defined by Vallette in \cite{Val03,Val07}. We do this review with an other point of view than the original one: we give an equivalent definition of the connected composition product, which is more adapted to \emph{species} and the functorial point of view of $\mathfrak{S}$-bimodules.

\paragraph{\textsc{Section \ref{sect::induction_functor} -- \nameref{sect::induction_functor}}}

The new product $\boxtimes_c$ is the avatar of the product $\boxtimes_c^{\mathrm{Val}}$ on the  category $\Sbimodred_k$. The most important property of the product $\boxtimes_c $ is its compatibility with the product of Vallette via the induction functor $ \Ind: \Smodred_k \rightarrow \Sbimodred_k$, defined using \Cref{eq::induction_group_morphism}. We prove the following.
\begin{thm*}[see \Cref{thm::Ind_monoidal_connexe}]
	The induction functor
	\[
	\Ind : \big(\Smodred_k,\boxtimes_c\big)  \longrightarrow   \big(\Sbimodred_k,\boxtimes_c^{\mathrm{Val}}\big)
	\] 
	is monoidal. In particular, it sends protoperads to properads,
\end{thm*}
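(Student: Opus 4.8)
The plan is to verify that $\Ind$ is a strong monoidal functor between the two monoidal categories $\big(\Smodred_k,\boxtimes_c\big)$ and $\big(\Sbimodred_k,\boxtimes_c^{\mathrm{Val}}\big)$, i.e. to construct a natural isomorphism $\Ind(P)\boxtimes_c^{\mathrm{Val}}\Ind(Q)\xrightarrow{\ \sim\ }\Ind(P\boxtimes_c Q)$ compatible with the units and the associativity constraints. Since everything is defined pointwise on finite sets via direct sums indexed by combinatorial data (connected walls, resp. connected two-level graphs), the key is a bijection at the level of indexing sets together with a matching of the representation-theoretic data sitting over each index. First I would unwind the definition of $\Ind$ on an individual reduced $\mathfrak{S}$-module $P$: for a finite set $S$ it is obtained by left Kan extension / induction along the diagonal $G\to G\times G\op$ of \Cref{eq::induction_group_morphism}, so that $\Ind(P)$ as an $\mathfrak{S}$-bimodule has a concrete description where both the "input" and "output" copies of $S$ are present but constrained by the diagonal symmetry. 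I would record, as a preliminary lemma, how $\Ind$ interacts with the concatenation product and with restriction/induction of bi-equivariant structures, and note that $\Ind$ sends the monoidal unit $I$ of $\boxtimes_c$ to the monoidal unit $I\Val$ of $\boxtimes_c^{\mathrm{Val}}$ (both are concentrated on singletons and $\Ind(k)=k$ with trivial action).

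The heart of the argument is a combinatorial comparison. On the $\Smodred$ side, $(P\boxtimes_c Q)(S)$ is the sum over connected walls in $\mathcal{X}^{\mathrm{conn}}(S)$ of tensor products $\bigotimes_\alpha P(I_\alpha)\otimes\bigotimes_\beta Q(J_\beta)$. On the $\Sbimodred$ side, $\big(\Ind(P)\boxtimes_c^{\mathrm{Val}}\Ind(Q)\big)(S)$ is Vallette's sum over connected graphs with genus built from two levels, where each vertex is decorated by a value of $\Ind(P)$ or $\Ind(Q)$. The plan is to show that the diagonal symmetry forced by $\Ind$ collapses such a two-level graph so that its set of input legs and its set of output legs are identified (both equal to $S$), the connecting edges between the two levels become the "shared columns" of a wall, and the genus/connectivity condition of Vallette's graphs translates exactly into the connectedness condition defining $\Wall(S)$ (equivalently $\mathcal{X}^{\mathrm{conn}}(S)$). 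I would make this precise by constructing, from a connected genus graph whose vertices are diagonally symmetric, its underlying wall over $S$, and conversely showing every connected wall together with a choice of $P$- or $Q$-decorations on its bricks arises this way; then I would check that the induced action of the symmetry/automorphism groups on the summands matches, so the bijection of indexing sets upgrades to an isomorphism of the corresponding (co)invariants, i.e. of the direct sums. This is where I expect the main obstacle to lie: keeping careful track of the $\mathfrak{S}_n\times\mathfrak{S}_n\op$-equivariance and the automorphisms of graphs/walls, and verifying that the diagonal induction $G\to G\times G\op$ is precisely what identifies Vallette's two independent levels of symmetry with the single symmetry of a wall — in particular that no spurious signs or extra orbits appear when passing from the graph model to the wall model.

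Once the natural isomorphism $\Ind(P)\boxtimes_c^{\mathrm{Val}}\Ind(Q)\cong\Ind(P\boxtimes_c Q)$ is established, I would check the coherence conditions: compatibility with the left and right unit constraints (using $\Ind(I)=I\Val$ from the preliminary step) and with the associativity constraints, by comparing both sides with the evident description of a threefold product $\Ind(P)\boxtimes_c^{\mathrm{Val}}\Ind(Q)\boxtimes_c^{\mathrm{Val}}\Ind(R)$ as a sum over connected three-level structures and invoking the corresponding threefold statement for $\boxtimes_c$ and $\Wall$; all these are again reduced to the combinatorial bijection above plus its naturality, so they follow formally. Finally, the statement that $\Ind$ sends protoperads to properads is then immediate and formal: a protoperad is a monoid $(\mathcal{P},\mu,\eta)$ in $\big(\Smodred_k,\boxtimes_c\big)$, and applying a strong monoidal functor to a monoid yields a monoid $\big(\Ind(\mathcal{P}),\Ind(\mu)\circ\gamma,\ \Ind(\eta)\big)$ in the target, where $\gamma$ is the monoidality isomorphism; by Vallette's definition this is exactly a properad. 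The exactness of $\Ind$ used implicitly here is the one already asserted in the excerpt's first theorem (induction along a group morphism is exact over a field, or more generally because $P\mapsto\Ind(P)(S)$ is built from finite direct sums and an induction functor with an exact left adjoint), so no further work is needed.
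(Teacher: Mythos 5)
Your outline is correct and its overall strategy — a combinatorial identification of indexing data, careful tracking of equivariance, unit preservation, and the formal transport of monoid structures along a strong monoidal functor — is the same as the paper's. The main difference is organizational: you plan to compare $\Ind(P)\boxtimes_c^{\mathrm{Val}}\Ind(Q)$ directly with Vallette's connected two-level graph (connected permutation) model, so the graph-to-wall translation and all of its $\mathfrak{S}_n\times\mathfrak{S}_n\op$ bookkeeping would sit inside the proof of the theorem; the paper instead never touches graphs at this stage, because it has already replaced Vallette's product by the equivalent partition-based product $\boxtimes_c^b$ (\Cref{prop::equivalence_produits_connexes2}, whose content is exactly your ``graphs versus shared columns'' step, proved via the connected-permutation \Cref{lem::combinatoire_connexite}). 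With that reformulation in hand, the proof of \Cref{thm::Ind_monoidal_connexe} reduces to unwinding $\Ind$: the summand $\Ind V(I_\alpha,K'_\alpha)\otimes\cdots$ vanishes unless $I_\alpha\cong K'_\alpha$ and $K''_\beta\cong J_\beta$, and the remaining choices of bijections, after the quotient by $\mathfrak{S}_n$, assemble into a single factor $\bigoplus_{\Hom_\Fin(E,S)}$, yielding $\Ind(V\boxtimes_c W)(S,E)$ with the connectedness condition transported verbatim. So your route would essentially re-prove the paper's earlier equivalence inside the theorem — workable, but the factorization the paper chooses is precisely what makes the equivariance step you flag as the main obstacle into a short computation. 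Your final step (monoids go to monoids, hence protoperads to properads, with $\Ind(\Ibox)=\Ibox\Val$) matches the paper, and your remark on exactness is consistent with \Cref{prop::Ind_exact}, though it is not needed for monoidality itself.
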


\paragraph{\textsc{Section \ref{sect::protoperads} -- \nameref{sect::protoperads}}}
In this section, we give an equivalent definition of a protoperad, generalizing the definition of operads in terms of partial compositions.

\begin{prop*}[see \Cref{prop::proto_def_partielle}]
	A protoperad $\calP$ has canonically a partial composition system. Conversely, a partial composition system on a $\mathfrak{S}$-module $P$  canonically extends to a protoperad structure.
\end{prop*}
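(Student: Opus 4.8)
The plan is to transcribe, in the wall‑theoretic setting, the classical equivalence between monoids in $(\Smodred_k,\circ)$ and operads presented by partial compositions $\circ_i$ (see e.g.\ \cite[Sect.~5.3]{LV12}). \textbf{From a protoperad to a partial composition system.} Let $(\calP,\gamma,\eta)$ be a monoid in $(\Smodred_k,\boxtimes_c)$ and let $\mathbf 1\in\calP(\{\ast\})$ be the image of $1$ under $\eta$. Given two non-empty finite sets $A$, $B$ together with an overlap --- a non-empty finite set $C$ with injections $C\hookrightarrow A$ and $C\hookrightarrow B$ --- set $S:=A\sqcup_C B$. Then the pair with top row $\{A\}\sqcup\{\{b\}\}_{b\in B\setminus C}$ and bottom row $\{B\}\sqcup\{\{a\}\}_{a\in A\setminus C}$ is a \emph{connected} wall over $S$, precisely because $C\neq\varnothing$; hence $\calP(A)\otimes\bigl(\bigotimes_{b\in B\setminus C}\calP(\{b\})\bigr)\otimes\calP(B)\otimes\bigl(\bigotimes_{a\in A\setminus C}\calP(\{a\})\bigr)$ is one of the summands of $\calP\boxtimes_c\calP(S)$, and I define the partial composition $\calP(A)\otimes\calP(B)\to\calP(S)$ by inserting $\mathbf 1$ in every singleton factor and then applying $\gamma_S$. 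The axioms of a partial composition system are then read off from the monoid axioms: equivariance from the naturality of $\gamma$ and of the summand inclusions under bijections of finite sets, the unit axioms from the two unit triangles, and the sequential and parallel associativity relations from the associativity square of $\gamma$ restricted to the summands of $\calP\boxtimes_c\calP\boxtimes_c\calP(S)$ carried by the connected walls with exactly two non-singleton bricks.

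\textbf{From a partial composition system to a protoperad.} Conversely, suppose $P$ carries a partial composition system with unit $\mathbf 1$. I reconstruct $\gamma_S\colon P\boxtimes_c P(S)\to P(S)$ one summand at a time. Fix $(I,J)\in\mathcal X^{\mathrm{conn}}(S)$ and an elementary tensor $\bigl(\bigotimes_\alpha p_\alpha\bigr)\otimes\bigl(\bigotimes_\beta q_\beta\bigr)$ in the corresponding summand. Since $(I,J)$ is connected, the combinatorics of walls of \Cref{sect::bricks_and_walls} lets me pick a \emph{contraction sequence}: iteratively choose a top brick and a bottom brick sharing at least one column and fuse them --- together with the singleton bricks passed through --- by a partial composition, until the wall has collapsed to the single brick $S$; the tensor collapses accordingly to an element of $P(S)$, which I take for the value of $\gamma_S$ on that summand, and I extend $\gamma$ additively. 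It then remains to check that (i) this value is independent of the chosen contraction sequence; (ii) $(P,\gamma,\mathbf 1)$ is a monoid; and (iii) $\gamma$ is natural in $S$. Granting (i), items (ii) and (iii) follow from the partial-composition axioms by the same summand-wise bookkeeping as in the first direction, using that iterating $\boxtimes_c$ simply superposes more rows of bricks.

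\textbf{Main obstacle.} The hard part will be (i): iterated partial compositions along a connected wall must not depend on the order of contraction. This is the wall analogue of the order-independence of iterated $\circ_i$'s along a rooted tree, but the two-dimensional combinatorics of walls makes the set of contraction sequences richer, so I expect it to absorb most of the work. The plan is a confluence argument: the graph whose vertices are the contraction sequences of a fixed connected wall and whose edges swap two successive, non-interfering contractions is connected --- here the partial order on the bricks of a wall and the compatibility between these orders recalled in \Cref{sect::bricks_and_walls} are exactly what is needed --- and each such swap is an instance of one partial-composition axiom: a sequential associativity when the two contractions share a brick, a parallel (interchange) one otherwise. Equivalently, I would fix for each connected wall a canonical contraction order refining its partial order and show that any contraction sequence can be brought to it through such swaps. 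The one genuinely tedious point should be tracking the relabellings of the intermediate fused bricks so that the equivariance axioms make all the local identifications compatible.
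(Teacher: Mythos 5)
Your proposal follows essentially the same route as the paper: the partial compositions are obtained by restricting the monoid product to the summands with exactly two non-unit factors (units inserted in all singleton bricks), and conversely the product $\mu_S$ on $P\boxtimes_c P(S)$ is rebuilt summand-by-summand over $(K,L)\in\mathcal{X}^{\mathrm{conn}}(S)$ by iterating partial compositions. The order-independence you single out as the main obstacle is exactly the point the paper treats (very tersely) by invoking the associativity axioms $\mathrm{H}$, $\mathrm{V}$, $\Lambda$ together with the equivariance condition, so your confluence sketch is just a more explicit rendering of the paper's argument rather than a different approach.
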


Using the work of Vallette on free monoids in abelian monoidal categories (see \cite{Val09}), we show that there exists a free protoperad functor. We also have a combinatorial description of the free protoperad.
\begin{thm*}[{see  \Cref{prop::proto_libre}}]
	Let $V$ be a reduced $\mathfrak{S}$-module. The free protoperad functor is the graded functor $\scrF^*(-)$ given, for all finite set $S$, and for all natural number $\rho$, by the isomorphism of right $\mathrm{Aut}(S)$-modules
	\[
		\scrF^\rho(V)(S) \cong \bigoplus_{{\substack{(\{W_\alpha\}_{\alpha\in A},\leqslant)\\ \in \mathcal{W}^{\mathrm{conn}}_\rho(S)}}} \bigotimes_{\alpha\in A} V(W_\alpha),
	\]
	with  $\Wall_\rho(S)$, the set of connected walls with $\rho$ bricks.
\end{thm*}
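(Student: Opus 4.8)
The plan is to deduce the existence and the shape of the free protoperad from Vallette's construction of free monoids in an abelian monoidal category \cite{Val09}, applied to $(\Smodred_k,\boxtimes_c)$, and then to perform the combinatorial bookkeeping that identifies its weight-graded pieces with the sums indexed by connected walls. First I would check the hypotheses of \cite{Val09}: the category $\Smodred_k=\Func(\Fin\op,\Ch_k)^{\mathrm{red}}$ is abelian and cocomplete, and by \Cref{def::prod_connexe_Smod} the bifunctor $\boxtimes_c$ is defined objectwise by a direct sum over the set $\mathcal{X}^{\mathrm{conn}}(S)$ of tensor products $\bigotimes_\alpha P(I_\alpha)\otimes\bigotimes_\beta Q(J_\beta)$, so it is colimit-preserving in each variable and the remaining hypotheses of \cite{Val09} (preservation of reflexive coequalizers, etc.) are readily verified. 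This produces the free protoperad functor $\scrF(-)$ together with a weight grading $\scrF(V)=\bigoplus_{\rho\geqslant 0}\scrF^\rho(V)$ obtained by counting copies of $V$, with the unit part $\scrF^0(V)=I$, with $\scrF^1(V)=V$, and with each $\scrF^\rho(V)$ realized as a colimit (a reflexive coequalizer, or equivalently the colimit of the partial-composition tower of \Cref{prop::proto_def_partielle}) of iterated products $V^{\boxtimes_c k}$ of total weight $\rho$ together with all their parenthesizations.

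Next I would unfold the iterated products. Arguing by induction on $k$ and expanding \Cref{def::prod_connexe_Smod}, one shows that $V^{\boxtimes_c k}(S)$ decomposes, naturally in $S\in\Fin$, as a direct sum indexed by \emph{leveled connected walls}: a connected wall $(\{W_\alpha\}_{\alpha\in A},\leqslant)$ over $S$ together with a surjection $\ell\colon A\to\{1,\dots,k\}$ whose fibres are antichains, which is strictly increasing along the brick order, and whose truncations are connected; the corresponding summand is $\bigotimes_{\alpha\in A}V(W_\alpha)$. This is the exact transcription, in the wall language of Section \ref{sect::bricks_and_walls}, of the classical description of the $k$-fold operadic composition product by leveled trees, the partial order recorded by a wall (together with its compatibility-of-orders axiom) playing the role of the ancestor relation in a tree.

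The heart of the argument is then to compute the colimit of the first step as ``forgetting the level function''. For a fixed connected wall $W$ with $\rho$ bricks, the set of its admissible level functions, equipped with the moves induced by the structure maps of the colimit, forms a connected diagram, so the colimit over it is a single copy of $\bigotimes_{\alpha\in A}V(W_\alpha)$; conversely every connected wall admits at least one admissible level function (for instance the height function sending a brick to one plus the length of a longest chain of bricks strictly below it, up to the care needed to keep the truncations connected), so each wall is reached, and since a wall is recovered from any of its leveled refinements, distinct walls index distinct summands and no further identification occurs. Combining this with the previous step gives the isomorphism $\scrF^\rho(V)(S)\cong\bigoplus_{(\{W_\alpha\},\leqslant)\in\Wall_\rho(S)}\bigotimes_{\alpha\in A}V(W_\alpha)$; its naturality in $\Fin$ is exactly its equivariance, so it is an isomorphism of right $\mathrm{Aut}(S)$-modules, and the differential on both sides is the total differential induced by that of $V$. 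The cases $\rho=1$ (the single brick $S$) and $\rho=0$ recover $\scrF^1(V)=V$ and $\scrF^0(V)=I$.

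I expect the main obstacle to be precisely the last step: controlling the poset of level functions of a fixed connected wall and proving that the coequalizer relation defining $\scrF^\rho(V)$ is \emph{exactly} ``same underlying wall'' --- neither coarser (so that the index set really is $\Wall_\rho(S)$ and not a proper quotient of it) nor finer (so that all leveled representatives of one wall are glued) --- while simultaneously reconciling the intermediate connectivity conditions imposed by the various parenthesizations of $V^{\boxtimes_c k}$ with the single global connectivity condition defining a connected wall. This is the analogue, for walls, of the point where connectedness of graphs is the crux in the corresponding statement for the free properad $\scrF\Val$, and it is where the combinatorics developed in Section \ref{sect::bricks_and_walls} is genuinely used.
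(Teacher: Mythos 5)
Your overall strategy (Vallette's free-monoid construction plus a combinatorial identification of the weight-graded pieces with wall-indexed sums) is the same as the paper's, but the central combinatorial step as you set it up is wrong, and the error is not cosmetic. You claim that $V^{\boxtimes_c k}(S)$ decomposes as a sum over connected walls equipped with a level surjection whose fibres are antichains. By \Cref{def::prod_connexe_Smod}, however, a summand of $V^{\boxtimes_c k}(S)$ is indexed by a $k$-tuple of \emph{full partitions} of $S$ (with the connectedness condition), so every element of $S$ is covered by a $V$-labelled brick at every one of the $k$ levels. For instance $V(\{1,2\})\otimes V(\{2,3\})$ is \emph{not} a summand of $(V\boxtimes_c V)(\{1,2,3\})$, although it is a summand of $\scrF^{(2)}(V)(\{1,2,3\})$ (\Cref{rem::description_F2}); indeed no $V^{\boxtimes_c k}$ contains it, since the two-brick wall $\{1,2\}<\{2,3\}$ covers the elements $1,3$ once and $2$ twice and hence admits no levelization into full partitions. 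Consequently the colimit of the un-augmented iterated products $V^{\boxtimes_c k}$ (however you parenthesize them) cannot be the free protoperad: the unit $\Ibox$, i.e.\ singleton bricks labelled by the unit, is indispensable. This is exactly why the paper works with $V_+=V\oplus\Ibox$, expands $(V_+)^{\boxtimes_c n}$ with the marking functors $\mathfrak{Q}$, and then (i) chooses explicit representatives for the quotient $\widetilde{V}_n$ by the unit-insertion relations --- the condition in $\widetilde{\Xi}$ that every $V$-labelled block at level $i+1$ meets a $V$-labelled block at level $i$, i.e.\ each brick sits at its height --- and (ii) exhibits an explicit bijection $\Phi,\Psi$ between such representatives and connected walls, using the height function $\mathfrak{h}$ for the inverse (\Cref{prop::grading_proto_libre} and the proof of \Cref{prop::proto_libre}). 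Your ``forget the level function over a connected diagram of levelings'' is a reasonable substitute for (i)+(ii), but it is precisely the step you only assert, and it cannot even be formulated correctly until the unit bricks are reinstated, because the identifications in Vallette's colimit are exactly the relocations of unit insertions.

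Two smaller points. The assertion that $\boxtimes_c$ is colimit-preserving in each variable is false: the functor $P\mapsto\bigotimes_\alpha P(I_\alpha)$ is polynomial, not additive, so $\boxtimes_c$ does not commute with direct sums; what holds (and what Vallette's construction actually requires) is preservation of reflexive coequalizers and sequential colimits, which is \Cref{lem::Smod_ana_scinde} and \Cref{prop::prop_fond_du_produit_connexe}. Also, there is no ``partial-composition tower'' in \Cref{prop::proto_def_partielle} whose colimit computes $\scrF^{\rho}(V)$; that proposition only translates a monoid structure into partial compositions, so invoking it does not supply the missing construction.
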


\paragraph{\textsc{Section \ref{sect::coloring} -- \nameref{sect::coloring}}}
In \cite{Ler18ii}, the author develops the Koszul duality for protoperads, which is related to Koszul duality theory of properads (see \cite{Val03,Val07}) via the functor $\Ind$. In this last section, we define the notion of a coloured wall, and we associate to a wall $W$ over a totally ordered set $S$, the \emph{colouring complex}, denoted by $\mathrm{C}_\bullet^{\Colo}(W)$. This is motivated by the combinatorial description of the bar construction of the free protoperad (see also \cite{Ler17,Ler18ii}). The principal result of this section is the following:

\begin{thm*}[see \Cref{prop::cpx_colo_acyclique}]
	Let $S$ be a finite totally ordered set, and $W$ a wall over $S$. If the set $\mathrm{Succ}(W)$ (see \Cref{subsect::recalls on posets} for the definition of $\mathrm{Succ}$) is non empty, then the colouring complex $\mathrm{C}_\bullet^{\Colo}(W)$ is acyclic.
\end{thm*}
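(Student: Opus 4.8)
\emph{Sketch of the intended argument.} First I would unwind the definitions of $\mathrm{C}_\bullet^{\Colo}(W)$ and of $\mathrm{Succ}(W)$. A colouring of the wall $W$ over the totally ordered set $S$ amounts to an order-preserving surjection from the poset of bricks of $W$ onto a totally ordered set $\{0<1<\dots<n\}$, equivalently to a chain of horizontal ``cuts'' splitting $W$ into stacked layers, each of which is again a connected wall; the colouring complex $\mathrm{C}_\bullet^{\Colo}(W)$ is, up to the natural shift, the normalised chain complex with $\mathrm{C}_n^{\Colo}(W)$ free on the colourings with $n+1$ colours and differential $d=\sum_i(-1)^i d_i$, where $d_i$ forgets the $i$-th cut, i.e. merges the colours $i$ and $i+1$. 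Under this translation the hypothesis $\mathrm{Succ}(W)\neq\varnothing$ says exactly that the covering relation of the brick poset is non-empty, i.e. that $W$ admits at least one non-trivial cut, so that the complex is supported in degrees $\geqslant 1$.

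The plan is then to produce an explicit contracting homotopy. Choose a covering pair $(b,b')\in\mathrm{Succ}(W)$, so $b<b'$ with nothing strictly in between and $b\cap b'\neq\varnothing$. Given a colouring $c$ with values in $\{0,\dots,n\}$, let $h(c)$ be the colouring with values in $\{0,\dots,n+1\}$ obtained by inserting a new cut immediately below $b'$: shift all colours $\geqslant c(b')$ up by one, give the new value $c(b')$ to every brick $x$ with $x\geqslant b'$, and leave the other bricks unchanged. One checks that $h(c)$ is again order-preserving and that each of the resulting layers is still a connected sub-wall, so that $h$ defines a map $\mathrm{C}_n^{\Colo}(W)\to\mathrm{C}_{n+1}^{\Colo}(W)$, and the claim to verify is $dh+hd=\mathrm{id}$. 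This is the usual ``extra degeneracy'' computation: writing $h$ as insertion of a cut at the position $p=p(c)$ prescribed by $(b,b')$, the terms $d_i h$ and $h d_j$ with indices away from $p$ cancel pairwise by the simplicial identities, the two terms straddling $p$ telescope, and the surviving contribution is the identity.

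The main obstacle is the well-definedness of $h$ together with the sign bookkeeping at the inserted cut. A single global choice of $(b,b')$ may fail to produce a legal colouring for those $c$ in which $b$ and $b'$ already carry non-adjacent colours, or may break the connectedness of an intermediate layer; the remedy I expect to need is to let the position of the inserted cut depend on $c$ (for instance, insert it just below the brick of smallest colour among $\{x : x\geqslant b'\}$), or, more cleanly, to first decompose $\mathrm{C}_\bullet^{\Colo}(W)$ over the connected components of $W$ and run the homotopy componentwise, and then re-check $dh+hd=\mathrm{id}$ for this adapted choice. The cancellation of the ``boundary'' terms $hd_{p-1}+d_p h$ and their mates is precisely the place where one uses that $(b,b')$ is a \emph{covering} pair, not merely a comparable one, so that the position of the new cut is unambiguous after a face map is applied on either side.

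If a clean global contracting homotopy proves elusive, the fallback is an induction on $|\mathrm{Succ}(W)|$. One isolates a single covering relation and fits the associated ``un-cut'' wall and the two walls obtained by cutting there into a short exact sequence of colouring complexes; the long exact sequence in homology, combined with the inductive hypothesis applied to the (strictly simpler) pieces, reduces everything to the base case $|\mathrm{Succ}(W)|=1$, which is then a short and explicit homotopy computation. Either way, acyclicity of $\mathrm{C}_\bullet^{\Colo}(W)$ is the combinatorial shadow of the acyclicity of the bar construction of a free protoperad, which is the structural reason one should expect the statement to hold.
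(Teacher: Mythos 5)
Your proposal rests on a misreading of the definition of a colouring. In the paper a colouring is a surjection $\phi\colon W\twoheadrightarrow C$ whose set of colours $C$ carries only the \emph{partial} order induced from $W$ (together with connectedness of the fibres); it is not an order-preserving surjection onto a totally ordered set $\{0<1<\dots<n\}$, and a colouring is not a system of horizontal ``cuts'' into stacked layers (the top-colouring of the wall in \Cref{ex::colouring_complex} already has an incomparable pair of colours). Consequently $\mathrm{C}_\bullet^{\Colo}(W)$ is not the normalised chain complex of a semi-simplicial set with faces $d_i$ ``forgetting the $i$-th cut'' — the paper states explicitly that $\Colo_\bullet(W)$ is \emph{not} semi-simplicial; the differential is a sum over the pairs of successive colours $\mathrm{Succ}(\phi)$, whose number varies from colouring to colouring, with the ad hoc sign $\Lambda$ built from the auxiliary total order $\prec_\phi$. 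So the entire first plan (extra degeneracy $h$ inserting a cut below a chosen covering pair $(b,b')$, cancellation via simplicial identities) has no object to act on: there are no simplicial identities, the proposed $h(c)$ need not satisfy the partial-order and fibre-connectedness conditions, and you have not specified how it would interact with the signs $\Lambda$.

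Your fallback is closer in spirit to what the paper does, but as stated it has a real gap. For a single covering pair $(k<l)$ the subcomplex $C_\bullet^{\Colo,k<l}(W)$ of colourings with $\phi(k)=\phi(l)$ is indeed isomorphic to $C_\bullet^{\Colo}(W/_{k\sim l})$, but the corresponding quotient of $C_\bullet^{\Colo}(W)$ is \emph{not} in general the colouring complex of a simpler wall (a colouring with $\phi(k)\neq\phi(l)$ may still merge $k$ or $l$ with other bricks), so the short exact sequence you invoke does not have two ``strictly simpler'' outer terms, and inducting on $|\mathrm{Succ}(W)|$ is not the right parameter. The paper instead inducts on the number of bricks, picks a brick $k$ of height one, and takes the \emph{sum} $\sum_{l\in\mathrm{Succ}_W(k)}C_\bullet^{\Colo,k<l}(W)$; its acyclicity requires the algebraic Mayer--Vietoris \Cref{prop::mayer_vietoris}, hence acyclicity of all the intersections (which are colouring complexes of quotient walls), the quotient by this sum is identified with $C_\bullet^{\Colo}(W\setminus\{k\})$ (a tensor product over connected components, handled by \Cref{rem::decompo_cpx_colo}), and the exceptional shape \eqref{fig::mur_exceptionnel}, where the quotient-wall step would collapse to a single brick, must be treated separately. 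None of these ingredients — the choice of a minimal brick, the sum over all its successors, the Mayer--Vietoris control of that sum, and the exceptional case — appears in your sketch, so the argument as proposed does not go through.
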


This theorem corresponds to the acyclicity result for bar construction of free protoperad (see \cite{Ler18ii} for the definition of the bar construction of a protoperad).

\setcounter{tocdepth}{1} 
\tableofcontents

\section*{Notation}
We use the notation $\N^*$ for the set $\N-\{0\}$. We denote by $\Fin$, the category with finite sets as objects and bijections as morphisms and $\Set$, the category of all sets and all applications.   For two integers $a$ and $b$, we denote by $[\![a,b]\!]$ the set $[a,b]\cap \Z$, and, for $n\in \N^*$, $\mathfrak{S}_n$ is the automorphism group of $[\![1,n]\!]$, i.e. $\mathfrak{S}_n=\mathrm{Aut}_{\Fin}\left( [\![1,n]\!] \right)$. We denote by $\Ch_k$ the category of $\Z$-graded chain complexes over the field $k$.

Let $(\C,\odot)$ be a monoidal category: we denote by $\As(\C,\odot)$ the category of monoids without unit (e.g. semi-groups) in $\C$ and $\UAs(\C,\odot)$ the category of unital monoids in $\C$. If $(\C,\odot)$ is symmetric monoidal, we also denote by  $\Com(\C,\odot)$ the category of commutative monoids without unit and $\UCom(\C,\odot)$ the category of commutative unital monoids in $\C$.

A monoidal category  $(\C,\odot, I)$ is an \emph{abelian monoidal category} if $\C$ is also abelian: we do not suppose any compatibility between the monoidal product $\odot$ and the abelian structure.

\section{Bricks and walls}\label{sect::bricks_and_walls}

We begin by describing the combinatorial framework of this paper. The first section is about posets and, after that, we introduce the \emph{functor of walls}. Walls encode the combinatorics of "diagonal properads", as rooted trees govern the combinatorics of operads. In this section, we define two important functors:
\[
	\mathcal{X}^{\mathrm{conn}}:\Fin\op \rightarrow \Set\op 
	\quad \mathrm{and} \quad
	\Wall:\Fin\op \rightarrow \Set\op \ .
\]
The first one, $\mathcal{X}^{\mathrm{conn}}$, encodes the combinatorics of the new monoidal structure on the category of $\mathfrak{S}$-modules, \emph{the connected composition product} (see \Cref{sect::produit_connexe_Smod}). The second, $\Wall$ encodes the combinatorics of the free monoid for this monoidal structure (see \Cref{prop::proto_libre}).
\begin{rem}
	In this section, we construct (covariant) functors from the opposite category of finite sets to the opposite category of sets, i.e. $F:\Fin\op\rightarrow\Set\op$ or the category of chain complexes, i.e. $F:\Fin\op\rightarrow\Ch_k$. We choose to consider the opposite category of $\Fin$ to get a right action of the automorphism group $\mathrm{Aut}(S)$ on $F(S)$. This right action mimics the actions of symmetric groups on the leaves of trees in the operadic case.
\end{rem}

\subsection{Recollections on posets}\label{subsect::recalls on posets}

Let $k$ and $l$ be two elements of a poset $(K, \leqslant)$. We say that $k$ and $l$ are \emph{successors} if $k<l$ and if there does not exist an element $t$ in $K$ such that $k< t< l$. We denote by $\mathrm{Succ}(K)$, the set of pairs of successors of $K$. A \emph{chain} of a poset $K$ is an increasing sequence of elements of $K$ and  the \emph{length} of the chain is the number of elements of the chain: we denote the length of a chain $k_1<k_2<\ldots<k_r$ by $\mathrm{len}(k_1<k_2<\ldots<k_r)$. The \emph{height} of an element $k$ of a poset $(K,\leqslant)$ is the element $\mathfrak{h}(k)$ of $\N\cup\{\infty\}$ \label{def::poset::hauteur} defined by
\[
	\mathfrak{h}(k):= \mathrm{max}\big\{\mathrm{len}(c)\in\N^*~|~c= (\lambda_1<\lambda_2<\ldots<\lambda_{r-1}<k) \big\}.
\]

\begin{prop}\label{prop::projection_ordre}
	Let $(K,\leqslant)$ be a poset and $(k,l)$ in $\mathrm{Succ}(K)$. Then the surjection  
	\[
		\pi_k^l : K \twoheadrightarrow K/ _{k\sim l}
	\] 
	induces a partial order on $K/ _{k\sim l}$ defined, for all $r$ and $s$ in $K$, by
	\begin{itemize}
		\item $[r] \leqslant [s]$ if $r\leqslant s$ and $r,s\notin\{k,l\}$
		\item $[s] \leqslant [k\sim l]$  (resp. $s\geqslant [k\sim l]$) if $s\leqslant k$ or $s\leqslant l$ (resp. $s\geqslant k$ or $s\geqslant l$).
	\end{itemize}	
\end{prop}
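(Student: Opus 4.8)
The plan is to verify directly that the two relations stated on the quotient $K/_{k\sim l}$ are well-defined and that together they define a partial order, exploiting crucially the hypothesis that $(k,l)$ is a pair of \emph{successors}, not merely a comparable pair. First I would fix notation: write $[r]$ for the class of $r$, so $[k]=[l]=:[k\sim l]$ and $[r]=\{r\}$ for $r\notin\{k,l\}$. The relation $\leqslant$ on $K/_{k\sim l}$ described in the statement is given by three clauses (comparison of two "ordinary" classes by $\leqslant$ in $K$; an ordinary class below $[k\sim l]$; an ordinary class above $[k\sim l]$), together with the reflexive case $[k\sim l]\leqslant[k\sim l]$. The first thing to check is that these clauses are unambiguous, i.e. that they do not force $[r]\leqslant[s]$ and $[s]\leqslant[r]$ for distinct ordinary classes via the middle clauses routed through $[k\sim l]$: if $s\leqslant k$ (or $s\leqslant l$) and simultaneously $r\geqslant k$ (or $r\geqslant l$), then combining with $[k\sim l]\leqslant[k\sim l]$ we would want $[s]\leqslant[r]$, which is consistent since indeed $s\leqslant k\leqslant r$ or one of the three other chains holds, giving $s\leqslant r$ in $K$ — so this is compatible with the first clause. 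The genuine worry, and the place the successor hypothesis enters, is transitivity through the identified point.

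Next I would check the three axioms. Reflexivity is immediate from reflexivity in $K$ and the explicit reflexive clause for $[k\sim l]$. Antisymmetry: suppose $[r]\leqslant[s]$ and $[s]\leqslant[r]$. If neither class is $[k\sim l]$, then $r\leqslant s$ and $s\leqslant r$ in $K$, so $r=s$. If, say, $[s]=[k\sim l]$ and $[r]\neq[k\sim l]$: from $[r]\leqslant[k\sim l]$ we get $r\leqslant k$ or $r\leqslant l$, and from $[k\sim l]\leqslant[r]$ we get $k\leqslant r$ or $l\leqslant r$. In the case $r\leqslant k$ and $k\leqslant r$ we get $r=k$, contradicting $[r]\neq[k\sim l]$; similarly $r\leqslant l$ and $l\leqslant r$ gives $r=l$, same contradiction. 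The mixed cases $r\leqslant k$ and $l\leqslant r$ give $l\leqslant r\leqslant k$, i.e. $l\leqslant k$; but $k<l$, so by antisymmetry in $K$ this forces $k=l$, and again $[r]=[k\sim l]$ — contradiction. Likewise $r\leqslant l$ and $k\leqslant r$ gives $k\leqslant l$ which is fine, but then we also need to rule out $r$ strictly between: actually $k\leqslant r\leqslant l$ with $(k,l)\in\mathrm{Succ}(K)$ forces $r=k$ or $r=l$, hence $[r]=[k\sim l]$ — contradiction. So antisymmetry holds, and this is the first point where being successors (no element strictly between $k$ and $l$) is essential.

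Transitivity is the main obstacle and the most case-heavy part. Suppose $[r]\leqslant[s]$ and $[s]\leqslant[t]$; I want $[r]\leqslant[t]$. The only interesting configurations are those in which $[k\sim l]$ appears as the middle class $[s]$ (the cases where it appears as $[r]$ or $[t]$, or not at all, reduce quickly to transitivity in $K$ together with the defining clauses). So assume $[s]=[k\sim l]$, with $[r],[t]\neq[k\sim l]$. Then $[r]\leqslant[k\sim l]$ means $r\leqslant k$ or $r\leqslant l$, and $[k\sim l]\leqslant[t]$ means $k\leqslant t$ or $l\leqslant t$. Among the four combinations, $r\leqslant k\leqslant t$ and $r\leqslant l\leqslant t$ give $r\leqslant t$ directly; the combination $r\leqslant l$, $k\leqslant t$ needs $l\leqslant k$ or an intermediate comparison, but $l\leqslant k$ contradicts $k<l$, so here I instead argue: since $r\leqslant l$ and $r\ne l$ (as $[r]\ne[k\sim l]$) and there is nothing strictly between $k$ and $l$, and... — here I would note that actually in this subcase we cannot always conclude $r\leqslant t$ in $K$, but $[r]\leqslant[t]$ still holds because $r\leqslant l$ together with $k\sim l$ means $[r]\leqslant[k\sim l]=[k\sim l]\leqslant[t]$ collapses correctly; more cleanly, I would handle it by observing $r\leqslant l$ implies $r\leqslant k$ is \emph{not} needed — instead use that the middle clause only requires $r\leqslant k$ \emph{or} $r\leqslant l$, so from $r\le l$ we are in the "$\le[k\sim l]$" regime and from $k\le t$ we are in the "$\ge[k\sim l]$" regime, and transitivity of $\le$ on $K$ through the identified vertex gives precisely the defining relation. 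The symmetric subcase $r\leqslant k$, $l\leqslant t$ is analogous. I expect that once the bookkeeping is organized by "which side of the identified point each of $r,t$ lies on", all cases close, with the successor hypothesis invoked exactly to prevent the pathological antisymmetry failure above and to keep the order relation from becoming degenerate. Finally I would remark that the stated relation is the \emph{smallest} partial order making $\pi_k^l$ order-preserving and sending $k,l$ to a common element, which is the natural characterization and makes the "induces" in the statement precise.
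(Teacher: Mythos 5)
The paper offers no proof of this proposition (it is ``left to the reader''), so the only question is whether your verification is sound, and there is a genuine gap precisely at the point you yourself flag: the mixed transitivity case. With the relation defined \emph{literally} by the two clauses of the statement, transitivity through the identified class can fail. Concretely, take $K=\{k,l,r,t\}$ with the only strict relations $k<l$, $r<l$, $k<t$; then $(k,l)\in\mathrm{Succ}(K)$, and the clauses give $[r]\leqslant[k\sim l]$ (since $r\leqslant l$) and $[k\sim l]\leqslant[t]$ (since $k\leqslant t$), but $[r]\leqslant[t]$ would require $r\leqslant t$ in $K$, which is false. Your treatment of this subcase (``$[r]\leqslant[t]$ still holds because \dots collapses correctly'') assumes exactly the transitivity you are trying to prove, so it is circular, and in fact the conclusion is false for the relation as literally defined.

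The correct repair is the one you gesture at in your closing remark but do not carry out: interpret the induced order as the relation \emph{generated} by the listed clauses, i.e.\ its transitive closure (equivalently, the smallest partial order making $\pi_k^l$ order-preserving), and then prove that this closure is still antisymmetric. That is where the successor hypothesis does its real work: any nontrivial cycle in the quotient must contain a segment of the form $k\leqslant x_1\leqslant\cdots\leqslant x_m\leqslant l$ with $x_i\notin\{k,l\}$, which is excluded because nothing lies strictly between $k$ and $l$, or a segment forcing $l\leqslant k$, which contradicts $k<l$; cycles avoiding $[k\sim l]$ are cycles in $K$ and are excluded since $K$ is a poset. Your antisymmetry analysis for the unclosed relation is essentially this argument in the special case of length-two cycles, and it is correct as far as it goes, but the proof must be reorganized around the transitive closure for the statement to be true; as written, the transitivity section cannot be completed.
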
 
\begin{proof}
	Left to the reader.
\end{proof}
\begin{lem}\label{lem::ordre_partiel_canonique}
	Let $(R,\leqslant_R)$ and $(S,\leqslant_S)$ be two posets with injections $R\hookrightarrow T \hookleftarrow S$. If, for all $a$ and $b$ in $R\cap S$, $a\leqslant_R b$ if and only if $a\leqslant_S b$, then  $R\cup S$ has a canonical partial order which extends the partial orders $\leqslant_R$ et $\leqslant_S$.
\end{lem}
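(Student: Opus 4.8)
The plan is to define the order on $R \cup S$ directly and then verify transitivity, which is the only nontrivial axiom. First I would set, for $x, y \in R \cup S$, the relation $x \preccurlyeq y$ to hold if and only if there exists a finite chain $x = z_0, z_1, \dots, z_n = y$ of elements of $R \cup S$ such that each consecutive pair $z_{i}, z_{i+1}$ lies together in $R$ (and $z_i \leqslant_R z_{i+1}$) or together in $S$ (and $z_i \leqslant_S z_{i+1}$). By construction this relation is reflexive and transitive, and it obviously restricts to $\leqslant_R$ on $R$ and to $\leqslant_S$ on $S$ once we check antisymmetry, so the whole content is antisymmetry.

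The key step is therefore to show: if $x \preccurlyeq y$ and $y \preccurlyeq x$ then $x = y$. Here I would use the hypothesis that $\leqslant_R$ and $\leqslant_S$ agree on $R \cap S$, together with the fact that $T$ contains $R \cup S$ as a set. The cleanest route is to exhibit an auxiliary partial order on all of $T$ (or at least on $R \cup S$ as a subset of $T$) that simultaneously extends $\leqslant_R$ and $\leqslant_S$ and whose associated ``$\preccurlyeq$-chains'' cannot cycle. Concretely, I would argue that a chain witnessing $x \preccurlyeq y$ can be shortened: whenever two consecutive steps $z_{i-1} \to z_i \to z_{i+1}$ are both in $R$ or both in $S$, they can be merged by transitivity of $\leqslant_R$ (resp. $\leqslant_S$); and whenever $z_{i-1}, z_i \in R$ while $z_i, z_{i+1} \in S$, the pivot $z_i$ lies in $R \cap S$, so the compatibility hypothesis lets one compare $z_{i-1}$ and $z_i$ and then $z_i$ and $z_{i+1}$ in a single poset after possibly replacing the pivot — the point being that repeated alternation forces the intermediate vertices into $R \cap S$, where the two orders coincide. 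Iterating, a minimal-length chain from $x$ to $y$ alternates between $R$ and $S$ at most once, i.e.\ there is $t \in R\cap S$ with $x \leqslant_R t \leqslant_S y$ (or the symmetric statement, or $x,y$ already lie in a common poset). Then $x \preccurlyeq y$ and $y \preccurlyeq x$ give $t' \in R \cap S$ with $y \leqslant_R t' \leqslant_S x$; combining and using compatibility on $R \cap S$ one gets $x \leqslant_R t \leqslant_R t' \leqslant_R x$ after transporting everything to a single order, whence $x = y$.

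The main obstacle I anticipate is precisely this chain-reduction bookkeeping: showing that a shortest $\preccurlyeq$-chain between two points has length at most essentially two, and that the ``pivot'' elements where one switches from $R$-steps to $S$-steps necessarily lie in $R \cap S$. This is intuitively clear — to pass from being compared in $R$ to being compared in $S$ you must route through an element common to both — but making it rigorous requires care about the endpoints (which may lie in only one of $R$, $S$) and about the degenerate cases where $x$ or $y$ itself lies in $R \cap S$. An alternative, possibly slicker approach that avoids chains altogether: note that $R \cap S$ with the common order is a sub-poset, and form $R \cup S$ as a pushout of posets $R \leftarrow R\cap S \rightarrow S$ in the category of posets; the hypothesis is exactly what guarantees this pushout exists as a poset (rather than merely a preorder) and that both legs are order-embeddings. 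I would present the explicit-order construction as the main argument but mention the pushout interpretation as a remark, since it also makes transparent in what sense the extension is ``canonical''.
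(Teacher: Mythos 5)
Your proposal is correct and lands exactly where the paper does: the paper's proof simply \emph{defines} the canonical order on $R\cup S$ by the three clauses (both elements in $R$ with $\leqslant_R$, both in $S$ with $\leqslant_S$, or $x\leqslant_R t\leqslant_S y$ for some $t\in R\cap S$) and leaves the axioms to the reader, and your chain-reduction argument shows precisely that the transitive closure you start from collapses to this same normal form, since any pivot between an $R$-step and an $S$-step automatically lies in $R\cap S$ where the orders agree. The additional work you sketch (reduction of alternating chains, antisymmetry by transporting everything into one of the two posets) is exactly the verification the paper omits, so this is essentially the same approach with the details filled in.
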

\begin{proof}
	For any $x$ and $y$ in $R\cup S$, we have $x\leqslant_{R\cup S} y$ if and only if one of the following assumption holds:
	\begin{itemize}
		\item $x$ and $y$ are in $R$ and $x\leqslant_R y$;
		\item $x$ and $y$ are in $S$ and $x\leqslant_S y$;
		\item $x$ is in $R$, $y$ is in  $S$ and there exists $t$ in $R\cap S$ such that $x\leqslant_R t \leqslant_S y$.
	\end{itemize}
\end{proof}
	
\subsection{The functors of walls}\label{sect::functors of walls}
In the rest of this section, we define some (covariant) functors from the category $\Fin\op$ to the category $\Set\op$, called \emph{functors of walls}. Let $\mathcal{F}$ be a functor of walls (see below for definitions) and $S$ a finite set with $n$ elements. An element $W$ of $\mathcal{F}(S)$ should represent  a morphism $\mathrm{Hom}_{\C}(V^{\otimes n}, V^{\otimes n})$, for $V$ a chain complex, with a diagonal action of $\mathfrak{S}_n$ by permutations inputs and outputs at the same time.
 
\begin{defi}[Functor of ordered walls $\mathcal{W}\ord$]
	For $n$ in $\N^*$, the covariant functor $\mathcal{W}_n\ord : \Fin\op \longrightarrow \Fin\op$ is given, for all finite set $S$, by 
	\[
	\mathcal{W}_n\ord(S):=
	\left\{ \big( W=(W_1,\ldots,W_n)\big)\Bigg| 
	\begin{array}{l}
		\cup_i W_i=S;~\forall i \in [\![1,n]\!], W_i\ne \varnothing;~ \\
		\forall s,t\in S, \Gamma^W_s:=\{W_i|s\in W_i\} \\
		\quad \mbox{ is totally ordered (by}\leqslant_s); \\
		\forall a,b\in\Gamma^W_s\cap \Gamma^W_t, \ a\leqslant_s b \Leftrightarrow a\leqslant_t b
	\end{array}
	~~\right\} 
	\]
	where an element $W$ of $\mathcal{W}\ord_n(S)$ is a poset for $\leqslant$ which is the induced partial order (cf. lemma \ref{lem::ordre_partiel_canonique}) on  $\cup_{s\in S} \Gamma^W_s= \{W_1,\ldots,W_n\}$. We denote by $(W, \leqslant)$, the elements of $\mathcal{W}\ord_n(S)$. The action of $\sigma$, an element of  $\mathrm{Aut}(S)$, on $\big((W_1,\ldots,W_n),\leqslant\big)$ in $\mathcal{W}_n\ord$ is induced by the canonical action on $S$,
	\[
		\big((W_1,\ldots,W_n),\leqslant\big)\cdot\sigma =\big((W_1\cdot \sigma,\ldots,W_n\cdot \sigma),\leqslant^\sigma \big)
	\]
	where $\leqslant^\sigma$ is induced by the total orders of the sets $\Gamma^{W\cdot \sigma}_{s}:=\{W_i\cdot \sigma|s\in W_i\cdot \sigma\}$. The functor  $\mathcal{W}\ord$, defined by
	\[
		\begin{array}{rccc}
		\mathcal{W}\ord : &\Fin\op & \longrightarrow &\mathtt{Sets}\op \\
		& S & \longmapsto & \coprod_{n\in\N^*} \mathcal{W}\ord_n(S)
		\end{array}~~.
	\]
\end{defi}

\begin{rem} 
	For all non-empty sets $S$, we have $\mathcal{W}\ord_0(S)=\varnothing$. For all integers $n>0$, the group $\mathfrak{S}_n$ acts freely on $\mathcal{W}\ord_n(S)$ by permuting the position of elements, i.e. for $\tau$ in $\mathfrak{S}_n$, we have
	$
	\tau\cdot \big((W_1,\ldots,W_n),\leqslant\big)=\big((W_{\tau^{-1}(1)},\ldots,W_{\tau^{-1}(n)}),\leqslant\big).
	$
	The partial order is the same because it doesn't depend of the $W_i$'s indexes. 
\end{rem}

\begin{exmp}
	 We consider  $W_a=\{1,2\}$, $W_b=\{3,4\}$ and $W_c=\{2,3\}$, three subset of $S=[\![1,4]\!]$.  Then, we have the following four elements of $\mathcal{W}_3\ord(S)$:
	\begin{itemize}
		\item $\big( (W_1,W_2,W_3),<^1\big)$  with  $<^1$ given by $W_1 <^1 W_3$ and $W_2 <^1 W_3$;
		\item $\big( (W_1,W_2,W_3),<^2\big)$ with $<^2$ given by $W_3 <^2 W_1$ and $W_3 <^2 W_2$;
		\item $\big( (W_1,W_2,W_3), <^3\big)$ with $<^3$ given by $W_1 <^3 W_3$ and $W_3 <^3 W_2$;
		\item $\big( (W_1,W_2,W_3), <^4\big)$ with $<^4$ given by $W_3 <^4 W_1$ and $W_2 <^4 W_3$.
	\end{itemize}
	This four elements of $\mathcal{W}_3\ord(S)$ are distinct. 
\end{exmp}

The \emph{vertical composition product} on $\mathcal{W}\ord$, is the natural transformation:
\[
	\mathcal{V} : \big(\mathcal{W}\ord\times \mathcal{W}\ord\big)(-) \longrightarrow \mathcal{W}\ord(-)
\]
given, for all finite set $S$, by $\mathcal{V}_{n,m,S}: \mathcal{W}_n\ord(S)\times \mathcal{W}_m\ord(S) \longrightarrow \mathcal{W}_{m+n}\ord(S)$ which sends the pair $\big((W,\leqslant_W),(L,\leqslant_L)\big)$ on $\big(R=(W_1,\ldots,W_n,L_1,\ldots L_m),\leqslant_W^L\big)$ where, for all $s$ in $S$, the total order of the poset $\Gamma_s^{R}$ is induced by the ones of $\Gamma_s^{W}$ and $\Gamma_s^{L}$ and by extension, for all $W_i$ in $\Gamma_s^{W}$ and all $L_j$ in $\Gamma_s^{L}$, we have $W_i\leqslant_W^L L_j$. This product is associative, so, for all finite set $S$, we have the following commutative diagram:
\[
\xymatrix@R=0.5cm{
	\mathcal{W}\ord(S)\times\mathcal{W}\ord(S)\times\mathcal{W}\ord(S) \ar[r]^(0.55){\mathcal{V}\times\mathrm{id}} \ar[d]_{\mathrm{id}\times\mathcal{V}} \ar@{}[rd]|\commu &
	\mathcal{W}\ord(S)\times\mathcal{W}\ord(S) \ar[d]^{\mathcal{V}}\\
	\mathcal{W}\ord(S)\times\mathcal{W}\ord(S) \ar[r]_{\mathcal{V}}&\mathcal{W}\ord(S).
}
\]

The \emph{(horizontal) concatenation product}  on $\mathcal{W}\ord$, is the natural transformation between bifunctors:
\[
\mathcal{H} : \mathcal{W}\ord(-_1)\times \mathcal{W}\ord(-_2)\longrightarrow \mathcal{W}\ord(-_1\amalg -_2)
\]
given, for all finite sets $S$ and $T$, by $\mathcal{H}_{n,m,S,T}: \mathcal{W}_n\ord(S)\times \mathcal{W}_m\ord(T) \longrightarrow \mathcal{W}_{m+n}\ord(S\amalg T)$ which sends $\big((W,\leqslant_W),(L,\leqslant_L)\big)$  to $\big(R=(W_1,\ldots,W_n,L_1,\ldots L_m),\leqslant_{W,L}\big)$ where, for all $s$ in $S$ and $t$ in $T$, we have the equalities  $\Gamma_s^{R}=\Gamma_s^{W}$ and $\Gamma_t^{R}=\Gamma_t^{L}$. This product is associative and commutative, so we have the following commutative diagrams: 
\[
\xymatrix@R=0.5cm{
	\mathcal{W}\ord(-_1)\times\mathcal{W}\ord(-_2)\times\mathcal{W}\ord(-_3) \ar[r]^(0.53){\mathcal{H}\times\mathrm{id}} \ar[d]_{\mathrm{id}\times\mathcal{H}} \ar@{}[rd]|\commu &
	\mathcal{W}\ord(-_1)\times\mathcal{W}\ord(-_2\amalg-_3) \ar[d]^{\mathcal{H}}\\
	\mathcal{W}\ord(-_1\amalg-_2)\times\mathcal{W}\ord(-_3) \ar[r]_{\mathcal{H}}&\mathcal{W}\ord(-_1\amalg -_2\amalg-_3)
}
\]
and
\[
\xymatrix@R=0.5cm{
	\mathcal{W}\ord(-_1) \times \mathcal{W}\ord(-_2) \ar[r]^{\mathcal{H}} \ar[d]_{\cong} \ar@{}[rd]|\commu  & \mathcal{W}\ord(-_1\amalg-_2) \ar[d]^{\cong} \\
	\mathcal{W}\ord(-_2) \times \mathcal{W}\ord(-_1) \ar[r]_{\mathcal{H}}  & \mathcal{W}\ord(-_2\amalg-_1)~. 
}
\]
We also have the following commutative diagram of natural transformations, called \emph{the interchanging law}:
\[
\begin{tikzcd}[column sep=small, row sep=small]
	\big(\mathcal{W}\ord\big)^{\times 2}(-_1) \times \big(\mathcal{W}\ord\big)^{\times 2}(-_2) 
	\ar[rr, "\mathrm{id}\times \sigma \times \mathrm{id}"] \ar[d,"\mathcal{V}\times\mathcal{V}"'] 
	& &
	\big(\mathcal{W}\ord(-_1)\times \mathcal{W}\ord(-_2)\big)^{\times 2} 
	 \ar[d,"\mathcal{H}\times \mathcal{H}"] \\
	\mathcal{W}\ord(-_1)\times \mathcal{W}\ord(-_2) 
	\ar[rd,"\mathcal{H}"]
	\ar[rr,phantom, "\commu"]
	& & \big(\mathcal{W}\ord(-_1\amalg -_2)\big)^{\times 2}
	\ar[ld,"\mathcal{V}"] \\
	& \mathcal{W}\ord(-_1\amalg -_2) &
\end{tikzcd} \ .
\]

Pass to $\mathcal{W}$, the functor of unordered walls:

\begin{defi}[Functor of walls $\mathcal{W}$]
	We define the functor $\mathcal{W}_n: \Fin\op \rightarrow \Fin\op$ by $\mathcal{W}_n:= \big( \mathcal{W}\ord_n \big)_{\mathfrak{S}_n}$
	which is given, for all finite sets $S$, by $\mathcal{W}_n(S):=$
	\[
		\left\{ \big( W=\{W_\alpha\}_{\alpha\in A},\leqslant\big)~~\Bigg|~~ 
		\begin{array}{l}
		|A|=n; \ \forall \alpha \in A, \ W_\alpha\ne\varnothing; \ \cup_\alpha W_\alpha=S;\\
		\forall s\in S,~\Gamma^W_s:=\{W_\alpha|s\in W_\alpha\} \\
		\mbox{is totally ordered (by }\leqslant_s) \\
		\forall s,t\in S, ~ \forall a,b\in\Gamma_s\cap \Gamma_t,~~a\leqslant_s b \Leftrightarrow a\leqslant_t b
		\end{array}
		~~\right\} 
	\]
	where $\leqslant$ is the canonical partial order of $\cup_{s\in S} \Gamma^W_s= \{W_\alpha\}_{\alpha\in A}$. We have the natural projection $\pi:\mathcal{W}\ord\longrightarrow \mathcal{W}$. 	The action of $\sigma$, an element of $\mathrm{Aut}(S)$, on $\big(\{W_\alpha\}_{\alpha\in A},\leqslant\big)\in\mathcal{W}_n$ is induced by the canonical action on $S$, i.e.
	\[
	\big(\{W_\alpha\}_{\alpha\in A},\leqslant\big)\cdot\sigma =\big(W=\{W_\alpha\cdot \sigma\}_{\alpha\in A},\leqslant^\sigma\big)
	\]
	where $\leqslant^\sigma$ is induced by total orders of $\Gamma^{W\cdot \sigma}_{s\cdot \sigma}:=\{W_\alpha\cdot \sigma|s\cdot \sigma\in W_\alpha\cdot \sigma\}$. We also define the functor 
	\[
	\begin{array}{rccc}
	\mathcal{W} : &\Fin\op & \longrightarrow &\mathtt{Sets}\op \\
	& S & \longmapsto & \coprod_{n\in\N^*} \mathcal{W}_n(S)
	\end{array}~~.
	\]
	An element $W$ of $\mathcal{W}(S)$ is called a \emph{wall} over $S$, and an element of a wall $W$ is called a \emph{brick} of $W$.
\end{defi}

\begin{prop}[Products on  $\mathcal{W}$]~~\label{prop::produits_sur_W}
	The products $\mathcal{V}$ and $\mathcal{H}$ on $\mathcal{W}\ord$ pass through tue quotient by the actions of the symmetric groups on the indexes of bricks, hence induce natural transformations 
	\[
	\mathcal{V} : \big(\mathcal{W}\times \mathcal{W}\big)(-) \longrightarrow \mathcal{W}(-) 
	~\mbox{ and }~ 
	\mathcal{H} : \mathcal{W}(-_1)\times \mathcal{W}(-_2)\longrightarrow \mathcal{W}(-_1\amalg -_2),
	\]
	respectively called the \textit{composition product} and \textit{concatenation product} on $\mathcal{W}$, such that we have the following commutative diagrams 
	\[
	\begin{tikzcd}
		\big(\mathcal{W}\ord\times \mathcal{W}\ord\big)(-) \ar[r,"\mathcal{V}"] 
		\ar[d, twoheadrightarrow, "\pi^{\times 2}"'] \ar[rd, phantom, "\commu" description] &\mathcal{W}\ord(-) \ar[d, two heads, "\pi"] \\
		\big(\mathcal{W}\times \mathcal{W}\big)(-) \ar[r, "\mathcal{V}"'] &\mathcal{W}(-)
	\end{tikzcd} \ ;
	\begin{tikzcd}
		\mathcal{W}\ord(-_1)\times \mathcal{W}\ord(-_2) \ar[rd, phantom, "\commu" description] \ar[r, "\mathcal{H}"] \ar[d, two heads, "\pi^{\times 2}"'] &\mathcal{W}\ord(-_1\amalg -_2) \ar[d, two heads, "\pi"] \\
		\mathcal{W}(-_1)\times \mathcal{W}(-_2) \ar[r, "\mathcal{H}"] & \mathcal{W}(-_1\amalg -_2)
	\end{tikzcd} \ ;
	\]
	\[
	\begin{tikzcd}[column sep= -0.4cm]
		\big(\mathcal{W}\times \mathcal{W}\big)(-_1)\times \big(\mathcal{W}\times \mathcal{W}\big)(-_2) 
		\ar[rr,"\id\times \sigma \times \id"] 
		\ar[d, "\mathcal{V}\times\mathcal{V}"']
		& &\mathcal{W}(-_1)\times \mathcal{W}(-_2)\times \mathcal{W}(-_1)\times \mathcal{W}(-_2) \ar[d,"\mathcal{H}\times \mathcal{H}"] \\
		\mathcal{W}(-_1)\times \mathcal{W}(-_2) \ar[rd, "\mathcal{H}"']
		\ar[rr, phantom, "\commu" description]
		& & \mathcal{W}(-_1\amalg -_2)\times \mathcal{W}(-_1\amalg -_2) \ar[ld,"\mathcal{V}"] \\
		& \mathcal{W}(-_1\amalg -_2) &
	\end{tikzcd} .
	\]
\end{prop}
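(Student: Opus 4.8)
The plan is to reduce everything to equivariance plus the universal property of orbit quotients of sets, using that $\pi\colon\mathcal{W}\ord\to\mathcal{W}$ is the quotient by the index-permuting action and is in particular levelwise an epimorphism.

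First I would establish the key point: the products $\mathcal{V}$ and $\mathcal{H}$ on $\mathcal{W}\ord$ are equivariant for the block-sum homomorphisms $-\sqcup-\colon\mathfrak{S}_n\times\mathfrak{S}_m\to\mathfrak{S}_{n+m}$, where $\mathfrak{S}_n\times\mathfrak{S}_m$ acts on the source by permuting brick positions. Concretely, for $(W,\leqslant_W)\in\mathcal{W}\ord_n$, $(L,\leqslant_L)\in\mathcal{W}\ord_m$, $\tau\in\mathfrak{S}_n$ and $\rho\in\mathfrak{S}_m$, one checks $\mathcal{V}\big(\tau\cdot(W,\leqslant_W),\rho\cdot(L,\leqslant_L)\big)=(\tau\sqcup\rho)\cdot\mathcal{V}\big((W,\leqslant_W),(L,\leqslant_L)\big)$, and similarly for $\mathcal{H}$. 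On the level of tuples of subsets this is immediate since $\mathcal{V}$ and $\mathcal{H}$ are defined by concatenating tuples; the content is that the partial orders match, and here I would invoke the fact, already used when defining $\mathcal{V}$ and $\mathcal{H}$, that the induced order on each $\Gamma^R_s$ (via \Cref{lem::ordre_partiel_canonique}) depends only on the total orders of $\Gamma^W_s$ and $\Gamma^L_s$ and, in the case of $\mathcal{V}$, the extra relations $W_i\leqslant_W^L L_j$ — none of which refers to the indexing of the bricks, so permuting positions by $\tau$ and $\rho$ leaves all of it unchanged.

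Next, using that for sets $(X\times Y)/(G\times H)\cong(X/G)\times(Y/H)$, so that $\mathcal{W}_n(S)\times\mathcal{W}_m(S)\cong\big(\mathcal{W}\ord_n(S)\times\mathcal{W}\ord_m(S)\big)_{\mathfrak{S}_n\times\mathfrak{S}_m}$ (and likewise over $T$), the equivariance from Step 1 makes $\pi\circ\mathcal{V}\ord$ (resp. $\pi\circ\mathcal{H}\ord$) invariant, so by the universal property of the orbit set it factors uniquely through $\pi^{\times 2}$. This yields the dashed arrows $\mathcal{V}\colon\mathcal{W}_n(S)\times\mathcal{W}_m(S)\to\mathcal{W}_{n+m}(S)$ and $\mathcal{H}\colon\mathcal{W}_n(S)\times\mathcal{W}_m(T)\to\mathcal{W}_{n+m}(S\amalg T)$ making the first two squares of the statement commute; taking the coproduct over $n,m\in\N^*$ defines $\mathcal{V}$ and $\mathcal{H}$ on $\mathcal{W}$. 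Naturality in $\Fin\op$ is then automatic: $\pi$ is a natural transformation and $\mathcal{V}\ord,\mathcal{H}\ord$ are already natural, so the two legs of each naturality square for the descended maps agree after precomposition with the epimorphism $\pi^{\times 2}$, hence agree.

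Finally, for the interchanging law on $\mathcal{W}$, I would lift an arbitrary element of the top-left corner along the componentwise-surjective projection from $\big(\mathcal{W}\ord\times\mathcal{W}\ord\big)(-_1)\times\big(\mathcal{W}\ord\times\mathcal{W}\ord\big)(-_2)$, run it through the interchanging law for $\mathcal{W}\ord$ (established just before the statement), and push the result down with $\pi$; by the commuting squares just obtained and the evident compatibility of $\pi$ with the middle swap $\sigma$, both composites around the $\mathcal{W}$-diagram become $\pi$ applied to the common value in $\mathcal{W}\ord$. Independence of the chosen lift is guaranteed by the well-definedness of the descended $\mathcal{V}$ and $\mathcal{H}$. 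The only step with genuine content — everything else being formal manipulation of set-theoretic quotients and surjections — is the verification in Step 1 that the induced partial orders are intrinsic to the unordered family of bricks; I expect that to be the main obstacle, though it amounts to a careful reading of \Cref{lem::ordre_partiel_canonique}.
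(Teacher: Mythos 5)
Your argument is correct: the equivariance of $\mathcal{V}$ and $\mathcal{H}$ for the block-sum action of $\mathfrak{S}_n\times\mathfrak{S}_m$ (which holds because, as already noted in the remark after the definition of $\mathcal{W}\ord$, the induced partial order does not depend on the indexing of the bricks) is exactly the content needed, and the descent along $\pi^{\times 2}$, the naturality, and the interchange law then follow formally as you describe. The paper states this proposition without proof, treating it as routine, and your write-up supplies precisely the verification it leaves implicit, so there is nothing to compare beyond noting agreement.
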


\subsection{Two subfunctors of \texorpdfstring{$\mathcal{W}$}{W}}\label{rem::sous_foncteur_combinatoire}
We introduce here two important subfunctors of $\mathcal{W}$. For all finite non-empty sets $S$ and all $n$ in $\N^*$, we define the functor of ordered partitions $\mathcal{Y}\ord_n:\Fin\op \rightarrow \Fin\op$, by 
\[	
	\mathcal{Y}\ord_n(S):=
	\left\{ \ (K_1,\ldots,K_n) \ \left| \ 
	\amalg_{i=1}^n K_i=S; \ \forall i \in [\![1,n]\!], K_i\ne \varnothing \ \right.\right\}
\]
equipped with the natural injection $\mathcal{Y}\ord_n \hookrightarrow \mathcal{W}_n\ord$. By disjoint union, we also define the functor  $\mathcal{Y}\ord$ by 
\[
\begin{array}{rccc}
	\mathcal{Y}\ord : &\Fin\op & \longrightarrow &\Set\op \\
	& S & \longmapsto & \coprod_{n\in\N^*} \mathcal{Y}\ord_n(S)
\end{array}~~.
\]
Via the vertical composition, we have, for all finite sets $S$ and all $m$ and $n$ in $\N^*$, the isomorphism:

\begin{align*}
	&\mathcal{Y}\ord_m(S)\times \mathcal{Y}\ord_n(S)~\cong \\
	&\left\{ \big(R=(K_1,\ldots,K_m,L_1,\ldots,L_n)\big)  \left| 
	\begin{array}{l}
	\amalg_i K_i=S=\amalg_j L_j; \\
	 \forall i \in [\![1,m]\!], K_i\ne \varnothing; \ \forall j \in [\![1,n]\!], L_j\ne \varnothing; \\
	\forall s\in S, \ \exists ! i\in[\![1,m]\!], \ \exists ! j\in[\![1,n]\!]~\\
	\qquad\mbox{s.t. }\Gamma^R_s:=\{K_i,L_j\} \mbox{ with } K_i\leqslant_s L_j \\
	\forall s,t\in S, \ \forall a,b\in\Gamma_s^R\cap \Gamma_t^R,\\
	\qquad a\leqslant_s b \Leftrightarrow a\leqslant_t b
	\end{array}
	\right. \right\} ,
\end{align*} 
which gives us the natural injection $ 	\mathcal{Y}\ord_m(S)\times \mathcal{Y}\ord_n(S) \hookrightarrow \mathcal{W}\ord_{m+n}(S)$. Hence, we define, for all non-empty finite sets $S$, the functor $\mathcal{X}\ord$ of ordered pairs of partitions of finite sets, by
\[
	\mathcal{X}\ord(S):= \coprod_{m,n\in\N^*} \mathcal{Y}\ord_n(S)\times \mathcal{Y}\ord_m(S).
\]
equipped with the natural injection $\mathcal{X}\ord \hookrightarrow \mathcal{W}\ord$. This functor is important: it encodes the combinatorics of our new monoidal product, up to a property of connectedness (see \Cref{subsect::connected wall})

The natural surjection $\pi:\mathcal{W}\ord\twoheadrightarrow \mathcal{W}$ gives the following  commutative diagrams of natural transformations:
\[
\begin{tikzcd}
	\mathcal{Y}\ord\ar[r, hook] \ar[d, two heads]\ar[rd, phantom , "\commu"] &\mathcal{W}\ord\ar[d, two heads] \ar[rd, phantom ,"\commu" description ]&\mathcal{X}\ord\ar[l, hook'] \ar[d,two heads ] \\		\mathcal{Y}\ar[r, hook]  &\mathcal{W} &\mathcal{X} \ar[l, hook'] 
\end{tikzcd} \ ,
\] 
where $\mathcal{Y}$ (resp. $\mathcal{X}$) is the quotient of $\mathcal{Y}\ord$ (resp. $\mathcal{X}\ord$) by the action of the symmetric group on the indexes of bricks. The concatenation product restricts to the subfunctors $\mathcal{X}$ and $\mathcal{Y}$:
\[
\begin{tikzcd}[column sep= 2ex]
	\mathcal{Y}(-_1)\times\mathcal{Y}(-_2) \arrow[r, hookrightarrow] \ar[d,"\mu^{\mathrm{conc}}"'] \arrow[rd,phantom,"\commu" description] & \mathcal{W} (-_1)\times\mathcal{W}(-_2) \arrow[d,"\mathcal{H}"] \\
	\mathcal{Y}(-_1\amalg-_2) \arrow[r, hookrightarrow] & \mathcal{W} (-_1\amalg-_2) \
\end{tikzcd}
 \mathrm{and} 
\begin{tikzcd}[column sep= 2ex]
	\mathcal{X}(-_1)\times\mathcal{X}(-_2) \arrow[r, hookrightarrow] \arrow[d, "\mu^{\mathrm{conc}}"'] \arrow[rd,phantom,"\commu" description] & \mathcal{W} (-_1)\times\mathcal{W}(-_2) \ar[d, "\mathcal{H}"] \\
	\mathcal{X}(-_1\amalg-_2) \arrow[r, hookrightarrow] & \mathcal{W} (-_1\amalg-_2)
\end{tikzcd} \ .
\]	

\subsection{Connected walls}\label{subsect::connected wall}
Now, we introduce the notion of \emph{connectedness} of a wall. Let  $(W=\{W_\alpha\}_{\alpha\in A},\leqslant)$ be a wall in $\mathcal{W}(S)$. We define on $W$  \emph{the equivalence relation of connectedness} $\overset{conn.}{\sim}$: for two elements $a$ and $b$ of $A$, we say $W_a\overset{conn.}{\sim} W_b$ if there exist an integer $n\geqslant 2$  and a sequence $W_0,W_1,\ldots,W_{n-1},W_n$ of elements of $W$ with $W_0=W_a$ and $W_n=W_b$ such that, for all $i$ in $[\![0,n-1]\!]$, 
	\[
	W_i\cap W_{i+1}\ne \varnothing \ \mbox{and} \ (W_i,W_{i+1})\in \mathrm{Succ}(W) \ \mbox{or} \ (W_{i+1},W_{i})\in \mathrm{Succ}(W).
	\]
\begin{defi}[Projection $\mathcal{K}$] \label{def::projection_K}
	We define the projection $\mathcal{K}$ as follows: for a finite set $S$, we have
	\[
	\begin{array}{cccc}
	\mathcal{K}_S : & \mathcal{W}(S) & \longrightarrow & \mathcal{Y}(S) \subset \mathcal{W}(-) \\
	& W & \longmapsto & \left\{ \left. \bigcup_{B_\alpha\in\pi^{-1}([B])} B_\alpha \  \right| \ [B]\in \pi(W) \right\}
	\end{array} \ ,
	\]
	where $\pi$ is the projection of $W$ to its quotient by $\overset{conn.}{\sim}$.
\end{defi} 
We have the natural commutative diagram
\[
\xymatrix{
	\mathcal{X} \ar@{}[rd]|\commu\ar@{^{(}->}[r] \ar[d]_{\mathcal{K}} & \mathcal{W} \ar[d]^{\mathcal{K}} \\
	\mathcal{Y} \ar@{^{(}->}[r] & \mathcal{W}.
}
\]
\begin{lem}\label{lem::K_associative}
	The projection $\mathcal{K}$ is associative, i.e. the following diagram of natural transformation commutes:
	\[
	\begin{tikzcd}
		\mathcal{Y}^{\times 3} \arrow[r, "\mathcal{Y}\times \mathcal{K}"] \arrow[d, "\mathcal{K}\times \mathcal{Y}"'] \arrow[rd, phantom, "\commu" description] &  \mathcal{Y}^{\times 2} \arrow[d, "\mathcal{K}"] \\
		\mathcal{Y}^{\times 2} \arrow[r, "\mathcal{K}"'] & \mathcal{Y}
	\end{tikzcd} \ .
	\]
\end{lem}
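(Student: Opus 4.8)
The plan is to identify the map $\mathcal{K}$, restricted to pairs of partitions, with the join operation $\vee$ of the lattice of partitions of $S$ ordered by refinement, and then to deduce the associativity of $\mathcal{K}$ from the (elementary) associativity of $\vee$.

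First I would spell out $\mathcal{K}$ concretely. Given partitions $P=\{K_i\}_i$ and $Q=\{L_j\}_j$ of a finite set $S$, the injection $\mathcal{Y}(S)\times\mathcal{Y}(S)\cong\mathcal{X}(S)\hookrightarrow\mathcal{W}(S)$ attaches to $(P,Q)$ the wall $W(P,Q)$ with bricks $\{K_i\}_i\sqcup\{L_j\}_j$ whose partial order is generated by $K_i<L_j$ whenever $K_i\cap L_j\neq\varnothing$; this is exactly the order coming from the vertical composition via \Cref{lem::ordre_partiel_canonique}, because every $s\in S$ lies in a unique $K_i$ and a unique $L_j$. Since no brick can lie strictly between two comparable bricks of $W(P,Q)$, one has $\mathrm{Succ}(W(P,Q))=\{(K_i,L_j)\mid K_i\cap L_j\neq\varnothing\}$, so a chain for the relation $\overset{conn.}{\sim}$ is an alternating sequence $K_{i_0},L_{j_0},K_{i_1},\ldots$ of pairwise meeting bricks. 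I would then observe that the blocks of $\mathcal{K}(P,Q)$ — the unions of $\overset{conn.}{\sim}$-classes, see \Cref{def::projection_K} — are exactly the blocks of $P\vee Q$: it is immediate that $\mathcal{K}(P,Q)$ is coarser than $P$ and than $Q$, and any partition $R$ coarser than both cannot split a $\overset{conn.}{\sim}$-class, since each step $K_i\cap L_j\neq\varnothing$ forces $K_i$ and $L_j$ into the same block of $R$; hence $\mathcal{K}(P,Q)$ has the universal property of the join.

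With this identification in hand, the square becomes the identity $(P\vee Q)\vee R=P\vee(Q\vee R)$: going around the top sends $(P,Q,R)$ to $\mathcal{K}\big(\mathcal{K}(P,Q),R\big)=(P\vee Q)\vee R$ and going around the bottom sends it to $\mathcal{K}\big(P,\mathcal{K}(Q,R)\big)=P\vee(Q\vee R)$. Naturality in $S$ is automatic, since $\vee$ is compatible with bijections of the ground set, so the diagram of natural transformations commutes. The one slightly delicate point — everything else being formal — is checking that the successor-chain definition of $\overset{conn.}{\sim}$ loses no identifications relative to the naive element-chain description of the join; this is precisely the observation above that in $W(P,Q)$ every pair of comparable bricks is already a successor pair, so that $K_i\cap L_j\neq\varnothing$ suffices to put $(K_i,L_j)\in\mathrm{Succ}(W(P,Q))$.
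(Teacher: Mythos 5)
Your argument is correct. The paper in fact states this lemma without any proof, so there is no official argument to compare yours against; your identification of $\mathcal{K}$ restricted to $\mathcal{X}(S)\cong\mathcal{Y}(S)\times\mathcal{Y}(S)$ with the join $\vee$ in the partition lattice of $S$ supplies exactly the missing content, and the associativity then reduces to $(P\vee Q)\vee R=P\vee(Q\vee R)$. You also isolate the only point that genuinely needs checking: for a wall coming from a pair of partitions, every pair of comparable, intersecting bricks is already a successor pair (the order generated by $K_i<L_j$ when $K_i\cap L_j\neq\varnothing$ admits no intermediate brick), so the successor-chain relation $\overset{conn.}{\sim}$ of \Cref{def::projection_K} coincides with the naive ``same block of the join'' relation; together with the verification that $\mathcal{K}(P,Q)$ is coarser than $P$ and $Q$ and minimal among such partitions, this is a complete proof. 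Your reading of the diagram — that the arrows labelled $\mathcal{K}$ on $\mathcal{Y}^{\times 2}$ mean the composite $\mathcal{Y}\times\mathcal{Y}\cong\mathcal{X}\hookrightarrow\mathcal{W}\xrightarrow{\mathcal{K}}\mathcal{Y}$ — is also the one consistent with how the lemma is invoked in the associativity computation for $\boxtimes_c$ (\Cref{lem::Smod_ana_scinde}), and naturality in $S$ is, as you say, immediate from equivariance of $\vee$ under bijections.
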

\begin{defi}[The functor $\Wall$] \label{def::mur}  
	We define the functor 
	\[
		\mathcal{W}_n^{conn,or} : \Fin\op \longrightarrow \Fin\op
	\]
	by, for all non-empty set $S$, the fiber of $\mathcal{K}_S : \mathcal{W}\ord(S) \rightarrow \mathcal{Y}\ord(S)$ over the wall with one brick $\{S\}$, i.e. the subfunctor of $\mathcal{W}\ord_n$ giving by $\mathcal{W}^{conn,or}_n(S):=$
	\[
	\left\{ (W,<_W)\in\mathcal{W}_n\ord~~\Bigg|~~ 
	\begin{array}{l}
	\forall \alpha,\beta\in[\![1,n]\!], \exists W_\alpha=:W_{i_0},\ldots,W_{i_{m-1}},W_m:=W_\beta\\
	\mbox{s.t.}~\forall j \in [\![0,m-1]\!],~~W_{i_j}\in W,~ W_{i_j}\cap W_{i_{j+1}}\ne \varnothing ~~\\
	\mbox{and}~(W_{i_j}, W_{i_{j+1}})\mbox{ or }( W_{i_{j+1}},W_{i_j})\in \mathrm{Succ}(W)
	\end{array}
	~~\right\} .
	\]
	The natural surjection $\mathcal{W}\ord \twoheadrightarrow \mathcal{W}$ gives us the subfunctor 
	\[
	\begin{tikzcd}
	\mathcal{W}^{conn,or} \ar[d, two heads] \ar[r] & \mathcal{W}\ord \ar[d,two heads] \\
	\Wall \ar[r] &\mathcal{W}
 	\end{tikzcd}
	\]
	called \textit{the functor of connected walls}: an element of $\Wall(S)$ is called a \emph{connected} wall on $S$.
\end{defi}

\begin{rem}
	By the same arguments as in remark \ref{rem::sous_foncteur_combinatoire}, we have the natural injection of $\mathcal{X}^{\mathrm{conn}}$  in $\Wall$.
\end{rem}

\begin{prop}\label{prop::decompo_mur_connexe}
	Let $W$ be a wall in $\mathcal{W}(S)$. Then, there exist $n$ in $\N$ and $S_1\amalg \ldots \amalg S_n$ a unique non-ordered partition of $S$ such that
	\[
		W\in \mathrm{im}\Big(\mathcal{H} : \prod_{i=1}^n \Wall(S_i) \longrightarrow \mathcal{W}(S) \Big).
	\]
\end{prop}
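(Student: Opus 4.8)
The plan is to produce the decomposition from the connectedness equivalence relation $\overset{conn.}{\sim}$ on the bricks of $W$ and to check it has all the required properties; everything hinges on one elementary fact about the poset $(W,\leqslant)$.

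\textbf{Step 1: successor bricks intersect.} I would first show that if $(X,Y)\in\mathrm{Succ}(W)$ then $X\cap Y\ne\varnothing$. The order $\leqslant$ on $W=\bigcup_{s\in S}\Gamma^W_s$ is the one generated by the total orders $\leqslant_s$ (it is their transitive closure, which is a partial order precisely by the compatibility axiom and \Cref{lem::ordre_partiel_canonique}); so $X<Y$ yields a chain $X=C_0,\dots,C_r=Y$ with $C_i<_{s_i}C_{i+1}$ for suitable $s_i\in S$. As each step is strict the $C_i$ are pairwise distinct, and if $r\geqslant 2$ then $C_1$ satisfies $X<C_1<Y$, contradicting $(X,Y)\in\mathrm{Succ}(W)$. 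Hence $r=1$, i.e. $X<_s Y$ for some $s$, so $s\in X\cap Y$. A maximal-chain argument then gives: \emph{if $X\leqslant Y$ in $W$ then $X\overset{conn.}{\sim}Y$} --- choose a maximal chain from $X$ to $Y$; its consecutive terms are successors, hence intersect, hence are linked for $\overset{conn.}{\sim}$. In particular two bricks with non-empty intersection both lie in some totally ordered $\Gamma^W_s$, hence are comparable, hence are $\overset{conn.}{\sim}$-equivalent.

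\textbf{Step 2: existence.} Let $W^{(1)},\dots,W^{(n)}$ be the $\overset{conn.}{\sim}$-classes of bricks and put $S_i:=\bigcup_{W_\alpha\in W^{(i)}}W_\alpha$. The $S_i$ cover $S$ because the bricks of $W$ do, and they are pairwise disjoint: an element of $S_i\cap S_j$ would lie in a brick of $W^{(i)}$ and in a brick of $W^{(j)}$, and these two bricks intersect, so by Step~1 they are equivalent, forcing $i=j$ (this is, in particular, the assertion in \Cref{def::projection_K} that $\mathcal{K}_S(W)$ lands in $\mathcal{Y}(S)$). Thus each brick of $W$ lies in a single $S_i$, namely that of its class, and for $s\in S_i$ every brick containing $s$ lies in $W^{(i)}$, so $\Gamma^{W^{(i)}}_s=\Gamma^W_s$; the wall axioms for $W^{(i)}$ over $S_i$ are therefore inherited from $W$, and $W^{(i)}$ is connected by construction, i.e. $W^{(i)}\in\Wall(S_i)$. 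Moreover bricks from different classes are incomparable in $(W,\leqslant)$ --- the contrapositive of ``$X\leqslant Y\Rightarrow X\overset{conn.}{\sim}Y$'' --- so $(W,\leqslant)$ is the disjoint sum of the posets $(W^{(i)},\leqslant)$. Comparing this with the explicit description of the concatenation product $\mathcal{H}$ on $\mathcal{W}$ (disjoint juxtaposition of base sets, the $\Gamma_s$ unchanged, no order relation between distinct blocks; see \Cref{prop::produits_sur_W}) shows precisely that $W=\mathcal{H}(W^{(1)},\dots,W^{(n)})$, so $W$ lies in the image of $\mathcal{H}:\prod_i\Wall(S_i)\to\mathcal{W}(S)$.

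\textbf{Step 3: uniqueness, and the main difficulty.} Suppose also $W=\mathcal{H}(V_1,\dots,V_m)$ with $V_j\in\Wall(T_j)$ and $\coprod_j T_j=S$. By definition of $\mathcal{H}$ the bricks of $W$ are partitioned into those of the $V_j$'s, bricks of distinct $V_j$'s are incomparable in $(W,\leqslant)$, and each $V_j$ is connected. Since $\overset{conn.}{\sim}$ only links successors --- in particular comparable bricks --- each $\overset{conn.}{\sim}$-class of $W$ is contained in one $V_j$, while each $V_j$, being connected, is a single class; hence $\{T_1,\dots,T_m\}=\{S_1,\dots,S_n\}$ as unordered partitions, which is the asserted uniqueness. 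The only substantive point in the whole argument is Step~1: once one knows that successor bricks of a wall meet and runs the maximal-chain trick, the remainder is bookkeeping with the definitions of $\overset{conn.}{\sim}$, $\Wall$ and $\mathcal{H}$. The delicate part is to use correctly the description of $\leqslant$ as the transitive closure of the orders $\leqslant_s$ --- which is exactly where the compatibility condition in the definition of a wall enters.
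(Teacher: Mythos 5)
Your proof is correct and takes essentially the same route as the paper: the paper's proof is the one-liner that the partition $\mathcal{K}_S(W)\in\mathcal{Y}(S)$ of \Cref{def::projection_K} (i.e.\ the quotient by the connectedness relation $\overset{conn.}{\sim}$ you work with) gives the result. Your Steps 1--3 simply make explicit the verifications the paper leaves implicit --- that successor bricks intersect, that the $\overset{conn.}{\sim}$-classes yield a partition of $S$ into bases of connected walls recovering $W$ via $\mathcal{H}$, and the uniqueness of that partition.
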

\begin{proof}
	Let $S$ be a finite set and $W$ be in $\mathcal{W}(S)$, a wall over $S$. The partition $\mathcal{K}(W)$ in $\mathcal{Y}(S)$ gives the result. 
\end{proof}

\begin{rem}\emph{About the diagramatic representation of walls.} 
	\label{rem::representation_mur}
	The terminology introduced in  definition \ref{def::mur} comes from the diagramatic representation of the elements of $\mathcal{W}(S)$. Just as the combinatorics of operads is controlled by rooted trees (cf. \cite[Sect. 5.6]{LV12}), the combinatorics of protoperads is controlled by a stack of bricks: 
	\[
	\begin{tikzpicture}[scale=0.4,baseline=1.4ex]
	\draw (0,0) rectangle (2,0.5);
	\draw (2.25,0) rectangle (4.25,0.5);
	\draw (0,1.5) rectangle (2,2);
	\draw (1.25,0.75) rectangle (3.25,1.25);
	\draw (-1,0.75) rectangle (1,1.25);
	\end{tikzpicture}
	~~\mbox{ or }~~
	\begin{tikzpicture}[scale=0.4,baseline=1ex]
	\draw (0,0) rectangle (2,0.5);
	\draw (2.5,0) rectangle (4.5,0.5);
	\draw (1.25,0.75) rectangle (3.25,1.25);
	\draw[fill= lightgray] (0,0.75) rectangle (1,1.25);
	\draw[fill= lightgray] (3.50,0.75) rectangle (4.5,1.25);
	\end{tikzpicture}~~ ,
	\]
	the gray colour indicates a brick that is not connected in this representation.
	We have some examples.
	\begin{itemize}	
		\item First, we consider the set $S=[\![1,4]\!]$ and the wall $W=\{W_a,W_b,W_c\}$ in $\mathcal{W}([\![1,4]\!])$ over $S$ with the three bricks
		\[
		W_a=\{1,2\}, W_b=\{3,4\} \mbox{ et } W_c=\{1,2\} 
		\]
		with the partial order $W_a<W_c$. We represent this wall by 
		\begin{equation}\label{fig::diagramme_mur}
				\begin{tikzpicture}[scale=0.4,baseline=1ex]
				\draw[dotted] (0.4,-0.2) -- (0.4,1.55);
				\draw (0.4,1.5) node[above] {\tiny{$1$}};
				\draw[dotted] (1.6,-0.2) -- (1.6,1.55);
				\draw (1.6,1.5) node[above] {\tiny{$2$}};
				\draw[dotted] (2.9,-0.2) -- (2.9,1.55);
				\draw (2.9,1.5) node[above] {\tiny{$3$}};
				\draw[dotted] (4.1,-0.2) -- (4.1,1.55);
				\draw (4.1,1.5) node[above] {\tiny{$4$}};
				\draw[fill=white] (0,0) rectangle (2,0.5);
				\draw (1,0.20) node {\scriptsize{a}};
				\draw[fill=white] (2.5,0) rectangle (4.5,0.5);
				\draw (3.5,0.25) node {\scriptsize{b}};
				\draw[fill=white] (0,0.75) rectangle (2,1.25);
				\draw (1,1) node {\scriptsize{c}};
				\end{tikzpicture}~.
		\end{equation}
		This wall is not connected: $W$ is in the image of the product 
		\[
		\mathcal{H} :  \Wall([\![1,2]\!])\times \Wall([\![3,4]\!]) \rightarrow \mathcal{W}([\![1,4]\!]).
		\]
		Note that this graphical representation of the wall $W$ \emph{depends on the choice of an order on $S$}: so, the diagrams 
		\[
		\begin{tikzpicture}[scale=0.4,baseline=1ex]
			\draw[dotted] (0.4,-0.2) -- (0.4,1.55);
			\draw (0.4,1.5) node[above] {\tiny{$1$}};
			\draw[dotted] (1.6,-0.2) -- (1.6,1.55);
			\draw (1.6,1.5) node[above] {\tiny{$2$}};
			\draw[dotted] (-2.1,-0.2) -- (-2.1,1.55);
			\draw (-2.1,1.5) node[above] {\tiny{$3$}};
			\draw[dotted] (-0.9,-0.2) -- (-0.9,1.55);
			\draw (-0.9,1.5) node[above] {\tiny{$4$}};
			\draw[fill=white] (0,0) rectangle (2,0.5);
			\draw (1,0.20) node {\scriptsize{a}};
			\draw[fill=white] (-2.5,0) rectangle (-0.5,0.5);
			\draw (-1.5,0.25) node {\scriptsize{b}};
			\draw[fill=white] (0,0.75) rectangle (2,1.25);
			\draw (1,1) node {\scriptsize{c}};
		\end{tikzpicture}
		~~~~\mbox{ and }~~~~
		\begin{tikzpicture}[scale=0.4,baseline=1ex]
			\draw[dotted] (0.4,-0.2) -- (0.4,1.55);
			\draw (0.4,1.5) node[above] {\tiny{$2$}};
			\draw[dotted] (1.6,-0.2) -- (1.6,1.55);
			\draw (1.6,1.5) node[above] {\tiny{$1$}};
			\draw[dotted] (-1,-0.2) -- (-1,1.55);
			\draw (-1,1.5) node[above] {\tiny{$3$}};
			\draw[dotted] (3,-0.2) -- (3,1.55);
			\draw (3,1.5) node[above] {\tiny{$4$}};
			\draw[fill=white] (0,0) rectangle (2,0.5);
			\draw (1,0.20) node {\scriptsize{a}};
			\draw[fill=white] (2.5,0) rectangle (3.5,0.5);
			\draw[fill=white] (-1.5,0) rectangle (-0.5,0.5);
			\draw (3,0.25) node {\scriptsize{b}};
			\draw (-1,0.25) node {\scriptsize{b}};
			\draw[fill=white] (0,0.75) rectangle (2,1.25);
			\draw (1,1) node {\scriptsize{c}};
		\end{tikzpicture}
		\]
		represent the same wall $W$. In the second diagram, the brick $W_b$ is not connected. The choice of the natural order on $[\![1,4]\!]$ corresponds to   \Cref{fig::diagramme_mur}. In cases where there is no ambiguity, we can omit the elements of $S$ and the names of bricks to obtain the following diagram
		\[
			\begin{tikzpicture}[scale=0.4,baseline=1ex]
			\draw[fill=white] (0,0) rectangle (2,0.5);
			\draw[fill=white] (2.5,0) rectangle (4.5,0.5);
			\draw[fill=white] (0,0.75) rectangle (2,1.25);
			\end{tikzpicture}~~ .
		\]
		\item We consider the connected wall with four bricks $W=\{W_a,W_b,W_c,W_d\}$ in  $\mathcal{W}^{\mathrm{conn}}([\![1,4]\!])$ with
		\[
		W_a=\{1,2\}, W_b=\{3,4\},W_c=\{2,3\} \mbox{ and } W_d=\{1,4\} 
		\]
		and the partial order $W_a<W_c$, $W_a<W_d$, $W_b<W_c$ and $W_b<W_d$. We represent this wall by
		\[
		\begin{tikzpicture}[scale=0.4,baseline=1ex]
			\draw (0,0) rectangle (2,0.5);
			\draw (1,0.20) node {\scriptsize{a}};
			\draw (2.5,0) rectangle (4.5,0.5);
			\draw (3.5,0.20) node {\scriptsize{b}};
			\draw (1.25,0.75) rectangle (3.25,1.25);
			\draw (0,0.75) rectangle (1,1.25);
			\draw (3.50,0.75) rectangle (4.5,1.25);
			\draw (2.25,0.95) node {\scriptsize{c}};
			\draw (0.5,1) node {\scriptsize{d}};
			\draw (4.15,1) node {\scriptsize{d}};
		\end{tikzpicture} ~~.
		\]
	\end{itemize}
\end{rem}

\section{Products on \texorpdfstring{$\mathfrak{S}$}{S}-modules}\label{sect::product_on_Smod}

\subsection{\texorpdfstring{$\mathfrak{S}$}{S}-modules}

Recall that the category $\Fin$ is a groupoid which gives us the equivalence of categories $\Fin \cong \Fin\op$ by passage to the inverse. One of the key points of the  constructions of this section is that $(\Fin,\amalg)$  is a symmetric monoidal category. We denote by  $\mathfrak{S}$ its skeleton i.e. the category where objects are natural numbers, i.e. $\Ob\mathfrak{S}=\N$ and where morphisms are given by $\Hom_\mathfrak{S}(n,n)=\mathfrak{S}_n$ for $n\ne 0$ and $\Hom(0,0)=\{\mathrm{id}\}$, and which is equivalent to $\Fin$.

\begin{defi}[$\mathfrak{S}$-module, $\mathfrak{S}$-bimodule]\label{def::Smod_loc_fini}
	A (right) \emph{$\mathfrak{S}$-module} is an object of  $\Smod_k\overset{\mathrm{not.}}{:=}\Func (\Fin\op,\Ch_k)$, the category of contravariant functors from $\Fin$  to the category of $k$-chain complexes.
	A \emph{$\mathfrak{S}$-bimodule} is an object of  $\Sbimod_k\overset{\mathrm{not.}}{:=}\Func (\Fin\times\Fin\op,\Ch_k)$. 
	
	Let $V$ be a $\mathfrak{S}$-module (respectively $W$ a $\mathfrak{S}$-bimodule). We say that $V$ (resp. $W$) is \emph{locally finite} if, for all finite sets $S$, $V(S)$ has finite total dimension (resp. for all finite sets $S,E$, the chain complex $W(S,E)$ has finite dimension).
\end{defi}
As  $\mathfrak{S}$ is the skeletal category of $\Fin$, we can view an  $\mathfrak{S}$-module $M$ as a collection $\big(M(n)\big)_{n\in\N^*}$ of chain complexes indexed by natural numbers, where the the group $\mathfrak{S}_n$ acts (on the left) on  $M(n)$, for $n\ne 0$. Similarly, an $\mathfrak{S}$-bimodule $P$ is a collection of chain complexes $\big(P(m,n)\big)_{m,n\in\N}$ indexed by pairs of integers where $P(m,n)$ has an  action of  $\mathfrak{S}_m$ on the left and an action of $\mathfrak{S}_n$ on the right, or equivalently, has an action of the group $\mathfrak{S}_m\times\mathfrak{S}_n\op$ on the left.

\begin{defi}[Reduced $\mathfrak{S}$-(bi)module]
	A $\mathfrak{S}$-module (resp. $\mathfrak{S}$-bimodule) $P$ which satisfies $P(\varnothing)=0$ (resp. $P(\varnothing,S)=0$ and $P(S,\varnothing)=0$ for all finite set $S$) is called \emph{reduced}.  We respectively note by  $\Smodred_k$ and $\Sbimodred_k$, the full subcategories of $\Smod_k$ and $\Sbimod_k$ of reduced $\mathfrak{S}$-modules and $\mathfrak{S}$-bimodules. 
\end{defi}
\begin{rem}
	We have the equivalence of categories $\Smod_k \cong\mathfrak{S}\op\mbox{-}\mod_k$, induced by taking the inverse of elements in symmetric groups. We use this equivalence without mention. 
\end{rem}

\subsection{Composition and concatenation products on \texorpdfstring{$\Smod_k$}{S-mod}}\label{subsect::prod_Smod}
In this subsection, we recall the classical constructions of composition and concatenation product of $\mathfrak{S}$-modules.
The \emph{composition product} (or \emph{vertical product}) is the bifunctor
\[
- \square - : \Smodred_k \times \Smodred_k \longrightarrow \Smodred_k
\]
defined, for $P$ and $Q$ two reduced $\mathfrak{S}$-modules and $S$ a non-empty finite set, by
\[
\big(P\:\square\: Q\big) (S) :=  P(S) \otimes Q(S).
\]
This bi-additive bifunctor gives the category $\Smodred$ a symmetric monoidal structure, with the identity $I_\square$, defined, for all non-empty sets $S$, by $I_\square(S):=k$ concentrated in degree $0$. In the litterature or algebraic operads (cf. \cite[Sect. 5.1.12]{LV12}), the composition product of $\mathfrak{S}$-modules is also called the \emph{Hadamard product}.
The \emph{concatenation product} is the bifunctor 
\[
	-\otimes^{\mathrm{conc}}- : \Smodred_k \times \Smodred_k \longrightarrow \Smodred_k
\]
defined, for all finite set $S$ and all reduced $\mathfrak{S}$-modules $P$ and $Q$, by:
\[
	\big(P\otimes^{\mathrm{conc}} Q\big) (S) := \underset{\{S',S''\}\in \mathcal{Y}\ord_2(S)}{\bigoplus} P(S') \otimes Q(S'').
\]
This product has no identity.

\begin{rem}
	This product is called the \emph{concatenation product} because it corresponds to a concatenation of operations. It is functorial: it is a particular case of Day's convolution product. For $P$ and $Q$ two reduced $\mathfrak{S}$-modules, we have
	\[
	P\otimes^{\mathrm{conc}} Q (-) := \int^{(S',S'')\in \Ob(\Fin\op)^{\times 2}} k\big[\Hom_{\Fin}(S'\amalg S'',-)\big] \otimes P(S')\otimes Q(S'').
	\]
\end{rem}
\begin{prop}\label{prop::symetrie_concatenation}
	The concatenation product is symmetric, i.e. for all reduced $\mathfrak{S}$-modules $P$ and $Q$, we have the following isomorphism of $\mathfrak{S}$-modules
	\[
		P\otimes^{\mathrm{conc}} Q \cong Q\otimes^{\mathrm{conc}} P.
	\]
\end{prop}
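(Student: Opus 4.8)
The plan is to exhibit an explicit natural isomorphism, built directly from the symmetry of the disjoint union $\amalg$ in $\Fin$ and the symmetry of the tensor product $\otimes$ in $\Ch_k$. Recall that, for a non-empty finite set $S$, we have
\[
	\big(P\otimes^{\mathrm{conc}} Q\big)(S) = \bigoplus_{\{S',S''\}\in\mathcal{Y}\ord_2(S)} P(S')\otimes Q(S''),
\]
where $\mathcal{Y}\ord_2(S)$ is the set of \emph{ordered} pairs $(S',S'')$ with $S'\amalg S'' = S$ and $S',S''\ne\varnothing$. The involution $\tau:(S',S'')\mapsto(S'',S')$ is a bijection of $\mathcal{Y}\ord_2(S)$ onto itself, and for each ordered pair the symmetry isomorphism of $\Ch_k$ gives $P(S')\otimes Q(S'') \xrightarrow{\ \cong\ } Q(S'')\otimes P(S')$ (with the Koszul sign on homogeneous elements).

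First I would define, for each non-empty finite set $S$, the map
\[
	\theta_S : \big(P\otimes^{\mathrm{conc}} Q\big)(S) \longrightarrow \big(Q\otimes^{\mathrm{conc}} P\big)(S)
\]
as the direct sum, over $(S',S'')\in\mathcal{Y}\ord_2(S)$, of the symmetry isomorphisms $P(S')\otimes Q(S'')\xrightarrow{\cong} Q(S'')\otimes P(S')$, the latter summand being the component of $\big(Q\otimes^{\mathrm{conc}} P\big)(S)$ indexed by $(S'',S')$. Since $\tau$ is a bijection of the index set and each component map is an isomorphism of chain complexes, $\theta_S$ is an isomorphism of chain complexes; applying $\theta_S$ twice returns the identity (the two Koszul signs cancel), so $\theta$ is an involution. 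Next I would check naturality: for a bijection $f:S\to T$ in $\Fin$, the induced maps on $\mathcal{Y}\ord_2$ commute with $\tau$ (as $f$ respects $\amalg$), and the functoriality squares for $P$ and $Q$ together with the naturality of the symmetry of $\otimes$ in $\Ch_k$ give the required commuting square. This establishes that $\theta=\{\theta_S\}_S$ is an isomorphism of $\mathfrak{S}$-modules.

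The only genuinely delicate point is bookkeeping of the Koszul sign coming from the symmetry of $\Ch_k$: one must be careful that $\theta$ is declared to use the standard symmetry $x\otimes y\mapsto (-1)^{|x||y|}y\otimes x$ on homogeneous elements, that this is compatible with the differentials (which it is, being a natural transformation of the underlying bifunctors on $\Ch_k$), and that the reindexing by $\tau$ is done consistently so that $\theta_S\circ\theta_S=\mathrm{id}$. There is no combinatorial obstacle here because, unlike the connected composition product $\boxtimes_c$, the concatenation product is literally Day convolution along $(\Fin,\amalg)$, and Day convolution of a symmetric monoidal category along a symmetric monoidal category is automatically symmetric; I would mention this as the conceptual reason, referring to the integral formula in the preceding remark, while the hands-on construction above serves to pin down the isomorphism and its naturality explicitly. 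Thus the main (and only) work is the sign-and-reindexing verification, which is routine.
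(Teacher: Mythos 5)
Your proof is correct and follows essentially the same route as the paper: reindex the direct sum via the involution $(S',S'')\mapsto(S'',S')$ on $\mathcal{Y}\ord_2(S)$ and apply the Koszul-signed symmetry of $\otimes$ in $\Ch_k$, checking naturality in $S$. The extra remark about Day convolution is a nice conceptual complement but not needed; the paper's proof is exactly your explicit construction.
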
	
\begin{proof}
	If $(S',S'')$ is an element of $\mathcal{Y}_2\ord(S)$, then $(S'',S')$ too.  As the monoidal product $\otimes_k$ of the category $\Ch_k$ is symmetric, we have the natural equivalence $\tau$ defined as follows: for all finite sets $S$,
	\[
	\tau_S~:\big(P\otimes^{\mathrm{conc}} Q\big) (S) \overset{\cong}{\longrightarrow} \big(Q\otimes^{\mathrm{conc}} P\big) (S)
	\]
	given by $\tau_S(p\otimes q)=(-1)^{|p||q|} q\otimes p$ for $p\otimes q$ in  $P(S')\otimes^{\mathrm{conc}}Q(S'')$.
\end{proof}
We can extend the concatenation product:
\begin{equation}\label{eq::extension_concatenation} 
-\otimes^{\mathrm{conc}}-:\Smod_k \times \Smodred_k \longrightarrow \Smodred_k.
\end{equation}
This extension is induced by the equivalence of categories 
\[
\Smod_k\cong \Ch_k\times\Smodred_k,
\]
by the injection $(-)^{\mathfrak{S}}:\Ch_k  \hookrightarrow  \Smod_k $ defined, for all chain complexes $C$ and all finite sets $S$, by
\[
	(C)^{\mathfrak{S}}(S):=\left\{
	\begin{array}{cc}
	C & \mbox{ if } S=\varnothing, \\
	0 & \mbox{ sinon ;}
	\end{array}\right.
\]
and by the action of the category $\Ch_k $ on $\Smodred_k$ defined, for all chain complexes $C$ and all finite sets $S$, by 
\[
	\big(C\otimes^{\mathrm{conc}} V\big)(S):= C \otimes V(S).
\]
This extension allows us to define the suspension of a $\mathfrak{S}$-module.
\begin{defi}[Suspension and desuspension of a $\mathfrak{S}$-module]\label{def::suspension_Smod}
	Let $\Sigma$ (respectively $\Sigma^{-1}$) be the chain complex $k$ concentrated in degree $1$ (resp. in degree $-1$). For $V$ a reduced $\mathfrak{S}$-module, the \emph{suspension of $V$} (resp. \emph{desuspension of $V$}) is the reduced $\mathfrak{S}$-module  $\Sigma V \overset{not.}{:=} \Sigma\otimes^{\mathrm{conc}} V$ (resp. $\Sigma^{-1} V \overset{not.}{:=} \Sigma^{-1}\otimes^{\mathrm{conc}} V$).
\end{defi}

\subsubsection{Free monoids associated to \texorpdfstring{$\otimes^{\mathrm{conc}}$}{otimes}}\label{sect::free_monoid}
Recall that the bifunctor  $-\otimes^{\mathrm{conc}}-$is linear in each of its inputs.
We have the functor:
\[
	\mathbb{T}_\otimes (-): \Smodred_k \longrightarrow \As(\Smodred_k, \otimes^{\mathrm{conc}})
\]
defined, for all finite sets $S$ and all reduced $\mathfrak{S}$-modules $P$, by
\[
	\big(\mathbb{T}_\otimes P\big)(S):=  \underset{r\in\N^*}{\bigoplus}\big(\mathbb{T}^r_\otimes P\big)(S)
\]
with
\[
	\big(\mathbb{T}^r_\otimes P\big)(S):=P^{\otimes^{\mathrm{conc}} r}(S)=\underset{I\in \mathcal{Y}\ord_r(S)}{\bigoplus} P(I_1)\otimes \ldots \otimes P(I_r).
\]
As $\otimes^{\mathrm{conc}}$ is symmetric, we have also the \emph{commutative free monoid functor}
\[
	\mathbb{S}(-) : \Smodred_k \longrightarrow \Com(\Smodred_k,\otimes^{\mathrm{conc}}).
\]
defined, for all reduced $\mathfrak{S}$-modules $P$, by:
\[
	\mathbb{S}(P):= \bigoplus_{b\in\N^*} \mathbb{S}^b(P) \ \mbox{with}  \ \mathbb{S}^b(P):=\big(\overline{\mathbb{T}}^b_\otimes(P)\big)_{\mathfrak{S}_b}
\]	
where the action of $\mathfrak{S}_b$ is given by the symmetry $\tau$ of the  product $\otimes^{\mathrm{conc}}$: explicitly, for 	$p_1 \otimes \cdots \otimes p_b$ an element of $P(I_1)\otimes^{\mathrm{conc}} \cdots\otimes^{\mathrm{conc}} P(I_b)$  which is included in $\left(\mathbb{T}^b_\otimes P\right)(S)$, and $\sigma$, a permutation in $\mathfrak{S}_b$, we have 
\[
	\sigma\cdot(p_1 \otimes \cdots \otimes p_b):= (-1)^{|\sigma(m)|} p_{\sigma^{-1}(1)} \otimes \ldots\otimes p_{\sigma^{-1}(b)} 
\]
which lives in $P(I_{\sigma^{-1}(1)})\otimes^{\mathrm{conc}} \!\ldots\!\otimes^{\mathrm{conc}} P(I_{\sigma^{-1}(b)})$ with  $(-1)^{|\sigma(m)|}$ the Koszul sign induced by $\tau$.~~\\
\begin{nota}
	Let $S$ be a finite set and $P$ be a reduced $\mathfrak{S}$-module, as in \cite[Sect. 5.1.14]{LV12}, we use the following notation
	\[
		\underset{I\in \mathcal{Y}_r(S)}{\bigoplus} ~\underset{\alpha\in A}{\bigotimes} P(I_\alpha) \overset{not.}{:=} \Big( \underset{I\in \mathcal{Y}\ord_r(S)}{\bigoplus} P(I_1)\otimes \ldots \otimes P(I_r) \Big)_{\mathfrak{S}_r}.
	\]
\end{nota}
Let $S$ be a finite set and $P$ and $Q$ be two reduced $\mathfrak{S}$-modules. Since
\[
	\big(\mathbb{S}^r P\big)(S) := ~\Big( \big(\mathbb{T}^rP\big)(S) \Big)_{\mathfrak{S}_r} 
	\cong  ~ \Big( \underset{I\in \mathcal{Y}\ord_r(S)}{\bigoplus} P(I_1)\otimes \ldots \otimes P(I_r) \Big)_{\mathfrak{S}_r} 
\]
then,  we have
\[
	\big(\mathbb{S}P\big)(S) \cong  \underset{ \{I_\alpha\}_{\alpha\in A}\in \mathcal{Y}(S)}{\bigoplus} ~\underset{\alpha\in A}{\bigotimes} P(I_\alpha).
\]
We also have  the following isomorphism:
\[
	\big(\mathbb{S}P \:\square\: \mathbb{S}Q\big)(S) \cong  \underset{(\{I_\alpha\}_{\alpha\in A},\{J_\beta \}_{\beta\in B})\in \mathcal{X}(S)}{\bigoplus} ~\underset{\alpha\in A}{\bigotimes} P(I_\alpha)\otimes~ \underset{\beta\in B}{\bigotimes} Q(J_\beta).
\]
Moreover, as the bifunctor $-\otimes^{\mathrm{conc}}-$ is biadditive, the functor $\mathbb{S}$ has the exponential property:
\begin{equation}\label{prop::prop_exp_S}
	\mathbb{S}(P\oplus Q) \cong \mathbb{S}(P) \oplus \mathbb{S}(Q) \oplus \mathbb{S}(P)\otimes^{\mathrm{conc}} \mathbb{S}(Q).
\end{equation}

\subsection{Connected composition product of  \texorpdfstring{$\mathfrak{S}$}{S}-modules}\label{sect::produit_connexe_Smod}

In this section, we define the new monoidal structure on the category of reduced $\mathfrak{S}$-modules, which is called the connected composition product. This monoidal structure is the analogue of the product defined by Vallette in \cite{Val03,Val07}.

\begin{defi}[Connected composition product of $\mathfrak{S}$-modules] \label{def::prod_connexe_Smod}
	The \emph{connected composition product} of reduced $\mathfrak{S}$-modules is the bifunctor 
	\[
		- \boxtimes_c - : \Smodred_k \times \Smodred_k \longrightarrow \Smodred_k
	\]
	defined, for all reduced $\mathfrak{S}$-modules $P$ and $Q$ and for all non-empty finite set $S$, by:	  
	\[
		~P \boxtimes_c Q (S):= \underset{(I,J)\in \mathcal{X}^{\mathrm{conn}}(S)}{\bigoplus}~\underset{\alpha}{\bigotimes}~P(I_\alpha) \otimes \underset{\beta}{\bigotimes}~Q(J_\beta).
	\]
	Denote by $\Ibox$, the $\mathfrak{S}$-module given by
	\[
	\Ibox(S) \overset{\mathrm{def}}{:=} 
	\left\{\begin{array}{cl}
	k & \mbox{ if } |S|=1 \\
	0 & \mbox{otherwise}
	\end{array}\right. ,
	\]
	which is the unit of the product $\boxtimes_c$.
\end{defi}

Below, we require a description of elements of $P\boxtimes_c Q(S)$: we represent by
\[
	(p_1\otimes\ldots\otimes p_r)\otimes(q_1\otimes\ldots\otimes q_s)
\]
a class of $\bigotimes_\alpha~P(I_\alpha) \otimes \bigotimes_\beta~Q(J_\beta)$ with $(I,J)$ in $\mathcal{X}^{\mathrm{conn}}(S)$. Recall that we identify $(p_1\otimes\ldots\otimes p_r)\otimes(q_1\otimes\ldots\otimes q_s)$ with 
\[
	(-1)^{|\sigma(p)|+|\tau(q)|}(p_{\sigma^{-1}(1)}\otimes\ldots\otimes p_{\sigma^{-1}(r)})\otimes(q_{\tau^{-1}(1)}\otimes\ldots\otimes q_{\tau^{-1}(s)})
\]
for all permutations $\sigma$ in $\mathfrak{S}_r$, $\tau$ in $\mathfrak{S}_s$ and  $(-1)^{|\sigma(p)|}, (-1)^{|\tau(q)|}$, the Koszul signs induced by these permutations.

\begin{lem}\label{lem::Smod_ana_scinde}
	The product $\boxtimes_c$ is associative and, for all reduced $\mathfrak{S}$-modules $A$ and $B$, the endofunctor
	\[	
	\begin{array}{rccc}
	\Phi_{A,B} : & \Smodred_k& \longrightarrow &  \Smodred_k\\
	& X & \longmapsto & A \boxtimes_c X \boxtimes_c B
	\end{array}
	\]
	is split analytic (in the sense of \cite{Val09}).
\end{lem}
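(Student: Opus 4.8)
The statement has two halves: (1) $\boxtimes_c$ is associative; (2) for fixed reduced $\mathfrak{S}$-modules $A,B$, the endofunctor $\Phi_{A,B}(X) = A\boxtimes_c X\boxtimes_c B$ is split analytic in the sense of Vallette \cite{Val09}. I would prove (1) first, since the well-definedness of the iterated product $A\boxtimes_c X\boxtimes_c B$ in (2) depends on it. For associativity, the key is to exhibit a natural isomorphism between $(P\boxtimes_c Q)\boxtimes_c R(S)$ and $P\boxtimes_c (Q\boxtimes_c R)(S)$, both of which should be identified with a "triple" connected composition indexed by connected triples of partitions of $S$. Concretely, I would introduce (or invoke, by the same pattern as $\mathcal{X}^{\mathrm{conn}}$) a functor $\mathcal{X}^{\mathrm{conn},3}$ of connected triples of ordered partitions, show that both bracketings of the product compute $\bigoplus_{(I,J,K)} \bigotimes P(I_\alpha)\otimes\bigotimes Q(J_\beta)\otimes\bigotimes R(K_\gamma)$ over this index set, and check that the identification is compatible with the $\mathfrak{S}$-actions and the Koszul signs. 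The combinatorial heart is that the connectedness condition is "associative": a wall (or pair-of-partitions stack) built in two stages is connected iff the corresponding three-layer object is connected — this is exactly the kind of statement recorded in \Cref{lem::K_associative} for the projection $\mathcal{K}$, and I would reduce the associativity of $\boxtimes_c$ to that lemma together with the associativity of $\otimes_k$ on $\Ch_k$.

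**The analytic-functor part.** For (2), recall that a split analytic functor à la \cite{Val09} is (roughly) a functor of the form $X\mapsto \bigoplus_{n} F_n(X^{\boxtimes_c n})$ where the "homogeneous pieces" $F_n$ are built from a fixed coefficient system with compatible symmetric-group actions, and "split" refers to the existence of the direct-sum decomposition (as opposed to a mere filtration). The plan is to unfold $A\boxtimes_c X\boxtimes_c B(S)$ using the definition: after applying associativity, it is a direct sum over connected triples of partitions $(I,J,K)$ of $S$ of $\bigotimes_\alpha A(I_\alpha)\otimes\bigotimes_\beta X(J_\beta)\otimes\bigotimes_\gamma B(K_\gamma)$. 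I would then reorganize this sum by first fixing the number $n=|B|$ of $X$-bricks and the underlying sets $\{J_\beta\}$ on which $X$ is evaluated, pulling $\bigotimes_\beta X(J_\beta)$ out as $X^{\boxtimes_c n}$ restricted to the relevant subsets, and bundling everything else ($A$, $B$, the combinatorial data of how the bricks are glued, and the $\mathfrak{S}_n$-action permuting the $X$-factors) into the coefficient functor. The exponential-type identities already recorded for $\mathbb{S}$ (see \eqref{prop::prop_exp_S}) and the $\mathcal{X}$-description of $\mathbb{S}P\square\mathbb{S}Q$ are the models for this bookkeeping; in fact $\boxtimes_c$ is essentially cut out of $\square$ applied to $\mathbb{S}(-)$'s by the connectedness condition, so I would try to leverage that to avoid redoing the analytic-functor verification from scratch.

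**Main obstacle.** The genuinely delicate point is not the formal shape of the decomposition but checking that it is \emph{split} and natural with the correct equivariance — i.e. that collecting the $X$-factors really produces $X^{\boxtimes_c n}$ with its honest $\mathfrak{S}_n$-action (including Koszul signs from reordering graded pieces), rather than some twisted or merely filtered version. Two subtleties deserve care: first, the bricks on which $X$ is evaluated are an \emph{unordered} family, so passing from the ordered description (where $\mathfrak{S}_n$ acts freely, as noted in the remark after the definition of $\mathcal{W}\ord_n$) to the quotient must be done coherently with the coinvariants defining $\boxtimes_c$; second, one must verify that the connectedness constraint on the full triple can be expressed purely through the coefficient system (it constrains how the $X$-bricks sit relative to the $A$- and $B$-bricks), so that the $X$-dependence is genuinely polynomial/analytic. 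I expect this to be mostly a careful unwinding of definitions in the spirit of \cite{Val09}, with \Cref{lem::K_associative} doing the real combinatorial work; I would state the coefficient system explicitly, observe its $\mathfrak{S}$-equivariance, and conclude that $\Phi_{A,B}$ matches Vallette's definition of a split analytic functor, which is precisely what is needed to later invoke his free-monoid construction for the existence of $\scrF(-)$.
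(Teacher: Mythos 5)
Your plan is correct and follows essentially the same route as the paper: associativity is obtained by rewriting both bracketings as a single sum over triples of partitions $(K,L,J)$ of $S$ whose connectedness is expressed via iterated applications of $\mathcal{K}_S$, with \Cref{lem::K_associative} doing the combinatorial work, and split analyticity is obtained by grading $A\boxtimes_c X\boxtimes_c B(S)$ by the number $n$ of blocks of the partition on which $X$ is evaluated, each graded piece being a homogeneous polynomial functor of degree $n$ in the sense of \cite{Val09}. The extra equivariance and sign checks you flag are handled implicitly in the paper by working throughout with the coinvariant description of $\boxtimes_c$, so nothing further is needed.
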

\begin{proof}
	The associativity of the product $\boxtimes_c$ follow from the associativity of $\mathcal{K}_S$: for $P$,$Q$ and $R$ three reduced $\mathfrak{S}$-modules, and $S$ a non-empty set, we have the following isomorphism of chain complexes:
	\[ 	
	\begin{aligned}
	\big((P \boxtimes_c Q)&\boxtimes_c R\big)(S) 
	=~ \underset{(I,J)\in \mathcal{X}^{\mathrm{conn}}(S)}{\bigoplus}~ \underset{\alpha}{\bigotimes}~\big(P\boxtimes_c Q\big)(I_\alpha) \otimes \underset{\beta}{\bigotimes}R(J_\beta) \\
	=~&\underset{(I,J)\in \mathcal{X}^{\mathrm{conn}}(S)}{\bigoplus} \underset{\alpha}{\bigotimes}
	\underset{{\substack{
		(K^\alpha,L^\alpha)\\
		\in \mathcal{X}^{\mathrm{conn}}(I_\alpha)
	}}}{\bigoplus}~  \underset{\gamma}{\bigotimes} P(K_\gamma^\alpha) \otimes \underset{\delta}{\bigotimes} Q(L_\delta^\alpha)    \otimes \underset{\beta}{\bigotimes}R(J_\beta)\\ 
	\cong~&\underset{(I,J)\in \mathcal{X}^{\mathrm{conn}}(S)}{\bigoplus} 
	\underset{{\substack{ 
		\{(K^\alpha,L^\alpha)\}\\
		\in \amalg_{\alpha\in A}\mathcal{X}^{\mathrm{conn}}(I_\alpha)
	}}}{\bigoplus}  
	\underset{\gamma,\alpha}{\bigotimes} P(K_\gamma^\alpha) \otimes \underset{\delta,\alpha}{\bigotimes} Q(L_\delta^\alpha)    \otimes \underset{\beta}{\bigotimes}R(J_\beta)~~\\
	\cong~&\underset{(I,J)\in \mathcal{X}^{\mathrm{conn}}(S)}{\bigoplus}~ \underset{{\substack{(K,L)\in \mathcal{Y}(S)\times \mathcal{Y}(S)\\ \K_S(K,L)=I}}}{\bigoplus}~  \underset{\gamma,\alpha}{\bigotimes} P(K_\gamma^\alpha) \otimes \underset{\delta,\alpha}{\bigotimes} Q(L_\delta^\alpha)    \otimes \underset{\beta}{\bigotimes}R(J_\beta)~~\\
	\cong~&\underset{{\substack{(K,L,J)\in \mathcal{Y}(S)^{\times 3} \\ \K_S(\K_S(K,L),J) = (S) }}}{\bigoplus}~  \underset{\gamma}{\bigotimes} P(K_\gamma) \otimes \underset{\delta}{\bigotimes} Q(L_\delta)    \otimes \underset{\beta}{\bigotimes}R(J_\beta)~~\\
	\underset{\scriptscriptstyle{\eqref{lem::K_associative}}}{\cong}~&\underset{{\substack{(K,L,J)\in \mathcal{Y}(S)^{\times 3} \\ \K_S(K,\K_S(L,J)) = (S) }}}{\bigoplus}~  \underset{\gamma}{\bigotimes} P(K_\gamma) \otimes \underset{\delta}{\bigotimes} Q(L_\delta)    \otimes \underset{\beta}{\bigotimes}R(J_\beta)~~\\
	\cong~&\underset{{\substack{ 
		(K,I)\\
		\in \mathcal{X}^{\mathrm{conn}}(S)
	}}}{\bigoplus} \underset{{\substack{(J,L)\in \mathcal{Y}(S)\times \mathcal{Y}(S)\\ \K_S(J,L)=I}}}{\bigoplus}~  \underset{\gamma}{\bigotimes} P(K_\gamma) \otimes \underset{\delta,\alpha}{\bigotimes} Q(L_\delta^\alpha)    \otimes \underset{\beta,\alpha}{\bigotimes}R(J_\beta^\alpha)~~\\
	\cong~& \underset{(K,I)\in \mathcal{X}^{\mathrm{conn}}(S)}{\bigoplus}~ \underset{\alpha}{\bigotimes}~P(K_\gamma) \otimes \underset{\alpha}{\bigotimes}\big(Q \boxtimes_c R\big)(I_\alpha) \\
	=~& \big(P \boxtimes_c (Q\boxtimes_c R)\big)(S).
	\end{aligned}
	\]
	Also, for all reduced $\mathfrak{S}$-modules $A$ and $B$, the endofunctor $\Phi_{A,B}$ is well defined by
	\begin{align*}
	\Phi_{A,B}(X) \cong~&	\underset{{\substack{(K,L,J)\in \mathcal{Y}(S)^{\times 3} \\ \K_S(\K_S(K,L),J) = (S) }}}{\bigoplus}~  \underset{\gamma}{\bigotimes} A(K_\gamma) \otimes \underset{\delta}{\bigotimes} X(L_\delta)    \otimes \underset{\beta}{\bigotimes}B(J_\beta)~~\\
	\cong~& \underset{n\in\N}{\bigoplus}~\underset{ L\in \mathcal{Y}_n(S)}{\bigoplus}~\underset{{\substack{(K,J)\in \mathcal{Y}(S)^{\times 2} \\ \K_S(\K_S(K,L),J) = (S) }}}{\bigoplus}~  \underset{\gamma}{\bigotimes} A(K_\gamma) \otimes \underset{\delta}{\bigotimes} X(L_\delta)    \otimes \underset{\beta}{\bigotimes}B(J_\beta)~~\\
	=:~& \underset{n\in\N}{\bigoplus}~(\Phi_{A,B})_n(X).
	\end{align*}
	where $(\Phi_{A,B})_n$ is an homogeneous polynomial functor of degree $n$; so, for all reduced $\mathfrak{S}$-modules $A$ and $B$, the functor $\Phi_{A,B}$ is a split analytic functor (in the sense of \cite[Sect. 4]{Val09}).
\end{proof}
\begin{prop}\label{prop::prop_fond_du_produit_connexe}
	The category $\big(\Smodred, \boxtimes_c, \tau, \Ibox\big)$ is an abelian symmetric monoidal category that preserves reflexive coequalizers and sequential colimits. 
\end{prop}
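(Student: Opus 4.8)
The plan is to establish separately the three assertions bundled in the statement: that $\Smodred_k$ is abelian (and bicomplete), that $\bigl(\boxtimes_c,\Ibox,\tau\bigr)$ is a symmetric monoidal structure, and that $\boxtimes_c$ preserves reflexive coequalizers and sequential colimits in each variable. The abelian part I would dispatch first, as it is immediate: $\Smod_k=\Func(\Fin\op,\Ch_k)$ is abelian and bicomplete with every (co)limit computed objectwise because $\Ch_k$ is, and the full subcategory $\Smodred_k$ cut out by the condition $P(\varnothing)=0$ is closed under objectwise kernels, cokernels, products and coproducts, hence is again abelian and bicomplete with (co)limits still computed objectwise. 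This last point is exactly what will let me reduce the colimit statement to a statement about $\Ch_k$. Recall that the paper's notion of \emph{abelian monoidal category} requires no compatibility between the two structures, so nothing further is needed here.

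For the monoidal structure, associativity together with its associator is already \Cref{lem::Smod_ana_scinde}. For the unit I would argue that in $\bigl(\Ibox\boxtimes_c P\bigr)(S)=\bigoplus_{(I,J)\in\mathcal{X}^{\mathrm{conn}}(S)}\bigl(\bigotimes_\alpha\Ibox(I_\alpha)\bigr)\otimes\bigl(\bigotimes_\beta P(J_\beta)\bigr)$ the factor $\bigotimes_\alpha\Ibox(I_\alpha)$ kills every summand except the one where $I$ is the discrete partition of $S$; and for $I$ discrete, connectedness of the two-row wall $(I,J)$, i.e. $\K_S(I,J)=\{S\}$, forces $J=\{S\}$ (the only connected two-row walls whose bottom row consists of singletons are the ``rakes''), leaving $k^{\otimes|S|}\otimes P(S)\cong P(S)$, naturally in $P$; the right unit is symmetric. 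The braiding $\tau$ I would take to be induced by the order-reversing involution on two-row connected walls, i.e. by the bijection $(I,J)\mapsto(J,I)$ of $\mathcal{X}^{\mathrm{conn}}(S)$ — reversing $\leqslant$ preserves the wall axioms and exchanges the two members of each successor pair, hence preserves connectedness — combined termwise with the symmetry of $\otimes_k$ and its Koszul signs; since both ingredients are involutions, $\tau^2=\id$. What then remains are the pentagon, triangle and hexagon axioms: all of the structure isomorphisms are, objectwise, a direct sum over a canonical bijection of indexing sets with summands the canonical associativity/unit/symmetry isomorphisms of $\Ch_k$, so I would check that each coherence diagram becomes commutative after applying an evaluation $P\mapsto P(S)$ — the pentagon reducing to one further iteration of the manipulation in \Cref{lem::Smod_ana_scinde} (governed by the associativity of $\K$, \Cref{lem::K_associative}), the triangle to the unit computation, the hexagons to the compatibility of order reversal with nested wall decomposition — and conclude by coherence in $(\Ch_k,\otimes_k)$.

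For the colimit preservation, I would fix $S$ and a reduced $\mathfrak{S}$-module $Q$ and analyse the functor $P\mapsto(P\boxtimes_c Q)(S)$. It is a direct sum, indexed by the set $\mathcal{X}^{\mathrm{conn}}(S)$, of functors of the shape $P\mapsto\bigl(\bigotimes_\alpha P(I_\alpha)\bigr)\otimes(\text{const})$; each such functor is the composite of the evaluations $P\mapsto(P(I_\alpha))_\alpha$, which preserve all colimits since colimits in $\Smodred_k$ are objectwise, with an iterated $k$-linear tensor product $\Ch_k^{|A|}\to\Ch_k$, which preserves colimits in each variable separately (each $-\otimes_k V$ being a left adjoint) and therefore preserves sifted colimits along the diagonal; since reflexive coequalizers and sequential colimits are sifted, and direct sums indexed by a set preserve all colimits, the functor $P\mapsto P\boxtimes_c Q$ preserves reflexive coequalizers and sequential colimits, and by the symmetry established above so does $Q\mapsto P\boxtimes_c Q$. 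Equivalently, this makes the homogeneous polynomial pieces $(\Phi_{A,B})_n$ of \Cref{lem::Smod_ana_scinde} preserve these colimits, which is precisely what is needed to run Vallette's free-monoid machinery \cite{Val09}.

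The part I expect to be fiddly rather than deep is the unit and coherence bookkeeping: one must be careful about which two-row walls are connected (the ``rake'' description) and match up the reindexings so that the pentagon, triangle and hexagons genuinely reduce to coherence in $\Ch_k$. Everything else — abelianness, and the colimit preservation, which rests only on objectwise colimits together with the exactness of $\otimes_k$ over the field $k$ — is formal.
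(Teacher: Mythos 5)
Your proposal is correct, and on the two points the paper actually writes out it coincides with the paper's argument: the braiding is obtained exactly as in the paper from the involution $(I,J)\mapsto(J,I)$ of $\mathcal{X}^{\mathrm{conn}}(S)$ together with the symmetry of $\otimes_k$, and associativity is taken from \Cref{lem::Smod_ana_scinde} (resting on \Cref{lem::K_associative}). Where you differ is that the paper then simply declares ``the rest of the proof is similar to \cite[Prop.~13]{Val09}'', delegating the unit, the coherence checks, abelianness and the preservation of reflexive coequalizers and sequential colimits to Vallette's argument (implicitly via the split-analytic structure of $\Phi_{A,B}$ already established in \Cref{lem::Smod_ana_scinde}), whereas you prove these directly: your ``rake'' computation correctly identifies the only connected two-row walls with discrete bottom row and gives the unit isomorphisms $\Ibox\boxtimes_c P\cong P\cong P\boxtimes_c\Ibox$ (a verification the paper never spells out), and your colimit argument — evaluation is objectwise, $\otimes_k$ preserves colimits in each variable, the diagonal is final for sifted shapes, reflexive coequalizers and sequential colimits are sifted, and set-indexed direct sums commute with colimits — is an elementary repackaging of the reason Vallette's analytic-functor machinery applies. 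The trade-off is transparency versus length: the paper's citation buys brevity and places the result squarely in Vallette's framework (which is what is used later for free monoids), while your route is self-contained and makes visible exactly which combinatorial facts about $\mathcal{X}^{\mathrm{conn}}$ (the rake description, stability under order reversal, associativity of $\mathcal{K}$) underlie each axiom; your coherence step is stated only in outline, but at a level of detail no thinner than the paper's own.
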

\begin{proof}
	Let $P$ and $Q$ be two reduced $\mathfrak{S}$-modules, we have, for all non-empty finite sets $S$, the isomorphism of $\mathfrak{S}_{|S|}$-modules
	\begin{align*}	
	P \boxtimes_c Q (S):=&~ \underset{(I,J)\in \mathcal{X}^{\mathrm{conn}}(S)}{\bigoplus}~\underset{\alpha}{\bigotimes}~P(I_\alpha) \otimes \underset{\beta}{\bigotimes}~Q(J_\beta)~~\\
	\cong& ~ \underset{(I,J)\in \mathcal{X}^{\mathrm{conn}}(S)}{\bigoplus}~\underset{\beta}{\bigotimes}~Q(J_\beta)\otimes \underset{\alpha}{\bigotimes}~P(I_\alpha) 
	\intertext{by symmetry of $\otimes$ of the category $\Ch_k$;  since $(I,J)$ is in $\mathcal{X}(S)$ if, and only if, $(J,I)$ is in $\mathcal{X}(S)$, we have the isomorphism}
	P \boxtimes_c Q (S) \cong& ~ \underset{(J,I)\in \mathcal{X}^{\mathrm{conn}}(S)}{\bigoplus}~\underset{\beta}{\bigotimes}~Q(J_\beta)\otimes \underset{\alpha}{\bigotimes}~P(I_\alpha) \cong ~Q \boxtimes_c P (S).
	\end{align*}
	We denote this isomorphism $\tau_{P,Q}(S) : P \boxtimes_c Q(S) \longrightarrow Q \boxtimes_c P(S)$, which gives us the symmetry of the product.
	The rest of the proof is similar to \cite[Prop 13]{Val09}.
\end{proof}

We have the following compatibility between these products:
\begin{prop}\label{prop::S_permute_prod_connexe}
	Let $P$ and $Q$ be two reduced $\mathfrak{S}$-modules. We have the following natural isomorphism of $\mathfrak{S}$-modules: 
	\[
		\mathbb{S}(P \boxtimes_c Q) \cong \mathbb{S}P \:\square\: \mathbb{S}Q.
	\]
\end{prop}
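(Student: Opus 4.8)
The plan is to evaluate both $\mathfrak S$-modules on a fixed non-empty finite set $S$, rewrite each side using the combinatorial descriptions of the free commutative monoid recalled at the end of \Cref{sect::free_monoid}, and identify the two indexing sets through the decomposition of a wall into its connected components. This is a mild variant of the associativity computation already carried out in the proof of \Cref{lem::Smod_ana_scinde}.

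First I would start from $(\mathbb S A)(S)\cong\bigoplus_{\{T_\alpha\}_{\alpha\in A}\in\mathcal Y(S)}\bigotimes_\alpha A(T_\alpha)$ with $A=P\boxtimes_c Q$, unfold \Cref{def::prod_connexe_Smod} on each block $T_\alpha$, so that
\[
\mathbb S(P\boxtimes_c Q)(S)\ \cong\ \bigoplus_{\{T_\alpha\}_\alpha\in\mathcal Y(S)}\ \bigotimes_\alpha\Big(\bigoplus_{(I^\alpha,J^\alpha)\in\mathcal X^{\mathrm{conn}}(T_\alpha)}\bigotimes_\gamma P(I^\alpha_\gamma)\otimes\bigotimes_\delta Q(J^\alpha_\delta)\Big),
\]
and then distribute the direct sums through the tensor products, which gives
\[
\mathbb S(P\boxtimes_c Q)(S)\ \cong\ \bigoplus_{\{T_\alpha\}_\alpha\in\mathcal Y(S)}\ \bigoplus_{(I^\alpha,J^\alpha)_\alpha\in\prod_\alpha\mathcal X^{\mathrm{conn}}(T_\alpha)}\ \bigotimes_{\alpha,\gamma}P(I^\alpha_\gamma)\otimes\bigotimes_{\alpha,\delta}Q(J^\alpha_\delta),
\]
where each $(I^\alpha,J^\alpha)\in\mathcal X^{\mathrm{conn}}(T_\alpha)$ is a pair of partitions of $T_\alpha$ with connected underlying wall, the parts being indexed by $\gamma$ for $I^\alpha$ and by $\delta$ for $J^\alpha$. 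Here the finiteness of $\mathcal Y(S)$ and of the $\mathcal X^{\mathrm{conn}}(T_\alpha)$ — every partition of a finite set has finitely many blocks — is what licenses the distribution.

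The heart of the proof is then the bijection of indexing sets
\[
\coprod_{\{T_\alpha\}\in\mathcal Y(S)}\ \prod_\alpha\mathcal X^{\mathrm{conn}}(T_\alpha)\ \overset{\cong}{\longrightarrow}\ \mathcal X(S).
\]
Going left to right, from $\{T_\alpha\}$ and connected pairs $(I^\alpha,J^\alpha)\in\mathcal X^{\mathrm{conn}}(T_\alpha)$ one forms $(K,L):=\big(\amalg_\alpha I^\alpha,\ \amalg_\alpha J^\alpha\big)\in\mathcal X(S)$; the underlying wall of $(K,L)$ has the $(I^\alpha,J^\alpha)$ as its connected components (the $T_\alpha$ being pairwise disjoint), so $\mathcal K_S(K,L)=\{T_\alpha\}$. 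Conversely, \Cref{prop::decompo_mur_connexe} applied to the wall underlying an arbitrary $(K,L)\in\mathcal X(S)$, together with the fact that $\mathcal H$ restricts to the subfunctor $\mathcal X$ (the commutative square of \Cref{rem::sous_foncteur_combinatoire}), shows that $(K,L)$ is recovered, uniquely, from $\{T_\alpha\}=\mathcal K_S(K,L)$ and from its connected pieces $(I^\alpha,J^\alpha)\in\mathcal X^{\mathrm{conn}}(T_\alpha)$. This bijection carries the summand indexed by $\big(\{T_\alpha\},(I^\alpha,J^\alpha)_\alpha\big)$ onto $\bigotimes_\gamma P(K_\gamma)\otimes\bigotimes_\delta Q(L_\delta)$, the regrouping of tensor factors being compatible with the identifications (and Koszul signs) of \Cref{def::prod_connexe_Smod} precisely as in the proof of \Cref{lem::Smod_ana_scinde}. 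Hence
\[
\mathbb S(P\boxtimes_c Q)(S)\ \cong\ \bigoplus_{(K,L)\in\mathcal X(S)}\bigotimes_\gamma P(K_\gamma)\otimes\bigotimes_\delta Q(L_\delta)\ \cong\ \big(\mathbb S P\:\square\:\mathbb S Q\big)(S),
\]
the last isomorphism being the description of $\mathbb S P\square\mathbb S Q$ from \Cref{sect::free_monoid}.

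Finally I would check naturality in $S$: $\mathbb S(-)$ and $\square$ are functorial, an automorphism $\sigma\in\mathrm{Aut}(S)$ acts on both combinatorial descriptions by relabelling the underlying finite sets, and since $\mathcal K$ is a natural transformation the indexing bijection above is $\mathrm{Aut}(S)$-equivariant; so the objectwise isomorphisms assemble into an isomorphism of $\mathfrak S$-modules. The only genuinely delicate point is the indexing bijection — that the connected components of a ``double partition'' $(K,L)$ behave as for ordinary walls and that the reassembly from the pieces is unique — and this is exactly \Cref{prop::decompo_mur_connexe} together with the associativity of $\mathcal K$ (\Cref{lem::K_associative}); the remaining manipulations of direct sums, tensor products and signs are the same bookkeeping as in the proof of \Cref{lem::Smod_ana_scinde}.
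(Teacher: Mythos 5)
Your proposal is correct and follows essentially the same route as the paper: unfold $\mathbb{S}(P\boxtimes_c Q)(S)$ over partitions, distribute the sums through the tensor products, and reindex via the decomposition of a pair of partitions into its connected components, i.e. the fibers of $\mathcal{K}_S$ (the paper's proof performs exactly this reindexing, writing the middle step as a sum over $\mathcal{K}_S^{-1}(\Lambda)$). Your explicit justification of the bijection via \Cref{prop::decompo_mur_connexe} and the equivariance check are just a more detailed write-up of the same argument.
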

\begin{proof}
	Let $S$ be a finite set, we have 
	\begin{align*}
	\mathbb{S}\big(P\boxtimes_c Q\big)(S) =~& \underset{\Lambda\in \mathcal{Y}(S)}{\bigoplus} ~\underset{\gamma}{\bigotimes} \big(P\boxtimes_c Q \big)(\Lambda_\gamma) \\
	\underset{\scriptscriptstyle{\ref{def::prod_connexe_Smod}}}{=}~& \underset{\Lambda\in \mathcal{Y}(S)}{\bigoplus} ~\underset{\gamma}{\bigotimes} ~\underset{(I^\gamma,J^\gamma)\in \mathcal{X}^{\mathrm{conn}}(\Lambda_\gamma)}{\bigoplus} \underset{\alpha}{\bigotimes} P(I^\gamma_\alpha) \otimes \underset{\beta}{\bigotimes} Q(J^\gamma_\beta) \\
	\cong~& \underset{\Lambda\in \mathcal{Y}(S)}{\bigoplus} ~\ \underset{\{(I^\gamma,J^\gamma)\}\in\coprod_{\gamma} \mathcal{X}^{\mathrm{conn}}(\Lambda_\gamma)}{\bigoplus}~ \bigg(\underset{\alpha,\gamma}{\bigotimes} P(I^\gamma_\alpha) \otimes \underset{\beta,\gamma}{\bigotimes} Q(J^\gamma_\beta)\bigg) \\
	\underset{}{\cong}~& \underset{\Lambda\in \mathcal{Y}(S)}{\bigoplus} ~ \underset{(\widetilde{I},\widetilde{J})\in \K_S^{-1}(\Lambda)}{\bigoplus}~ \bigg(\underset{a}{\bigotimes} P(\widetilde{I}_a) \otimes \underset{b}{\bigotimes} Q(\widetilde{J}_b)\bigg) \\
	\underset{}{\cong}~& \underset{(\widetilde{I},\widetilde{J})\in X(S)}{\bigoplus}\bigg(\underset{\alpha}{\bigotimes} P(\widetilde{I}_\alpha) \otimes \underset{\beta}{\bigotimes} Q(\widetilde{J}_\beta)\bigg)~=~ \Big(\mathbb{S}P\:\square\:\mathbb{S}Q\Big)(S)
	\end{align*}
	which conclude the proof.
\end{proof}

\section{Connected product on \texorpdfstring{$\mathfrak{S}$}{S}-bimodules}
\label{sect::bimodule}
Just as in the $\mathfrak{S}$-modules, we give a description of the three monoidal structures on the category of (reduced) $\mathfrak{S}$-bimodules which are the equivalent of the three previous ones defined in $\Smodred_k$. We start with a section on the functors that encode the combinatorics of our monoidal products. We express the combinatorics of the different monoidal structures on the $\Sbimod_k$ category, using the same formalism as in the previous section. 

\subsection{Combinatorics of the connected composition of \texorpdfstring{$\mathfrak{S}$}{S}-bimodules}

\begin{defi}[Functors $\mathbb{Y}\ord$ and $\mathbb{Y}$]
	Let $S$ and $E$ be two finite sets. We define
	\begin{enumerate}
		\item the bifunctor $\mathbb{Y}\ord(-,-): \Fin \times \Fin\op \rightarrow \Set$ given on objects by $\mathbb{Y}\ord(S,E)=\amalg_{r\in\N^*} \mathbb{Y}\ord_r(S,E)$, with $\mathbb{Y}\ord_r(S,E):= \mathcal{Y}\ord_r(S)\times \mathcal{Y}\ord_r(E)\cong$
		\[
			\left\{(I,K)=\big((I_j,K_j)\big)_{j\in [\![1,r]\!] } \ \left| \ \begin{array}{c}
			\amalg_{j=1}^{r} I_j = S; \ \amalg_{j=1}^{r} K_j = E; \\
			\forall j\in [\![1,r]\!], I_j\ne\varnothing, K_j\ne\varnothing 
			\end{array}
			\right. \right\} \ , 
		\]
		where the elements of $\mathbb{Y}\ord_r(S,E)$ are ordered sets of pairs of sets. 
		
		\item $\mathbb{Y}(-,-): \Fin \times \Fin\op \rightarrow \Set$ the bifunctor given by $\mathbb{Y}(S,E)=\amalg_{r\in\N^*} \mathbb{Y}_r(S,E)$ with $\mathbb{Y}_r(S,E)=$
		\[
			\left\{ \{I,K\}\overset{\mathrm{not}}{:=}\{(I_\alpha,K_\alpha)\}_{\alpha\in A} \ \left| \ 
			\begin{array}{c}
			 A\in\Ob\Fin, \ |A|=r \\
			\amalg_{\alpha\in A} I_\alpha = S,~\amalg_{\alpha\in A} K_\alpha = E \\ 
			\forall \alpha\in A, I_\alpha \ne \varnothing,~K_\alpha\ne\varnothing \end{array} \right.\right\} \ ,
		\]
		where the elements of $\mathbb{Y}_r(S,E)$ are non ordered sets of pairs of sets. 
	\end{enumerate}
\end{defi}

Note that  $\mathbb{Y}_r(S,E) \not\cong \mathcal{Y}(S)\times\mathcal{Y}(E)$. As in the case of $\mathcal{Y}\ord(-)$, we have a free action of $\mathfrak{S}_r$ on $\mathbb{Y}\ord_r(S,E)$ given, for all $ (I,K)$ in $\mathbb{Y}^{\mathrm{ord}}_r(S,E)$ and all permutation $\sigma$ in $\mathfrak{S}_r$, by
\[
\sigma\cdot (I,K)=\big((I_{\sigma^{-1}(j)},K_{\sigma^{-1}(j)})\big)_{j\in [\![1,r]\!]}
\]
which induces the surjection $\mathbb{Y}^{\mathrm{or}}_r(S,E)\twoheadrightarrow \mathbb{Y}_r(S,E)$.

As in the case of $\mathfrak{S}$-modules, we define a new bifunctor, denoted by $\mathbb{X}^{\mathrm{conn}}$ which encodes connectedness. For $r$ and $s$ in $\N^*$ and for $S$ and $E$ two finite sets, the bifunctor $\mathbb{X}^{n,\mathrm{conn}}_{r,s}(S,E)$ is equal to
\[ 
	\left\{\left. 
	\begin{array}{c}
	\big(\{I,K'\},\{K'',J\} \big)\\
	\in \mathbb{Y}_r(S,[\![1,n]\!])\times \mathbb{Y}_s([\![1,n]\!],E)
	\end{array}
	\right| (K',K'')\in \mathcal{X}^{\mathrm{conn}}([\![1,n]\!])
	\right\}\bigg/_{\mathfrak{S}_n},
\]
where, for all $n$ in $\N^*$, by the functoriality of $\mathcal{X}^{\mathrm{conn}}$, the quotient by $\mathfrak{S}_n$ which identifies
\[
	\big(\{I,K'\}\cdot \sigma,\{K'',J\}\big) \sim \big(\{I,K'\},\sigma^{-1}\cdot\{K'',J\}\big) 
\]
is well defined. We also have the functors
\[
	\mathbb{X}^{n,\mathrm{conn}}(S,E):=\coprod_{r,s\in\N^*} \mathbb{X}^{n,\mathrm{conn}}_{r,s}(S,E)
	\ \mbox{and} \ 
	\mathbb{X}^{\mathrm{conn}}(S,E):=\coprod_{n\in\N^*} \mathbb{X}^{n,\mathrm{conn}}(S,E).
\]

\subsection{Monoidal products of the category \texorpdfstring{$\Sbimod_k$}{S-bimod} }

\subsubsection{Composition product of $\mathfrak{S}$-bimodules $\square$}
\begin{defi}[The composition product of $\mathfrak{S}$-bimodules] \label{def::prod_compo_bimod}
The \emph{the composition product of $\mathfrak{S}$-bimodules} is the bifunctor of
\[
	-\:\square\:- : \Sbimodred_k\times\Sbimodred_k \longrightarrow \Sbimodred_k
\]
defined, for all reduced $\mathfrak{S}$-bimodules $P$ and $Q$, and for all finite sets $S$ and $E$, by 
\begin{align*}
	\big( P\:\square\: Q\big)(S,E) :=~ &\underset{n\in\N^*}{\bigoplus}~\Big(\underset{(\{I,K'\},\{K'',J\})\in \mathbb{X}^n_{1,1}(S,E)}{\bigoplus} P(I,K')\underset{}{\otimes} Q(K'',J)\Big)\Big/_{\mathfrak{S}_n} \\
	\cong~ &\underset{n\in\N^*}{\bigoplus} P(S,	[\![1,n]\!])\underset{\mathfrak{S}_n}{\otimes} Q([\![1,n]\!],E)~;
\end{align*}
where the action of $\mathfrak{S}_n$ is induced by the action on $[\![1,n]\!]$.
\end{defi}

\begin{rem}\label{rem::prod_composition}
The  composition product $\square$ is defined by a coend. In fact, the tensorial product of the category $\Ch_k$ gives us the external product
\[
	\begin{array}{ccc}
	\Func(\Fin\times \Fin\op,\Ch_k)^{\times 2} & \longrightarrow & \Func(\Fin\times \Fin\op\times \Fin\times \Fin\op,\Ch_k) \\
	(P,Q) & \longmapsto & \big\{(S_1,E_1,S_2,E_2) \mapsto P(S_1,E_1)\otimes Q(S_2,E_2)\}
	\end{array} \ ,
\]
and, by taking the coend of the functors $P(S_1,-)\otimes Q(-, E_2) : \Fin\op\times \Fin \rightarrow \Fin$ for $S_1$ and $E_2$ two finite sets, we have
\[
	\begin{array}{rccc}
	-\:\square\: - :&  \Func(\Fin\times \Fin\op,\Ch_k)^{\times 2}& \longrightarrow & \Func(\Fin\times \Fin\op,\Ch_k) \\
	& (P,Q) &\longmapsto & \int^{S\in \Fin}  P(-,S)\otimes Q(S,-)
	\end{array}.
\]
As the category $\mathfrak{S}$ is a skeleton of the category $\Fin$, we finally have
\[
	\begin{array}{rccc}
	-\:\square\: - :&  \Func(\Fin\times \Fin\op,\Ch_k)^{\times 2} & \longrightarrow & \Func(\Fin\times \Fin\op,\Ch_k) \\
	& (P,Q) &\longmapsto & \displaystyle{\bigoplus}_{n\in\N}  P(-,[\![1,n]\!])\underset{\mathfrak{S}_n}{\otimes} Q([\![1,n]\!],-)
	\end{array}.
\]
\end{rem}

\begin{rem}
Let $P$ and $Q$ be two reduced $\mathfrak{S}$-modules. For all $m$ and $n$ in $\N^*$, we have the following isomorphism of $\mathfrak{S}_m\times\mathfrak{S}_n\op$-bimodules:
\[
	P\:\square\: Q(m,n)\cong \underset{N\in\N^*}{\bigoplus}\: P(m,N)\underset{k[\mathfrak{S}_N]}{\otimes} Q(N,n) .
\]
\end{rem}

\begin{prop}
	The category $\big(\Sbimod_k,\square,I_\square\big)$ with  $I_\square(S,E)=k[\mathrm{Aut}(S)]$ for $S\cong E$ and $0$ otherwise, is a monoidal category.
\end{prop}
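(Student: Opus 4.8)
The plan is to recognize $\square$ on $\Sbimod_k$ as composition of $\Ch_k$-valued (bi)modules over the $k$-linear category $k[\Fin]$ — equivalently, composition of $\Ch_k$-linear profunctors — and to deduce monoidality from the standard fact that this composition is coherently associative and unital, with unit the hom-profunctor. Concretely, I would first reformulate: by \Cref{rem::prod_composition}, $\square$ is the coend $(P\square Q)(-,-)=\int^{N\in\Fin}P(-,N)\otimes_k Q(N,-)$, which is legitimate because $N\mapsto P(S,N)$ is contravariant and $N\mapsto Q(N,E)$ is covariant; passing to the skeleton $\mathfrak{S}$ this is the familiar $(P\square Q)(m,n)\cong\bigoplus_{N\in\N}P(m,N)\otimes_{k[\mathfrak{S}_N]}Q(N,n)$ with $\mathfrak{S}_N$ acting diagonally. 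I would also record that $I_\square$ is exactly the linearized hom-profunctor $I_\square(S,E)=k[\Hom_{\Fin}(E,S)]$: since $\Fin$ is a groupoid, $\Hom_{\Fin}(E,S)$ is empty unless $S\cong E$, in which case it is an $\mathrm{Aut}(S)$-torsor, so $I_\square(S,E)\cong k[\mathrm{Aut}(S)]$, matching the statement; in skeletal form $I_\square(m,N)$ is $k[\mathfrak{S}_m]$ (the regular bimodule) if $m=N$ and $0$ otherwise.

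For associativity the single substantive input is that $\otimes_k$ on $\Ch_k$ is cocontinuous in each variable (it has the internal hom as a right adjoint), hence commutes with coends and with $k[\mathfrak{S}_N]$-coinvariants. Therefore
\[
\big((P\square Q)\square R\big)(S,E)\;\cong\;\int^{M}\Big(\int^{N}P(S,N)\otimes Q(N,M)\Big)\otimes R(M,E)\;\cong\;\int^{M,N}P(S,N)\otimes Q(N,M)\otimes R(M,E),
\]
and the Fubini theorem for iterated coends reassociates this double coend to $\int^{N}P(S,N)\otimes\big(\int^{M}Q(N,M)\otimes R(M,E)\big)=\big(P\square(Q\square R)\big)(S,E)$. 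On the skeleton this is just the observation that $\bigoplus_{M,N}P(m,N)\otimes_{k[\mathfrak{S}_N]}Q(N,M)\otimes_{k[\mathfrak{S}_M]}R(M,n)$ is independent of the bracketing. I would then note that the resulting isomorphism $a_{P,Q,R}$ is natural in $P,Q,R$ and in $(S,E)$.

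For the unit, the left unit isomorphism is the $\Ch_k$-enriched co-Yoneda (density) formula
\[
(I_\square\square Q)(S,E)\;=\;\int^{N}k[\Hom_{\Fin}(N,S)]\otimes Q(N,E)\;\cong\;Q(S,E),
\]
applied to the covariant functor $Q(-,E)$, and symmetrically $Q\square I_\square\cong Q$ via co-Yoneda for the contravariant functor $Q(S,-)$; in skeletal form only the summand $N=m$ (resp. $N=n$) survives and $k[\mathfrak{S}_m]\otimes_{k[\mathfrak{S}_m]}Q(m,n)\cong Q(m,n)$ as $\mathfrak{S}_m\times\mathfrak{S}_n\op$-modules. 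Call these $\lambda_Q$ and $\rho_Q$.

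Finally, the pentagon and triangle identities hold because $a$, $\lambda$, $\rho$ are assembled entirely from the canonical isomorphisms of coend calculus (associativity of iterated coends, co-Yoneda), which are themselves coherent — equivalently, $(\Sbimod_k,\square,I_\square)$ is the hom-category of $\Ch_k$-linear profunctors $\Fin\to\Fin$ inside the bicategory of such profunctors, whose composition is a standard weak $2$-categorical structure (in the skeletal picture, this is the classical bicategory of rings/$k$-linear categories and bimodules). I expect no genuine obstacle; the only point requiring care is the bookkeeping of variances, i.e.\ making sure every coend appearing above is formed over the correct contravariant/covariant pair so that cocontinuity of $\otimes_k$, Fubini, and co-Yoneda all legitimately apply. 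If one prefers to avoid invoking the profunctor bicategory, the pentagon and triangle can instead be checked directly by unwinding the explicit skeletal formulas, which is routine.
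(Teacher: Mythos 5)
Your argument is essentially the paper's own proof: the paper likewise uses the coend description of $\square$ (its Remark on the composition product) plus Fubini for iterated coends to obtain associativity, and verifies the unit by the same collapse of $\bigoplus_n P(S,[\![1,n]\!])\otimes_{\mathfrak{S}_n} I_\square([\![1,n]\!],E)$ to the summand $n=|E|$, which is exactly your co-Yoneda computation in skeletal form. The only difference is that you additionally address the pentagon and triangle coherences via the profunctor-bicategory picture, a point the paper leaves implicit; this is a harmless strengthening, not a different route.
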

\begin{proof}
Let $P$, $Q$ and $R$ be three reduced $\mathfrak{S}$-bimodules and $S$ and $E$ be two non-empty finite sets. We note by $e$ the cardinal of the set $E$. By definition of  $I_\square$, we have the following isomorphisms:
\begin{eqnarray*}
	P\:\square\:I_\square(S,E) & \cong &\bigoplus_{n\in\N}  P(S,[\![1,n]\!])\underset{\mathfrak{S}_n}{\otimes} I_\square([\![1,n]\!],E) \\
	& \cong & P(S,[\![1,e]\!])\underset{\mathfrak{S}_e}{\otimes} I_\square([\![1,e]\!],E) \\
	& \cong & P(S,[\![1,e]\!])\underset{\mathfrak{S}_e}{\otimes} k[\mathrm{Aut}([\![1,e]\!])] \\
	& \cong & P(S,E).
\end{eqnarray*}
By \Cref{rem::prod_composition}, we also have the isomorphisms:
\begin{eqnarray*}
	\big( P\:\square \:Q\big)\:\square\: R(S,E) & = & \int^{U\in\Fin}\big( P\:\square \:Q\big)(S,U)\otimes R(U,E) \\
	& \cong & \int^{U\in\Fin}\int^{V\in\Fin} P(S,V)\otimes Q(V,U)\otimes R(U,E) \\
	& \underset{\mathrm{Fubini}}{\cong} & \int^{(U,V)\in\Fin^{\times 2}} P(S,V)\otimes Q(V,U)\otimes R(U,E) \\
	& \cong & 	 P\:\square \big( \:Q\:\square\: R\big)(S,E)
\end{eqnarray*}
\end{proof}

\subsubsection{The concatenation product $\otimes^{\mathrm{conc}}$}
As in the case of $\mathfrak{S}$-modules, we define the concatenation product  of two reduced $\mathfrak{S}$-bimodules. 
\begin{defi}[Concatenation product of $\mathfrak{S}$-bimodules] \label{def::prod_conca_bimod}
	The \textit{concatenation product} is the bifunctor
	\[
		-\otimes^{\mathrm{conc}}- : \Sbimodred_k \times \Sbimodred_k \longrightarrow \Sbimodred_k,
	\]
	defined, for $P$ and $Q$ two reduced $\mathfrak{S}$-bimodules and for all finite sets $S$ and $E$, by
	\[
		\big( P \otimes^{\mathrm{conc}} Q\big)(S,E):= \underset{(I,K) \in \mathbb{Y}_2^{\mathrm{ord}}(S,E)}{\bigoplus} ~P(I_1,K_1)\otimes Q(I_2,K_2) .
	\]
\end{defi}

\begin{rem}
This product is a particular case of a Day convolution product: let $P$ and $Q$ be two reduced $\mathfrak{S}$-bimodules, the bifunctor $P\otimes^{\mathrm{conc}} Q (-_1,-_2)$ is given by	
\[
	\int^{(I_1,I_2,J_1,J_2)} k\left[\Hom_{\Fin\op\times\Fin}\big((I_1\amalg I_2,J_1\amalg J_2), (-_1,-_2)\big)\right]\otimes P(I_1,J_1)\otimes Q(I_2,J_2).
\]
\end{rem}


The two product $\square$ and $\otimes^{\mathrm{conc}}$ satisfy the interchanging law.
\begin{prop}[Interchanging law]\label{prop::injection_compatibilite} 
	Let $A,B,C,D,E$ and $F$ be reduced $\mathfrak{S}$-bimodules. We have the natural  injection of $\mathfrak{S}$-bimodules 
	\[
		\Phi_{A,B,C,D} : (A\:\square\:B)\otimes^{\mathrm{conc}}(C\:\square\:D)\hookrightarrow (A\otimes^{\mathrm{conc}}C)\:\square\:(B\otimes^{\mathrm{conc}}D),
	\]
	which is associative, i.e. we have
	\[
		\Phi_{A\otimes C, B\otimes D, E,F}\left((\Phi_{A,B,C,D})\otimes (E\square F)\right) 
		= \Phi_{A,B,C\otimes E,D\otimes F}\left( (A\square B)\otimes \Phi_{C,D,E,F}\right).
	\]
\end{prop}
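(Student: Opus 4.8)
The plan is to realise $\Phi_{A,B,C,D}$ as the inclusion of a canonical direct summand: once this is done, injectivity and naturality are automatic, and the associativity identity reduces to comparing two explicit formulas on representatives.

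First I would unwind both sides using the coend description of $\square$ from \Cref{rem::prod_composition} and \Cref{def::prod_conca_bimod}. On the source, for finite sets $S,E$ and sums over ordered $2$-partitions, one gets
\[
\big((A\square B)\otimes^{\mathrm{conc}}(C\square D)\big)(S,E)\;\cong\;\bigoplus_{\substack{S=S_1\amalg S_2\\ E=E_1\amalg E_2}}\ \int^{U,V\in\Fin} A(S_1,U)\otimes B(U,E_1)\otimes C(S_2,V)\otimes D(V,E_2).
\]
On the target, I expand $A\otimes^{\mathrm{conc}}C$ on the ``output'' side and $B\otimes^{\mathrm{conc}}D$ on the ``input'' side of $\square$; the single coend variable of $\square$ then carries \emph{two} ordered $2$-partitions, and decomposing it according to the resulting $2\times 2$ array $(U_{ij})_{i,j\in\{1,2\}}$ of finite sets — using that the subgroup of $\mathfrak{S}_n$ stabilising a row-partition and a column-partition simultaneously is the product $\prod_{i,j}\mathfrak{S}_{n_{ij}}$ — yields
\[
\big((A\otimes^{\mathrm{conc}}C)\square(B\otimes^{\mathrm{conc}}D)\big)(S,E)\;\cong\;\bigoplus_{\substack{S=S_1\amalg S_2\\ E=E_1\amalg E_2}}\ \bigoplus_{(U_{ij})}\, A(S_1,U_{11}\amalg U_{12})\otimes C(S_2,U_{21}\amalg U_{22})\otimes B(U_{11}\amalg U_{21},E_1)\otimes D(U_{12}\amalg U_{22},E_2),
\]
each $U_{ij}$ running over finite sets up to isomorphism. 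The source is visibly the sub-sum where $U_{12}=U_{21}=\varnothing$ (so $U=U_{11}$, $V=U_{22}$), and I take $\Phi_{A,B,C,D}$ to be this inclusion; concretely it sends a class $a\otimes b\otimes c\otimes d$ to $(-1)^{|b||c|}(a\otimes c)\otimes(b\otimes d)$ placed over the middle set $U\amalg V$, the sign being the Koszul sign of the middle-four interchange. It then remains to note, all routine once these identifications are in place: $\Phi$ is well defined because the coend relations defining the source (bijections of $U$, resp. of $V$) are among those defining the target (bijections of $U\amalg V$); it is natural in $(S,E)\in\Fin\times\Fin\op$ since $\square$, $\otimes^{\mathrm{conc}}$ and the inclusion of a summand are all functorial; and it is a split injection of $\mathfrak{S}$-bimodules, being the inclusion of a direct summand in the decomposition above.

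For the associativity identity, I would first use the associativity isomorphisms of $\square$ and of $\otimes^{\mathrm{conc}}$ to identify the common source $\big((A\square B)\otimes^{\mathrm{conc}}(C\square D)\big)\otimes^{\mathrm{conc}}(E\square F)$ and the common target $(A\otimes^{\mathrm{conc}}C\otimes^{\mathrm{conc}}E)\square(B\otimes^{\mathrm{conc}}D\otimes^{\mathrm{conc}}F)$. Evaluating both composites on a representative $a\otimes b\otimes c\otimes d\otimes e\otimes f$ (over composition variables $U,V,W$), one checks directly that each sends it to $(a\otimes c\otimes e)\otimes(b\otimes d\otimes f)$ placed over $U\amalg V\amalg W$, with the same Koszul sign coming from moving the block $\{b,d\}$ past the block $\{c,e\}$; since the two bracketings of this interchange give the same sign by coherence of the Koszul sign rule, the two composites coincide on the nose.

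The main obstacle I anticipate is purely bookkeeping: making the $2\times2$ array decomposition of the target-side coend precise — genuinely identifying the relevant stabiliser as $\prod_{i,j}\mathfrak{S}_{n_{ij}}$, so that the source is realised as an honest direct summand and not merely an abstract subobject — and tracking the Koszul signs consistently enough that the associativity identity holds exactly rather than up to sign.
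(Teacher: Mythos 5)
Your proposal is essentially correct, but it takes a more explicit route than the paper. The paper never decomposes the target completely: it writes both sides as Day-convolution coends, produces $\Phi_{A,B,C,D}$ by composing natural injections at the level of the linearized hom-sets $k[\Hom((I_1\amalg I_2,J_1\amalg J_2),(-,-))]$ and of the module values, reassembles by Fubini, and then verifies associativity by the commutativity of a diagram of these natural transformations. You instead decompose the coend variable of the target into a $2\times 2$ array of blocks and exhibit the source as the canonical summand with $U_{12}=U_{21}=\varnothing$; this buys you a \emph{split} injection with an explicit complement and reduces associativity to a Koszul-sign check on representatives (which does come out right: both composites carry the sign $(-1)^{|b||c|+|b||e|+|d||e|}$), at the price of the stabilizer bookkeeping that the paper's more formal argument avoids.

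One imprecision you should repair, and which you partly anticipate yourself: your displayed decomposition of $\big((A\otimes^{\mathrm{conc}}C)\:\square\:(B\otimes^{\mathrm{conc}}D)\big)(S,E)$ is not literally an isomorphism as written. After passing to $\mathfrak{S}_n$-orbits of pairs of ordered $2$-decompositions of $[\![1,n]\!]$, each orbit with block sizes $(n_{ij})$ contributes the \emph{coinvariants} of the stabilizer $\prod_{i,j}\mathfrak{S}_{n_{ij}}$, i.e.
\[
\Big(A(S_1,U_{11}\amalg U_{12})\otimes C(S_2,U_{21}\amalg U_{22})\otimes B(U_{11}\amalg U_{21},E_1)\otimes D(U_{12}\amalg U_{22},E_2)\Big)_{\prod_{i,j}\mathrm{Aut}(U_{ij})},
\]
and these residual quotients (gluing the $A,C$ factors to the $B,D$ factors along each $U_{ij}$) must appear in the sum; without them the right-hand side over-counts. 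With the coinvariants inserted the argument goes through exactly as you describe: the summand $U_{12}=U_{21}=\varnothing$ is precisely $\bigoplus A(S_1,U)\otimes_{\mathrm{Aut}(U)}B(U,E_1)\otimes C(S_2,V)\otimes_{\mathrm{Aut}(V)}D(V,E_2)$, i.e. the source, so $\Phi$ is the inclusion of a direct summand, and well-definedness, naturality and the associativity identity follow as you state.
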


\begin{proof}
Let $A,B,C$ and $D$ be reduced $\mathfrak{S}$-bimodules. We have
\begin{align*}
	&(A\:\square\:B) \otimes^{\mathrm{conc}} (C\:\square\:D)(-,-)\\
	\overset{}{=}~&\int^{(I_1,I_2,J_1,J_2)\in\Fin^{\times 4}} 
	{\substack{k\left[\Hom\left((I_1\amalg I_2,J_1\amalg J_2),(-,-)\right)\right] \\
			\otimes (A\:\square\:B)(I_1,J_1) \otimes (C\:\square\:D)(I_2,J_2)}} \\
	\overset{}{=}~&\int^{(I_1,I_2,J_1,J_2)\in\Fin^{\times 4}} {\scriptstyle{k\left[\Hom\left((I_1\amalg I_2,J_1\amalg J_2),(-,-)\right)\right]}} \\
	& \quad\quad \otimes \int^{S\in\Fin} {\scriptstyle{A(I_1,S)\otimes B(S,J_1)}} 
	\otimes \int^{T\in\Fin} \scriptstyle{C(I_2,T)\otimes D(T,J_2)} \\
	\underset{\mathrm{Fubini}}{\cong}~& \int^{(I_1,I_2,J_1,J_2,S,T)\in\Fin^{\times 6}}
	{\substack{ k\left[\Hom\left((I_1\amalg I_2,J_1\amalg J_2),(-,-)\right)\right] \\ \otimes  A(I_1,S) \otimes C(I_2,T)\otimes B(S,J_1)\otimes D(T,J_2)}}. 
\end{align*}
The natural injections
\begin{eqnarray*}
	&&	\scriptstyle{k\big[\Hom\big((I_1\amalg I_2,J_1\amalg J_2),(-,-)\big)\big]}  \\
	 & \hookrightarrow &
	\displaystyle{\coprod_S } \scriptstyle{k\big[\Hom\big((I_1\amalg I_2,J_1\amalg J_2),(-,S)\big)\big]\otimes k\big[\Hom\big((I_1\amalg I_2,J_1\amalg J_2),(S,-)\big)\big] } \\
	& \hookrightarrow &	\displaystyle{\coprod_{{\substack{S\\ U\rightarrow S\\ S\rightarrow V}}} } \scriptstyle{k\big[\Hom\big((I_1\amalg I_2,U),(-,S)\big)\big]\otimes k\big[\Hom\big((V,J_1\amalg J_2),(S,-)\big)\big] } 
\end{eqnarray*}
and
\begin{eqnarray*}
	\scriptstyle{A(I_1,S) \otimes C(I_2,T)\otimes B(S,J_1)\otimes D(T,J_2)} & \hookrightarrow  &
	\displaystyle{ \coprod_{{\substack{J_1,J_2,K_1,K_2}}} }\scriptstyle{A(I_1,J_1) \otimes C(I_2,J_2)\otimes B(K_1,L_1)\otimes D(K_2,L_2)}
\end{eqnarray*}

give us the natural injection
\begin{align*}
	&(A\:\square\:B) \otimes^{\mathrm{conc}} (C\:\square\:D)(-,-)\\
	\hookrightarrow~& \int^{(S,I_1,I_2,J_1,J_2,K_1,K_2,L_1,L_2)}
	{\substack{
	k\big[\Hom\big((I_1\amalg I_2,J_1\amalg J_2),(-,S)\big)\big]\\
	\otimes k\big[\Hom\big((K_1\amalg K_2,L_1\amalg L_2),(S,-)\big)\big] \\ \qquad \otimes A(I_1,J_1) \otimes C(I_2,J_2)\otimes B(K_1,L_1)\otimes D(K_2,L_2) }}\\
	\underset{\mathrm{Fubini}}{\cong}~& \int^{S} \int^{I_i,J_i}
	{\substack{ k\big[\Hom\big((I_1\amalg I_2,J_1\amalg J_2),(-,S)\big)\big]\\
			\otimes A(I_1,J_1) \otimes C(I_2,J_2) }}
		 \otimes \int^{K_i,L_i} 
	{\substack{k\big[\Hom\big((K_1\amalg K_2,L_1\amalg L_2),(S,-)\big)\big]\\
			\otimes B(K_1,L_1)\otimes D(K_2,L_2) }}\\
	= ~& \int^{S} \big(A\otimes^{\mathrm{conc}} C\big)(-,S) \otimes \big(B\otimes^{\mathrm{conc}}D\big)(S,-) \\
	= ~& \big(A\otimes^{\mathrm{conc}} C\big)\:\square\: \big(B\otimes^{\mathrm{conc}}D\big)(-,-).
\end{align*}
Then we have the injective natural transformation between bifunctors: 
\[
	\Phi_{A,B,C,D} : (A\:\square\:B)\otimes^{\mathrm{conc}}(C\:\square\:D)\hookrightarrow (A\otimes^{\mathrm{conc}}C)\:\square\:(B\otimes^{\mathrm{conc}}D).
\]
Moreover, $\Phi$ is associative, because the following diagram is commutative:
\[
	\xymatrix@C=-7pc@R=0.7cm{
	\Big((A\:\square\:B)\otimes^{\mathrm{conc}}(C\:\square\:D)\Big)\otimes^{\mathrm{conc}}(E\:\square\:F) \ar[d]_{\Phi_{A,B,C,D}\otimes 1} \ar[rr]^{\cong}
	& \ar@{}[dd]|(0.4)\commu & (A\:\square\:B)\otimes^{\mathrm{conc}}\Big( (C\:\square\:D)\otimes^{\mathrm{conc}}(E\:\square\:F) \Big) \ar[d]^{1\otimes \Phi_{C,D,E,F}} \\
	\Big((A\otimes^{\mathrm{conc}}C)\:\square\:(B\otimes^{\mathrm{conc}}D)\Big)\otimes^{\mathrm{conc}}(E\:\square\:F) \ar@/_1pc/[dr]|(0.25){\Phi_{A\otimes C,B\otimes D,E,F}}
	& & (A\:\square\:B)\otimes^{\mathrm{conc}}\Big( (C)\otimes^{\mathrm{conc}}E)\:\square\:(D\otimes^{\mathrm{conc}}F) \Big) \ar@/^1pc/[dl]|(0.25){\Phi_{A,B,C\otimes E,D\otimes F}} \\
	& (A\otimes^{\mathrm{conc}}C \otimes^{\mathrm{conc}} E)\:\square\:(B\otimes^{\mathrm{conc}}D\otimes^{\mathrm{conc}} F) &  
	}
\]
\end{proof}

\begin{cor} 
	The categories of monoids  $\big(\As(\Sbimod_k,\otimes^{\mathrm{conc}}),\square,I_\square \big)$ and commutative monoids $\big(\Com(\Sbimod_k,\otimes^{\mathrm{conc}}),\square,I_\square \big)$ are monoidal. In other words, if $(P,c_P)$ and $(Q,c_Q)$ are two monoids in the symmetric monoidal category (without unit) $\big(\Sbimod_k,\otimes^{\mathrm{conc}}\big)$, then
	\begin{enumerate}
	\item  $(P\:\square\:Q,c_{P\:\square\:Q})$ is a monoid in  $\big(\Sbimod_k,\otimes^{\mathrm{conc}}\big)$; 	
	\item if $P$ and $Q$ are commutatives monoids, then $P\:\square\:Q$ is too.
	\end{enumerate}
\end{cor}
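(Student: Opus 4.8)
The cleanest route is to observe that the interchanging law of \Cref{prop::injection_compatibilite} exhibits the bifunctor $\square$ as a \emph{lax symmetric monoidal functor} $\big(\Sbimod_k,\otimes^{\mathrm{conc}}\big)^{\times 2}\to\big(\Sbimod_k,\otimes^{\mathrm{conc}}\big)$ between these (non-unital) symmetric monoidal categories, with lax structure map $\Phi_{A,B,C,D}$ and associativity coherence given by the commutative diagram established at the end of the proof of \Cref{prop::injection_compatibilite}. Such functors send (commutative) monoids to (commutative) monoids, which is precisely (1) and (2); moreover, since $I_\square$ carries the evident $\otimes^{\mathrm{conc}}$-monoid structure (disjoint union of bijections on $k[\mathrm{Iso}(-,-)]$) and the associativity and unit isomorphisms of $(\Sbimod_k,\square,I_\square)$ are morphisms of $\otimes^{\mathrm{conc}}$-monoids by the naturality and associativity of $\Phi$, one gets the monoidality of the categories $\As(\Sbimod_k,\otimes^{\mathrm{conc}})$ and $\Com(\Sbimod_k,\otimes^{\mathrm{conc}})$ under $(\square,I_\square)$.

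Unwinding this, given monoids $(P,c_P)$ and $(Q,c_Q)$ in $\big(\Sbimod_k,\otimes^{\mathrm{conc}}\big)$, I would equip $P\:\square\:Q$ with the structure morphism
\[
c_{P\square Q}\colon\ (P\:\square\:Q)\otimes^{\mathrm{conc}}(P\:\square\:Q)\ \xrightarrow{\ \Phi_{P,Q,P,Q}\ }\ (P\otimes^{\mathrm{conc}}P)\:\square\:(Q\otimes^{\mathrm{conc}}Q)\ \xrightarrow{\ c_P\:\square\:c_Q\ }\ P\:\square\:Q,
\]
which is a morphism of reduced $\mathfrak{S}$-bimodules because $\Phi$ is a natural transformation and $\square$ is a bifunctor. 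Associativity of $c_{P\square Q}$ then follows by a diagram chase: expanding both composites $\big((P\:\square\:Q)\otimes^{\mathrm{conc}}(P\:\square\:Q)\big)\otimes^{\mathrm{conc}}(P\:\square\:Q)\to P\:\square\:Q$, the naturality of $\Phi$ lets one pull $c_P$ and $c_Q$ past $\Phi$ in the two $\square$-slots, the associativity of $\Phi$ (\Cref{prop::injection_compatibilite} with $A=C=E=P$, $B=D=F=Q$) makes the iterated applications of $\Phi$ independent of the bracketing, and the associativity of $c_P$ and of $c_Q$ handles the remaining slots; modulo the associator of $\otimes^{\mathrm{conc}}$ the two composites thus coincide. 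This proves (1).

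For (2), assume moreover $c_P\circ\tau_{P,P}=c_P$ and $c_Q\circ\tau_{Q,Q}=c_Q$, where $\tau$ is the symmetry of $\otimes^{\mathrm{conc}}$. The relation
\[
\Phi_{P,Q,P,Q}\circ\tau_{P\square Q,\,P\square Q}\ =\ \big(\tau_{P,P}\:\square\:\tau_{Q,Q}\big)\circ\Phi_{P,Q,P,Q},
\]
which is immediate from the Day-convolution description of $\Phi$ once one checks that the Koszul signs on the two sides match, gives, after postcomposition with $c_P\:\square\:c_Q$ and use of $(c_P\:\square\:c_Q)\circ(\tau_{P,P}\:\square\:\tau_{Q,Q})=(c_P\circ\tau_{P,P})\:\square\:(c_Q\circ\tau_{Q,Q})=c_P\:\square\:c_Q$, the identity $c_{P\square Q}\circ\tau_{P\square Q,P\square Q}=c_{P\square Q}$; hence $P\:\square\:Q$ is commutative.

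The genuine difficulty is the associativity check in (1): although \Cref{prop::injection_compatibilite} already isolates the combinatorial core, assembling the associativity of $c_{P\square Q}$ forces one to invoke at once the associativity and the naturality of $\Phi$, the two associativities of $c_P$ and $c_Q$, and the associator of $\otimes^{\mathrm{conc}}$, all while keeping track of the Koszul signs produced by working over $\Ch_k$. The commutativity statement and the verification that $I_\square$ is a monoidal unit are, by comparison, straightforward.
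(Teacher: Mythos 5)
Your proposal is correct and follows essentially the same route as the paper: the multiplication $c_{P\square Q}$ is defined exactly as in the paper's proof, by composing $\Phi_{P,Q,P,Q}$ with $c_P\:\square\:c_Q$, with associativity deduced from the associativity and naturality of $\Phi$ (the content of \Cref{prop::injection_compatibilite}) and commutativity inherited from $c_P$ and $c_Q$. You merely spell out the diagram chases and the compatibility of $\Phi$ with the symmetry $\tau$ that the paper leaves implicit, so there is nothing to change.
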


\begin{proof}
\begin{enumerate}
	\item The injection  $\Phi$ of the \Cref{prop::injection_compatibilite} gives us the product $c_{P\:\square\:Q}$ :
	\[
	\xymatrix{
		(P\:\square\:Q )\otimes^{\mathrm{conc}} (P\:\square\:Q ) \ar@{.>}@/_1pc/[rd]_{c_{P\:\square\:Q}}\ar[r]^{\Phi} & (P\otimes^{\mathrm{conc}} P)\:\square\:(Q\otimes^{\mathrm{conc}} Q) \ar[d]^{c_P\square c_Q} \\
		& P\:\square\: Q~
	}
	\]	
	and the associativity of the product $\square$ gives us the associativity of the product $c_{P\:\square \:Q}$.
	\item The commutativity of $c_{P\square Q}$ follows from the commutativity of $c_P$ and $c_Q$.
\end{enumerate}
\end{proof}

\begin{defi}[Free monoids]
Let $P$ be a reduced $\mathfrak{S}$-bimodule and $S$ and $E$ be two finite sets.
\begin{enumerate}
	\item The free associative monoid without unit on $P$ is defined by
	\[
		\mathbb{T}^rP(S,E):= \underset{(I,K) \in \mathbb{Y}_r^{\mathrm{ord}}(S,E)}{\bigoplus} ~  P(I_1,K_1)\otimes \ldots \otimes P(I_r,K_r).
	\]
	\item The free commutative monoid without unit on $P$ is defined by the quotient of the free associative monoid by the action of the symmetric groups; namely
	\[
		\mathbb{S}^rP(S,E):= \Big(\mathbb{T}^rP(S,E)\Big)_{\mathfrak{S}_r} \cong \underset{\{I,K\} \in \mathbb{Y}_r(S,E)}{\bigoplus} ~\underset{\alpha}{\bigotimes} P(I_\alpha,K_\alpha).
	\]
\end{enumerate}
\end{defi}

\subsection{Connected composition product of \texorpdfstring{$\mathfrak{S}$}{S}-bimodules}\label{sect::produit_connexe_Sbimod}
We define the connected composition product of $\mathfrak{S}$-bimodules. The product was defined the first time by Vallette in his PhD thesis \cite{Val03}, for studying the homotopic comportment of algebraic structures with several inputs and outputs. Our definition is not the original one, but we show (cf. proposition \ref{prop::equivalence_produits_connexes2}) that they are equivalent.

\begin{defi}[Product of connected composition $\boxtimes^b_c$] \label{def::prod_compo_conn_bimod}
	The \textit{product of connected composition} of reduced $\mathfrak{S}$-bimodules is the bifunctor
	\[
	-\boxtimes^b_c - : \Sbimodred_k \times \Sbimodred_k \longrightarrow \Sbimodred_k
	\]
	defined,  for all reduced $\mathfrak{S}$-bimodules $P$ and $Q$ and all pairs $(S,E)$ of finite sets, by $\big(P\boxtimes^b_c Q\big)(S,E) :=$
	\[
	 \underset{n\in\N^*}{\bigoplus}~\Big(\underset{(\{I,K'\},\{K'',J\})\in\mathbb{X}^{n,\mathrm{conn}}(S,E)}{\bigoplus} ~\underset{\alpha}{\bigotimes} P(I_\alpha,K'_\alpha) \underset{}{\otimes} ~\underset{\beta}{\bigotimes} Q(K''_\beta, J_\beta)\Big)\Big/_{\mathfrak{S}_n}\ ,
	\]
	where the quotient by $\mathfrak{S}_n$ identifies, for all $\sigma$ in $\mathfrak{S}_n$, the terms
	\[
	\underset{\alpha}{\bigotimes}	P(I_\alpha,K'_\alpha) \underset{}{\otimes} ~\underset{\beta}{\bigotimes} Q(K''_\beta, J_\beta)
	\sim
	\Big(\underset{\alpha}{\bigotimes}P(I_\alpha,K'_\alpha) \Big)\cdot \sigma^{-1}\underset{}{\otimes} ~\sigma\cdot\Big(\underset{\beta}{\bigotimes} Q(K''_\beta, J_\beta)\Big) .
	\] 
\end{defi}
\begin{rem}
	This construction is functorial, because $(P\boxtimes^b_c Q)(-,-)$ is a sub-bifunctor of $\big(\mathbb{S}P\:\square\:\mathbb{S}Q\big)(-,-)$ (see \Cref{lem::SPsquareSQ}).
\end{rem}
\begin{nota}\label{not::prod_connexe}
	We denote by $\big/_\mathfrak{S}$, the quotient by symmetric groups for
	\[
	\big(P\boxtimes^b_c Q\big)(S,E) :=
	\Big(\underset{{\substack{ 
	 	(\{I,K'\},\{K'',J\})\\ 
	 	\in\mathbb{X}^{\mathrm{conn}}(S,E)
 	}}}{\bigoplus} ~\underset{\alpha}{\bigotimes} P(I_\alpha,K'_\alpha) \underset{}{\otimes} ~\underset{\beta}{\bigotimes} Q(K''_\beta, J_\beta)\Big)\Big/_{\mathfrak{S}}
	\]
\end{nota}
The following proposition says that our definition  of the connected composition product of $\mathfrak{S}$-bimodules is equivalent to that of Vallette in \cite{Val03, Val07}.
First, recall the notion of connected permutation.

\begin{defi}[Connected permutation -- \textup{\cite[Sect 1.3]{Val07}}]
	Let $a,b$ and $N$ be three integers with $a$ and $b$ in $\N^*$, let  $\bar{\alpha}=(\alpha_1,\ldots,\alpha_a)$ in $(\N^*)^a$ be an $a$-tuple and  $\bar{\beta}=(\beta_1,\ldots,\beta_b)$ in $(\N^*)^b$ be a $b$-tuple such that $|\bar{\alpha}|=N=|\bar{\beta}|$. A \emph{$(\bar{\alpha},\bar{\beta})$-connected permutation} $\sigma$ of $\mathfrak{S}_N$ is a permutation of $\mathfrak{S}_N$ such that the graph of a geometric representation of $\sigma$ is connected if one gathers the inputs labelled by $\alpha_1+\ldots+\alpha_i+1,\ldots,\alpha_1+\ldots+\alpha_i+\alpha_{i+1}$ for $0\leqslant i \leqslant a-1$, and the outpus labelled by $\beta_1+\ldots+\beta_i+1,\ldots,\beta_1+\ldots+\beta_i+\beta_{i+1}$ for $0\leqslant i \leqslant b-1$. The set of $(\bar{\alpha},\bar{\beta})$-connected permutations is denoted by $S_c^{\bar{\beta},\bar{\alpha}}$.
\end{defi}

\begin{lem}\label{lem::combinatoire_connexite}
	Let  $r,s$ and $N$ be in $\N^*$, and let $\bar{k}$ be an $r$-tuple in $(\N^*)^r$ and  $\bar{j}$ be a $s$-tuple in $(\N^*)^s$ such that $\sum_{\alpha=1}^rk_\alpha=N=\sum_{\beta=1}^s j_\beta$. The map
	\[	
		\begin{array}{rccc}
			\phi : &\left\{ (K,J)\in \mathcal{X}^{\mathrm{conn,ord}}_{r,s}(N) \left|
			{\substack{(|K_1|,\ldots,|K_r|)=\bar{k},\\
					(|J_1|,\ldots,|J_s|)=\bar{j}
			}} \right.\right\}
			& \longrightarrow & S_c^{\bar{k},\bar{j}} \\
			& (K,J) & \longmapsto & \sigma_K^{-1}\sigma_J
		\end{array}
	\]
	is surjective.
\end{lem}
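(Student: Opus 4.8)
The plan is to prove that $\phi$ is onto by exhibiting, for each connected permutation, an explicit preimage. Fix $\sigma\in S_c^{\bar k,\bar j}\subseteq\mathfrak{S}_N$ and write $P_\alpha:=[\![k_1+\cdots+k_{\alpha-1}+1,\ k_1+\cdots+k_\alpha]\!]$ and $Q_\beta:=[\![j_1+\cdots+j_{\beta-1}+1,\ j_1+\cdots+j_\beta]\!]$ for the consecutive intervals of $[\![1,N]\!]$ of lengths $\bar k$, respectively $\bar j$. Recall the geometric picture of $\sigma$: it is the two-level bipartite graph whose $r$ upper vertices carry the half-edge sets $P_1,\dots,P_r$, whose $s$ lower vertices carry $Q_1,\dots,Q_s$, and which has one edge joining lower half-edge $m$ to upper half-edge $\sigma(m)$. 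The hypothesis $\sigma\in S_c^{\bar k,\bar j}$ says precisely that this graph is connected.

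First I would build the pair $(K,J)$ out of $\sigma$. Identify the inner set $[\![1,N]\!]$ with the edge set of the graph of $\sigma$, and let $K_\alpha$ (respectively $J_\beta$) be the set of edges incident to the upper vertex $\alpha$ (respectively to the lower vertex $\beta$). In interval coordinates this reads, as a first guess, $K_\alpha=\sigma^{-1}(P_\alpha)$ and $J_\beta=Q_\beta$, up to the choice of how one labels the $N$ edges. Since $\sigma$ is a bijection and the $P_\alpha$ (resp. $Q_\beta$) partition $[\![1,N]\!]$, the tuples $K=(K_1,\dots,K_r)$ and $J=(J_1,\dots,J_s)$ are ordered partitions with $|K_\alpha|=k_\alpha$ and $|J_\beta|=j_\beta$, so the pair has the prescribed block sizes. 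Unwinding the definitions of $\sigma_K$ and $\sigma_J$ — this is where the labelling of the inner set has to be chosen so that the internal order of each $K_\alpha$ is the order induced through $\sigma$ from $P_\alpha$ — one obtains $\sigma_K^{-1}\sigma_J=\sigma$, that is, $\phi(K,J)=\sigma$.

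It remains to see that $(K,J)$ lies in $\mathcal{X}^{\mathrm{conn,ord}}_{r,s}(N)$, i.e. that the associated wall, with the $K_\alpha$ on the lower level and the $J_\beta$ on the upper level, is connected in the sense of \Cref{subsect::connected wall}. In such a two-level wall every comparable pair of bricks is automatically a pair of successors, so two bricks are linked by the relation $\overset{conn.}{\sim}$ precisely when they meet; and by construction $K_\alpha\cap J_\beta\neq\varnothing$ if and only if the graph of $\sigma$ has an edge between its upper vertex $\alpha$ and its lower vertex $\beta$. Hence the connectedness graph of the wall is literally the bipartite graph of $\sigma$, which is connected; so $(K,J)$ is a connected pair and $\phi$ is surjective. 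The step I expect to be the real obstacle is the normalisation in the second paragraph: pinning down the identification of the inner $N$-element set — equivalently, the bijection between $\mathcal{X}^{\mathrm{conn,ord}}_{r,s}(N)$ and the suitably labelled two-level bipartite graphs — so that $\sigma_K^{-1}\sigma_J$ comes out to be $\sigma$ itself and not merely a representative of $\sigma$ modulo the Young subgroup $\mathfrak{S}_{k_1}\times\cdots\times\mathfrak{S}_{k_r}$; once that is settled, the connectedness transfer and the size count are routine.
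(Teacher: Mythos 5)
Your argument is essentially the paper's own proof: it exhibits the same section $\psi(\sigma)=\big((\sigma^{-1}(P_1),\ldots,\sigma^{-1}(P_r)),(Q_1,\ldots,Q_s)\big)$ and deduces connectedness of this pair from the connectedness of $\sigma$, your bipartite-graph reformulation being just a slightly more explicit version of the paper's one-line appeal to the geometric representation of $\sigma$. As for the normalisation you flag as the remaining obstacle, the paper does not address it either -- it implicitly reads $\sigma_K$ off the order in which it lists the elements $\sigma^{-1}(k_1+\cdots+k_{\alpha-1}+1),\ldots,\sigma^{-1}(k_1+\cdots+k_{\alpha})$ of each block, so that $\phi\circ\psi=\mathrm{id}$ holds by construction -- and in the only place the lemma is used (the proof of \Cref{prop::equivalence_produits_connexes2}) the target is immediately tensored over $\mathfrak{S}_{\bar{k}}\times\mathfrak{S}_{\bar{j}}$, so surjectivity up to within-block reordering is all that is actually needed.
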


\begin{proof}
	By the definition of the connectedness of a pair $(K,J)$ in $\mathcal{X}^{\mathrm{conn,ord}}_{r,s}(N)$, if we consider the graph of a geometric representation of the permutation $\sigma_K^{-1}\sigma_J$, where we gather the inputs and the outputs as in the definition of connected permutation, then there exists a path between every input labelled by $i$ and every output labelled by $j$. Then, we have $\sigma_K^{-1}\sigma_J$ in  $S_c^{\bar{k},\bar{j}}$. 
	
	Let $\sigma$ be a permutation in $S_c^{\bar{k},\bar{j}}$. We consider
	\[
		[\![1,N]\!]_{\bar{j}}\overset{not}{:=}\Big \{ [\![1,j_1]\!],[\![j_1+1,j_1+j_2]\!],\ldots,[\![j_1+\ldots+j_{s-1}+1,j_1+\ldots+j_{s}]\!] \Big\},
	\]
	and we denote by $\left( [\![1,N]\!]\cdot \sigma\right)_{\bar{k}}$, the following set
	\[
		\Big \{\! \{\sigma^{-1}(1),\sigma^{-1}(2),\ldots\sigma^{-1}(k_1)\},\!\ldots\!,\{\sigma^{-1}(k_1+\ldots+k_{r-1}+1),\!\ldots\!,\sigma^{-1}(k_1+\ldots+k_{r})\} \!\Big\};
	\]
	the pair $( [\![1,N]\!]\cdot \sigma)_{\bar{k}},[\![1,N]\!]_{\bar{j}})$ is an element of $\mathcal{X}^{\mathrm{conn,ord}}_{r,s}(N)$ by the connectedness of the permutation $\sigma$. So the application 
	\[
	\begin{array}{rccc}
		\psi :& S_c^{\bar{k},\bar{j}} & \longrightarrow & 
		\left\{ (K,J)\in \mathcal{X}^{\mathrm{conn,ord}}_{r,s}(N) \left|
			{\substack{(|K_1|,\ldots,|K_r|)=\bar{k},\\
			(|J_1|,\ldots,|J_s|)=\bar{j}
			}} \right.\right\} \\
		&\sigma & \longmapsto &		\big(([\![1,N]\!]\cdot\sigma)_{\bar{k}},[\![1,N]\!]_{\bar{j}}\big)
	\end{array}
	\]
	is well-defined and gives us a section of $\phi$, so that $\phi$ is surjective.
\end{proof}

\begin{prop}\label{prop::equivalence_produits_connexes2}
	For all integers $m$ and $n$ in $\N^*$, we have the isomorphism of $\mathfrak{S}_m\times \mathfrak{S}_n\op$-modules:
	\[
		\big(P\boxtimes^b_c Q\big)([\![1,m]\!],[\![1,n]\!]) \cong \big(P\boxtimes_c^{\mathrm{Val}} Q\big)(m,n).
	\]
\end{prop}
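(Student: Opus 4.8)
The plan is to unwind both sides of the claimed isomorphism over the pair of objects $([\![1,m]\!],[\![1,n]\!])$ and to match the combinatorial data indexing the two direct sums by means of \Cref{lem::combinatoire_connexite}. Recall that Vallette's product $\big(P\boxtimes_c^{\mathrm{Val}}Q\big)(m,n)$ is, by definition (see \cite{Val03,Val07}), a direct sum over an integer $N\in\N^*$ and over tuples of arities of a two-row tensor product: the top row consists of copies of $P$ whose outputs partition $[\![1,m]\!]$ and whose inputs partition $[\![1,N]\!]$ into blocks of sizes $\bar k$, the bottom row consists of copies of $Q$ whose outputs partition $[\![1,N]\!]$ into blocks of sizes $\bar j$ and whose inputs partition $[\![1,n]\!]$, the two rows being glued along a connected permutation $\sigma\in S_c^{\bar k,\bar j}$, with the whole expression induced by $k[\mathfrak{S}_m]$ and $k[\mathfrak{S}_n]$ on the two ends and by the relevant products of symmetric groups over each row and over $\mathfrak{S}_N$ in the middle. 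What must be produced is a natural bijection between this indexing data and the data appearing in \Cref{def::prod_compo_conn_bimod} for $\big(P\boxtimes^b_c Q\big)([\![1,m]\!],[\![1,n]\!])$, respecting the tensor factors and the $\mathfrak{S}_m\times\mathfrak{S}_n\op$-action.

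First I would unwind the left-hand side. By \Cref{def::prod_compo_conn_bimod}, $\big(P\boxtimes^b_c Q\big)([\![1,m]\!],[\![1,n]\!])$ is the direct sum over $N\in\N^*$ of the $\mathfrak{S}_N$-coinvariants of
\[
\bigoplus_{(\{I,K'\},\{K'',J\})\in\mathbb{X}^{N,\mathrm{conn}}([\![1,m]\!],[\![1,n]\!])}\ \bigotimes_\alpha P(I_\alpha,K'_\alpha)\otimes\bigotimes_\beta Q(K''_\beta,J_\beta).
\]
Lifting to the ordered level (the map $\mathbb{Y}^{\mathrm{ord}}_r\twoheadrightarrow\mathbb{Y}_r$ is the quotient by the free $\mathfrak{S}_r$-action, and likewise for $\mathbb{Y}_s$), one may rewrite this summand, after fixing the block sizes $\bar k=(|K'_\alpha|)$ and $\bar j=(|K''_\beta|)$ and the outer sizes $(|I_\alpha|)$, $(|J_\beta|)$, as the $\big(\mathfrak{S}_r\times\mathfrak{S}_s\big)$-coinvariants (and then $\mathfrak{S}_N$-coinvariants) of a sum indexed by ordered pairs $(K',K'')\in\mathcal{X}^{\mathrm{conn,ord}}_{r,s}([\![1,N]\!])$ of the prescribed block sizes, together with ordered families of subsets of $[\![1,m]\!]$ and $[\![1,n]\!]$; using order-preserving identifications $K'_\alpha\cong[\![1,k_\alpha]\!]$ and $K''_\beta\cong[\![1,j_\beta]\!]$, the only datum relating the upper and lower levels is the pair $(K',K'')$ itself.

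This is precisely where \Cref{lem::combinatoire_connexite} is used: the surjection $\phi\colon(K',K'')\mapsto\sigma_{K'}^{-1}\sigma_{K''}$ identifies, after the appropriate quotient, the sum over $\mathcal{X}^{\mathrm{conn,ord}}_{r,s}([\![1,N]\!])$ with prescribed block sizes with the sum over $S_c^{\bar k,\bar j}$ appearing in Vallette's formula, the section $\psi$ normalising the blocks to consecutive intervals; the non-injectivity of $\phi$ is exactly absorbed by the internal symmetric-group actions on the arguments of the $P$- and $Q$-factors (together with the $\mathfrak{S}_r\times\mathfrak{S}_s$ that reorders the rows), which match the inductions $\otimes_{\mathfrak{S}_{k_1}\times\cdots}$ and $\otimes_{\mathfrak{S}_{j_1}\times\cdots}$ in Vallette's definition, while the middle $\mathfrak{S}_N$-coinvariants match Vallette's $\otimes_{k[\mathfrak{S}_N]}$. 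Finally, the $\mathfrak{S}_m$- (resp. $\mathfrak{S}_n\op$-) action permutes the subsets $I_\alpha\subseteq[\![1,m]\!]$ (resp. $J_\beta\subseteq[\![1,n]\!]$) and acts on $\sigma$, reproducing the outer $k[\mathfrak{S}_m]$- (resp. $k[\mathfrak{S}_n]$-) inductions; reassembling over $N$ and all arities yields the isomorphism, naturally in $P$ and $Q$.

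The step I expect to be the main obstacle is the bookkeeping of the nested quotients: one must check that the $\mathfrak{S}_N$-quotient of \Cref{def::prod_compo_conn_bimod}, the $\mathfrak{S}_r\times\mathfrak{S}_s$-quotient implicit in passing from $\mathbb{Y}^{\mathrm{ord}}$ to $\mathbb{Y}$, and the internal $\mathfrak{S}_{k_\alpha}$, $\mathfrak{S}_{j_\beta}$ symmetries of the $P$- and $Q$-factors interlock so that exactly Vallette's identifications survive and nothing collapses further; concretely, that the stabiliser in $\mathfrak{S}_N$ of a $\psi$-normalised representative acts only through the block subgroups. Once this orbit-stabiliser analysis around $\phi$ and $\psi$ is pinned down, the remainder is a routine re-indexing of direct sums, essentially already carried out in the proof of \Cref{lem::combinatoire_connexite}.
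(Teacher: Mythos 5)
Your proposal is correct and follows essentially the same route as the paper: unwind $\big(P\boxtimes^b_c Q\big)([\![1,m]\!],[\![1,n]\!])$ via ordered partitions and arities, then use the map $(K',K'')\mapsto\sigma_{K'}^{-1}\sigma_{K''}$ of \Cref{lem::combinatoire_connexite} (with its section $\psi$) to match the indexing with Vallette's sum over connected permutations, the surjectivity giving one direction and the quotient by the block subgroups $\mathfrak{S}_{\bar k}\times\mathfrak{S}_{\bar j}$ (your ``internal'' symmetries) absorbing the non-injectivity, exactly as in the paper's proof.
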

\begin{proof}
	Let $m$ and $n$ be two integers in $\N^*$. We have
	\begin{align*}
	\big(&P\boxtimes^b_c Q\big)([\![1,m]\!],[\![1,n]\!]) :=~ \\
	& \bigg(\underset{N,r,s\in\N}{\bigoplus}~\underset{{\substack{(\{I,K'\},\{K'',J\})\\ \in\mathbb{X}^{N,\mathrm{conn}}([\![1,m]\!],[\![1,n]\!])}}}{\bigoplus} ~\underset{\alpha}{\bigotimes} P(I_\alpha,K'_\alpha) \underset{}{\otimes} ~\underset{\beta}{\bigotimes} Q(K''_\beta, J_\beta)\bigg)\bigg/_\mathfrak{S}.\\
	\cong~&
	\bigg(\underset{N\in\N}{\bigoplus}
	\bigg(\underset{{\substack{
				\{I,K'\}\in  \mathbb{Y}\ord_r([\![1,m]\!],[\![1,N]\!])\\ \{K'',J\}\in\mathbb{Y}\ord_s([\![1,N]\!],[\![1,n]\!])\\
				(K',K'')\in \mathcal{X}^{\mathrm{conn}}_{r,s}([\![1,N]\!])
			}}}{\bigoplus} 
	\Big(\bigotimes_{\alpha=1}^{r} P(I_\alpha,K'_\alpha)\Big)_{\mathfrak{S}_r} \underset{}{\otimes} 
	\Big(\bigotimes_{\beta=1}^{s} Q(K''_\beta, J_\beta)\Big)_{\mathfrak{S}_s}\bigg)\bigg)\bigg/_\mathfrak{S} \\
	\cong~& \bigg(\underset{N\in\N}{\bigoplus}~\bigg(~\underset{{\substack{\{I,K'\}\in  \mathbb{Y}\ord_r([\![1,m]\!],[\![1,N]\!])\\ \{K'',J\}\in\mathbb{Y}\ord_s([\![1,N]\!],[\![1,n]\!])\\ (K',K'')\in \mathcal{X}^{\mathrm{conn}}_{r,s}([\![1,N]\!])}}}{\bigoplus} ~ \bigotimes_{\alpha=1}^{r} P(I_\alpha,K'_\alpha) \underset{}{\otimes} ~\bigotimes_{\beta=1}^{s} Q(K''_\beta, J_\beta)\bigg)_{\mathfrak{S}_r\times\mathfrak{S}_s}\bigg)\bigg/_\mathfrak{S}~~\\
	\end{align*}
	\begin{align*} 
	\cong~& \bigg(\underset{N\in\N}{\bigoplus}
	\bigg(\underset{{\substack{
				\bar{l}\in(\N^*)^r, |\bar{l}|=m\\ 
				\bar{i}\in(\N^*)^s, |\bar{i}|=n\\ 
				(K',K'')\in \\
				X^{\mathrm{conn},\mathrm{ord}}_{r,s}([\![1,N]\!])
			}}}{\bigoplus} k[\mathfrak{S}_m]
		\underset{\prod \mathfrak{S}_{l_\alpha}}{\otimes} 
		\bigotimes_{\alpha=1}^{r} P(l_\alpha,K'_\alpha) \otimes \bigotimes_{\beta=1}^{s} Q(K''_\beta, i_\beta)
		\underset{\prod\mathfrak{S}_{i_\beta}}{\otimes} k[\mathfrak{S}_n] \bigg)_{\mathfrak{S}_r\times\mathfrak{S}_s}\bigg)\bigg/_\mathfrak{S}\\
	\overset{\phi}{\longrightarrow}~& \underset{N\in\N}{\bigoplus}~\bigg(~\underset{{\substack{\bar{l},\bar{k}\in(\N^*)^r, |\bar{l}|=m\\ \bar{i},\bar{j}\in(\N^*)^s, |\bar{i}|=n\\ |\bar{k}|=N=|\bar{j}|}}}{\bigoplus} ~k[\mathfrak{S}_m]\underset{\prod \mathfrak{S}_{l_\alpha}}{\otimes} \Big(\bigotimes_{\alpha=1}^{r} P(l_\alpha,k_\alpha)\Big) \underset{\mathfrak{S}_{\bar{k}}}{\otimes} k[S_c^{\bar{k},\bar{j}}]\underset{\mathfrak{S}_{\bar{j}}}{\otimes} ~\Big(\bigotimes_{\beta=1}^{s} Q(j_\beta, i_\beta) \Big)\underset{\prod\mathfrak{S}_{i_\beta}}{\otimes} k[\mathfrak{S}_n] \bigg)_{\mathfrak{S}_r\times\mathfrak{S}_s}\\
	\end{align*}
	where $\phi$ sends, for $(K',K'')\in \mathcal{X}^{\mathrm{conn}}_{r,s}([\![1,N]\!])$ fixed, the component
	\[
		\bigg(k[\mathfrak{S}_m]\underset{\prod \mathfrak{S}_{l_\alpha}}{\otimes} \bigotimes_{\alpha=1}^{r} P(l_\alpha,K'_\alpha)  \underset{}{\otimes}  ~\bigotimes_{\beta=1}^{s} Q(K''_\beta, i_\beta)\Big)\underset{\prod\mathfrak{S}_{i_\beta}}{\otimes} k[\mathfrak{S}_n]\bigg/_{\mathfrak{S}_N}
	\]
	to the following $\mathfrak{S}_m\times\mathfrak{S}_n^{\mathrm{op}}$-module
	\begin{align*}
	&k[\mathfrak{S}_m]\underset{\prod \mathfrak{S}_{l_\alpha}}{\otimes} \bigotimes_{\alpha=1}^{r} P\big(l_\alpha,\big[\!\!\big[\sum_{j=1}^{\alpha-1}|K'_j|+1,\sum_{j=1}^{\alpha}|K'_j|\big]\!\!\big]\big)
	\underset{\mathfrak{S}_{\bar{k}}}{\otimes} \sigma_{K'}^{-1}
	\underset{\mathfrak{S}_N}{\otimes}\sigma_{K''} \\
	& \qquad \qquad  \underset{\mathfrak{S}_{\bar{j}}}{\otimes}  ~\bigotimes_{\beta=1}^{s} Q\big(\big[\!\!\big[\sum_{j=1}^{\beta-1}|K''_j|+1,\sum_{j=1}^{\beta}|K''_j|\big]\!\!\big], i_\beta\big) \underset{\prod\mathfrak{S}_{i_\beta}}{\otimes} k[\mathfrak{S}_n] \ ,
	\end{align*}
	which is isomorphic to
	\begin{align*}
	\cong & k[\mathfrak{S}_m]\underset{\prod \mathfrak{S}_{l_\alpha}}{\otimes} \bigotimes_{\alpha=1}^{r} P\big(l_\alpha,\big[\!\!\big[\sum_{j=1}^{\alpha-1}|K'_j|+1,\sum_{j=1}^{\alpha}|K'_j|\big]\!\!\big]\big) 
	\underset{\mathfrak{S}_{\bar{k}}}{\otimes}  \sigma_{K'}^{-1}\sigma_{K''} \underset{\mathfrak{S}_{\bar{j}}}{\otimes}  \\
	& \qquad ~\bigotimes_{\beta=1}^{s} 
	Q\big(\big[\!\!\big[\sum_{j=1}^{\beta-1}|K''_j|+1,\sum_{j=1}^{\beta}|K''_j|\big]\!\!\big], i_\beta\big) 
	\underset{\prod\mathfrak{S}_{i_\beta}}{\otimes} k[\mathfrak{S}_n] \ .
	\end{align*}
	However, by  \Cref{lem::combinatoire_connexite}, the morphism $\phi$ is surjective, and the quotient by the group $\mathfrak{S}_{\bar{k}}\times\mathfrak{S}_{\bar{j}}$ gives us the injectivity.
\end{proof}

\begin{prop}[{\cite[Lem. 49]{Val03}}]
	The category $\big(\Sbimod_k, \boxtimes_c\Val, \Ibox\Val\big)$ where the unit $I$ is defined, for all pairs $(S,E)$ of finite sets, by:
	\[
	\Ibox\Val(S,E):= \left\{
	\begin{array}{rl}
	k & \mbox{ if } |S|=1=|E| \\
	0 & \mbox{ otherwise}
	\end{array}\right. ,
	\]
	is abelian monoidal and  preserves coequalizers and sequential colimits.
\end{prop}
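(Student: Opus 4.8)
The plan is to transport the statement to the concrete model $\boxtimes^b_c$ of \Cref{def::prod_compo_conn_bimod}, which by \Cref{prop::equivalence_produits_connexes2} is isomorphic to Vallette's product $\boxtimes_c\Val$ and which, by the remark following that definition, realises $P\boxtimes^b_c Q$ as a sub-bifunctor of $\mathbb{S}P\:\square\:\mathbb{S}Q$ (see \Cref{lem::SPsquareSQ}). The abelian part is immediate: $\Sbimod_k=\Func(\Fin\times\Fin\op,\Ch_k)$ is a functor category with values in the abelian category $\Ch_k$, hence abelian with all kernels, cokernels, limits and colimits computed objectwise, and in particular \emph{abelian monoidal} in the weak sense fixed in the Notation section once the monoidal structure has been produced.

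First I would check the unit axiom by a direct computation on summands. In $\big(P\boxtimes^b_c\Ibox\Val\big)(S,E)$, a term indexed by $(\{I,K'\},\{K'',J\})\in\mathbb{X}^{n,\mathrm{conn}}(S,E)$ can be non-zero only if every $K''_\beta$ and every $J_\beta$ is a singleton; hence $s=n=|E|$, the partition $K''$ of $[\![1,n]\!]$ is discrete, and connectedness of $(K',K'')$ forces $r=1$, $\{I\}=\{S\}$ and $\{K'\}=\{[\![1,n]\!]\}$. Modding out by the residual $\mathfrak{S}_n$ then identifies the whole sum with $P(S,E)$, naturally in $S$ and $E$; the computation of $\Ibox\Val\boxtimes^b_c P$ is symmetric. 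These natural isomorphisms are the left and right unitors.

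Next I would establish associativity, imitating the computation in the proof of \Cref{lem::Smod_ana_scinde}: expanding $\big((P\boxtimes^b_c Q)\boxtimes^b_c R\big)(S,E)$ and $\big(P\boxtimes^b_c(Q\boxtimes^b_c R)\big)(S,E)$, interchanging the nested direct sums and regrouping the $\mathfrak{S}$-quotients, both become one and the same sum over triples of partitions of $(S,E)$ routed through two intermediate finite sets and subject to a single global connectedness condition. The identification of the two sides is then exactly the $\mathfrak{S}$-bimodule counterpart of the associativity of $\mathcal{K}$ (\Cref{lem::K_associative}), applied to the composite of the two connecting steps; this reindexing bijection is natural and equivariant, so it is the associator, and the pentagon and triangle axioms hold because both of their legs act, on each layered partition, by the obvious relabelling of blocks. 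I expect this associativity bijection to be the main obstacle, the delicate point being to keep track of the various $\mathfrak{S}_n$-coinvariants through the expansion so that they match the $\mathbb{X}^{\mathrm{conn}}$-combinatorics; an alternative is to cite the original verification \cite[Lem.~49]{Val03} through \Cref{prop::equivalence_produits_connexes2}.

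Finally, for the preservation of reflexive coequalizers and sequential colimits I would argue as in \Cref{lem::Smod_ana_scinde}: colimits in $\Sbimod_k$ are objectwise, and objectwise $\boxtimes^b_c$ is built out of direct sums, the tensor product $-\otimes_k-$ over the field $k$ and coinvariants under finite symmetric groups, each of which commutes with these colimits. More structurally, for fixed reduced $\mathfrak{S}$-bimodules $A,B$ the endofunctor $X\mapsto A\boxtimes^b_c X\boxtimes^b_c B$ splits as $\bigoplus_{n\in\N}(\Phi_{A,B})_n$, with $(\Phi_{A,B})_n$ the homogeneous polynomial functor of degree $n$ obtained by freezing the $n$ blocks carried by $X$, so $\Phi_{A,B}$ is split analytic in the sense of \cite[Sect.~4]{Val09}; since split analytic functors preserve reflexive coequalizers and sequential colimits, so does $\boxtimes^b_c$ in each variable. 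The remaining verifications are identical to those of \cite[Prop.~13]{Val09} and \Cref{prop::prop_fond_du_produit_connexe}.
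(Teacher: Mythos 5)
Your proposal is correct in substance, but it takes a different route from the paper for the simple reason that the paper gives no proof at all here: this proposition is stated as a recollection of Vallette's result and the argument is delegated entirely to the citation \cite[Lem.~49]{Val03}, with no proof environment following it. What you do instead is a self-contained verification in the paper's own model $\boxtimes^b_c$: the unit computation (forcing $K''$ and $J$ to be discrete, whence $r=1$ by connectedness, and recovering $P(S,E)$ after the $\mathfrak{S}_n$-quotient) is exactly right; associativity transposed from \Cref{lem::Smod_ana_scinde} via \Cref{lem::K_associative} is the right mechanism, with the honest caveat you already flag about tracking the $\mathfrak{S}_n$-coinvariants through the two intermediate levels; and the split-analytic argument for preservation of reflexive coequalizers and sequential colimits mirrors \Cref{prop::prop_fond_du_produit_connexe} and \cite[Prop.~13]{Val09}. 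What the paper's citation buys is economy and correct attribution (the result is Vallette's, for his original permutation-indexed definition); what your argument buys is that everything is proved directly for the partition-indexed product actually used in the rest of the paper, and it quietly replaces "coequalizers" by "reflexive coequalizers", which is the statement genuinely needed for the free-monoid construction of \Cref{subsect::monoide_libre}. One small caution if you keep the transport step: \Cref{prop::equivalence_produits_connexes2} is only stated as an isomorphism of $\mathfrak{S}_m\times\mathfrak{S}_n\op$-modules on skeleta, not as a natural monoidal equivalence, so to carry a monoidal structure across it you should note naturality in both variables; proving the axioms directly for $\boxtimes^b_c$, as most of your sketch does, avoids the issue.
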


The $\mathfrak{S}$-bimodule $P \boxtimes\Val_c Q$ appears as the  indecomposables for the product $\otimes^{\mathrm{conc}}$ of $\mathbb{S}P \:\square\:\mathbb{S}Q$.
\begin{lem}\label{lem::SPsquareSQ}
	Let $P$ and $Q$ be two reduced $\mathfrak{S}$-bimodules. We have, for all finite sets $S$ and $E$, the following isomorphism
	\[
		\big(\mathbb{S}P\:\square\:\mathbb{S}Q\big)(S,E) \cong \underset{n\in\N^*}{\bigoplus}
		\Big(\underset{{\substack{ 
			(\{I,K'\},\{K'',J\}) \\ 
			\in\mathbb{X}^n(S,E)
		}}}{\bigoplus}
		\underset{\alpha}{\bigotimes} P(I_\alpha,K'_\alpha) \underset{}{\otimes} \underset{\beta}{\bigotimes} Q(K''_\beta,J_\beta)\Big)\Big/_{\mathfrak{S}_n}.
	\]
\end{lem}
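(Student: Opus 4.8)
The plan is to prove the isomorphism by unfolding both sides into one and the same explicit bigraded chain complex, using only the definitions already available. First I would rewrite the left-hand side using the description of the composition product from \Cref{def::prod_compo_bimod}, namely
\[
\big(\mathbb{S}P\:\square\:\mathbb{S}Q\big)(S,E)\cong \underset{n\in\N^*}{\bigoplus}\ \mathbb{S}P(S,[\![1,n]\!])\underset{\mathfrak{S}_n}{\otimes}\mathbb{S}Q([\![1,n]\!],E),
\]
where $\mathfrak{S}_n$ acts on $\mathbb{S}P(S,[\![1,n]\!])$ on the right and on $\mathbb{S}Q([\![1,n]\!],E)$ on the left through relabelling of $[\![1,n]\!]$, i.e. through the functoriality of $\mathbb{Y}(-,-)$ in the appropriate variable.

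Next I would expand each factor via the definition of the free commutative monoid: $\mathbb{S}P(S,[\![1,n]\!])=\bigoplus_{r\in\N^*}\bigoplus_{\{I,K'\}\in\mathbb{Y}_r(S,[\![1,n]\!])}\bigotimes_\alpha P(I_\alpha,K'_\alpha)$, and likewise for $\mathbb{S}Q([\![1,n]\!],E)$ with data $\{K'',J\}\in\mathbb{Y}_s([\![1,n]\!],E)$. Distributing the tensor product of $\Ch_k$ over these direct sums (it is additive in each variable) then gives
\[
\big(\mathbb{S}P\:\square\:\mathbb{S}Q\big)(S,E)\cong \underset{n\in\N^*}{\bigoplus}\Big(\underset{r,s\in\N^*}{\bigoplus}\ \underset{{\substack{\{I,K'\}\in\mathbb{Y}_r(S,[\![1,n]\!])\\ \{K'',J\}\in\mathbb{Y}_s([\![1,n]\!],E)}}}{\bigoplus}\ \underset{\alpha}{\bigotimes}P(I_\alpha,K'_\alpha)\otimes\underset{\beta}{\bigotimes}Q(K''_\beta,J_\beta)\Big)_{\mathfrak{S}_n}.
\]
Here I would note that the $\mathfrak{S}_r\times\mathfrak{S}_s$-coinvariants implicit in $\mathbb{S}^r$ and $\mathbb{S}^s$ commute with everything else, since these groups only permute tensor factors living over disjoint blocks of the disjoint unions $S=\amalg I_\alpha$, etc., and commute with the $\mathfrak{S}_n$-action on the middle set.

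The only substantial point is the identification of the $\mathfrak{S}_n$-coinvariants. The diagonal right $\mathfrak{S}_n$-action on the indexing set $\mathbb{Y}_r(S,[\![1,n]\!])\times\mathbb{Y}_s([\![1,n]\!],E)$ is precisely the one whose quotient defines $\mathbb{X}^n_{r,s}(S,E)$, via the identification $(\{I,K'\}\cdot\sigma,\{K'',J\})\sim(\{I,K'\},\sigma^{-1}\cdot\{K'',J\})$ stated just before \Cref{def::prod_compo_conn_bimod}. Moreover each summand $\bigotimes_\alpha P(I_\alpha,K'_\alpha)\otimes\bigotimes_\beta Q(K''_\beta,J_\beta)$ depends only on the sets $I_\alpha,K'_\alpha,K''_\beta,J_\beta$, so $\sigma$ induces a canonical identification between the summand over $(\{I,K'\}\cdot\sigma,\{K'',J\})$ and the one over $(\{I,K'\},\sigma^{-1}\cdot\{K'',J\})$; this identification carries no Koszul sign because the $\mathfrak{S}_n$-relabelling only permutes elements inside the $K'$'s and $K''$'s and never transposes tensor factors of $P$ or $Q$. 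Hence taking $\mathfrak{S}_n$-coinvariants of the indexing set and of the summands is compatible, and the coinvariants are indexed by $\coprod_{r,s\in\N^*}\mathbb{X}^n_{r,s}(S,E)=\mathbb{X}^n(S,E)$ with summand $\bigotimes_\alpha P(I_\alpha,K'_\alpha)\otimes\bigotimes_\beta Q(K''_\beta,J_\beta)$, which is exactly the right-hand side. Naturality in $S$ and $E$ is automatic, every step being natural.

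I expect the main obstacle to be purely bookkeeping: keeping straight which symmetric group acts on the indexing data versus on the chain-level summands, and checking that the three layers of quotients ($\mathfrak{S}_r$, $\mathfrak{S}_s$ from $\mathbb{T}\rightsquigarrow\mathbb{S}$, and $\mathfrak{S}_n$ from $\square$) interact correctly. In particular one must verify that the $\mathfrak{S}_n$-coinvariants can be pushed past the $\mathfrak{S}_r\times\mathfrak{S}_s$-coinvariants, and that no spurious signs appear, which holds precisely because the $\mathfrak{S}_n$-action relabels only the middle set $[\![1,n]\!]$.
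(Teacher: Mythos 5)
Your proof is correct and takes essentially the same route as the paper: both unfold the composition product of $\mathbb{S}P$ and $\mathbb{S}Q$ via \Cref{def::prod_compo_bimod}, expand each $\mathbb{S}$-factor as a sum over $\mathbb{Y}_r$, and regroup the indexing data modulo the diagonal $\mathfrak{S}_n$-relabelling of $[\![1,n]\!]$ to recover $\mathbb{X}^n(S,E)$, with no signs since the relabelling never permutes tensor factors. The only (immaterial) difference is that you start from the skeletal form $\bigoplus_n \mathbb{S}P(S,[\![1,n]\!])\otimes_{\mathfrak{S}_n}\mathbb{S}Q([\![1,n]\!],E)$ rather than the $\mathbb{X}^n_{1,1}$ form, and these are identified in that same definition.
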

\begin{proof}
	Let $S$ and $E$ be two finite sets, then
	\begin{align*}
	\big(\mathbb{S}P\:\square\:&\mathbb{S}Q\big)(S,E) \underset{}{\cong}~ \underset{n\in\N^*}{\bigoplus}~\Big(\underset{(\{A,B\},\{C,D\})\in\mathbb{X}_{1,1}^n(m,n)}{\bigoplus} \mathbb{S}P(A,B)\underset{}{\otimes} \mathbb{S}Q(C,D)\Big)\Big/_{\mathfrak{S}_n} \\
	\cong~& \underset{n\in\N^*}{\bigoplus}~\Big(
	\underset{{\substack{
				(\{A,B\},\{C,D\})\\ 
				\in\mathbb{X}_{1,1}^n(S,E)
			}}}{\bigoplus} 
	~\underset{{\substack{ 
				\{I,K'\}\in \mathbb{Y}(A,B) \\ \{K'',J\}\in \mathbb{Y}(C,D) }}}{\bigoplus} 
	~\underset{\alpha}{\bigotimes} P(I_\alpha,K'_\alpha) \underset{}{\otimes} ~\underset{\beta}{\bigotimes} Q(K''_\beta, J_\beta) \Big)\Big/_{\mathfrak{S}_n}\\
	\cong~& \underset{n\in\N^*}{\bigoplus}~\Big(\underset{(\{I,K'\},\{K'',J\})\in\mathbb{X}^n(S,E)}{\bigoplus} ~\underset{\alpha}{\bigotimes} P(I_\alpha,K'_\alpha) \underset{}{\otimes} ~\underset{\beta}{\bigotimes} Q(K''_\beta, J_\beta)\Big)\Big/_{\mathfrak{S}_n} \ .
	\end{align*}
\end{proof}

\begin{prop}\label{prop::S_permute_prod_connexe_bi}
	Let $P$ and $Q$ be two reduced $\mathfrak{S}$-bimodules. We have the natural isomorphism
	\[
		\mathbb{S}\big( P\boxtimes\Val_c Q\big) \cong \mathbb{S}P\:\square\:\mathbb{S}Q.
	\]
\end{prop}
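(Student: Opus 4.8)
The plan is to run the bimodule analogue of the argument proving \Cref{prop::S_permute_prod_connexe}. By \Cref{prop::equivalence_produits_connexes2} I may (and will) replace $\boxtimes\Val_c$ by the bifunctor $\boxtimes^b_c$ of \Cref{def::prod_compo_conn_bimod}, so that the goal becomes the natural isomorphism $\mathbb{S}\big(P\boxtimes^b_c Q\big)\cong \mathbb{S}P\:\square\:\mathbb{S}Q$. First I would fix finite sets $S$ and $E$ and expand the left-hand side using the description of $\mathbb{S}$ on a free commutative monoid, namely $\mathbb{S}R(S,E)\cong\bigoplus_{\{A,B\}\in\mathbb{Y}(S,E)}\bigotimes_\gamma R(A_\gamma,B_\gamma)$ applied to $R=P\boxtimes^b_c Q$, giving
\[
	\mathbb{S}\big(P\boxtimes^b_c Q\big)(S,E)\cong \underset{\{A,B\}\in\mathbb{Y}(S,E)}{\bigoplus}\ \underset{\gamma}{\bigotimes}\ \big(P\boxtimes^b_c Q\big)(A_\gamma,B_\gamma).
\]

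Next, plug \Cref{def::prod_compo_conn_bimod} into each factor: $\big(P\boxtimes^b_c Q\big)(A_\gamma,B_\gamma)$ is the sum over $n_\gamma\in\N^*$ and over connected pairs $(\{I^\gamma,K'^\gamma\},\{K''^\gamma,J^\gamma\})\in\mathbb{X}^{n_\gamma,\mathrm{conn}}(A_\gamma,B_\gamma)$ of $\bigotimes_\alpha P(I^\gamma_\alpha,K'^\gamma_\alpha)\otimes\bigotimes_\beta Q(K''^\gamma_\beta,J^\gamma_\beta)$, taken modulo $\mathfrak{S}_{n_\gamma}$. Distributing $\bigotimes_\gamma$ over these direct sums turns the whole expression into a direct sum indexed by a partition $\{A,B\}\in\mathbb{Y}(S,E)$ together with a connected pair on each block, modulo $\prod_\gamma\mathfrak{S}_{n_\gamma}$ and modulo the symmetric group permuting the blocks $\gamma$. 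The combinatorial core is then to identify this indexing datum with $\coprod_{n\in\N^*}\mathbb{X}^n(S,E)/\mathfrak{S}_n$: relabel $\coprod_\gamma[\![1,n_\gamma]\!]$ as $[\![1,n]\!]$ with $n=\sum_\gamma n_\gamma$, and use the connected-components decomposition — an element $(\{I,K'\},\{K'',J\})\in\mathbb{X}^n(S,E)$ has $(K',K'')$ a pair of partitions of $[\![1,n]\!]$ with \emph{no} connectedness requirement, and $\mathcal{K}_{[\![1,n]\!]}$ (\Cref{def::projection_K}) decomposes $(K',K'')$ uniquely into its connected components; grouping the blocks $I_\alpha$ resp. $J_\beta$ according to which component their colour-block lies in yields a partition $\{A,B\}\in\mathbb{Y}(S,E)$ and a connected pair on each block, and conversely. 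Matching the quotient by $\mathfrak{S}_n$ on one side with the quotient by $\prod_\gamma\mathfrak{S}_{n_\gamma}$ and the block permutations on the other is the bookkeeping already carried out in the proofs of \Cref{lem::SPsquareSQ} and \Cref{prop::equivalence_produits_connexes2}. This produces an isomorphism of $\mathbb{S}\big(P\boxtimes^b_c Q\big)(S,E)$ with $\bigoplus_{n\in\N^*}\big(\bigoplus_{(\{I,K'\},\{K'',J\})\in\mathbb{X}^n(S,E)}\bigotimes_\alpha P(I_\alpha,K'_\alpha)\otimes\bigotimes_\beta Q(K''_\beta,J_\beta)\big)/\mathfrak{S}_n$, which is exactly $\big(\mathbb{S}P\:\square\:\mathbb{S}Q\big)(S,E)$ by \Cref{lem::SPsquareSQ}. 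Since every step is natural in the finite sets $S$ and $E$, these isomorphisms assemble into the desired natural isomorphism.

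The hard part will be the previous paragraph: making rigorous that "a partition of $(S,E)$ equipped with a connected pair (each modulo its own $\mathfrak{S}_{n_\gamma}$) on every block, modulo block permutations" carries the same information as "one element of $\mathbb{X}^n(S,E)$ modulo $\mathfrak{S}_n$". The subtlety lies entirely in how the internal colour sets $[\![1,n_\gamma]\!]$ glue into $[\![1,n]\!]$ and in how the several symmetric-group quotients interact; this is the bimodule counterpart of the regrouping step in \Cref{prop::S_permute_prod_connexe}, and it is handled by invoking the connected-components decomposition of \Cref{def::projection_K} together with the identification of \Cref{prop::equivalence_produits_connexes2}. Everything else — the expansion of $\mathbb{S}$ on a free commutative monoid, distributing the tensor product over direct sums, and the final appeal to \Cref{lem::SPsquareSQ} — is formal.
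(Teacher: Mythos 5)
Your proposal is correct and follows essentially the same route as the paper: the paper's own proof simply cites \Cref{lem::SPsquareSQ} for the expansion of $\mathbb{S}P\:\square\:\mathbb{S}Q$ over all (not necessarily connected) elements of $\mathbb{X}^n(S,E)$ modulo $\mathfrak{S}_n$, leaving the regrouping by connected components implicit, exactly as in the $\mathfrak{S}$-module case of \Cref{prop::S_permute_prod_connexe}. Your write-up just makes explicit the connected-components bookkeeping (via $\mathcal{K}$ and the interaction of the symmetric-group quotients) that the paper omits.
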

\begin{proof}
	By \Cref{lem::SPsquareSQ}, for all finite sets $S$ and $E$, we have the isomorphism
	\[
		\big(\mathbb{S}P\:\square\:\mathbb{S}Q\big)(S,E) \cong \underset{n\in\N^*}{\bigoplus}
		\Big(
		\underset{{\substack{ 
			(\{I,K'\},\{K'',J\})\\ \in\mathbb{X}^n(S,E)
		}}}{\bigoplus}
		\underset{\alpha}{\bigotimes} P(I_\alpha,K'_\alpha) \underset{}{\otimes} \underset{\beta}{\bigotimes} Q(K''_\beta,J_\beta)\Big)\Big/_{\mathfrak{S}_n}.
	\]
\end{proof}

\section{Induction functor}\label{sect::induction_functor}
We describe the functor $\Ind$, and its right adjoint, the restriction functor  $\mathrm{Res}$, which establishes the link between the two previous sections, since $ \Ind : \Smod_k \rightarrow \Sbimod_k$ is strong monoidal for the different products introduced in the sections \ref{sect::product_on_Smod} and \ref{sect::bimodule}.

\subsection{Adjunction Ind-Res}
For $\mathcal{C}$ a groupoid, we note $\Delta_{\mathcal{C}}$ the functor
\[
\Delta_{\mathcal{C}}:=(\mathrm{inv}_{\mathcal{C}},\mathrm{id}_{\mathcal{C}}) : \mathcal{C} \longrightarrow \mathcal{C}\op \times \mathcal{C}
\]
where $\mathrm{inv}_{\mathcal{C}} :\mathcal{C} \rightarrow \mathcal{C}\op $ is the equivalence of categories given by the passage to the inverse. The \textit{restriction functor}, denoted by $\mathrm{Res}$, is given by the following composition: 
\[
\begin{array}{rccc}
\mathrm{Res} : & \Sbimod_k & \longrightarrow & \Smod_k\\
& P & \longmapsto & P\circ \Delta_{\Fin\op}
\end{array}~~.
\]
This functor is exact and has a left adjoint functor, the \emph{induction functor}. 
\begin{defi}[Functor $\Ind$]\label{defi::foncteur Ind}
	The \emph{induction functor} is given by:
	\[
		\begin{array}{rccc}
		\Ind :& \Smod_k & \longrightarrow &  \Sbimod_k \\
		& V & \longmapsto & \big(\Ind V \big)(S,E):= \underset{\Hom_\Fin(E,S)}{\bigoplus} V(E)
		\end{array}
	\]
	where an element $f$ in $\mathrm{Aut}(S)$ acts on the left by
	\[
		f\cdot\Big( \underset{\phi\in\Hom_\Fin(E,S)}{\bigoplus} V(E)\Big) = \underset{f\phi\in\Hom_\Fin(E,S)}{\bigoplus} V(E)
	\]
	and an element $g$ in  $\mathrm{Aut}(E)$ acts on the right by
	\[
		\Big( \underset{\phi\in\Hom_\Fin(E,S)}{\bigoplus} V(E)\Big)\cdot g =  \underset{\phi g\in\Hom_\Fin(E,S)}{\bigoplus} V\big(f^{-1}(E)\big).
	\]
\end{defi}
\begin{rem}\label{rem::Description_Ind}
	Let $V$ be a reduced $\mathfrak{S}$-module and $S$ and $E$ be two finite sets. If  $S$ and $E$ are not isomorphic (i.e. $|S|\ne|E|$) then $\big(\Ind V \big)(S,E) =0$. Finally, we have:
	\begin{align*}
		\big(\Ind\;V \big)(S,E)\cong & 
		\left\{ \begin{array}{cl}
		0 & \mbox{ if } S\not\cong E \\ 
		k[\mathrm{Aut}(S)]\otimes V(S)  & \mbox{ otherwise.}
		\end{array} \right.
	\end{align*}
\end{rem}

\begin{prop}\label{prop::adj_ind_res}
	We have the adjunction
	\[
		\Ind : \Smod_k \rightleftarrows \Sbimod_k : \mathrm{Res}.
	\]
\end{prop}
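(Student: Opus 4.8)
The plan is to establish a bijection $\Hom_{\Sbimod_k}(\Ind V, P) \cong \Hom_{\Smod_k}(V,\mathrm{Res}\,P)$, natural in $V\in\Smod_k$ and $P\in\Sbimod_k$. Conceptually this is the left Kan extension adjunction: $\mathrm{Res}$ is precomposition with $\Delta_{\Fin\op}\colon\Fin\op\to\Fin\times\Fin\op$, and since $\Ch_k$ is cocomplete, precomposition with a functor between small categories always admits a left adjoint $\mathrm{Lan}_{\Delta_{\Fin\op}}$; the real content is that the explicit formula of \Cref{defi::foncteur Ind} computes this left Kan extension (because $\Fin$ is a groupoid the defining coend collapses to the direct sum over $\Hom_\Fin(E,S)$). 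I would make this precise by exhibiting the unit and counit and checking the triangle identities directly.

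Concretely, I would define the unit $\eta_V\colon V\to\mathrm{Res}\,\Ind V$ by sending, for each finite set $E$, the complex $V(E)$ onto the summand of $(\mathrm{Res}\,\Ind V)(E)=\Ind V(E,E)=\bigoplus_{\phi\in\mathrm{Aut}(E)}V(E)$ indexed by $\mathrm{id}_E$, and the counit $\epsilon_P\colon\Ind\,\mathrm{Res}\,P\to P$ by letting its component at $(S,E)$ be, on the summand of $\Ind\,\mathrm{Res}\,P(S,E)=\bigoplus_{\phi\in\Hom_\Fin(E,S)}P(E,E)$ indexed by $\phi\colon E\to S$, the map $P(\phi,\mathrm{id}_E)\colon P(E,E)\to P(S,E)$ --- both sides vanishing unless $|S|=|E|$, in accordance with \Cref{rem::Description_Ind}. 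The first task is to check that $\eta_V$ and $\epsilon_P$ are morphisms of $\mathfrak{S}$-modules, resp.\ $\mathfrak{S}$-bimodules, and are natural in $V$ and $P$: for $\eta_V$ one uses that the $\mathrm{Aut}(E)$-action on $(\mathrm{Res}\,\Ind V)(E)$ coming through $\mathrm{Res}$ permutes the summands by conjugation, hence fixes $\mathrm{id}_E$, while acting on that summand by the $\mathfrak{S}$-module structure of $V$; for $\epsilon_P$ one reads off equivariance for the left $\mathrm{Aut}(S)$- and right $\mathrm{Aut}(E)$-actions from the action formulas in \Cref{defi::foncteur Ind} together with functoriality of $P$.

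It then remains to verify the triangle identities $\mathrm{Res}(\epsilon_P)\circ\eta_{\mathrm{Res}\,P}=\mathrm{id}_{\mathrm{Res}\,P}$ and $\epsilon_{\Ind V}\circ\Ind(\eta_V)=\mathrm{id}_{\Ind V}$; after unravelling the coproduct decompositions the first collapses to $P(\mathrm{id},\mathrm{id})=\mathrm{id}$ and the second to the observation that functoriality of $\Ind V$ in its covariant variable relabels the summand indexed by $\psi$ to the one indexed by $\phi\psi$ via the identity of $V(E)$. The only genuine hazard, and the step I expect to be fiddly rather than hard, is the variance bookkeeping --- $V$ and $\mathrm{Res}\,P$ are contravariant on $\Fin$, $P$ is covariant in its first slot and contravariant in its second, and $\mathrm{Res}$ acts on morphisms by the twisted diagonal $g\mapsto P(\mathrm{inv}(g),g)$ --- so once the conventions are fixed and everything is spelled out summand by summand, each verification reduces to a one-line application of functoriality. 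Equivalently, one can bypass the unit/counit and build the Hom-bijection directly from the fact that $\Hom_\Fin(E,S)$ is an $\mathrm{Aut}(S)$-torsor when $|S|=|E|$, so that a map $\Ind V\to P$ is determined by its restriction to the $\mathrm{id}$-summands along the diagonal, which is exactly a map $V\to\mathrm{Res}\,P$.
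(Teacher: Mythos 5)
Your proposal is correct, but it takes a different (and more self-contained) route than the paper. The paper disposes of this proposition in one line, by citing the classical Frobenius reciprocity theorem of Serre \cite[Th. 13]{Ser71}: via \Cref{rem::Description_Ind}, the functor $\Ind$ is, aritywise, the usual induction of representations along the diagonal subgroup inclusion $\mathfrak{S}_n \hookrightarrow \mathfrak{S}_n\times\mathfrak{S}_n\op$ of \Cref{eq::induction_group_morphism}, and $\Res$ is the corresponding restriction, so the adjunction is the classical induction--restriction adjunction for finite groups applied in each arity. You instead work at the level of the functor categories: you identify $\Res$ as precomposition with $\Delta_{\Fin\op}$, observe that the explicit formula of \Cref{defi::foncteur Ind} computes the left Kan extension $\mathrm{Lan}_{\Delta_{\Fin\op}}$ (the coend collapsing to $\bigoplus_{\Hom_\Fin(E,S)}V(E)$ because $\Fin$ is a groupoid), and then exhibit the unit and counit and check the triangle identities summand by summand. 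Your verifications are sound: the unit lands in the $\mathrm{id}_E$-indexed summand, which is indeed fixed by the conjugation-type action that $\Res$ induces on $\Ind V(E,E)$; the counit $P(\phi,\mathrm{id}_E)$ uses only the covariant slot of $P$, so the variance bookkeeping works out; and the two triangle identities reduce to $P(\mathrm{id},\mathrm{id})=\mathrm{id}$ and to the relabelling of summands $\psi\mapsto\phi\psi$, exactly as you say. What the paper's citation buys is brevity, at the cost of implicitly using the identification of \Cref{rem::Description_Ind} with group-theoretic induction; what your argument buys is a proof that is independent of that identification, makes the naturality in both variables explicit, and explains conceptually why the direct-sum formula is forced (it is the Kan extension/torsor argument you sketch at the end). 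Either proof is acceptable here.
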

\begin{proof}
	By the classical result \cite[Th.13]{Ser71}.
\end{proof}
One of the fundamental properties of  $\Ind$ is the following.
\begin{prop}\label{prop::Ind_exact}
	The functor $\Ind$ is exact, preserves quasi-isomorphisms and commutes with colimits.
\end{prop}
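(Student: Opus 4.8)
The statement to prove is Proposition \ref{prop::Ind_exact}: the functor $\Ind$ is exact, preserves quasi-isomorphisms and commutes with colimits. The plan is to reduce everything to the explicit description of $\Ind V$ given in \Cref{rem::Description_Ind}, namely $\big(\Ind V\big)(S,E) \cong k[\mathrm{Aut}(S)]\otimes V(S)$ when $S\cong E$ and $0$ otherwise. Since exactness, preservation of quasi-isomorphisms, and commutation with colimits are all computed pointwise in $\Func(\Fin\times\Fin\op,\Ch_k)$ (limits and colimits of functors into $\Ch_k$ are computed objectwise, and a sequence of functors is exact iff it is exact at every object), it suffices to check the three properties for the chain-complex-valued functor $V\mapsto \big(\Ind V\big)(S,E)$ for each fixed pair $(S,E)$.

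First I would dispose of the case $S\not\cong E$: there $\big(\Ind V\big)(S,E)=0$ for every $V$, so the functor is the zero functor, which is trivially exact, sends everything to a quasi-isomorphism, and commutes with all colimits. For the case $S\cong E$, the functor is $V\mapsto k[\mathrm{Aut}(S)]\otimes_k V(S)$; since we work over a field $k$, the free $k$-module $k[\mathrm{Aut}(S)]$ is in particular flat, so tensoring with it over $k$ is an exact functor on $\Ch_k$ (it preserves short exact sequences of chain complexes degreewise, hence is exact). Exactness of the composite $V\mapsto k[\mathrm{Aut}(S)]\otimes V(S)$ then follows because the evaluation functor $V\mapsto V(S)$ is itself exact (again, kernels and cokernels of natural transformations are computed objectwise). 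Preservation of quasi-isomorphisms is then automatic: a quasi-isomorphism $f:V\to V'$ gives a quasi-isomorphism $f(S):V(S)\to V'(S)$, and tensoring a quasi-isomorphism with the flat complex $k[\mathrm{Aut}(S)]$ (concentrated in degree $0$) is again a quasi-isomorphism, e.g. by the K\"unneth formula over a field, or simply because $k[\mathrm{Aut}(S)]\otimes -$ commutes with taking homology. Commutation with colimits follows from the same two observations: evaluation at $S$ commutes with colimits (they are objectwise), and $k[\mathrm{Aut}(S)]\otimes_k -$, being a left adjoint on $\Ch_k$, commutes with all colimits.

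I do not expect a genuine obstacle here; the only point requiring a little care is making sure the reductions are legitimate --- that is, that in $\Func(\Fin\times\Fin\op,\Ch_k)$ all the relevant structure (exact sequences, quasi-isomorphisms, colimits) is detected objectwise, and that the description in \Cref{rem::Description_Ind} is an isomorphism \emph{of functors in $V$} (it is, being built from the natural isomorphism $\bigoplus_{\Hom_\Fin(E,S)}V(E)\cong k[\mathrm{Aut}(S)]\otimes V(S)$ which is visibly natural in $V$). Once these are in place the proof is a one-line invocation of flatness of $k[\mathrm{Aut}(S)]$ over the field $k$ together with exactness and cocontinuity of evaluation functors. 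Alternatively, and even more cheaply, one may simply remark that $\Ind$ is a left adjoint (\Cref{prop::adj_ind_res}), hence automatically commutes with all colimits, so that only exactness and preservation of quasi-isomorphisms need the pointwise argument above.
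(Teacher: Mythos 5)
Your argument is correct and is essentially the paper's own proof: both rest on the explicit description of $\Ind V(S,E)$ from \Cref{rem::Description_Ind} (freeness of the group algebra over the field $k$, hence exactness and preservation of quasi-isomorphisms, checked objectwise), and both invoke the adjunction of \Cref{prop::adj_ind_res} to get commutation with colimits. Your version merely spells out the objectwise reductions that the paper leaves implicit.
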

\begin{proof}
	The result is induced by the remark  \ref{rem::Description_Ind}; more generally, for $G$ a finite group and $H$ a subgroup of $G$, $k[G]$ is a free $k[H]$-module. As the functor $\Ind$ has a right adjoint, it commutes  with colimits.
\end{proof}	

\subsection{Compatibilities with products}
In this subsection, we show compatibilities of the functor  $\Ind$ with products defined in previous sections. First, we recall the following proposition, about compatibility between monoidal structures and adjunction.

\begin{prop}\label{prop::adjonction_lax_monoidal}
	Let $(\C,\otimes,I_\C)$ and  $(\mathsf{D},\odot,I_{\mathsf{D}})$ be two monoidal categories with the following adjunction:
	\[
	\begin{tikzcd}
		L \ : \ \C \arrow[r, harpoon, shift left=-0.2ex, "\perp"', bend left=15]
		& \arrow[l, harpoon,  shift left=-0.5ex, bend left=15] \mathsf{D} \ : \ R \ 
	\end{tikzcd}
	\]
	such that the left adjoint is a strong monoidal functor by the following natural  equivalence $\mu_L : L(-_1)\odot L(-_2) \overset{\cong}{\rightarrow} L(-_1\otimes -_2) $ and the natural isomorphism $
	e_L : I_\mathsf{D} \overset{\cong}{\rightarrow} L(I_\C)$. Then the right adjoint is a Lax monoidal functor with the natural transformation $\mu_R$ and the morphism $\epsilon_R$ given by
	\[
	\xymatrix@C=2cm{
		\ar@{.>}[d]_{\mu_R:=}  R(-_1)\otimes R(-_2) \ar[r]^(0.45){\eta(R(-_1)\otimes R(-_2)} & RL\big( R(-_1)\otimes R(-_2)\big) \ar[d]^{R\big(\mu_L^{-1}(R(-_1),R(-_2))\big)} \\
		R(-_1\otimes -_2) & R\big( LR(-_1)\odot LR(-_2)\big) \ar[l]^(0.55){R(\epsilon(-_1)\odot \epsilon(-_2))}
	}
	\]
	and 
	$
	\xymatrix{
		e_R : I_\C \ar[r]^{\eta(I_\C)} & RL(I_\C) \ar[r]^{R(e_L^{-1})} & R(I_\mathsf{D}) .
	}
	$
\end{prop}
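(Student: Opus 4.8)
The plan is to take the formulas displayed in the statement as the \emph{definitions} of $\mu_R$ and $e_R$, and then to check in turn that (i) they are natural, and (ii) they satisfy the three coherence axioms of a lax monoidal functor: the associativity hexagon and the two unit triangles. Naturality is immediate, since $\mu_R$ is assembled from the unit $\eta$ and counit $\epsilon$ of the adjunction $L\dashv R$ and from the components of $\mu_L^{-1}$, all of which are natural; the same applies to $e_R$, built from $\eta$ and $e_L^{-1}$.

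The mechanism for the coherence axioms is the interplay of the triangle identities $\epsilon L\circ L\eta=\id_L$ and $R\epsilon\circ\eta R=\id_R$ with the observation that, \emph{because $\mu_L$ and $e_L$ are invertible}, the triple $(L,\mu_L^{-1},e_L^{-1})$ is an \emph{oplax} monoidal functor: $\mu_L^{-1}$ satisfies the reversed associativity hexagon and $e_L^{-1}$ the reversed unit triangles, which is a purely formal consequence of the strong monoidal coherence of $(L,\mu_L,e_L)$. The verification of the associativity hexagon for $(R,\mu_R)$ then reduces to a diagram chase: one substitutes the definition of $\mu_R$ twice, uses naturality of $\eta$ and $\epsilon$ to slide the structure morphisms of $L$ past the relevant functors, applies a triangle identity to cancel an $\eta$--$\epsilon$ pair, and finally invokes the oplax associativity of $\mu_L^{-1}$ (applied inside $R$). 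The left and right unit axioms are dispatched in exactly the same way, replacing the hexagon by the corresponding oplax unit triangle for $e_L^{-1}$.

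Conceptually the cleanest formulation — which I would at least mention — is through the calculus of mates: $\mu_R$ is precisely the mate of $\mu_L^{-1}$ along the adjunction $L\dashv R$ and its pointwise square $L\times L\dashv R\times R$ (applied to the bifunctors $\otimes$ and $\odot$), and $e_R$ is the mate of $e_L^{-1}$. The general principle that the mate of a coherence cell is again a coherence cell then converts the oplax monoidal axioms for $(L,\mu_L^{-1},e_L^{-1})$ directly into the lax monoidal axioms for $(R,\mu_R,e_R)$, with no further computation. This is the content of Kelly's doctrinal adjunction, to which one may simply refer.

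The main obstacle is bookkeeping in the associativity hexagon: it is a sizeable diagram, and one must be careful about the order in which the $\eta$'s and $\epsilon$'s attached to the three tensor slots are introduced and cancelled, and about applying $R$ to the \emph{whole} associativity constraint of $L$ rather than to a fragment of it. Once the dictionary ``lax axiom for $R$ $\leftrightarrow$ oplax axiom for $L$ read off inside $R$'' is set up correctly, everything else is formal.
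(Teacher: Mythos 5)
Your proposal is correct: defining $\mu_R$ and $e_R$ by the displayed composites, checking naturality, and deducing the lax coherence axioms either by the direct chase with the triangle identities and the oplax structure $(L,\mu_L^{-1},e_L^{-1})$, or more conceptually by observing that $\mu_R$ and $e_R$ are the mates of $\mu_L^{-1}$ and $e_L^{-1}$ and invoking Kelly's doctrinal adjunction, is exactly the standard argument. The paper itself states this proposition without proof, treating it as a classical fact, so there is nothing in the text to diverge from; your write-up supplies precisely the argument the paper implicitly relies on, and the mates formulation you mention is the cleanest way to record it.
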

Now, we can study the case of the adjunction given by the functors $\Ind$ and $\Res$.
\begin{nota}
	We note $\Sbimod_k^{\Ind}$, the essential image of the functor $\Ind$.
\end{nota}
\begin{prop}\label{prop_ind_mon_sym_comp}
	 Then $(\Sbimod_k^{\Ind},\:\square\:)$ is a symmetric monoidal category and the induction functor 
	\[
	\Ind:(\Smod_k,\square)\longrightarrow(\Sbimod_k^{\Ind},\square)
	\]
	is symmetric monoidal.
\end{prop}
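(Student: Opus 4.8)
The plan is to verify that $\Ind$, restricted to the composition (Hadamard) product $\square$, is symmetric monoidal, and to do so by hand since $\Ind$ has the very explicit description recorded in Remark~\ref{rem::Description_Ind}: $\big(\Ind V\big)(S,E)\cong k[\mathrm{Aut}(S)]\otimes V(S)$ when $|S|=|E|$ and vanishes otherwise. First I would construct the comparison morphism $\mu_{V,W}\colon \Ind(V)\square\Ind(W)\to \Ind(V\square W)$. On the $\Sbimod_k$ side, $\big(\Ind(V)\square\Ind(W)\big)(S,E)\cong \bigoplus_{n}\Ind(V)(S,[\![1,n]\!])\otimes_{\mathfrak{S}_n}\Ind(W)([\![1,n]\!],E)$; using the vanishing of $\Ind$ away from the diagonal, only the term $n=|S|=|E|$ survives, giving
\[
\big(\Ind(V)\square\Ind(W)\big)(S,E)\cong \big(k[\mathrm{Aut}(S)]\otimes V(S)\big)\otimes_{k[\mathfrak{S}_{|S|}]}\big(k[\mathrm{Aut}([\![1,n]\!])]\otimes W([\![1,n]\!])\big),
\]
which after identifying $[\![1,n]\!]$ with $S$ collapses to $k[\mathrm{Aut}(S)]\otimes V(S)\otimes W(S)$. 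Since $(V\square W)(S)=V(S)\otimes W(S)$ by definition, Remark~\ref{rem::Description_Ind} identifies this last expression with $\big(\Ind(V\square W)\big)(S,E)$, so $\mu_{V,W}$ is in fact an isomorphism. I would similarly identify the unit: $\Ind(I_\square)(S,E)\cong k[\mathrm{Aut}(S)]\otimes I_\square(S)=k[\mathrm{Aut}(S)]$ for $S\cong E$ and $0$ otherwise, which is exactly $I_\square$ on $\Sbimod_k$ as in the proposition preceding Definition~\ref{def::prod_compo_bimod}; this gives the unit isomorphism $e$.

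Next I would check naturality of $\mu$ in both variables — routine from the functoriality of the tensor product over group algebras — and then the three coherence diagrams (associativity pentagon-type square, and the two unit triangles). For associativity one unwinds both $\Ind((V\square W)\square X)$ and $\Ind(V\square(W\square X))$ through the isomorphism above; both reduce to $k[\mathrm{Aut}(S)]\otimes V(S)\otimes W(S)\otimes X(S)$ and the coherence isomorphism of $\otimes_k$ on $\Ch_k$ together with the associativity constraint of $\square$ on $\Sbimod_k$ makes the square commute. The unit triangles follow the same pattern. Because $\mu$ and $e$ are isomorphisms, strong (not merely lax) monoidality is immediate once these squares commute. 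For the symmetry statement, I would check that $\Ind$ intertwines the symmetry constraint $\tau$ of $(\Smod_k,\square)$ (the Koszul-sign swap on $V(S)\otimes W(S)$) with the symmetry of $(\Sbimod_k,\square)$: under the identifications above the two symmetries both act as the swap on the $V(S)\otimes W(S)$ factor while fixing $k[\mathrm{Aut}(S)]$, so the hexagon-compatibility square commutes. Finally, that $(\Sbimod_k^{\Ind},\square)$ is itself a symmetric monoidal category follows because $\square$ of two objects in the essential image of $\Ind$ lies again in the essential image (that is exactly the content of $\mu$ being an isomorphism), and the monoidal unit $I_\square=\Ind(I_\square)$ is in the image too; the constraints restrict from $(\Sbimod_k,\square)$.

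The main obstacle I anticipate is purely bookkeeping rather than conceptual: keeping the identifications $[\![1,n]\!]\cong S$ and the attendant $k[\mathrm{Aut}(S)]$-actions (left versus right, and the $f^{-1}$ appearing in the right action in Definition~\ref{defi::foncteur Ind}) consistent across the coend/tensor-over-$\mathfrak{S}_n$ description, so that the comparison map is genuinely $\mathrm{Aut}(S)\times\mathrm{Aut}(E)\op$-equivariant and the coherence squares commute on the nose rather than up to an unaccounted permutation. One clean way to handle this, and the route I would actually take, is to invoke Proposition~\ref{prop::adjonction_lax_monoidal} in reverse spirit: exhibit $\mu$ and $e$ as above, observe they are isomorphisms, and then note that the coherence axioms for a strong monoidal functor can be checked after applying the faithful evaluation functors $(S,E)\mapsto(-)(S,E)$, reducing everything to the already-known coherence of $(\Ch_k,\otimes_k)$ and of $\square$ on $\Smod_k$ and $\Sbimod_k$.
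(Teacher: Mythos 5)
Your argument is correct, but it reaches the key isomorphism $\Ind P\:\square\:\Ind Q\cong\Ind(P\:\square\:Q)$ by a genuinely different route than the paper. The paper works with the coend formula of \Cref{rem::prod_composition}: it rewrites $\Ind P\:\square\:\Ind Q$ as $\int^{T\in\Fin}\bigoplus_{\Hom_\Fin(T,S)\times\Hom_\Fin(E,T)}P(T)\otimes Q(E)$ and then identifies this with $\bigoplus_{\Hom_\Fin(E,S)}P(E)\otimes Q(E)=\Ind(P\:\square\:Q)(S,E)$ by exhibiting maps in both directions, one given by composition of bijections and the other via the universal property (cowedge condition) of the coend. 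You instead evaluate on the skeleton, use the vanishing of $\Ind$ off the diagonal to isolate the single summand $n=|S|=|E|$, and collapse the tensor over $\mathfrak{S}_n$ using freeness of the induced modules; this is the skeletal shadow of the paper's coend computation and is perfectly adequate, provided the equivariance bookkeeping you flag is actually carried out (the comparison map must be shown independent of the chosen bijection $[\![1,n]\!]\cong S$ and $\mathrm{Aut}(S)\times\mathrm{Aut}(E)\op$-equivariant, which is exactly the step the coend formulation handles for free). You are also more explicit than the paper about naturality, the unit, and the coherence diagrams, which the paper's proof leaves implicit.

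One phrase should be corrected: the composition product $\square$ on all of $\Sbimod_k$ is not symmetric (it is a composition of bimodules, like composition of linear maps), so there is no ambient symmetry of $(\Sbimod_k,\square)$ for $\Ind$ to intertwine, and the braiding on $\Sbimod_k^{\Ind}$ cannot ``restrict'' from $\Sbimod_k$ --- only the associativity and unit constraints do. The symmetry on the essential image has to be constructed: either transported along $\Ind$ from the symmetry of $(\Smod_k,\square)$ through your isomorphism $\mu$, or, equivalently, defined as the swap of the factors $V(S)$ and $W(S)$ in the explicit description $k[\mathrm{Aut}(S)]\otimes V(S)\otimes W(S)$. Since your text already contains exactly this swap, this is a fix of wording rather than of substance, but as written the appeal to a symmetry of $(\Sbimod_k,\square)$ is not meaningful.
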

\begin{proof}
	Let $P$ and $Q$ be two $\mathfrak{S}$-modules and, $S$ and $E$ be two finite sets. We have
	\begin{align*}
		\big(\Ind P \:\square\:\Ind Q\big)(S,E)\cong~&
		\int^{T\in \Fin} \Ind P (S,T)\otimes \Ind Q(T,E)\\
		\cong~&\int^{T\in\Fin} \bigoplus_{\Hom_{\Fin}(T,S)} P(T) \otimes \bigoplus_{\Hom_{\Fin}(E,T)}Q(E) \\
		\cong~&\int^{T\in \Fin}\bigoplus_{\Hom_{\Fin}(T,S)\times\Hom_{\Fin}(E,T)} P(T)\otimes Q(E)~~.
	\end{align*}
	Also, we have the following commutative diagram:
	\[
	\xymatrix{
		\underset{\scriptscriptstyle{\Hom(E,S)}}{\displaystyle{\bigoplus}} P(E)\otimes Q(E) \ar[d]^{T=E} 
		\ar@/^2pc/[dr] \ar@/_3pc/@<-3pc>[dd]_{\mathrm{id}}& \\
		\underset{T}{\displaystyle{\coprod}} \underset{{\substack{\scriptscriptstyle{\Hom(T,S)}\\ \scriptscriptstyle{\times \Hom(E,T)}}}}{\displaystyle{\bigoplus}}
		P(T)\otimes Q(E) \ar[r] \ar[d]^{\phi}&
		\displaystyle{\int}^{T}\underset{{\substack{\scriptscriptstyle{\Hom(T,S)}\\ \scriptscriptstyle{\times \Hom(E,T)}}}}{\bigoplus}
		P(T)\otimes Q(E) \ar@/^2pc/@[..>][ld]^\psi\\
		\underset{\scriptscriptstyle{\Hom(E,S)}}{\displaystyle{\bigoplus}} P(E)\otimes Q(E)& ,
	}
	\]
	where $\phi$ is given by the composition of functions and  $\psi$ exists by the universal property of the coend and the commutativity of the following diagram:
	\[
	\xymatrix{
		\underset{\scriptscriptstyle{T_1\rightarrow T_2}}{\displaystyle{\coprod}}
		\underset{{\substack{
					\scriptscriptstyle{\Hom(T_2,S)}\\
					\scriptscriptstyle{ \times \Hom(E,T_1)
				}}}}{\displaystyle{\bigoplus}} 
		P(T_2) \otimes Q(E) 
		\ar@<2pt>[r]^{\Phi_1}\ar@<-2pt>[r]_{\Phi_2} \ar@/_2pc/[rd]|{p\Phi_1=p\Phi_2}
		& \underset{\scriptscriptstyle{T}}{\displaystyle{\coprod}} \underset{{\substack{\scriptscriptstyle{\Hom(T,S)}\\ \scriptscriptstyle{\times \Hom(E,T})}}}{\displaystyle{\bigoplus}} P(T)\otimes Q(E) \ar[r] \ar[d]^{p} 
		& \displaystyle{\int}^{\scriptscriptstyle{T}}\underset{{\substack{\scriptscriptstyle{\Hom_{\Fin}(T,S)}\\\scriptscriptstyle{ \times\Hom_{\Fin}(E,T)}}}}{\displaystyle{\bigoplus}} P(T)\otimes Q(E) 
		\ar@/^2pc/@{..>}[ld]^\psi
		\\
		& \underset{\scriptscriptstyle{\Hom(E,S)}}{\displaystyle{\bigoplus}} P(E)\otimes Q(E)&
	}~,
	\] 
	which gives us the natural isomorphism
	\[
		\int^{T\in \Fin}\underset{{\substack{\Hom_{\Fin}(T,S)\\ \times\Hom_{\Fin}(E,T)}}}{\bigoplus} P(T)\otimes Q(E) \cong~
		\underset{\Hom(E,S)}{\bigoplus} P(E)\otimes Q(E) \cong~ \Ind\big( P\:\square\:Q\big)(E) \ .
	\]
\end{proof}
\begin{cor}
	Let $M$ be a reduced $\mathfrak{S}$-module. We have the following natural isomorphism of $\mathfrak{S}$-bimodules  $\mathbb{T}_{\square}\big(\Ind(M)\big) \cong\Ind\big(\mathbb{T}_\square(M)\big)$ where $\mathbb{T}_{\square}$ is the free unitary associative monoid for the product $\square$.
\end{cor}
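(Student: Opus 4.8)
The plan is to deduce this purely formally from \Cref{prop_ind_mon_sym_comp} together with the cocontinuity of $\Ind$ recorded in \Cref{prop::Ind_exact}. Recall that the free unitary associative monoid on a $\mathfrak{S}$-module $M$ for the product $\square$ is
\[
	\mathbb{T}_\square(M) = \bigoplus_{n\in\N} M^{\square n},
\]
with $M^{\square 0} := I_\square$ (the unit of $\square$, which on $\Smod_k$ is $I_\square(S) = k$ in degree $0$ for $S\ne\varnothing$, and on $\Sbimod_k$ is $I_\square(S,E) = k[\mathrm{Aut}(S)]$ for $S\cong E$ and $0$ otherwise). So the content of the corollary is just the general principle that a strong monoidal functor which commutes with countable direct sums carries free monoids to free monoids.

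First I would check that $\Ind$ preserves the unit, i.e. $\Ind\,I_\square \cong I_\square$ as $\mathfrak{S}$-bimodules: by \Cref{rem::Description_Ind}, $\big(\Ind\,I_\square\big)(S,E)$ is $k[\mathrm{Aut}(S)]\otimes I_\square(S)$ for $S\cong E$ and $0$ otherwise, and since $I_\square(S)=k$ this is exactly $I_\square(S,E)$ (this is the isomorphism $e_L$ of \Cref{prop::adjonction_lax_monoidal}). Next, combining this unit isomorphism with the natural isomorphism $\mu : \Ind(-_1)\square\Ind(-_2) \xrightarrow{\ \cong\ } \Ind(-_1\square -_2)$ of \Cref{prop_ind_mon_sym_comp}, an immediate induction on $n$ yields natural isomorphisms of $\mathfrak{S}$-bimodules
\[
	\big(\Ind\,M\big)^{\square n} \cong \Ind\big(M^{\square n}\big) \qquad (n\in\N),
\]
the coherence of the strong monoidal structure guaranteeing these are well defined (although for the stated conclusion we only need them as bijections).

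Finally, since $\Ind$ commutes with colimits by \Cref{prop::Ind_exact} — in particular with the direct sum $\bigoplus_{n\in\N}$ — one concludes, naturally in $M$,
\[
	\mathbb{T}_\square\big(\Ind\,M\big) = \bigoplus_{n\in\N} \big(\Ind\,M\big)^{\square n} \cong \bigoplus_{n\in\N} \Ind\big(M^{\square n}\big) \cong \Ind\Big(\bigoplus_{n\in\N} M^{\square n}\Big) = \Ind\big(\mathbb{T}_\square(M)\big).
\]
There is essentially no obstacle here; the only points requiring minor care are the bookkeeping that $\Ind$ corestricted to its essential image $\Sbimod_k^{\Ind}$ is monoidal for the structure inherited from $\Sbimod_k$ (so the iterated products $(\Ind M)^{\square n}$ may be computed inside $\Sbimod_k$), and — if one wants the sharper statement — that the displayed isomorphism is one of monoids, which holds because $\mu$ is a monoidal natural transformation and hence intertwines the concatenation maps defining the two multiplications.
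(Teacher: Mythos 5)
Your proof is correct and follows essentially the same route the paper intends: the corollary is stated without a separate proof precisely because it is the formal consequence of \Cref{prop_ind_mon_sym_comp} (strong monoidality of $\Ind$ for $\square$, including preservation of the unit) together with the cocontinuity of $\Ind$ from \Cref{prop::Ind_exact}, which is exactly the argument you spell out, including the correct identification of $\mathbb{T}_\square$ with the tensor algebra $\bigoplus_{n}M^{\square n}$ since $\square$ is bi-additive and preserves direct sums in each variable.
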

\begin{cor}
	The functor $\Res : \big(\Sbimodred_k, \square\big) \rightarrow \big(\Smodred_k,\square \big)$ is Lax-monoidal.
\end{cor}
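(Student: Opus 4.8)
The plan is to deduce this formally from the doctrinal adjunction principle recorded in \Cref{prop::adjonction_lax_monoidal}, applied to the adjunction $\Ind \dashv \Res$ of \Cref{prop::adj_ind_res}; no explicit construction of comparison maps is needed, only a verification that the hypotheses of that proposition are met.

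First I would check that the adjunction restricts to the reduced subcategories. By \Cref{rem::Description_Ind}, $\Ind$ sends a reduced $\mathfrak{S}$-module to a reduced $\mathfrak{S}$-bimodule, and for a reduced $\mathfrak{S}$-bimodule $P$ one has $\Res(P)(S) = P(S,S)$, which vanishes for $S=\varnothing$; hence $\Ind : \Smodred_k \rightleftarrows \Sbimodred_k : \Res$ is again an adjunction. Next I would record that its left adjoint $\Ind : (\Smodred_k, \square) \to (\Sbimodred_k, \square)$ is strong monoidal: the multiplication comparison is the natural isomorphism $\mu_{\Ind} : \Ind(P)\square\Ind(Q) \overset{\cong}{\longrightarrow} \Ind(P\square Q)$ furnished by \Cref{prop_ind_mon_sym_comp}, while the unit comparison $e_{\Ind} : I_\square \overset{\cong}{\longrightarrow} \Ind(I_\square)$ follows from the explicit formula of \Cref{rem::Description_Ind}, which gives $\Ind(I_\square)(S,E) \cong k[\mathrm{Aut}(S)]$ when $S\cong E$ is non-empty and $0$ otherwise, i.e.\ $\Ind(I_\square)$ is canonically the monoidal unit of $(\Sbimodred_k, \square)$.

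With these two facts I would then simply invoke \Cref{prop::adjonction_lax_monoidal} with $L = \Ind$ and $R = \Res$. It endows $\Res$ with the lax monoidal structure maps
\[
\mu_{\Res} : \Res(-_1)\square\Res(-_2)\longrightarrow\Res(-_1\square -_2) \qquad\text{and}\qquad e_{\Res} : I_\square\longrightarrow\Res(I_\square),
\]
built out of the unit $\eta$ and counit $\epsilon$ of the adjunction together with $\mu_{\Ind}^{-1}$ and $e_{\Ind}^{-1}$ exactly as in the diagrams of that proposition, and all coherence (associativity, unitality, naturality) is part of its conclusion. This finishes the argument.

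The only point that is not a pure citation, and which I would be most careful about, is that $\Ind$ is strong monoidal into $\Sbimodred_k$ \emph{on the nose} and not merely onto its essential image, since \Cref{prop_ind_mon_sym_comp} is stated with target $\Sbimod_k^{\Ind}$; but this causes no difficulty, because $\mu_{\Ind}$ is already an isomorphism of $\mathfrak{S}$-bimodules and the unit identification is the explicit computation above. As a sanity check one could also describe $\mu_{\Res}$ directly using $\Res(P)(S) = P(S,S)$: after choosing a bijection $S \cong [\![1,|S|]\!]$, it is the map $P(S,S)\otimes Q(S,S)\to\bigoplus_{n\in\N} P(S,[\![1,n]\!])\otimes_{\mathfrak{S}_n} Q([\![1,n]\!],S)$ that lands in the summand $n = |S|$; but the adjunction route is preferable since it makes the coherence conditions automatic.
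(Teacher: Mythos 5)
Your argument is correct and follows exactly the paper's route: the paper's proof is precisely the invocation of \Cref{prop::adjonction_lax_monoidal} applied to the adjunction $\Ind\dashv\Res$ of \Cref{prop::adj_ind_res}, with the strong monoidality of $\Ind$ for $\square$ coming from \Cref{prop_ind_mon_sym_comp}. Your additional checks (restriction to the reduced subcategories, the unit comparison via \Cref{rem::Description_Ind}, and the remark about the essential image) only make explicit what the paper leaves implicit.
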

\begin{proof}
	By adjunction of functors $\Ind$ and $\Res$, and \Cref{prop::adjonction_lax_monoidal}.
\end{proof}

\begin{rem}
	The functor $\Res$ is not strongly monoidal with respect to $\square$. For example, if we consider $P$ and $Q$, the $\mathfrak{S}$-bimodules defined by
	\[
	P(S,E):= \left\{		
	\begin{array}{cl}
	k &\quad \mbox{ if }|S|=1\mbox{ and }|E|=1 \mbox{ or } 2~~\\
	0 &\quad \mbox{ otherwise}
	\end{array}\right.
	\]
	and
	\[
		Q(S,E):= \left\{		
		\begin{array}{cl}
		k &\quad \mbox{ if }|E|=1\mbox{ and }|S|=1 \mbox{ or } 2~~\\
		0 &\quad \mbox{ otherwise}
		\end{array}\right.
	\]
	then $\Res(P\:\square\:Q)(\{*\},\{*\})=k^2$ and $\big(\Res P\:\square\:\Res Q\big)(\{*\},\{*\})=k$.
\end{rem}
\begin{rem}
	We have the monoidal adjunction
	\[
	\Ind : \big(\Smodred_k,\square\big) \rightleftarrows \big(\Sbimodred_k,\square\big) : \mathrm{Res}.
	\]
\end{rem}
The functor $\Ind$ is also compatible with the concatenation product.
\begin{prop}\label{prop_ind_mon_sym_conc}
	The functor of induction  
	\[
		\Ind:(\Smodred_k,\otimes^{\mathrm{conc}}) \longrightarrow (\Sbimodred_k,\otimes^{\mathrm{conc}})
	\]
	is symmetric monoidal. 
\end{prop}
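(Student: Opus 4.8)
The plan is to construct, for all reduced $\mathfrak{S}$-modules $P$ and $Q$, a natural isomorphism $\Ind P \otimes^{\mathrm{conc}} \Ind Q \cong \Ind\big(P \otimes^{\mathrm{conc}} Q\big)$ of $\mathfrak{S}$-bimodules and to verify that it is compatible with the associativity and symmetry constraints of the two concatenation products; since $\otimes^{\mathrm{conc}}$ carries no unit on the reduced categories, no unit coherence is needed, and "symmetric monoidal" is meant in the non-unital sense. Conceptually the statement is an instance of the functoriality of Day convolution: both concatenation products are Day convolution products (see the remark after \Cref{def::prod_conca_bimod} and its $\mathfrak{S}$-module counterpart), the functor $\Res$ is precomposition along $\Delta_{\Fin\op}=(\mathrm{inv},\mathrm{id}):\Fin\op\to\Fin\times\Fin\op$, so $\Ind$ is the left Kan extension $\mathrm{Lan}_{\Delta_{\Fin\op}}$, and $\Delta_{\Fin\op}$ is symmetric strong monoidal for $(\Fin\op,\amalg)$ and $(\Fin\times\Fin\op,\amalg)$; left Kan extension along a symmetric strong monoidal functor is symmetric strong monoidal for Day convolution. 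I would, however, exhibit the isomorphism by hand, in the style of the proof of \Cref{prop_ind_mon_sym_comp}.

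Using $\Ind V(S',E')=\bigoplus_{\Hom_\Fin(E',S')}V(E')$, a homogeneous summand of $\big(\Ind P\otimes^{\mathrm{conc}}\Ind Q\big)(S,E)$ is indexed by $(I,K)=\big((I_1,I_2),(K_1,K_2)\big)\in\mathbb{Y}_2^{\mathrm{ord}}(S,E)$ together with bijections $g_1\in\Hom_\Fin(K_1,I_1)$ and $g_2\in\Hom_\Fin(K_2,I_2)$, and its underlying complex is $P(K_1)\otimes Q(K_2)$; a homogeneous summand of $\Ind\big(P\otimes^{\mathrm{conc}}Q\big)(S,E)=\bigoplus_{g\in\Hom_\Fin(E,S)}\bigoplus_{(L_1,L_2)\in\mathcal{Y}\ord_2(E)}P(L_1)\otimes Q(L_2)$ is indexed by $\big(g,(L_1,L_2)\big)$ with underlying complex $P(L_1)\otimes Q(L_2)$. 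I would send $(I,K,g_1,g_2)$ to $\big(g_1\amalg g_2,\,(K_1,K_2)\big)$, where $g_1\amalg g_2:E=K_1\amalg K_2\to I_1\amalg I_2=S$ is the evident bijection (so $I_j=(g_1\amalg g_2)(K_j)$ is recovered), with inverse $\big(g,(L_1,L_2)\big)\mapsto\big((g(L_1),g(L_2)),(L_1,L_2),g|_{L_1},g|_{L_2}\big)$. This is a bijection of indexing sets inducing the identity on underlying complexes, visibly natural in $P$ and $Q$ and equivariant for the left $\mathrm{Aut}(S)$- and right $\mathrm{Aut}(E)$-actions (which act by post- and pre-composition on the relevant $\Hom$-sets). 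On the coend side the same isomorphism is obtained by evaluating the coends over $I_1,I_2$ by co-Yoneda and splitting off a copy of $k[\Hom_\Fin(E,S)]$, exactly the manipulation (composition of bijections) performed in \Cref{prop_ind_mon_sym_comp}.

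It then remains to check the two coherence diagrams. For associativity, unfolding both $\big(\Ind P\otimes\Ind Q\big)\otimes\Ind R$ and $\Ind\big((P\otimes Q)\otimes R\big)$ gives sums indexed by an ordered three-block partition of $E$, an ordered three-block partition of $S$ and three block-wise bijections $g_1,g_2,g_3$; the comparison map assembles $g=g_1\amalg g_2\amalg g_3$, and the two bracketings agree by associativity of $\amalg$, as in the Fubini argument of \Cref{prop_ind_mon_sym_comp}. For the symmetry, swapping the two factors on the left replaces $(I,K,g_1,g_2)$ by $\big((I_2,I_1),(K_2,K_1),g_2,g_1\big)$ and introduces the Koszul sign of $\tau$ (see \Cref{prop::symetrie_concatenation}); since $g_2\amalg g_1$ and $g_1\amalg g_2$ are the same function $E\to S$, this corresponds under the comparison map to replacing $\big(g,(L_1,L_2)\big)$ by $\big(g,(L_2,L_1)\big)$ with the same sign, i.e. to $\Ind$ applied to the symmetry of $\otimes^{\mathrm{conc}}$ on $\Smodred_k$. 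I expect the only real work — as opposed to conceptual difficulty — to be the bookkeeping: keeping track of the $\mathrm{Aut}$-actions so that the comparison is well defined at the level of the coends (equivalently, the symmetric-group quotients) defining $\otimes^{\mathrm{conc}}$, and checking that the Koszul signs produced by $\tau$ agree on both sides.
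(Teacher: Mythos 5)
Your proposal is correct and follows essentially the same route as the paper: the paper's proof is exactly your core computation, rewriting $\big(\Ind P\otimes^{\mathrm{conc}}\Ind Q\big)(S,E)$ as a sum over $\mathbb{Y}\ord_2(S,E)$ with blockwise bijections and then reassembling $g_1\amalg g_2$ into a single bijection $E\to S$ to identify it with $\Ind\big(P\otimes^{\mathrm{conc}}Q\big)(S,E)$. Your additional checks (equivariance, associativity and symmetry coherence, and the Day convolution viewpoint) are correct but are left implicit in the paper.
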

\begin{proof}
	Let $P$ and $Q$ be two reduced $\mathfrak{S}$-modules and $S$ and $E$ be two finite sets. We have the isomorphisms of chain conplexes:
	\begin{align*}
	\big(\Ind P\otimes^{\mathrm{conc}}\Ind Q\big)(S,E) \cong ~&
	\bigoplus_{(I,J)\in\mathbb{Y}\ord_2(S,E)} \bigoplus_{\Hom_{\Fin}(J_1,I_1)\times\Hom_{\Fin}(J_2,I_2)} P(J_1)\otimes Q(J_2) \\
	\cong~& \bigoplus_{J\in \mathcal{Y}\ord_2(E)} \bigoplus_{\Hom_{\Fin}(J_1\amalg J_2,S)} P(J_1)\otimes Q(J_2)\\
	\cong~& \Ind\big(P\otimes^{\mathrm{conc}} Q\big)(S,E).
	\end{align*}
\end{proof}

\begin{prop}
	The functor $\Res : \big(\Sbimodred_k,\otimes^{\mathrm{conc}}\big) \rightarrow \big(\Smodred_k,\otimes^{\mathrm{conc}}\big)$ is Lax-monoidal.
\end{prop}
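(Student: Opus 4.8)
The plan is to obtain the lax monoidal structure on $\Res$ formally, as the mate of the strong monoidal structure on its left adjoint $\Ind$, via \Cref{prop::adjonction_lax_monoidal}. By \Cref{prop::adj_ind_res} we have the adjunction $\Ind \dashv \Res$ between $\Smodred_k$ and $\Sbimodred_k$, and by \Cref{prop_ind_mon_sym_conc} the left adjoint $\Ind : \big(\Smodred_k,\otimes^{\mathrm{conc}}\big) \to \big(\Sbimodred_k,\otimes^{\mathrm{conc}}\big)$ is (symmetric) strong monoidal, with structure isomorphism $\mu_{\Ind} : \Ind(-_1)\otimes^{\mathrm{conc}}\Ind(-_2) \overset{\cong}{\longrightarrow}\Ind(-_1\otimes^{\mathrm{conc}}-_2)$ exhibited in the proof of that proposition. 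Feeding this data into \Cref{prop::adjonction_lax_monoidal} then yields a canonical natural transformation
\[
\mu_{\Res} : \Res(-_1)\otimes^{\mathrm{conc}}\Res(-_2) \longrightarrow \Res(-_1\otimes^{\mathrm{conc}}-_2),
\]
namely the composite of the unit $\eta$ of the adjunction, of $\Res$ applied to $\mu_{\Ind}^{-1}$, and of $\Res$ applied to $\epsilon\otimes^{\mathrm{conc}}\epsilon$ with $\epsilon$ the counit; its associativity coherence follows purely formally from that of $\mu_{\Ind}$ together with the triangle identities, exactly as for the analogous statement for the composition product $\square$ proved just above. This is essentially the whole argument.

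The only point deserving a word of care --- and the nearest thing to an obstacle --- is that $\otimes^{\mathrm{conc}}$ is a \emph{non-unital} symmetric monoidal product on both $\Smodred_k$ and $\Sbimodred_k$ (it has no identity; see \Cref{subsect::prod_Smod} and \Cref{def::prod_conca_bimod}), whereas \Cref{prop::adjonction_lax_monoidal} is phrased for monoidal categories with unit. This causes no real difficulty: the construction of $\mu_{\Res}$ and the verification of its coherence invoke only the multiplicative structure maps $\mu_{\Ind}$, the adjunction data $(\eta,\epsilon)$ and naturality, never the unit isomorphisms, so they apply verbatim in the non-unital setting and produce a non-unital (and, since $\mu_{\Ind}$ is symmetric, symmetric) lax monoidal functor. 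If a concrete description were preferred to the mate construction, one could instead unwind $\Res P = P\circ\Delta_{\Fin\op}$ and, using \Cref{rem::Description_Ind} for the explicit form of $\mu_{\Ind}$, write $\mu_{\Res}$ on objects as the map $\big(\Res P \otimes^{\mathrm{conc}} \Res Q\big)(S) \to \Res\big(P\otimes^{\mathrm{conc}} Q\big)(S)$ induced by the diagonal embedding of the indexing set $\mathcal{Y}\ord_2(S)$ and check associativity by hand; but the adjunction argument is shorter and makes the coherence automatic, so I do not expect a genuine obstruction here.
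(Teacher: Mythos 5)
Your argument is correct, but it is not the route the paper takes for this particular statement. The paper's own proof is a direct construction: it uses the Day-convolution (coend) description of $\otimes^{\mathrm{conc}}$ from \Cref{def::prod_conca_bimod}, exhibits explicit natural injections
\[
\coprod_{I\in\mathcal{Y}\ord_2(-)} \Hom_{\Fin\op}(I_1\amalg I_2,-)\otimes \Res P(I_1)\otimes \Res Q(I_2)
\hookrightarrow \Res\big(P\otimes^{\mathrm{conc}} Q\big)
\]
coming from the diagonal $\Delta_{\Fin\op}$, and then invokes the universal property of the coend defining $\Res P\otimes^{\mathrm{conc}}\Res Q$ to produce the lax structure map $\Res P \otimes^{\mathrm{conc}} \Res Q \to \Res(P\otimes^{\mathrm{conc}} Q)$. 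You instead obtain the map formally as the mate of the strong monoidal structure on $\Ind$ (\Cref{prop_ind_mon_sym_conc}) under the adjunction of \Cref{prop::adj_ind_res}, via \Cref{prop::adjonction_lax_monoidal} --- exactly the mechanism the paper itself uses for the $\square$ and $\boxtimes_c$ cases, but not here. Your caveat about the unit is the right one to raise: $\otimes^{\mathrm{conc}}$ has no unit, and \Cref{prop::adjonction_lax_monoidal} is stated for unital monoidal categories, so strictly speaking you are extending that proposition to the non-unital setting; your justification (the construction of $\mu_{\Res}$ and its associativity coherence use only $\mu_{\Ind}$, the adjunction unit/counit, naturality and the triangle identities, never the unit isomorphisms) is sound, and one should also note that the adjunction restricts to the reduced subcategories since both $\Ind$ and $\Res$ preserve reducedness. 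What each approach buys: yours is shorter and makes associativity automatic; the paper's yields the structure map in completely explicit form (the diagonal embedding on the indexing data), which is the concrete description you sketch at the end and which sits well next to the subsequent counterexample showing $\Res$ is not strong monoidal.
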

\begin{proof}
	Let $P$ and  $Q$ be two $\mathfrak{S}$-bimodules. We have the natural injections
	\begin{align*}
	\underset{I\in\mathcal{Y}\ord_2(-)}{\coprod}& \Hom_{\Fin\op}(I_1\amalg I_2,-)\otimes \Res P(I_1)\otimes \Res Q(I_2)\\
	~=~& \underset{I\in\mathcal{Y}\ord_2(-)}{\coprod} \Hom_{\Fin\op}(I_1\amalg I_2,-)\otimes P(I_1,I_1)\otimes Q(I_2,I_2) ~~\\
	\hookrightarrow~&\underset{I\in\mathcal{Y}\ord_2(-)}{\coprod} \Hom_{\Fin\times\Fin\op}\big((I_1\amalg I_2,I_1\amalg I_2),\Delta(-)\big)\otimes P(I_1,I_1)\otimes Q(I_2,I_2) ~~\\
	\hookrightarrow~&\underset{(I,J)\in\mathbb{Y}\ord_2(-)}{\coprod} \Hom_{Fin\times\Fin\op}\big((I_1\amalg I_2,J_1\amalg J_2),\Delta(-)\big)\otimes P(I_1,J_1)\otimes Q(I_2,J_2) 
	\end{align*}
	which imply the following natural injection
	\[
	\coprod_{I\in\mathcal{Y}\ord_2(-)} \Hom_{\Fin\op}(I_1\amalg I_2,-)\otimes \Res P(I_1)\otimes \Res Q(I_2)
	\hookrightarrow \Res\big(P\otimes^{\mathrm{conc}} Q\big).
	\]
	Also, we have the following commutative diagram
	\[
	\xymatrix{
		\underset{I_1\rightarrow J_1,  I_2\rightarrow J_2}{\coprod} \Hom_{\Fin\op}(J_1\amalg J_2,-)\otimes \Res P(I_1)\otimes \Res Q(I_2) \ar@<2pt>[d] \ar@<-2pt>[d] & \\
		\underset{I\in\mathcal{Y}\ord_2(-)}{\coprod} \Hom_{\Fin\op}(I_1\amalg I_2,-)\otimes \Res P(I_1)\otimes \Res Q(I_2) 		\ar[d] \ar@{^{(}->}[r]
		& \Res (P\otimes^{\mathrm{conc}} Q) \\
		\big(\Res P\big)\otimes\big(\Res Q\big) \ar@{..>}@/_1pc/[ur]_\exists  & 
	}
	\]
	so, by the universal property of the coend, we have the natural morphism
	\[
	\Res P \otimes^{\mathrm{conc}} \Res Q \longrightarrow \Res(P\otimes^{\mathrm{conc}} Q).
	\]
\end{proof}
\begin{rem}
	The functor $\Res$ is not strongly monoidal with respect to $\otimes^{\mathrm{conc}}$. Indeed, if we consider $P$ and $Q$, the $\mathfrak{S}$-bimodules defined by
	\[
		P(S,E):=\left\{
		\begin{array}{cl}
		k & \mbox{ if } |E|=2 \mbox{ and } |S|=1;\\
		0 & \mbox{ otherwise }
		\end{array}\right.
		\]
		and
		\[
		Q(S,E):=\left\{
		\begin{array}{cl}
		k & \mbox{ if } |S|=2 \mbox{ and } |E|=1;\\
		0 & \mbox{ otherwise }
		\end{array}\right.~,
	\]
	then $\Res P\otimes^{\mathrm{conc}} \Res Q =0$, while $\Res(P\otimes^{\mathrm{conc}} Q)(S)=k$ if the cardinal of $S$ is $3$.
\end{rem}
We have the following compatibility between functors
$\mathbb{S}(-)$ and $\Ind(-)$.
\begin{prop}\label{prop::ind_commute_S}
	Let $P$ be a reduced $\mathfrak{S}$-module. Then, we have the natural isomorphism of reduced $\mathfrak{S}$-bimodules:
	\[
	\Ind\big(\mathbb{S}(P)\big)\cong \mathbb{S}\big( \Ind(P) \big).
	\]
\end{prop}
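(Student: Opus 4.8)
The plan is to exploit the fact that the free commutative monoid functor $\mathbb{S}$ is built only from the symmetric monoidal product $\otimes^{\mathrm{conc}}$ together with colimits (coinvariants and coproducts), both of which $\Ind$ respects by \Cref{prop_ind_mon_sym_conc} and \Cref{prop::Ind_exact}.

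Recall that $\mathbb{S}(P)=\bigoplus_{b\in\N^*}\big(\mathbb{T}^b_\otimes P\big)_{\mathfrak{S}_b}$, where $\mathbb{T}^b_\otimes P = P^{\otimes^{\mathrm{conc}} b}$ and $\mathfrak{S}_b$ acts by permuting the factors through the symmetry $\tau$ of $\otimes^{\mathrm{conc}}$. The first step is to produce, for each $b\in\N^*$, a \emph{$\mathfrak{S}_b$-equivariant} natural isomorphism $\mu^{(b)}\colon(\Ind P)^{\otimes^{\mathrm{conc}} b}\xrightarrow{\ \cong\ }\Ind\big(P^{\otimes^{\mathrm{conc}} b}\big)$. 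This is obtained by iterating the structure isomorphism $\mu\colon\Ind(-_1)\otimes^{\mathrm{conc}}\Ind(-_2)\xrightarrow{\cong}\Ind(-_1\otimes^{\mathrm{conc}}-_2)$ of \Cref{prop_ind_mon_sym_conc}; equivariance is precisely the compatibility of $\mu$ with the symmetries of source and target, which is part of the assertion that $\Ind$ is \emph{symmetric} monoidal.

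The second step is to note that $\Ind$ commutes with all colimits by \Cref{prop::Ind_exact}; in particular with the coequalizer computing the coinvariants $(-)_{\mathfrak{S}_b}$ and with the direct sum $\bigoplus_{b\in\N^*}$. Putting the two steps together we get, naturally in $P$,
\[
\Ind\big(\mathbb{S}P\big)=\Ind\Big(\bigoplus_{b}\big(\mathbb{T}^b_\otimes P\big)_{\mathfrak{S}_b}\Big)\cong\bigoplus_{b}\big(\Ind(\mathbb{T}^b_\otimes P)\big)_{\mathfrak{S}_b}\cong\bigoplus_{b}\big((\Ind P)^{\otimes^{\mathrm{conc}} b}\big)_{\mathfrak{S}_b}=\mathbb{S}\big(\Ind P\big),
\]
the middle isomorphism using the $\mathfrak{S}_b$-equivariance of $\mu^{(b)}$ so that it descends to coinvariants.

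The only delicate point is the $\mathfrak{S}_b$-equivariance of $\mu^{(b)}$: one must check that iterating $\mu$ and permuting factors agree, which follows from the coherence (hexagon) axioms relating $\mu$ to the braiding, i.e. from $\Ind$ being symmetric monoidal for $\otimes^{\mathrm{conc}}$. Alternatively, the isomorphism can be checked by hand on objects: using \Cref{rem::Description_Ind} and the identity $(\mathbb{S}P)(E)\cong\bigoplus_{\{K_\alpha\}\in\mathcal{Y}(E)}\bigotimes_\alpha P(K_\alpha)$, a bijection $\phi\in\Hom_{\Fin}(E,S)$ together with a partition $\{K_\alpha\}$ of $E$ corresponds bijectively to a family $\{(I_\alpha,K_\alpha)\}\in\mathbb{Y}(S,E)$ equipped with bijections $K_\alpha\to I_\alpha$, and this matches $\big(\Ind(\mathbb{S}P)\big)(S,E)$ with $\big(\mathbb{S}(\Ind P)\big)(S,E)$ term by term.
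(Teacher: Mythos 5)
Your argument is correct and follows essentially the same route as the paper: the paper's proof also invokes the symmetric monoidality of $\Ind$ for $\otimes^{\mathrm{conc}}$ (\Cref{prop_ind_mon_sym_conc}) together with its exactness/commutation with colimits (\Cref{prop::Ind_exact}) to pass through tensor powers, coinvariants and direct sums. Your write-up merely makes explicit the $\mathfrak{S}_b$-equivariance of the iterated structure isomorphism, which the paper leaves implicit.
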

\begin{proof}
	The functor $\Ind$ commutes with the concatenation product $\otimes^{\mathrm{conc}}$ and is compatible with the symmetry, by \Cref{prop_ind_mon_sym_conc}. We conclude by the exactness of the functor $\Ind$.
\end{proof}
One of the most important properties of the functor $\Ind$ is  that it is compatible with connected composition products. 

\begin{thm}\label{thm::Ind_monoidal_connexe}
	The functor 
	\[
	\Ind : \big(\Smodred_k,\boxtimes_c\big)  \rightarrow   \big(\Sbimodred_k,\boxtimes_c\Val\big)
	\]
	is monoidal.
\end{thm}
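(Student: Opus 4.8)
\emph{Strategy.} The plan is to reduce the statement to the compatibilities of $\Ind$ with the composition product $\square$, the concatenation product $\otimes^{\mathrm{conc}}$ and the free commutative monoid functor $\mathbb{S}$, already established in Propositions~\ref{prop_ind_mon_sym_comp}, \ref{prop_ind_mon_sym_conc} and~\ref{prop::ind_commute_S}. The key observation is that, by Proposition~\ref{prop::S_permute_prod_connexe}, $\mathbb{S}(P\boxtimes_c Q)\cong\mathbb{S}P\:\square\:\mathbb{S}Q$, and reading off its proof this isomorphism identifies the weight grading of $\mathbb{S}$ on the left with the grading of $(\mathbb{S}P\:\square\:\mathbb{S}Q)(S)=\bigoplus_{(I,J)\in\mathcal{X}(S)}\bigotimes_a P(I_a)\otimes\bigotimes_b Q(J_b)$ by the number $|\mathcal{K}_S(I,J)|$ of connected components of $(I,J)$; in particular $P\boxtimes_c Q$ is precisely the weight-$1$ summand $\mathbb{S}^1(P\boxtimes_c Q)$ of $\mathbb{S}P\:\square\:\mathbb{S}Q$, the one corresponding to $\mathcal{X}^{\mathrm{conn}}(S)\subseteq\mathcal{X}(S)$. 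By Lemma~\ref{lem::SPsquareSQ} and Proposition~\ref{prop::S_permute_prod_connexe_bi} the analogous facts hold verbatim for $\mathfrak{S}$-bimodules, with $\mathbb{X}^{\mathrm{conn}}$ in place of $\mathcal{X}^{\mathrm{conn}}$.

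\emph{The unit.} By Remark~\ref{rem::Description_Ind}, $\Ind(\Ibox)(S,E)=0$ when $S\not\cong E$, and otherwise equals $k[\mathrm{Aut}(S)]\otimes\Ibox(S)$, which is $k$ if $|S|=|E|=1$ (then $\mathrm{Aut}(S)$ is trivial) and $0$ if $|S|=|E|\neq1$. Hence there is a natural isomorphism $e\colon\Ibox\Val\overset{\cong}{\longrightarrow}\Ind(\Ibox)$.

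\emph{The multiplicativity isomorphism.} Applying $\Ind$ and chaining the available isomorphisms gives, naturally in $P$ and $Q$,
\begin{align*}
\mathbb{S}\big(\Ind(P\boxtimes_c Q)\big)
&\overset{\ref{prop::ind_commute_S}}{\cong}\Ind\big(\mathbb{S}(P\boxtimes_c Q)\big)
\overset{\ref{prop::S_permute_prod_connexe}}{\cong}\Ind\big(\mathbb{S}P\:\square\:\mathbb{S}Q\big)
\overset{\ref{prop_ind_mon_sym_comp}}{\cong}\Ind(\mathbb{S}P)\:\square\:\Ind(\mathbb{S}Q) \\
&\overset{\ref{prop::ind_commute_S}}{\cong}\mathbb{S}(\Ind P)\:\square\:\mathbb{S}(\Ind Q)
\overset{\ref{prop::S_permute_prod_connexe_bi}}{\cong}\mathbb{S}\big(\Ind P\boxtimes_c\Val\Ind Q\big).
\end{align*}
Each of these isomorphisms is compatible with the weight grading of $\mathbb{S}$ (equivalently, with the number-of-connected-components grading where applicable): this uses that $\Ind$ is exact, commutes with $\otimes^{\mathrm{conc}}$ and with the symmetry, so that Proposition~\ref{prop::ind_commute_S} refines to a weight-graded statement (trivially so in weight $1$), and that $\Ind$ is strong monoidal for $\square$. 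Restricting the composite to the weight-$1$ component, where $\mathbb{S}^1=\mathrm{id}$, yields a natural isomorphism $\mu_{P,Q}\colon\Ind P\boxtimes_c\Val\Ind Q\overset{\cong}{\longrightarrow}\Ind(P\boxtimes_c Q)$. Together with $e$ this exhibits $\Ind$ as a (strong) monoidal functor; in particular it carries protoperads to properads.

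\emph{Coherence, and the main obstacle.} It remains to verify the pentagon and triangle axioms for $(\Ind,\mu,e)$. Since the associativity constraint of $\boxtimes_c$ comes — via Lemma~\ref{lem::Smod_ana_scinde} and the identification $\mathbb{S}(P\boxtimes_c Q)\cong\mathbb{S}P\:\square\:\mathbb{S}Q$ — from the associativity of $\mathcal{K}$ (Lemma~\ref{lem::K_associative}) together with the coherence of $\square$ and $\otimes^{\mathrm{conc}}$, and likewise on the $\mathfrak{S}$-bimodule side, these axioms reduce to coherences for which $\Ind$ is already known to be monoidal. I expect making this precise to be the delicate part: one has to track how the two connected pieces $(P\boxtimes_c Q)\boxtimes_c R$ and $P\boxtimes_c(Q\boxtimes_c R)$ sit inside $\mathbb{S}P\:\square\:\mathbb{S}Q\:\square\:\mathbb{S}R$, and check that the $\Ind$-images of the two comparison maps agree — equivalently, that the chain of isomorphisms above is genuinely graded and natural in the triple. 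An alternative, more computational route avoids this bookkeeping by building $\mu_{P,Q}$ by hand: using Remark~\ref{rem::Description_Ind} and Definition~\ref{def::prod_compo_conn_bimod} one identifies $\Ind(P\boxtimes_c Q)(S,E)$, which vanishes unless $|S|=|E|=:n$ and is then $k[\mathrm{Aut}(S)]\otimes\bigoplus_{(I,J)\in\mathcal{X}^{\mathrm{conn}}(S)}\bigotimes_\alpha P(I_\alpha)\otimes\bigotimes_\beta Q(J_\beta)$, with $(\Ind P\boxtimes_c\Val\Ind Q)(S,E)$ by matching the indexing sets $\coprod_{f\in\mathrm{Aut}(S)}\mathcal{X}^{\mathrm{conn}}(S)$ and $\mathbb{X}^{n,\mathrm{conn}}(S,E)/\mathfrak{S}_n$, and then checks associativity directly; this is longer but elementary.
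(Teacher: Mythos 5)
Your proposal is correct in substance, but your primary route is genuinely different from the paper's. The paper proves \Cref{thm::Ind_monoidal_connexe} by exactly the ``alternative, more computational route'' you mention in your last sentences: after checking the unit via \Cref{rem::Description_Ind}, it expands $\big(\Ind V\boxtimes_c\Val \Ind W\big)(S,E)$ over $\mathbb{X}^{n,\mathrm{conn}}(S,E)$, observes that the summands vanish unless $I_\alpha\cong K'_\alpha$ and $K''_\beta\cong J_\beta$ (so in particular unless $|S|=|E|$), and then re-indexes $\Hom_\Fin([\![1,n]\!],S)\times_{\mathfrak{S}_n}\Hom_\Fin(E,[\![1,n]\!])\cong\Hom_\Fin(E,S)$ to pull out $\bigoplus_{\Hom_\Fin(E,S)}\big(V\boxtimes_c W\big)(E)=\Ind(V\boxtimes_c W)(S,E)$. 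Your main argument instead deduces the binary isomorphism from $\mathbb{S}(P\boxtimes_c Q)\cong\mathbb{S}P\:\square\:\mathbb{S}Q$, its bimodule analogue, and the $\square$-, $\otimes^{\mathrm{conc}}$- and $\mathbb{S}$-compatibilities of $\Ind$, then restricts to the weight-one piece. This is a viable and more modular organisation (it reuses \Cref{prop_ind_mon_sym_comp}, \Cref{prop_ind_mon_sym_conc}, \Cref{prop::ind_commute_S} rather than redoing the coend bookkeeping), but be aware of where the real content sits: the statements of \Cref{prop::S_permute_prod_connexe}, \Cref{prop::S_permute_prod_connexe_bi} and \Cref{prop_ind_mon_sym_comp} only give unstructured isomorphisms, and an isomorphism $\mathbb{S}F\cong\mathbb{S}G$ does not by itself yield $F\cong G$. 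So your step ``each isomorphism is compatible with the weight grading'' must be verified by inspecting the proofs — it holds because all these maps are relabelings of summands commuting with the projection $\mathcal{K}$, hence preserve the number of connected components — and that verification is essentially the same combinatorial matching of indexing sets that the paper performs directly; your approach trades explicit re-indexing for grading bookkeeping rather than eliminating it. Finally, note that the paper, like you, only constructs the unit and multiplication isomorphisms and does not spell out the coherence axioms, so your flagged ``main obstacle'' is not something the published proof treats more fully; with the grading-preservation checks written out, your argument is a legitimate alternative proof.
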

\begin{proof}
	Let $S$ and $E$ be two finite sets, then $(\Ind \,\Ibox)(S,E) = k$ if $|S|=1=|E|$ and $0$ otherwise. Then, the functor $\Ind$ respect the unit. Let $V$ and $W$ be two reduced $\mathfrak{S}$-modules and $S$ and $E$ be two finite sets.
	\begin{align*}
	\Big( \Ind V&\boxtimes_c\Val\Ind W \Big)(S,E) \\
	=~&\underset{n\in\N^*}{\bigoplus}~ \underset{{\substack{(\{I,K'\},\{K'',J\})\\ \in\mathbb{X}^{n,\mathrm{conn}}(S,E)}}}{\bigoplus} ~\underset{\alpha}{\bigotimes}~\Ind V(I_\alpha,K'_\alpha) \underset{\mathfrak{S}_n}{\otimes} ~\underset{\beta}{\bigotimes}~ \Ind W(K''_\beta, J_\beta) ~~\\
	\cong~& \underset{n\in\N^*}{\bigoplus}~ \underset{{\substack{(\{I,K'\},\{K'',J\})\\ \in\mathbb{X}^{n,\mathrm{conn}}(S,E)}}}{\bigoplus} ~\underset{\alpha}{\bigotimes}~\underset{\Hom_\Fin(K'_\alpha,I_\alpha)}{\bigoplus} V(K'_\alpha)\underset{\mathfrak{S}_n}{\otimes} ~\underset{\beta}{\bigotimes}~ \underset{\Hom_\Fin(J_\beta,K''_\beta)}{\bigoplus}W(J_\beta). ~
	\end{align*}
	Note that the right side is non zero if and only if  $I_\alpha\cong K'_\alpha$ for all $\alpha$ and $K''_\beta \cong J_\beta$ for all $\beta$. This implies that $(\Ind V\boxtimes_c\Ind W )(S,E)=0$ if $|S|\ne|E|$.
	\begin{align*}			
	\Big( \Ind V&\boxtimes_c\Val\Ind W \Big)(S,E) 	\\
	\cong  & \underset{n\in\N^*}{\bigoplus}~ \underset{{\substack{(\{I,K'\},\{K'',J\}) \\ \in\mathbb{X}^{n,\mathrm{conn}}(S,E)}}}{\bigoplus} ~
	\underset{{\substack{\prod_\alpha\Hom_\Fin(K'_\alpha,I_\alpha) \\ \times\prod_\beta\Hom_\Fin(J_\beta,K''_\beta)}}}{\bigoplus}~\underset{\alpha}{\bigotimes}~ V(K'_\alpha)\underset{\mathfrak{S}_n}{\otimes} ~\underset{\beta}{\bigotimes}~ W(J_\beta)~~\\
	\cong~& \underset{r,s,n\in\N^*}{\bigoplus}~ \underset{{\substack{I\in Y_r(S), J\in Y_s(E)\\ (K',K'')\in \mathcal{X}^{\mathrm{conn}}_{r,s}([\![1,n]\!]) \\ I_\alpha\cong K'_\alpha, K''_\beta\cong J_\beta}}}{\bigoplus} 
	~\underset{{\substack{
		\Hom_\Fin([\![1,n]\!],S) \\ \quad \underset{\mathfrak{S}_n}{\times}\Hom_\Fin(E,[\![1,n]\!])
	}}}{\bigoplus}\underset{\alpha}{\bigotimes}~ V(K'_\alpha)\underset{\mathfrak{S}_n}{\otimes} ~\underset{\beta}{\bigotimes}~ W(J_\beta)~~\\
	\cong~& \underset{r,s,n\in\N^*}{\bigoplus}~ \underset{{\substack{I\in Y_r(S), J\in Y_s(E)\\ (K',K'')\in \mathcal{X}^{\mathrm{conn}}_{r,s}([\![1,n]\!]) \\ I_\alpha\cong K'_\alpha, K''_\beta\cong J_\beta}}}{\bigoplus} ~\underset{\Hom_\Fin(E,S)}{\bigoplus}~\underset{\alpha}{\bigotimes}~ V(K'_\alpha)\underset{\mathfrak{S}_n}{\otimes} ~\underset{\beta}{\bigotimes}~ W(J_\beta)~~\\
	\cong~& \underset{\Hom_\Fin(E,S)}{\bigoplus}~\underset{r,s,n\in\N^*}{\bigoplus}~ \underset{{\substack{I\in Y_r(S), J\in Y_s(E)\\ (K',K'')\in \mathcal{X}^{\mathrm{conn}}_{r,s}([\![1,n]\!]) \\ I_\alpha\cong K'_\alpha, K''_\beta\cong J_\beta}}}{\bigoplus} ~\underset{\alpha}{\bigotimes}~ V(K'_\alpha)\underset{\mathfrak{S}_n}{\otimes} ~\underset{\beta}{\bigotimes}~ W(J_\beta)~~\\
	\cong~& \underset{\Hom_\Fin(E,S)}{\bigoplus}~\underset{r,s\in\N^*}{\bigoplus}~ \underset{(I,J)\in \mathcal{X}^{\mathrm{conn}}_{r,s}(E)}{\bigoplus} ~\underset{\alpha}{\bigotimes}~ V(I_\alpha)\underset{\mathfrak{S}_n}{\otimes} ~\underset{\beta}{\bigotimes}~ W(J_\beta)~~\\
	\cong~& \Ind\:\big( V \boxtimes_c W\big)(S,E).
	\end{align*}
\end{proof}
\begin{cor}
	The functor
	$
	\Res : \big(\Sbimodred_k,\boxtimes_c\Val\big)\longrightarrow\big(\Smodred_k,\boxtimes_c\big)
	$ 
	is Lax-monoidal.
\end{cor}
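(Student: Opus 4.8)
The plan is to obtain this as a formal consequence of the monoidal adjunction $\Ind \dashv \Res$, exactly as in the two preceding corollaries. First I would record that the adjunction $\Ind : \Smod_k \rightleftarrows \Sbimod_k : \Res$ of \Cref{prop::adj_ind_res} restricts to the reduced subcategories: by \Cref{rem::Description_Ind} the bimodule $\Ind V$ vanishes on every pair having an empty entry, so $\Ind$ lands in $\Sbimodred_k$, while $\Res P(\varnothing) = P(\varnothing,\varnothing) = 0$ for $P$ reduced, so $\Res$ lands in $\Smodred_k$; the unit and counit are unchanged. Next I would note that, although \Cref{thm::Ind_monoidal_connexe} only states that $\Ind : (\Smodred_k,\boxtimes_c) \to (\Sbimodred_k,\boxtimes_c\Val)$ is monoidal, its proof in fact produces an \emph{isomorphism} $\Ind V \boxtimes_c\Val \Ind W \xrightarrow{\ \cong\ } \Ind(V \boxtimes_c W)$, natural in $V$ and $W$, together with an isomorphism $\Ibox\Val \cong \Ind\,\Ibox$; hence $\Ind$ is a \emph{strong} (symmetric) monoidal left adjoint.

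The conclusion is then immediate from \Cref{prop::adjonction_lax_monoidal}, applied with $(\C,\otimes,I_\C) = (\Smodred_k, \boxtimes_c, \Ibox)$, $(\mathsf{D},\odot,I_\mathsf{D}) = (\Sbimodred_k, \boxtimes_c\Val, \Ibox\Val)$, $L = \Ind$ and $R = \Res$. That proposition manufactures the lax structure maps $\mu_\Res : \Res(-_1) \boxtimes_c \Res(-_2) \to \Res(-_1 \boxtimes_c\Val -_2)$ and $e_\Res : \Ibox \to \Res(\Ibox\Val)$ out of the unit $\eta$ of the adjunction, the counit $\epsilon$, and the inverse of the (now invertible) comparison morphism $\mu_{\Ind}$; its associativity and unitality axioms — and, since $\Ind$ is symmetric strong monoidal, compatibility with the symmetries $\tau$ on both sides — hold automatically. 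This yields that $\Res : (\Sbimodred_k, \boxtimes_c\Val) \to (\Smodred_k, \boxtimes_c)$ is lax monoidal, which is the claim.

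Since everything reduces to \Cref{thm::Ind_monoidal_connexe} and the abstract nonsense of \Cref{prop::adjonction_lax_monoidal}, there is no genuine obstacle here; the only point worth a line of verification is the strong monoidality of $\Ind$, i.e. that the comparison map built in the proof of \Cref{thm::Ind_monoidal_connexe} is actually invertible and that $\Ind$ respects the units on the nose — both of which are visible from the chain of isomorphisms carried out in that proof.
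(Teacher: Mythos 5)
Your proposal is correct and follows exactly the paper's route: the paper also deduces the corollary directly from \Cref{prop::adjonction_lax_monoidal} applied to the adjunction $\Ind\dashv\Res$, with \Cref{thm::Ind_monoidal_connexe} supplying the (strong) monoidality of the left adjoint. Your extra checks (restriction to reduced subcategories, invertibility of the comparison map) are just a more explicit account of what the paper leaves implicit.
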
	
\begin{proof}
	By the \Cref{prop::adjonction_lax_monoidal}.
\end{proof}

\section{Protoperads}\label{sect::protoperads}
In this section, we study monoids in the monoidal category $(\Smodred_k,\boxtimes_c,\Ibox)$.
\begin{defi}[Protoperad]\label{def::protoperade}
	A \textit{protoperad} is an unital monoid $(P,\mu,\eta)$ in the monoidal category $(\Smodred_k,\boxtimes_c)$: we note by 
	\[
		\Proto _k:=\UAs(\Smodred_k,\boxtimes_c,\Ibox),
	\] 
	the category of protoperads.
\end{defi}	

\begin{prop}
	The functor $\Ind$ induces the functor
	\[
	\Ind : \Proto _k \longrightarrow \Prope _k.
	\]
\end{prop}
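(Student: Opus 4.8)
The plan is to deduce the statement as an immediate corollary of \Cref{thm::Ind_monoidal_connexe}, using the standard categorical principle that a lax monoidal functor — and \emph{a fortiori} a strong monoidal one — carries unital monoids to unital monoids and morphisms of monoids to morphisms of monoids. First I would recall that, by \Cref{def::protoperade}, $\Proto_k = \UAs(\Smodred_k,\boxtimes_c,\Ibox)$ and, by the corresponding definition on the bimodule side, $\Prope_k = \UAs(\Sbimodred_k,\boxtimes_c\Val,\Ibox\Val)$. By \Cref{thm::Ind_monoidal_connexe} the functor $\Ind\colon(\Smodred_k,\boxtimes_c,\Ibox)\to(\Sbimodred_k,\boxtimes_c\Val,\Ibox\Val)$ is (strong) monoidal: its proof supplies a natural isomorphism $\mu^{\Ind}_{V,W}\colon \Ind V\boxtimes_c\Val \Ind W \xrightarrow{\ \cong\ }\Ind(V\boxtimes_c W)$ together with an isomorphism $e^{\Ind}\colon\Ibox\Val\xrightarrow{\ \cong\ }\Ind\,\Ibox$, both compatible with the associativity and unit constraints of the two monoidal categories.

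Given a protoperad $(P,\mu,\eta)$, I would then equip $\Ind(P)$ with the multiplication
\[
m_{\Ind P}\colon \Ind P\boxtimes_c\Val \Ind P \xrightarrow{\ \mu^{\Ind}_{P,P}\ }\Ind(P\boxtimes_c P)\xrightarrow{\ \Ind(\mu)\ }\Ind P
\]
and the unit
\[
\eta_{\Ind P}\colon \Ibox\Val\xrightarrow{\ e^{\Ind}\ }\Ind\,\Ibox\xrightarrow{\ \Ind(\eta)\ }\Ind P .
\]
Associativity of $m_{\Ind P}$ is obtained by pasting the associativity coherence diagram for the monoidal structure of $\Ind$ against the image under $\Ind$ of the associativity diagram for $\mu$; the two unit axioms follow in the same way from the compatibility of $\mu^{\Ind}$ and $e^{\Ind}$ with the unit constraints together with $\Ind$ applied to the unit axioms of $(\mu,\eta)$. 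For morphisms: if $f\colon P\to P'$ is a morphism of protoperads, then $\Ind(f)$ intertwines the multiplications and units because $\mu^{\Ind}$ and $e^{\Ind}$ are natural transformations; hence $\Ind(f)$ is a morphism of properads, and $P\mapsto\Ind(P)$ is manifestly functorial, giving the claimed functor $\Ind\colon\Proto_k\to\Prope_k$.

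The only points deserving a word of care, rather than a genuine obstacle, are: first, that $\Ind P$ is indeed a \emph{reduced} $\mathfrak{S}$-bimodule, so that it lies in $\Sbimodred_k$ — this is exactly \Cref{rem::Description_Ind}, since $\Ind$ of a reduced module vanishes on $(\varnothing,S)$ and $(S,\varnothing)$; and second, that the structure isomorphisms $\mu^{\Ind}$, $e^{\Ind}$ of \Cref{thm::Ind_monoidal_connexe} satisfy the monoidal coherence axioms, which is not spelled out in that proof but follows from the same coend/Fubini bookkeeping used there. I do not expect any hard step: once \Cref{thm::Ind_monoidal_connexe} is available the statement is formal.
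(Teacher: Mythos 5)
Your proposal is correct and follows exactly the paper's route: the paper's entire proof is the single line ``By \Cref{thm::Ind_monoidal_connexe}'', i.e.\ it invokes the monoidality of $\Ind$ and leaves implicit the standard fact that a monoidal functor carries unital monoids to unital monoids, which is precisely what you spell out. Your additional remarks (reducedness of $\Ind P$ via \Cref{rem::Description_Ind}, coherence of the structure isomorphisms) are just the details the paper omits, not a different argument.
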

\begin{proof}
	By the  \Cref{thm::Ind_monoidal_connexe}.
\end{proof}

\begin{rem}
	There exists the notion of \emph{prop} which is more general than the notion of properad: a prop is an object of the category
	\[
	\mathsf{props}_k:=\UAs\big(\Com(\Sbimod_k,\otimes^{\mathrm{conc}}),\square,I_\square \big),
	\]
	i.e. a $\mathfrak{S}$-bimodule with two products, a horizontal and a vertical ones, which satisfy the \emph{interchanging law} (see \cite{Mar08}). A  natural question is the following: what structure puts on a $\mathfrak{S}$-module $P$ such that $\Ind(P)$ is a prop? As the functor $\Ind$ is monoidal for $\square$ and $\otimes^{\mathrm{conc}}$, it induces the functor
	 \[
	 \Ind:\mathsf{protops}_k:=\UAs\big(\Com(\Smod,\otimes^{\mathrm{conc}}),\square,I_\square\big) 
	 \longrightarrow
	 \mathsf{props}_k.
	 \]
\end{rem}

We also have the dual notion.
\begin{defi}[Coprotoperad]\label{def::coprotoperade}
	A \emph{coprotoperad} is a co-unital comonoid  $(Q,\Delta,\epsilon)$ in the monoidal category  $(\Smodred_k,\boxtimes_c,\Ibox)$: we note  $\mathsf{coprotoperads}_k$, the category $\mathrm{co}\UAs(\Smodred_k,\boxtimes_c,\Ibox)$ of coprotoperads.
\end{defi}


\begin{nota}
	We note $\Smod^{\mathrm{gr}}_k$, the category $\Func(\Fin\op,\Ch^{\mathrm{gr}}_k)$, where $\Ch^{\mathrm{gr}}_k$ is the category of chain complexes with an 
	$\N$-grading called the \emph{weight}. All the previous constructions extend naturally to graded $\mathfrak{S}$-modules. Remark that this grading does not imply Koszul signs: the symmetry of the monoidal category $\Ch_k^{\mathrm{gr}}$ is given, for $C$ and $D$, two weight graded chain complexes,  by
	\[
	\begin{array}{cccc}
		\tau_{C,D} : & C\otimes  D& \longrightarrow & D\otimes C \\
		& c \otimes d & \longmapsto & (-1)^{|c||d|}d\otimes c
	\end{array} \ ,
	\]
	where, for $c$, an homogeneous element of $C$, $|c|$ is its homological degree.
\end{nota}
\begin{defi}[(Connected) Weight graded protoperad/coprotoperad]
	A protoperad (resp. coprotoperad) $\mathcal{P}$ is  \emph{weight graded}  if $\mathcal{P}$ is a  monoid (resp. comonoid) in the category $\Smod_k^{\mathrm{red,gr}}$ for the product $\boxtimes_c$. We denote this grading by $\mathcal{P}=\bigoplus_{i\in \N} \mathcal{P}^{[i]}$. We say that a weight graded (co)protoperad $\mathcal{P}=\bigoplus_{i\in \N} \mathcal{P}^{[i]}$ is \emph{connected} if $\mathcal{P}^{[0]}\cong \Ibox$.
\end{defi}

\subsection{Partial compositions}\label{sect::composition partielle}
One can describe (cf. \cite[Sect. 5.3.4]{LV12}) the operad structure on a $\mathfrak{S}$-module $\mathcal{O}$ just by giving the partial compositions maps $\circ_s : \mathcal{O}(S)\otimes \mathcal{O}(R) \rightarrow \mathcal{O}(R\amalg S\backslash\{s\})$. We have a similar property for protoperads
\begin{defi}[Partial compositions]
	Let $P$ be a reduced $\mathfrak{S}$-module equipped with a morphism of $\mathfrak{S}$-modules $\epsilon : \Ibox \hookrightarrow P$. Let $M,N$ and $S$ be three non-empty finite sets, with two diagrams of injections as follows:
	\[
		\phi:= \ \big(i:M\hookrightarrow S \hookleftarrow N :j\big) \ \mbox{and} \ \phi\op:= \ \big(j:N\hookrightarrow S \hookleftarrow M :i\big)
	\]  
	and such that
	\begin{equation}\label{eq::recouvrement_proto}
	\left\{
	\begin{array}{rl}
	\mathrm{im}(i)\cup \mathrm{im}(j) & = S \\
	\mathrm{im}(i)\cap\mathrm{im}(j) & \ne \varnothing
	\end{array}
	\right.~~.
	\end{equation}
	We say that $P$ has a \emph{partial composition system} if, for all diagrams $\phi$, we have a morphism of chain complexes
	\[
	\underset{\phi}{\circ} : P(M) \otimes P(N) \longrightarrow P(S)\ ,
	\]
	compatible with the action of $\mathrm{Aut}(S)$, i.e. for all $\sigma\in \mathrm{Aut}(S)$ with
	\[
	\sigma\cdot \phi := \Big( i':M':=\sigma\big(i(M)\big)\hookrightarrow S \hookleftarrow \sigma\big(j(N)\big)=:N' : j'\Big)
	\]
	we have the commutative diagram 
	\begin{equation}\label{eq::compa_action_AutS}
	\xymatrix@C=3cm@R=0.5cm{
		P(M) \otimes P(N) \ar[d]_{\underset{\phi}{\circ}} \ar[r]_{\cong}^{P(\sigma|_M)\otimes P(\sigma|_N)} & P(M') \otimes P(N') \ar[d]^{\underset{\sigma\cdot \phi}{\circ}} \\
		P(S) \ar[r]_{P(\sigma)} & P(S)~;
	}
	\end{equation}
	and which satisfies the following compatibility properties, for all commutative diagram of injections 
	\[
	\xymatrix@C=0.7cm@R=0.3cm{
		M \ar[dr]  & \ar@{}[d]|\phi & N \ar[dl] \ar[dr]  &\ar@{}[d]|\psi & U \ar[dl] \\
		& R \ar[dr] & \ar@{}[d]|\xi &  S \ar[dl]& \\
		& & T & &
	}
	\]
	with $\xi_L:= M \rightarrow T \leftarrow S$ and $\xi_R:= R \rightarrow T \leftarrow U$, such that the four pairs of arrows $\phi,\psi,\xi_L$ and $\xi_R$ satisfy the \Cref{eq::recouvrement_proto}. The partial composition satisfies the three  associativity axioms:
	\begin{description}
		\item[Axiom $\mathrm{H}$]~
		$\left(
		\begin{tikzpicture}[scale=0.2,baseline=1.5ex]
		\draw (0,1.5) rectangle (2,2);
		\draw (1.25,2.25) rectangle (3.25,2.75);
		\draw (-1,0.75) rectangle (1,1.25);
		\end{tikzpicture}\right)
		$	
		\[
		\xymatrix@C=2cm@R=0.5cm{
			P(M)\otimes P(N) \otimes P(U) \ar[r]^{1\otimes \underset{\psi}{\circ}}\ar[d]_{\underset{\phi}{\circ}\otimes 1} & P(M)\otimes P(S) \ar[d]^{\underset{\xi_L}{\circ}} \\
			P(R) \otimes P(U) \ar[r]_{\underset{\xi_R}{\circ}} & P(T)~;
		} 
		\] 
		\item[Axiom $\mathrm{V}$]~
		$\left(
		\begin{tikzpicture}[scale=0.2,baseline=1.2ex]
		\draw (0,1.5) rectangle (2,2);
		\draw (2.25,1.5) rectangle (4.25,2);
		\draw (1.25,0.75) rectangle (3.25,1.25);
		\end{tikzpicture}\right)
		$
		\[
		\xymatrix@C=2cm@R=0.5cm{
			P(N)\otimes P(M) \otimes P(U) \ar[r]^{(\underset{\psi}{\circ}\otimes 1)(1\otimes \tau)}\ar[d]_{\underset{\phi\op}{\circ}\otimes 1} & P(S)\otimes P(M) \ar[d]^{\underset{\xi_L\op}{\circ}} \\
			P(R) \otimes P(U) \ar[r]_{\underset{\xi_R}{\circ}} & P(T)~;
		} 
		\] 
		\item[Axiom $\mathrm{\Lambda}$]~
		$\left(
		\begin{tikzpicture}[scale=0.2,baseline=1.2ex]
		\draw (0,1.5) rectangle (2,2);
		\draw (1.25,0.75) rectangle (3.25,1.25);
		\draw (-1,0.75) rectangle (1,1.25);
		\end{tikzpicture}\right)
		$	
		\[		
		\xymatrix@C=2cm@R=0.5cm{
			P(M)\otimes P(U) \otimes P(N) \ar[r]^{1\otimes \underset{\psi\op}{\circ}}\ar[d]_{(1\otimes \underset{\phi}{\circ})(\tau\otimes 1)} & P(M)\otimes P(S) \ar[d]^{\underset{\xi_L}{\circ}} \\
			P(U) \otimes P(R) \ar[r]_{\underset{\xi_R\op}{\circ}} & P(T).
		} ~~.
		\]	
		The partial products also satisfy the following unital property for all diagrams of the form
		\[
		\iota := \big(i:\{*\} \hookrightarrow M \overset{\cong}{\leftarrow} M:\mathrm{id}\big),
		\]
		we have commutative diagrams
		\[
		\begin{tikzcd}
			\Ibox(\{*\}) \otimes P(M) \ar[rd, bend right=15, "\cong"'] \ar[r, hookrightarrow, "\epsilon \otimes \mathrm{id}"] & P(\{*\})\otimes P(M) \ar[d, "\underset{\iota}{\circ}"]  \\
			& P(M)
		\end{tikzcd}
		\]
		and
		\[
		\begin{tikzcd}
			P(M)\otimes \Ibox(\{*\}) \ar[rd, bend right=15, "\cong"']  \ar[r, hookrightarrow, "\mathrm{id}\otimes\epsilon"] & P(M)\otimes P(\{*\}) \ar[d, "\underset{\iota\op}{\circ}"]  \\
			& P(M)
		\end{tikzcd} \ .
		\]
	\end{description}
\end{defi}
\begin{prop}\label{prop::proto_def_partielle}
	A protoperad $\calP$ has canonically a partial compositions system. Conversely, a partial compositions system on a $\mathfrak{S}$-module $P$  canonically extends to a protoperad structure.
\end{prop}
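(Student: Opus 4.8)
The plan is to establish the two directions of the equivalence by unwinding the definition of $\boxtimes_c$ in terms of the combinatorics of $\mathcal{X}^{\mathrm{conn}}$, and to use $\mathcal{X}^{\mathrm{conn}}\subset \Wall$ (via $\mathcal{Y}^{\mathrm{ord}}_2(S)\times\mathcal{Y}^{\mathrm{ord}}_2(S)$ and the vertical composition) to read off partial compositions.

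\textbf{From a protoperad to a partial composition system.} First I would note that the product $\mu:\calP\boxtimes_c\calP \to \calP$ is, by \Cref{def::prod_connexe_Smod}, a collection of maps
\[
\mu_S : \bigoplus_{(I,J)\in\mathcal{X}^{\mathrm{conn}}(S)}\ \bigotimes_\alpha \calP(I_\alpha)\otimes\bigotimes_\beta\calP(J_\beta)\ \longrightarrow\ \calP(S),
\]
natural in $S\in\Fin$. Given a diagram $\phi=(i:M\hookrightarrow S\hookleftarrow N:j)$ satisfying \Cref{eq::recouvrement_proto}, the pair of ordered one-block ``partitions'' $(\{i(M)\},\{j(N)\})$ is, up to identifying $i(M)\cup j(N)=S$, an element of $\mathcal{Y}^{\mathrm{ord}}_1(S)\times\mathcal{Y}^{\mathrm{ord}}_1(S)$; condition \Cref{eq::recouvrement_proto} (nonempty intersection together with covering) is exactly the statement that this pair lies in $\mathcal{X}^{\mathrm{conn}}(S)$ — it is a connected wall with two bricks. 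Composing the inclusion of this summand into $\calP\boxtimes_c\calP(S)$ with $\mu_S$, and precomposing with $\calP(i)\otimes\calP(j)^{-1}$-type relabelling isomorphisms $\calP(M)\otimes\calP(N)\cong\calP(i(M))\otimes\calP(j(N))$, defines $\underset{\phi}{\circ}$. Naturality of $\mu$ in $S$ gives \Cref{eq::compa_action_AutS}. The unit $\eta:\Ibox\to\calP$ supplies $\epsilon$ and the unital diagrams follow from the monoid unit axioms, since $\Ibox\boxtimes_c\calP\cong\calP$ is realized summand-wise by exactly the diagrams $\iota$. The three associativity axioms $\mathrm{H},\mathrm{V},\mathrm{\Lambda}$ all come from the single associativity square $\mu\circ(\mu\boxtimes_c\id)=\mu\circ(\id\boxtimes_c\mu)$: the three axioms correspond to the three ``shapes'' of connected walls with three bricks — two bricks on top of one, one on top of two, and a ``staircase'' — exactly the three pictures drawn in the statement. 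Concretely one restricts the associativity isomorphism from the proof of \Cref{lem::Smod_ana_scinde} to the relevant three-brick summands indexed by $(K,L,J)\in\mathcal{Y}(S)^{\times 3}$ with $\mathcal{K}_S(\mathcal{K}_S(K,L),J)=(S)$, and the associativity of $\mathcal{K}_S$ (\Cref{lem::K_associative}) is what makes the square commute.

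\textbf{From a partial composition system to a protoperad.} Conversely, given partial compositions $\underset{\phi}{\circ}$, I would define $\mu_S$ on each summand of $\calP\boxtimes_c\calP(S)$ indexed by $(I,J)\in\mathcal{X}^{\mathrm{conn}}(S)$ by iterating the partial compositions along a chain realizing the connectedness of the wall $(I,J)$ (which exists by \Cref{def::mur}), bracketing the bricks one at a time. One must check this is independent of the chosen chain and bracketing: this is precisely where axioms $\mathrm{H},\mathrm{V},\mathrm{\Lambda}$ are used, together with the $\mathrm{Aut}(S)$-compatibility \Cref{eq::compa_action_AutS} to pass from the ordered description $\mathcal{X}^{\mathrm{ord}}$ to the unordered $\mathcal{X}$ (so that the map is well-defined on $\mathfrak{S}_r\times\mathfrak{S}_s$-coinvariants, using the identification recalled after \Cref{def::prod_connexe_Smod}). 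Naturality in $S$ again follows from \Cref{eq::compa_action_AutS}. Finally unitality of $(\calP,\mu,\epsilon)$ follows from the two unital diagrams for $\iota$, and associativity of $\mu$ reduces, after unwinding $(\calP\boxtimes_c\calP)\boxtimes_c\calP$ via the chain of isomorphisms in \Cref{lem::Smod_ana_scinde}, to checking equality on each three-brick summand, which holds by a diagram chase combining the three axioms.

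\textbf{Main obstacle.} The routine part is the dictionary between diagrams $\phi$ and two-brick connected walls, and between the monoid unit/associativity axioms and the unital/associativity diagrams. The genuinely delicate point is \emph{well-definedness} in the converse direction: a connected wall with $n$ bricks can be built up by partial compositions in many orders, and one must verify that all of them agree. I expect this to require an induction on the number of bricks, reducing any two bracketings to a common refinement using exactly the moves encoded by $\mathrm{H}$ (a brick sitting above two horizontally adjacent bricks), $\mathrm{V}$ (two bricks stacked vertically under a third), and $\mathrm{\Lambda}$ (the staircase), much as associativity and the pentagon for operads reduce to the single partial-composition associativity relations; checking that these three local moves generate all reorderings of a connected wall — i.e. that the relevant ``complex of bracketings'' is connected — is the crux, and it is essentially the combinatorial content packaged by the associativity of $\mathcal{K}_S$ in \Cref{lem::K_associative}.
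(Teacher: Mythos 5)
Your converse direction follows essentially the paper's route: you define $\mu_S$ summand by summand by iterating the partial compositions along the bricks, use the axioms $\mathrm{H}$, $\mathrm{V}$, $\Lambda$ to make the result independent of the order of composition, and use the compatibility \Cref{eq::compa_action_AutS} to descend to the unordered quotient; the paper compresses exactly this (including the well-definedness point you rightly single out as the crux) into ``by the associativity of the partial compositions''. The genuine gap is in the forward direction. You claim that for a diagram $\phi=(i:M\hookrightarrow S\hookleftarrow N:j)$ the pair $(\{i(M)\},\{j(N)\})$ is an element of $\mathcal{X}^{\mathrm{conn}}(S)$, hence indexes a summand $\calP(i(M))\otimes\calP(j(N))$ of $\calP\boxtimes_c\calP(S)$, and you define $\underset{\phi}{\circ}$ by including this summand and applying $\mu_S$. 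But $\mathcal{X}^{\mathrm{conn}}(S)$ lies in $\mathcal{Y}(S)\times\mathcal{Y}(S)$, and by definition of $\mathcal{Y}^{\mathrm{or}}_n(S)$ each row is a \emph{partition} of $S$ (the blocks are disjoint with union $S$), so a one-block row forces that block to be all of $S$. Since in general $i(M)\subsetneq S$ and $j(N)\subsetneq S$, there is no such summand of $\calP\boxtimes_c\calP(S)$, and your recipe does not define $\underset{\phi}{\circ}$. (The configuration $\{i(M),j(N)\}$ is a two-brick element of $\Wall(S)$, but $\Wall$ indexes the free protoperad, not the product $\boxtimes_c$, which is indexed by the smaller $\mathcal{X}^{\mathrm{conn}}$.)

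The missing ingredient is the unit: one must complete each row by the singletons of $S$ not covered by $i(M)$ (resp.\ $j(N)$), which does give an element of $\mathcal{X}^{\mathrm{conn}}(S)$, and feed the unit $\eta:\Ibox\to\calP$ into those singleton bricks before applying $\mu_S$. This is precisely what the paper's restriction of $\mu$ to the weight-two component accomplishes (cf.\ the grading coming from \Cref{lem::Smod_ana_scinde} and the identification $(V_+\boxtimes_cW_+)^{(1)_V,(1)_W}(S)\cong\bigoplus_{K\cup L=S,\ K\cap L\ne\varnothing}V(K)\otimes W(L)$ of \Cref{rem::description_VboxtimesV}); the same singleton-padding is needed when you derive $\mathrm{H}$, $\mathrm{V}$, $\Lambda$ from the associativity square on three-brick configurations, and when you realize the unital diagrams inside $\Ibox\boxtimes_c\calP\cong\calP$. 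With this correction your argument goes through and coincides with the paper's proof.
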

\begin{proof}
	Let $(P,\mu)$ be a monoid in the category  $\big(\Smodred_k,\boxtimes_c,\Ibox\big)$. By the grading of $\boxtimes_c$  which is implied by the analycity of $\boxtimes_c$ (cf. \Cref{lem::Smod_ana_scinde}), the restriction of the product $\mu$ to $(P\boxtimes_c P)^{(2)_P}$ gives us directly all the partial compositions $\underset{\phi}{\circ}$ and the associativity and the unit of the product imply all diagrams of the definition hold. 
	
	Conversely, let $P$ be a reduced $\mathfrak{S}$-module, with an injection of $\mathfrak{S}$-modules $\Ibox\hookrightarrow P$ and a partial composition system. By the associativity of partial compositions $\underset{\phi}{\circ}$ for $P$, we define, for all $K\in \mathcal{Y}\ord_m(S)$, $L\in \mathcal{Y}\ord_n(S)$ with $\mathcal{K}(K,L)=\{S\}$, a morphism
	\[
	\tilde{\mu}_{K,L} : \bigotimes_{i=1}^m P(K_i) \otimes \bigotimes_{j=1}^n P(L_j) \longrightarrow P(S).
	\]
	The compatibility of the partial compositions with the action of the automorphism group of the target (cf. \Cref{eq::compa_action_AutS}) implies that the following morphism
	\[	
	\sum \tilde{\mu}_{K,L} : \bigoplus_{(\sigma,\tau)\in\mathfrak{S}_m\times\mathfrak{S}_n}\bigotimes_{i=1}^m P(K_{\sigma(i)}) \otimes \bigotimes_{j=1}^n P(L_{\tau(j)}) \longrightarrow P(S)
	\]
	passes to the quotient
	\[
	\mu_S : \bigoplus_{(K,L)\in\mathcal{X}^{\mathrm{conn}}(S)}\bigotimes_{\alpha\in A} P(K_\alpha) \otimes \bigotimes_{\beta\in B} P(L_\beta) \longrightarrow P(S),		
	\]
	which gives us a natural transformation $\mu : P\boxtimes_c P \longrightarrow P$. This natural transformation makes $P$ a unital associative monoid in $\boxtimes_c$, because the partial products satisfy the associativity and unital axioms.
\end{proof}

Vallette's works\cite{Val09}  gives us the construction of the free (co)monoid in a abelian monoidal category as $(\Smodred_k,\boxtimes_c,\Ibox)$. We briefly review this construction in the next section.

\subsection{Free monoid in abelian monoidal categories}\label{subsect::monoide_libre}
We briefly recall the construction of the free monoid $\scrF(-)$ by Vallette in \cite{Val03} for general abelian monoidal category and \cite{Val09}. 

\indent Let $(\mathsf{A},\odot,I)$ be an abelian monoidal category such that, for all objects $A$ in  $\mathsf{A}$, the endofunctors of $\mathsf{A}$ $R_A$ and $L_A$, given by $R_A(M):= M\odot A$ and $L_A(M)=A\odot M$ for all object $M\in \mathsf{A}$,  preserve reflexive coequalizors and sequential colimits (cf. \cite{Val09}). Fix  an object $V$ in the category $\mathsf{A}$. We consider the augmented object $V_+:= I\oplus V$ and we denote by  $\eta_V : I \hookrightarrow V_+$, the injection of $I$ in $V_+$; $\epsilon_V : V_+ \twoheadrightarrow I$, the projection of $V_+$ to $I$; $i_V : V \rightarrow V_+$, the injection of $V$ into $V_+$: 
\[
\begin{tikzcd}
	I \ar[r, hook, "\eta_V"] \ar[rd, bend right=15, "="'] & I\oplus V  \ar[d, two heads, "\epsilon_V"] & V \ar[l, hook', "\iota_V"'] \\
	& I &
\end{tikzcd} \ .
\]
For all natural numbers $n$, we denote $V_n:= (V_+)^{\odot n}$, and $\lambda_A : I\odot A \overset{\cong}{\rightarrow} A$ and $\rho_A : A \odot I \overset{\cong}{\rightarrow} A$, the structure isomorphisms of the monoidal category $(\mathsf{A},\odot,I)$. The injection $\eta_V$ induces degeneracy morphisms $\eta_{V,i} : V_n \rightarrow V_{n+1}$: for all integers $1\leqslant i\leqslant n$, we have
\[
\xymatrix@C=4pc{
	\eta_{V,i} : (V_+)^{\odot i}\odot I \odot (V_+)^{\odot (n-i)} \ar[r]^{V_i\odot \eta_V \odot V_{n-i}} & (V_+)^{\odot i}\odot (V_+) \odot (V_+)^{\odot (n-i)}
}.
\]
We define the morphism $\tau_V : V \rightarrow V_2$ as the following composition:  
\[
\xymatrix@C=5pc{
	V \ar[r]_(0.35){\lambda_V^{-1}+ \rho_V^{-1}} \ar@{.>}@/^1.5pc/[rr]^{=:\tau_V} & (I \odot V) \oplus (V\odot I) \ar[r]_(0.45){\eta_V\odot i_V - i_V\odot \eta_V} &  (I\oplus V)\odot(I\oplus V)=:V_2
}.
\]
For two objects $A$ and $B$ in $\mathsf{A}$, we define the object $ A \odot( \underline{V}\oplus V_2) \odot B $ as the cokernel of 
\[
\xymatrix@C=4pc{
	A \odot V_2 \odot B \ar[r]^(0.4){A\odot i_{V_2} \odot B}& A \odot ( V \oplus V_2) \odot B
},
\]
where $i_{V_2}$ is the canonical injection $V_2 \hookrightarrow V\oplus V_2$. By  \cite[Cor. 4]{Val03}, $ A \odot( \underline{V}\oplus V_2) \odot B $ is also the kernel of 
\[
\xymatrix@C=4pc{
	A \odot ( V \oplus V_2) \odot B\ar[r]^(0.55){A\odot \pi_{V_2} \odot B}&  A \odot V_2 \odot B, 
}
\]
where $\pi_{V_2}$ is the projection $V\oplus V_2 \rightarrow V_2$. So, the object $ A \odot( \underline{V}\oplus V_2) \odot B $ can be considered as a subobject of $ A \odot( V\oplus V_2) \odot B $. We define: 
\begin{itemize}
	\item $R_{A,B}$ as the image of the composition 
	\[
	\xymatrix@C=3pc{
		A \odot( \underline{V}\oplus V_2) \odot B \ar@{^{(}->}[r] & A \odot( V\oplus V_2) \odot B \ar[rr]^{A\odot(\tau+\mathrm{id}_{V_2})\odot B} & & A \odot V_2 \odot B
	};
	\]
	\item  $\widetilde{V}_n$ as the cokernel of
	\begin{equation}\label{eq::def_VnTilde}
	\bigoplus_{i=0}^{n-2} R_{V_i, V_{n-i-2}} \longrightarrow V_n.
	\end{equation} 
\end{itemize}
By \cite[Lem. 4]{Val09}, the morphisms $\eta_{V,i}: V_n \rightarrow V_{n+1}$, for $i$ in $[\![1,n+1]\!]$, induce the same morphism on the quotient: 
\[
	\widetilde{\eta}_V : \widetilde{V}_n \longrightarrow \widetilde{V}_{n+1}.
\]
\begin{defi}[The object {$\scrF(V)$} -- \textup{\cite[Sect. 3]{Val09}}]
	Let $V$  be an element of the category $\mathsf{A}$. The object $\scrF(V)$ associated to $V$, is the sequential colimit:
	\[
	\xymatrix@C=0.7cm{
		\widetilde{V}_0:= I \ar[r]^{\widetilde{\eta}_V} \ar@/_1pc/[rrrd]|(0.4){j_{V,0}}
		& \widetilde{V}_1 = V_1 = V_+ \ar[r]^{\widetilde{\eta}_V}\ar@/_1pc/[rrd]|(0.35){j_{V,1}}
		&  \widetilde{V}_2 \ar[r]^{\widetilde{\eta}_V}\ar@/_1pc/[rd]|(0.3){j_{V,2}}
		& ~~\ldots~~ \ar[r]^{\widetilde{\eta}_V} 
		& \widetilde{V}_n \ar[r]^{\widetilde{\eta}_V}  \ar@/^1pc/[ld]|(0.3){j_{V,n}}
		& ~\ldots~ \\
		& & & \scrF(V):= \underset{n\in \N}{\mathrm{Colim}} \widetilde{V}_n .
	}
	\]
\end{defi}
\begin{prop}[see \cite{Val09}]
	Let $V$ be an object of the category $\mathsf{A}$. The free monoid (resp. cofree comonoid) on $V$ is $(\scrF(V),\mu)$ (resp. $(\scrF^c(V),\Delta)$).
\end{prop}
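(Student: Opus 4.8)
The plan is to equip $\scrF(V)$ with a monoid structure and then verify the universal property by hand, working in an arbitrary abelian monoidal category $(\mathsf{A},\odot,I)$ satisfying the stated colimit hypotheses (for the case $(\Smodred_k,\boxtimes_c,\Ibox)$ these are supplied by \Cref{prop::prop_fond_du_produit_connexe}). To build the multiplication, note that the concatenation isomorphisms $V_m\odot V_n\cong V_{m+n}$ coming from $(V_+)^{\odot m}\odot(V_+)^{\odot n}=(V_+)^{\odot(m+n)}$ are compatible, on the source, with the inclusions of the relation subobjects $R_{V_i,V_j}$ defining $\widetilde V_\bullet$, since those relations (built from $\tau_V\colon V\to V_2$) are supported on two consecutive tensor factors; hence they descend to maps $\widetilde V_m\odot\widetilde V_n\to\widetilde V_{m+n}$ compatible with the degeneracies $\widetilde\eta_V$. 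Because $L_A$ and $R_A$ preserve sequential colimits, $-\odot-$ commutes in each variable with the colimit defining $\scrF(V)$, so $\scrF(V)\odot\scrF(V)\cong\mathrm{colim}_{(m,n)}\widetilde V_m\odot\widetilde V_n$, and passing to this colimit produces $\mu\colon\scrF(V)\odot\scrF(V)\to\scrF(V)$; the unit is $j_{V,0}\colon I=\widetilde V_0\to\scrF(V)$, and associativity and unitality are inherited from the coherently associative, unital concatenation on the tower $(V_n)_n$.

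For the universal property, let $(M,\gamma,u)$ be a monoid in $\mathsf{A}$ and $f\colon V\to M$ a morphism; set $V_+=I\oplus V\xrightarrow{(u,f)}M$ and define $\phi_n\colon V_n=(V_+)^{\odot n}\to M^{\odot n}\xrightarrow{\gamma^{(n)}}M$ using the $n$-fold iterate of $\gamma$ (with $\gamma^{(0)}=u$). The crucial verification is that $\phi_n$ annihilates each subobject $R_{V_i,V_{n-i-2}}\hookrightarrow V_i\odot V_2\odot V_{n-i-2}\cong V_n$: unwinding the definitions of $R_{A,B}$ and of $\tau_V=(\eta_V\odot i_V-i_V\odot\eta_V)\circ(\lambda_V^{-1}+\rho_V^{-1})$, this reduces to $\phi_2\circ\tau_V=0$, which is exactly the assertion that the two maps $V\to M$ obtained from $(u,f)$ by multiplying in $M$ on the left and on the right by the unit both coincide with $f$ — i.e. the left/right unit axioms of $M$ — while the $\mathrm{id}_{V_2}$ summand in the definition of $R_{A,B}$ contributes nothing because $A\odot(\underline V\oplus V_2)\odot B$ is the kernel of the projection to the $V_2$-component. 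So $\phi_n$ factors as $\widetilde\phi_n\colon\widetilde V_n\to M$, and $\widetilde\phi_{n+1}\circ\widetilde\eta_V=\widetilde\phi_n$ again by unitality of $\gamma$; the universal property of the colimit then yields $\bar f\colon\scrF(V)\to M$. One checks stagewise that $\bar f$ is a morphism of monoids (iterated concatenation maps to iterated $\gamma$) and that precomposing $\bar f$ with the structural map $V\xrightarrow{i_V}V_+=\widetilde V_1\xrightarrow{j_{V,1}}\scrF(V)$ recovers $f$. Uniqueness is forced: a monoid morphism $g$ agreeing with $f$ along this structural map must restrict on $\widetilde V_1=V_+$ to $(u,f)$, hence agree with $\bar f$ on each $\widetilde V_n$ (since $(V_+)^{\odot n}\twoheadrightarrow\widetilde V_n\hookrightarrow\scrF(V)$ equals $\mu^{(n)}\circ j_{V,1}^{\odot n}$), hence on $\scrF(V)=\mathrm{colim}_n\widetilde V_n$. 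The cofree comonoid statement is the formal dual, obtained by running the argument in $\mathsf{A}\op$ (where the hypotheses become the dual ones on reflexive equalizers and sequential limits), giving $(\scrF^c(V),\Delta)$.

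The step I expect to be the main obstacle is the descent in the second paragraph: since $\odot$ is not assumed biadditive, $V_n=(I\oplus V)^{\odot n}$ does not split as a direct sum of ``words'', so one cannot simply read off that $\phi_n$ kills the relations, and must instead argue at the level of the kernel/cokernel objects $A\odot(\underline V\oplus V_2)\odot B$, using precisely the exactness- and colimit-preservation hypotheses on $L_A$ and $R_A$. This is the technical core of Vallette's construction, and the same bookkeeping underlies the coherence checks for the associativity of $\mu$; for these verifications I would follow \cite[Sect.~3--4]{Val09} (and \cite[Cor.~4]{Val03}).
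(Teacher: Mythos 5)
The paper does not actually prove this proposition: it is recalled from \cite{Val09}, immediately after the construction of $\scrF(V)$ is summarized in \Cref{subsect::monoide_libre}. Your sketch reconstructs exactly Vallette's argument — the multiplication induced by the concatenation isomorphisms $V_m\odot V_n\cong V_{m+n}$, descending to the $\widetilde V_\bullet$ and to the sequential colimit, and the universal property checked by showing that $\phi_n$ kills $R_{V_i,V_{n-i-2}}$ via the kernel description of $A\odot(\underline V\oplus V_2)\odot B$ together with the unit axioms of the target monoid — so it is essentially the same approach, with the genuinely delicate non-additivity bookkeeping correctly deferred to \cite[Sect.~3--4]{Val09} and \cite{Val03}, which is also all the paper itself does. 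The one point to amend is your final sentence: the comonoid half is not obtained by formally running the theorem in $\mathsf{A}\op$, since the hypotheses (preservation of reflexive coequalizers and sequential colimits by $L_A$ and $R_A$) are not self-dual; in Vallette's treatment $\scrF^c(V)$ has the same underlying object as $\scrF(V)$, equipped with a deconcatenation-type coproduct, and it is cofree only among connected (conilpotent) comonoids, which is the sense in which the statement should be read.
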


\begin{prop}\label{prop::foncteur_free_monoide}
	Let $(\C,\odot_\C,I_\C)$ and $(\D,\odot_\D,I_\D)$ be two abelian monoidal categories which admit sequential colimits and such that the monoidal product preserves sequential colimits and reflexive coequalizors. Let $G: \C \rightarrow \D$ be a monoidal functor which commutes with colimits. Then $G$ commutes with the free monoids, i.e. we have the natural equivalence
	\[
		G\big( \scrF_\C(-)\big) \cong \scrF_D\big(G(-)\big).
	\]
\end{prop}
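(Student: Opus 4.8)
The plan is to run through Vallette's construction of $\scrF(-)$ step by step and check that $G$ carries each ingredient attached to $V$ to the corresponding ingredient attached to $G(V)$. Throughout, ``monoidal functor'' is taken in the strong sense, so $G$ comes equipped with natural isomorphisms $e_G : I_\D \xrightarrow{\cong} G(I_\C)$ and $\mu_G : G(-_1)\odot_\D G(-_2) \xrightarrow{\cong} G(-_1\odot_\C -_2)$. Since $G$ commutes with colimits it preserves finite coproducts, hence is additive (additivity being equivalent to preservation of finite biproducts), so it also preserves split short exact sequences, split epimorphisms, and their kernels together with the kernel inclusions; moreover it preserves cokernels and sequential colimits. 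From $e_G$, $\mu_G$ and additivity one gets at once natural isomorphisms $G(V_+) \cong G(V)_+$ and then, iterating $\mu_G$, $G(V_n) = G\big((V_+)^{\odot n}\big) \cong \big(G(V)_+\big)^{\odot n} = G(V)_n$ for all $n$. The morphisms $\eta_V$, $\iota_V$, the degeneracies $\eta_{V,i}$ and the map $\tau_V$ are all assembled from the coherence isomorphisms $\lambda,\rho$ and the structural maps of the biproduct $I\oplus V$ by applying $\odot$ and $\oplus$, and are natural in $V$; hence $G$ sends each of them, under the identifications above, to the corresponding morphism for $G(V)$.

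The delicate point is the relation object $A\odot(\underline V\oplus V_2)\odot B$ and the subobject $R_{A,B}$, since a colimit-preserving functor need not preserve kernels or images: this is exactly where the non-exactness of $\odot$ is felt. I would handle it with two observations. First, $A\odot(\underline V\oplus V_2)\odot B$ is by definition the cokernel of $A\odot i_{V_2}\odot B$, so $G$ preserves it and $G\big(A\odot(\underline V\oplus V_2)\odot B\big)\cong G(A)\odot\big(\underline{G(V)}\oplus G(V)_2\big)\odot G(B)$ naturally. Second, by \cite[Cor. 4]{Val03} this object is also the kernel of the map $A\odot\pi_{V_2}\odot B$, which is a \emph{split} epimorphism because $\pi_{V_2}$ is split and $A\odot-\odot B$ is additive; hence $G$ preserves this kernel together with its canonical monomorphism into $A\odot(V\oplus V_2)\odot B$, and therefore $G$ preserves the composite (built from that monomorphism and $A\odot(\tau+\mathrm{id}_{V_2})\odot B$) whose image is $R_{A,B}$. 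Since replacing a morphism by its image does not change its cokernel, $\widetilde V_n$ is the cokernel of $\bigoplus_{i=0}^{n-2}\big(V_i\odot(\underline V\oplus V_2)\odot V_{n-i-2}\big)\longrightarrow V_n$, and $G$ preserves this cokernel; thus $G(\widetilde V_n)\cong \widetilde{G(V)}_n$, compatibly with the projections $V_n\twoheadrightarrow\widetilde V_n$ and, since the $\widetilde\eta_V$ are induced by the $\eta_{V,i}$ on these cokernels, with the maps $\widetilde\eta_V$.

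It then remains to pass to the colimit: as $\scrF_\C(V)=\mathrm{Colim}_n\widetilde V_n$ along the $\widetilde\eta_V$ and $G$ commutes with sequential colimits,
\[
G\big(\scrF_\C(V)\big) \;=\; G\Big(\mathrm{Colim}_n \widetilde V_n\Big) \;\cong\; \mathrm{Colim}_n G(\widetilde V_n) \;\cong\; \mathrm{Colim}_n \widetilde{G(V)}_n \;=\; \scrF_\D\big(G(V)\big).
\]
Every isomorphism used — the monoidal coherence constraints on both sides, the structure isomorphisms $\mu_G$, $e_G$, and those produced by the universal properties of the cokernels and of the colimit — is natural in $V$, so the composite is a natural equivalence $G\circ\scrF_\C\cong\scrF_\D\circ G$; and because the multiplication on $\scrF(V)$ is itself produced from the same colimit data, this equivalence is compatible with the monoid structures, which gives the statement. (One could also organise the argument around the adjunction between $\scrF$ and the forgetful functor on monoids, but it reduces to the same colimit-preservation facts.) As indicated, the only real obstacle is the object $A\odot(\underline V\oplus V_2)\odot B$; the key is that it is a retract of $A\odot(V\oplus V_2)\odot B$, so additivity of $G$ — a consequence of its commuting with colimits — is enough to control it.
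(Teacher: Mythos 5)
Your proof follows essentially the same route as the paper's: identify $G(V_+)\cong G(V)_+$ and $G(V_n)\cong G(V)_n$ via additivity and strong monoidality, transport the cokernels defining $\widetilde{V}_n$, and pass to the sequential colimit, all naturally in $V$. The only difference is that you treat the image object $R_{A,B}$ with more care (using that the kernel inclusion is that of a split epimorphism, hence preserved by the additive functor $G$, and that a morphism and its image inclusion have the same cokernel), whereas the paper simply asserts $G(R_{A,B})\cong R_{G(A),G(B)}$ from cokernel preservation — a welcome extra precision, but not a different argument.
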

\begin{proof}
	Let $V$ be an object of $\C$. By hypothesis, the functor $G$ commutes with colimits, in particular, with coproducts:  so we have
	\[
		G(V_+) \cong G(V)\oplus G(I_\C) \cong G(V)\oplus I_\D = G(V)_+
	\]
	and, as $G$ is monoidal, we have that, for all integers $n$, $G(V_n)\cong G(V)_n$. 	Since $G$ preserves cokernels, for all objects $A$ and $B$ in the  category $\mathsf{A}$, we have
	\[
		G(R_{A,B}) \cong R_{G(A),G(B)}
	\]
	and, for all integers $n$, $G(\widetilde{V}_n)\cong \widetilde{G(V)}_n$. Finally, as $G$ commutes with colimits, we have 
	\[
		G\big( \scrF_\C(V)\big) \cong \scrF_D\big(G(V)\big).
	\]
\end{proof}

\subsection{A first description of the free protoperad}
\label{subsec::protopérade_libre}
We use the results of the previous section to describe the free protoperad $\scrF(V)$ over a $\mathfrak{S}$-module $V$. This first is not very explicit, in the sense that  the underlying combinatorics is hidden by the formalism.  \Cref{rem::description_F2} and \Cref{sect::functors of walls} will remedy this problem.

To a partition $K$ of a finite set $S$, i.e. $K$ is an element of $\mathcal{Y}(S)$ (see \Cref{rem::sous_foncteur_combinatoire}), we associate the non-ordered set $\Gamma(K)$ which labelled this partition: $S\cong \coprod_{\alpha\in \Gamma(K)} K_\alpha$. Recall that the functor $\mathbb{S}(-)$ is non-unitary and satisfies the exponential property (cf. \Cref{prop::prop_exp_S}): then, for two $\mathfrak{S}$-modules $V_1$ and $V_2$, we have the following isomorphism of $\mathfrak{S}$-modules:
\[
	\mathbb{S}\big( V_1\oplus V_2\big) \cong \mathbb{S}(V_1) \oplus \mathbb{S}(V_2)\oplus \mathbb{S}(V_1) \otimes^{\mathrm{conc}}\mathbb{S}(V_2).
\]
\begin{nota}
	Let $V$ be a $\mathfrak{S}$-module,  we denote by the exponent $(-)_V$, the weight-grading by the number of terms $V$.
\end{nota}
The functor $\mathbb{S}(-)$ is split analytic (cf. \cite{Val03,Val09} for the definition of split analytic functor), so that, for three $\mathfrak{S}$-modules $V_1$, $V_2$ and $V_3$, we have the weight-bigrading:
\begin{align*}
\mathbb{S}(V_1\oplus V_2)\:\square\:\mathbb{S}(V_3) \cong &~\mathbb{S}V_1\:\square\:\mathbb{S}V_3 \oplus \mathbb{S}V_2\:\square\:\mathbb{S}V_3 \oplus \big(\mathbb{S}V_1\otimes^{\mathrm{conc}}\mathbb{S}V_2\big)\:\square\:\mathbb{S}V_3 \\
\cong &~\underset{i,j\in \N^*}{\bigoplus} \mathbb{S}^jV_1\:\square\:\mathbb{S}V_3 \oplus \mathbb{S}^jV_2\:\square\:\mathbb{S}V_3 \oplus \big(\mathbb{S}^iV_1\otimes^{\mathrm{conc}}\mathbb{S}^jV_2\big)\:\square\:\mathbb{S}V_3 \\
\overset{not}{=:} &~\underset{i,j\in \N^*}{\bigoplus}\big(\mathbb{S}(V_1\oplus V_2)\:\square\:\mathbb{S}(V_3)\big)^{(i)_{V_1},(j)_{V_2}}
\end{align*}
by the bi-additivity of the bifunctors $-\square-$ and $- \otimes^{\mathrm{conc}} -$. 

This bigrading induces, via the injection  $(V_1\oplus V_2)\boxtimes_c V_3 \hookrightarrow \mathbb{S}(V_1\oplus V_2)\:\square\:\mathbb{S}(V_3)$ (cf. \Cref{prop::S_permute_prod_connexe}), the bigrading by weight of $V_1$ and $V_2$ on $(V_1\oplus V_2)\boxtimes_c V_3$ which is denoted by
\[
	(V_1\oplus V_2)\boxtimes_c V_3 =: \underset{i,j\in\N^*}{\bigoplus} \big((V_1\oplus V_2)\boxtimes_c V_3\big)^{(i)_{V_1},(j)_{V_2}}.
\]
By the symmetry of the product $\boxtimes_c$, we also have  the bigrading
\[
	V_3\boxtimes_c(V_1\oplus V_2)=: \underset{i,j\in\N^*}{\bigoplus} \big(V_3\boxtimes_c(V_1\oplus V_2)\big)^{(i)_{V_1},(j)_{V_2}},
\]
and we denote $	\big((V_1\oplus V_2)\boxtimes_c V_3\big)^{(j)_{V_2}} := \underset{i\in\N^*}{\bigoplus} \big((V_1\oplus V_2)\boxtimes_c V_3\big)^{(i)_{V_1},(j)_{V_2}}$.
\begin{rem}
	These gradings are natural: they arise from the split analytic property of the bifunctor $-\boxtimes_c -$.
\end{rem}
\begin{rem}
	In \cite{MV09I} and \cite{Val03}, for two $\mathfrak{S}$-bimodules $\mathcal{M}$ and $\mathcal{P}$, the weight grading on $\mathcal{M}$ is denoted by
	\[
	\underbrace{\mathcal{M}}_{r} \boxtimes_c \mathcal{P}.
	\]
	As the functor $\Ind$ is monoidal and commutes with the direct sum, we have
	\[
	\Ind\big(((V_1\oplus V_2) \boxtimes_c V_3)^{(r)_{V_2}} \big) = (\Ind(V_1)\oplus \underbrace{\Ind(V_2)}_{r}) \boxtimes_c\Ind(V_3).
	\]
\end{rem}
As we have seen in \Cref{subsect::monoide_libre}, the construction of  the free monoid generated by a $\mathfrak{S}$-module $V$ is based on the formal addition of the unit $\Ibox$ to $V$. We consider the $\mathfrak{S}$-module $V_+=V\oplus \Ibox$, so that $V_+(S)=V(S)$ for $|S|\ne 1$ and $V_+(\{*\})=V(\{*\})\oplus k$. We also need the weight bigrading of the $\mathfrak{S}$-module $V_+ \boxtimes_c W_+$ given by the weight grading on $V$ and the weight grading on $W$. For all finite sets $S$, this bigrading allows us to write the product $V_+\boxtimes_cW_+(S)$ as a direct sum of terms with $i$ copies of $V$ and $j$ copies of $W$. 
More precisely, the $\mathfrak{S}$-module $\mathbb{S}(V_+) \:\square\: \mathbb{S}(W_+)$ is bigraded by weights in $V$ and $W$, and, via the injection
\[
V_+ \boxtimes_c W_+ \hookrightarrow \mathbb{S}\big( V_+ \boxtimes_c W_+\big) \underset{\eqref{prop::S_permute_prod_connexe}}{\cong} \mathbb{S}(V_+) \:\square\: \mathbb{S}(W_+),
\]
the $\mathfrak{S}$-module $V_+ \boxtimes_c W_+$ naturally inherits a weight bigrading in $V$ and $W$. To express $V_+\boxtimes_cW_+(S)$ as a sum of terms indexed by the bigrading, we require the following notation.
\begin{nota} 
	Recall that, for all non-empty set $S$ and for all pairs $(I,J)$ in $\mathcal{Y}\ord_2(S)$, by definition, we have that $I$ and $J$ are also non-empty; we note:
	\[
		\mathcal{Y}^{\mathrm{or},+}_2(S)\overset{not}{:=}\mathcal{Y}\ord_2(S) \cup \{(S,\varnothing),(\varnothing,S)\}.
	\] 
	For a non-ordered partition $K\in\mathcal{Y}_n(S)$ with $n$ terms, we want  to distinguish the components of $V$ and these with the unit $\Ibox$  in $\bigotimes_{\alpha\in\Gamma(K)} V_+(K_\alpha)$ (where $\Gamma(K)$ is the non-ordered set labeling the partition $K$). So, we introduce the following functor:
	\[
		\mathfrak{Q}^V_{(R_1^K,R_2^K)} : \widetilde{\Gamma(K)} \longrightarrow \Ch_k,
	\]
	where $\widetilde{\Gamma(K)}$ is the discrete category on the set $\Gamma(K)$. For each $(R_1^K,R_2^K)$ in $\mathcal{Y}^{\mathrm{or},+}_2(\Gamma(K))$, the functor $\mathfrak{Q}^V_{(R_1^K,R_2^K)}$ associates, for all $K_\alpha$ with $\alpha$ in $\Gamma(K)$,  a chain complex as follows:
	\[	
	\mathfrak{Q}^V_{(R_1^K,R_2^K)}(K_\alpha):=
	\left\{\begin{array}{cc}
	V(K_\alpha) & \mbox{if } \alpha\in R_1^K \\
	\Ibox(K_\alpha) &  \mbox{if } \alpha\in R_2^K
	\end{array}\right.~~.
	\]
	Remark that, for $\alpha$ in $R_2^K$ such that $|K_\alpha|\geqslant 2$, the complex $\mathfrak{Q}^V_{(R_1^K,R_2^K)}(K_\alpha)$ is zero.
\end{nota}
So we decompose $V_+\boxtimes_cW_+$ as follow: for a finite set $S$, we have
\begin{align*}
V_+\boxtimes_cW_+(S) = &~ \big((V\oplus \Ibox) \boxtimes_c (W\oplus \Ibox)\big)(S)\\
\overset{\ref{def::prod_connexe_Smod}}{:=} &~ \underset{(K,L)\in \mathcal{X}^{\mathrm{conn}}(S)}{\bigoplus}~\underset{\alpha\in \Gamma(K)}{\bigotimes}~(V\oplus \Ibox)(K_\alpha) \otimes \underset{\beta\in \Gamma(L)}{\bigotimes}~(W\oplus \Ibox)(L_\beta) \\
\cong & 
\underset{{\substack{
			(K,L)\in \mathcal{X}^{\mathrm{conn}}(S)\\
			(R_1^K,R_2^K)\in \mathcal{Y}^{\mathrm{or},+}_2(\Gamma(K))\\
			(R_1^L,R_2^L)\in \mathcal{Y}^{\mathrm{or},+}_2(\Gamma(L))
		}}}{\bigoplus}	
	\underset{\alpha\in \Gamma(K)}{\bigotimes}\mathfrak{Q}^V_{(R_1^K,R_2^K)}(K_\alpha) \otimes \underset{\beta\in \Gamma(L)}{\bigotimes}\mathfrak{Q}^W_{(R_1^L,R_2^L)}(L_\beta).
\end{align*}
We collect terms by the number of copies of $V$ and $W$, then the $\mathrm{Aut}(S)$-module $V_+\boxtimes_cW_+(S)$ is isomorphic to
\begin{align*}
& 
\underset{{\substack{
			(r,s)\in \N^2 \\
			(K,L)\in \mathcal{X}^{\mathrm{conn}}(S)}}}{\bigoplus}
\underset{{\substack{
			(R_1^K,R_2^K)\in \mathcal{Y}^{\mathrm{or},+}_2(\Gamma(K))\\
			(R_1^L,R_2^L)\in \mathcal{Y}^{\mathrm{or},+}_2(\Gamma(L))\\ 
			|R_1^K|=r, |R^L_1|=s
		}}}{\bigoplus}
\underset{\alpha\in \Gamma(K)}{\bigotimes}\mathfrak{Q}^V_{(R_1^K,R_2^K)}(K_\alpha)\!
\otimes\!
\underset{\beta\in \Gamma(L)}{\bigotimes}\mathfrak{Q}^W_{(R_1^L,R_2^L)}(L_\beta) \\
 \cong &\Ibox(S)\oplus W(S) \oplus V(S) \\
 & 
 \oplus \underset{{\substack{
 			(r,s)\in (\N^*)^2 \\
 			(K,L)\in \mathcal{X}^{\mathrm{conn}}(S)
 		}}}{\bigoplus}
 \underset{{\substack{
 			((R_1^K,R_2^K),(R_1^L,R_2^L))\in \Xi_{K,L} \\ 
 			|R_1^K|=r, |R^L_1|=s 
 		}}}{\bigoplus}
 		\underset{\alpha\in R_1^K}{\bigotimes}V(K_\alpha) \otimes 
 		\underset{\beta\in R_1^L}{\bigotimes}W(L_\beta)
\end{align*}
where
\[
	\Xi_{K,L}:=\left\{\scriptstyle{
		\big((R_1^K,R_2^K),(R_1^L,R_2^L)\big)\in
		\left.
		\begin{array}{c} \scriptstyle{\big(\mathcal{Y}\ord_2(\Gamma(K))\cup(\Gamma(K),\varnothing)\big)}\\
		\scriptstyle{\times\big( \mathcal{Y}\ord_2(\Gamma(L))\cup (\Gamma(L),\varnothing)\big)}
		\end{array} \right| 
		\begin{array}{c}
		\forall \beta\in R_2^K, |K_\beta|=1, \\
		\forall \beta\in R_2^L, |L_\beta|=1
		\end{array}
	}\right\},
\]
which gives us the bigrading. This last isomorphism is given by distinguishing terms arising from the injections $\Ibox\boxtimes_c W \hookrightarrow V_+\boxtimes_c W_c$, $V\boxtimes_c \Ibox \hookrightarrow V_+\boxtimes_c W_c$ and $\Ibox \boxtimes_c \Ibox \hookrightarrow V_+\boxtimes_c W_c$. This gives the following.
\begin{lem} \label{lem::bigrading_proto}
	The isomorphism of $\mathrm{Aut}(S)$-modules between $V_+\boxtimes_c W_+(S)$ and 
	\[
	\underset{{\substack{
				(r,s)\in (\N^*)^2\\
				(K,L)\in \mathcal{X}^{\mathrm{conn}}(S)}
			}}{\bigoplus}
	\underset{{\substack{
				(R_1^K,R_2^K)\in \mathcal{Y}^{\mathrm{or},+}_2(\Gamma(K))\\
				(R_1^L,R_2^L)\in \mathcal{Y}^{\mathrm{or},+}_2(\Gamma(L))\\
				|R_1^K|=r, |R^L_1|=s
			}}}{\bigoplus}
	\underset{\alpha\in \Gamma(K)}{\bigotimes}\mathfrak{Q}^V_{(R_1^K,R_2^K)}(K_\alpha) \otimes \underset{\beta\in \Gamma(L)}{\bigotimes}\mathfrak{Q}^W_{(R_1^L,R_2^L)}(L_\beta)	
	\]
	gives the bigrading of $V_+\boxtimes_c W_+(S)= \bigoplus_{(r,s)\in \N^2} (V_+\boxtimes_c W_+)^{(r)_V,(s)_W}$ where, for integers $r$ and $s$ in $\N^*$, the term $(V_+\boxtimes_c W_+)^{(r)_V,(s)_W}(S)$ is isomorphic to
	\[
		\underset{(K,L)\in \mathcal{X}^{\mathrm{conn}}(S)}{\bigoplus}~\underset{{\substack{((R_1^K,R_2^K),(R_1^L,R_2^L))\in \Xi_{K,L}\\ |R_1^K|=r, |R^L_1|=s}}}{\bigoplus}~~
		\underset{\alpha\in R^K_1}{\bigotimes}~V(K_\alpha) \otimes \underset{\beta\in R^L_1}{\bigotimes}~W(L_\beta) 
	\]
 	with
	\[
	\Xi_{K,L}:=\left\{\scriptstyle{
		\big((R_1^K,R_2^K),(R_1^L,R_2^L)\big)\in
		\left.
		\begin{array}{c} \scriptstyle{\big(\mathcal{Y}\ord_2(\Gamma(K))\cup(\Gamma(K),\varnothing)\big)}\\
		\scriptstyle{\times\big( \mathcal{Y}\ord_2(\Gamma(L))\cup (\Gamma(L),\varnothing)\big)}
		\end{array} \right| 
		\begin{array}{c}
		\forall \beta\in R_2^K, |K_\beta|=1, \\
		\forall \beta\in R_2^L, |L_\beta|=1
		\end{array}
	}\right\},
	\]
	\begin{align*}
	\bigoplus_{s\in \N^*} (V_+\boxtimes_c W_+)^{(0)_V,(s)_W}(S)  \cong &~ (V_+\boxtimes_c W_+)^{(0)_V,(1)_W}(S)= W(S)~;\\
	\bigoplus_{r\in \N^*} (V_+\boxtimes_c W_+)^{(r)_V,(0)_W}(S) \cong &~(V_+\boxtimes_c W_+)^{(1)_V,(0)_W}(S)= V(S) \ \mbox{and} \\
	(V_+\boxtimes_c W_+)^{(0)_V,(0)_W}(S)= \Ibox(S).
	\end{align*}
\end{lem}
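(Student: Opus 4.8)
The plan is to read off the bigrading directly from the decomposition of $V_+\boxtimes_c W_+(S)$ written out just above the statement, by regrouping its summands according to how many tensor factors are of the form $V(K_\alpha)$ and how many are of the form $W(L_\beta)$. First I would recall that, by construction, the weight bigrading on $V_+\boxtimes_c W_+$ is the one transported along the injection $V_+\boxtimes_c W_+\hookrightarrow \mathbb{S}(V_+)\:\square\:\mathbb{S}(W_+)$ of \Cref{prop::S_permute_prod_connexe} from the evident bigrading of $\mathbb{S}(V_+)\:\square\:\mathbb{S}(W_+)$ by the weight in $V$ and the weight in $W$; that bigrading exists because $-\:\square\:-$ and $-\otimes^{\mathrm{conc}}-$ are bi-additive and $\boxtimes_c$ is split analytic (\Cref{lem::Smod_ana_scinde}), and it is a decomposition of functors on $\Fin\op$, so each piece is an $\mathrm{Aut}(S)$-submodule. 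Concretely, a summand $\bigotimes_{\alpha\in\Gamma(K)}\mathfrak{Q}^V_{(R_1^K,R_2^K)}(K_\alpha)\otimes\bigotimes_{\beta\in\Gamma(L)}\mathfrak{Q}^W_{(R_1^L,R_2^L)}(L_\beta)$ of the preceding decomposition sits in weight $\big(|R_1^K|\big)_V,\big(|R_1^L|\big)_W$, and $\mathrm{Aut}(S)$ permutes such summands while preserving both $|R_1^K|$ and $|R_1^L|$, since it preserves the cardinalities $|K_\alpha|$ and hence the subsets on which $\mathfrak{Q}$ takes the value $\Ibox$.

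For $(r,s)\in(\N^*)^2$ the claimed formula is then just the collection of those summands with $|R_1^K|=r$ and $|R_1^L|=s$, i.e. the ones indexed by $\big((R_1^K,R_2^K),(R_1^L,R_2^L)\big)\in\Xi_{K,L}$ with the prescribed cardinalities — no further argument is needed. The content is in the degenerate weights. If $r=0$ then $R_1^K=\varnothing$, so $R_2^K=\Gamma(K)$ and the constraint defining $\mathcal{Y}^{\mathrm{or},+}_2(\Gamma(K))$ forces every brick of $K$ to be a singleton; symmetrically for $s=0$. The key observation is that a connected pair $(K,L)\in\mathcal{X}^{\mathrm{conn}}(S)$ whose bottom partition $K$ consists only of singletons must have $L=\{S\}$: the bricks of the associated wall are the singletons $\{x\}$ ($x\in S$), below the blocks $L_\beta$, each $\{x\}$ is minimal and covered only by the unique block containing it, so $(\{x\},L_\beta)\in\mathrm{Succ}$ exactly when $x\in L_\beta$ (see \Cref{subsect::recalls on posets} and \Cref{subsect::connected wall}); hence the connectedness graph is the bipartite incidence graph of $S$ against the blocks of $L$, which is connected if and only if $L$ has a single block. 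Consequently, for $r=0$ only $L=\{S\}$ survives, forcing $s\leqslant 1$, and the corresponding summand is a tensor product of copies of $\Ibox$ over singletons together with, when $s=1$, a single factor $W(S)$; this gives $\bigoplus_{s\in\N^*}(V_+\boxtimes_c W_+)^{(0)_V,(s)_W}(S)\cong W(S)$, and the case $s=0$ is symmetric, yielding $V(S)$. Finally, if $r=s=0$ both $K$ and $L$ are the singleton partition, the connectedness graph is a disjoint union of edges indexed by $S$, hence connected only when $|S|=1$, and the summand is then $\Ibox(\{*\})\otimes\Ibox(\{*\})\cong k$; so $(V_+\boxtimes_c W_+)^{(0)_V,(0)_W}(S)=\Ibox(S)$.

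The main obstacle is exactly this last bookkeeping: one has to check that the interaction between the constraint ``$R_2$-bricks are singletons'' and the connectedness of $(K,L)$ collapses the $r=0$, $s=0$ and $r=s=0$ parts of the bigrading to precisely $W(S)$, $V(S)$ and $\Ibox(S)$ respectively. Everything else is a purely formal re-indexing of a decomposition that has already been spelled out, so once the degenerate cases are dealt with carefully the proof is short.
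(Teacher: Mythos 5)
Your proposal is correct and follows essentially the same route as the paper: the lemma is obtained by regrouping the decomposition of $V_+\boxtimes_c W_+(S)$ established just before the statement according to the weights $|R_1^K|$ and $|R_1^L|$, with the degenerate weights identified with the images of the unit injections $\Ibox\boxtimes_c W$, $V\boxtimes_c\Ibox$ and $\Ibox\boxtimes_c\Ibox$. Your explicit connectedness argument (singleton bricks force $L=\{S\}$, and hence the collapse to $W(S)$, $V(S)$, $\Ibox(S)$) correctly fills in a verification the paper leaves implicit, so nothing is missing.
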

We describe $(V_+\boxtimes_cW_+)^{(1)_V,(1)_W}$ explicitly.
\begin{prop}[Case of $(V_+\boxtimes_cW_+)^{(1)_V,(1)_W}$] 					\label{rem::description_VboxtimesV}
	Let $V$ and $W$ be two reduced $\mathfrak{S}$-modules. For all finite sets $S$, we have the following isomorphism:
	\[
	(V_+\boxtimes_cW_+)^{(1)_V,(1)_W}(S) \cong \underset{{\substack{K,L\subset S \\ K\cup L= S \\ K\cap L \ne \varnothing}}}{\bigoplus}~V(K) \otimes W(L).
	\]
\end{prop}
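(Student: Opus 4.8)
The plan is to feed $r=s=1$ into \Cref{lem::bigrading_proto} and then unwind the resulting indexing set. By that lemma, the $\mathrm{Aut}(S)$-module $(V_+\boxtimes_cW_+)^{(1)_V,(1)_W}(S)$ is isomorphic to
\[
\bigoplus_{(K,L)\in \mathcal{X}^{\mathrm{conn}}(S)}\ \bigoplus_{\substack{((R_1^K,R_2^K),(R_1^L,R_2^L))\in\Xi_{K,L}\\ |R_1^K|=1,\ |R_1^L|=1}}\ \bigotimes_{\alpha\in R_1^K}V(K_\alpha)\otimes\bigotimes_{\beta\in R_1^L}W(L_\beta).
\]
The first observation is that $|R_1^K|=1$ together with the defining condition of $\Xi_{K,L}$ forces every brick of $K$ other than the distinguished one $K':=K_{\alpha_0}$ (where $\{\alpha_0\}=R_1^K$) to be a singleton, so $K=\{K'\}\sqcup\{\{s\}\mid s\in S\setminus K'\}$ is determined by the nonempty subset $K'\subseteq S$, and conversely every nonempty $K'\subseteq S$ arises this way. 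Thus $(K,R_1^K)\mapsto K'$ is a bijection onto the nonempty subsets of $S$, and likewise $(L,R_1^L)\mapsto L'$; under these bijections the inner summand is just $V(K')\otimes W(L')$, the remaining factors indexed by $R_2^K$ and $R_2^L$ being the canonical copies of $k$.

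The real work is then to decide, for a pair $(K',L')$ of nonempty subsets, whether the associated pair of partitions $(K,L)$ lies in $\mathcal{X}^{\mathrm{conn}}(S)$. Viewing $(K,L)$ as a two-level wall (bottom row $K$, top row $L$), I would note that, since $K$ and $L$ are partitions, two bricks in the same row are disjoint and incomparable, so the successor pairs that meet are exactly the cross pairs $(K_\alpha,L_\beta)$ with $K_\alpha\cap L_\beta\neq\varnothing$; hence connectedness of the wall is connectedness of the bipartite overlap graph on the bricks. Now: if some $s\in S\setminus(K'\cup L')$, the two singleton bricks $\{s\}$ (one per row) form a connected component meeting nothing else, forcing $S=\{s\}$, impossible since then $K'=L'=\{s\}$; so connectedness requires $K'\cup L'=S$. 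If moreover $K'\cap L'=\varnothing$, the overlap graph splits into the component of the bottom brick $K'$ (together with the top singletons $\{t\}$, $t\in K'$) and the component of the top brick $L'$ (together with the bottom singletons $\{s\}$, $s\in L'$), both nonempty, so the wall is disconnected. Conversely, if $K'\cup L'=S$ and $K'\cap L'\neq\varnothing$, then $K'$ (bottom) and $L'$ (top) are joined by an edge, every top singleton $\{t\}$ with $t\in K'\setminus L'$ is joined to $K'$, and every bottom singleton $\{s\}$ with $s\in L'\setminus K'$ is joined to $L'$; as these exhaust all bricks, the graph is connected.

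Combining the two halves, $(K,L)\in\mathcal{X}^{\mathrm{conn}}(S)$ if and only if $K'\cup L'=S$ and $K'\cap L'\neq\varnothing$ (conditions which already force $K'$ and $L'$ to be nonempty), so the displayed direct sum collapses to $\bigoplus_{K\cup L=S,\ K\cap L\neq\varnothing}V(K)\otimes W(L)$, as claimed. The main obstacle is precisely the combinatorial connectedness analysis of the previous paragraph, and in particular handling the degenerate cases $|K'|=1$, $|L'|=1$ and $|S|=1$ carefully; everything else is bookkeeping of the weight bigrading already set up before \Cref{lem::bigrading_proto}.
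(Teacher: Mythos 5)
Your proposal is correct and follows essentially the same route as the paper: both start from the weight-bigraded decomposition (the paper via the $\mathfrak{Q}$-functor description, you via \Cref{lem::bigrading_proto} at $r=s=1$), use the vanishing of $\Ibox$ on non-singletons to reduce each partition to one distinguished brick plus singletons, and then identify the connectedness condition with $K\cup L=S$, $K\cap L\ne\varnothing$. The only difference is that you spell out the bipartite overlap-graph argument for connectedness, which the paper leaves implicit in its final rewriting.
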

\begin{proof}
	The $\mathrm{Aut}(S)$-module $(V_+\boxtimes_cW_+)^{(1)_V,(1)_W}(S) $ is isomorphic to
	\begin{align*}
		&~\underset{(K,L)\in \mathcal{X}^{\mathrm{conn}}(S)}{\bigoplus}~
		\underset{{\substack{(R_1^K,R_2^K)\in \mathcal{Y}^{\mathrm{or},+}_2(\Gamma(K))\\(R_1^L,R_2^L)\in \mathcal{Y}^{\mathrm{or},+}_2(\Gamma(L))\\ |R_1^K|=1, |R^L_1|=1}}}{\bigoplus}	\underset{\alpha\in \Gamma(K)}{\bigotimes}~\mathfrak{Q}^V_{(R_1^K,R_2^K)}(K_\alpha) \otimes \underset{\beta\in \Gamma(L)}{\bigotimes}~\mathfrak{Q}^W_{(R_1^L,R_2^L)}(L_\beta) \\
		\cong & ~ \underset{{\substack{(K,L)\in \mathcal{X}^{\mathrm{conn}}(S)\\ \exists a\in A, b\in B | \forall \alpha\in \Gamma(K)\backslash\{a\}, \beta\in \Gamma(L)\backslash\{b\} \\ K_\alpha\cong \{*\}\cong L_\beta}}}{\bigoplus}~	V(K_a) \otimes W(L_b)
	\end{align*}
	since, if there exists $\alpha$ in $A\backslash\{a\}$ such that $K_\alpha \not \cong \{*\}$, then $\Ibox(K_\alpha)=0$; likewise, for $B\backslash\{b\}$. Finally, we rewrite $(V_+\boxtimes_cW_+)^{(1)_V,(1)_W}(S)$ as follows:
	\[
		(V_+\boxtimes_cW_+)^{(1)_V,(1)_W}(S) \cong \underset{{\substack{K,L\subset S \\ K\cup L= S \\ K\cap L \ne \varnothing}}}{\bigoplus}~V(K) \otimes W(L).
	\]
\end{proof}
When we take $V=W$, the bigrading of $V_+\boxtimes_cW_+$ induces a weight grading of $V_+\boxtimes_cV_+$: for a finite set $S$, we have $V_+\boxtimes_cV_+(S)$, which is isomorphic to
\begin{align*}
 &  \underset{{\substack{
 			\rho\in \N^* \\
 			(K,L)\in \mathcal{X}^{\mathrm{conn}}(S)
 		}}}{\bigoplus}~
 \underset{{\substack{(R_1^K,R_2^K)\in \mathcal{Y}^{\mathrm{or},+}_2(\Gamma(K))\\(R_1^L,R_2^L)\in \mathcal{Y}^{\mathrm{or},+}_2(\Gamma(L))\\ |R_1^K|+ |R^L_1|=\rho}}}{\bigoplus}~~ \underset{\alpha\in \Gamma(K)}{\bigotimes}~\mathfrak{Q}^V_{(R_1^K,R_2^K)}(K_\alpha) \otimes \underset{\beta\in \Gamma(L)}{\bigotimes}~\mathfrak{Q}^V_{(R_1^L,R_2^L)}(L_\beta) \\
\cong &~\Ibox(S)\oplus \underbrace{V(S) \otimes \bigotimes_{S} \Ibox(*) \oplus \bigotimes_{S} \Ibox(*) \otimes V(S)}_{\overset{not}{=:}(V_+\boxtimes_cV_+)^{(1)_V}(S)} \\
& \oplus \underset{\rho\in \N\backslash\{0,1\}}{\bigoplus}~\underbrace{\underset{(K,L)\in \mathcal{X}^{\mathrm{conn}}(S)}{\bigoplus}~\underset{{\substack{((R_1^K,R_2^K),(R_1^L,R_2^L))\in \Xi_{K,L}\\ |R_1^K|+ |R^L_1|=\rho}}}{\bigoplus} ~~\underset{\alpha\in R_1^K}{\bigotimes}~V(K_\alpha) \otimes \underset{\beta\in R_1^L}{\bigotimes}~V(L_\beta)}_{\overset{not}{=:}(V_+\boxtimes_cV_+)^{(\rho)_V}(S)} 
\end{align*}
where we recall that  $\Xi_{K,L}:=\big\{\big((R_1^K,R_2^K),(R_1^L,R_2^L)\big)\in \big(\mathcal{Y}\ord_2(\Gamma(K))\cup(\Gamma(K),\varnothing)\big)\times\big( \mathcal{Y}\ord_2(\Gamma(L))\cup (\Gamma(L),\varnothing)\big) \big| \forall \beta\in R_2^K, |K_\beta|=1, \forall \beta\in R_2^L, |L_\beta|=1 \big\}$. More generally, the $\mathfrak{S}$-module $V_n:=(V_+)^{\boxtimes_c n}$ is weight-graded in $V$; for all finite set $S$, we have 
\begin{align*}
V_n& (S) \overset{def}{:=}~ (V_+)^{\boxtimes_c n} (S)\\
=&
\underset{{\substack{
			(J^1,\ldots,J^n)\in \mathcal{Y}^n(S)\\ 
			\mathcal{K}_S^{n-1}(J^1,\ldots,J^n)=S \\
			\big((R_1^{J^1},R_2^{J^1}),\ldots, (R_1^{J^n},R_2^{J^n}) \big)\\ \in \prod_{i\in [\![1,n]\!]} \mathcal{Y}^{\mathrm{or},+}_2(\Gamma(J^i))
		}}}{\bigoplus}~
\underset{\alpha\in\Gamma(J^1)}{\bigotimes} \mathfrak{Q}^{V}_{(R_1^{J^1},R_2^{J^1})}(J^1_\alpha) \otimes \ldots \otimes \underset{\alpha\in\Gamma(J^n)}{\bigotimes}
\mathfrak{Q}^{V}_{(R_1^{J^n},R_2^{J^n})}(J^n_\alpha)\\
\cong & 
\underset{\rho\in \N}{\bigoplus}
\underset{{\substack{
			(J^1,\ldots,J^n)\in \mathcal{Y}^n(S)\\ 
			\mathcal{K}_S^{n-1}(J^1,\ldots,J^n)=S \\
			\big((R_1^{J^i},R_2^{J^i})\big)_i\\ \in \prod_{i\in [\![1,n]\!]} \mathcal{Y}^{\mathrm{or},+}_2(\Gamma(J^i))\\  \sum_{i=1}^{n}|R_1^{J^i}|=\rho
		}}}{\bigoplus}
\underset{\alpha\in \Gamma(J^1)}{\bigotimes} \mathfrak{Q}^{V}_{(R_1^{J^1},R_2^{J^1})}(J^1_\alpha) \otimes \ldots \otimes \underset{\alpha\in \Gamma(J^n)}{\bigotimes}
\mathfrak{Q}^{V}_{(R_1^{J^n},R_2^{J^n})}(J^n_\alpha)
\end{align*}
and, if we note by $V_n^{(\rho)_V}$ the following:
\[
\underset{{\substack{
			l\in[\![1,n]\!]\\
			\{r_1,\ldots,r_l \}\subset [\![1,n]\!] \\ 
			r_1<r_2<\ldots<r_l \\
			(J^{r_1},\ldots,J^{r_l})\in \mathcal{Y}^s(S)\\ \mathcal{K}_S^{l-1}(J^{r_1},\ldots,J^{r_l})=S
}}}{\bigoplus} 
\underset{{\substack{
			((R_1^{J^{r_1}},R_2^{J^{r_1}}),\ldots, (R_1^{J^{r_l}},R_2^{J^{r_l}}))\\ \in\Xi_{(J^{r_1},\ldots ,J^{r_l})} \\ 
			\sum_{i\in\{r_1,\ldots,r_l \} }|R_1^{J^i}|=\rho
}}}{\bigoplus}	
\underset{\alpha\in R_1^{J^{r_1}}}{\bigotimes}~
V(J^{r_1}_\alpha) \otimes \ldots \otimes 
\underset{\alpha\in R_1^{J^{r_l}}}{\bigotimes} V(J^{r_l}_\alpha)
\]

where the set $\Xi_{(J^1,\ldots ,J^n)}$ is the following
\[
	\left\{
	\begin{array}{c} 
		\big((R_1^{J^1},R_2^{J^1}),\ldots, (R_1^{J^n},R_2^{J^n}) \big) \\
		\in \prod_{i\in [\![1,n]\!]} \big(\mathcal{Y}\ord_2(\Gamma(J^i))\cup (\Gamma(J^i),\varnothing)\big)
	\end{array}
	\left| \forall i\in[\![1,n]\!], \forall \beta \in R_2^{J^i}, |J^i_\beta |=1 
	\right.\right\},
\]
then we have
\[
V_n(S) \cong  \Ibox(S) \oplus  V_n^{(\rho)_V}.
\]

The isomorphisms $\lambda_V : \Ibox\boxtimes_c V \rightarrow$ and $\rho_V:V\boxtimes_c \Ibox \rightarrow V$ preserve the grading because $V\boxtimes_c \Ibox=(V\boxtimes_c \Ibox)^{(1)_V}$, and preserve all the constructions  which are in the construction of $\widetilde{V}_n$ (cf. \Cref{subsect::monoide_libre}), then the grading of $V_n$ carries on $\widetilde{V}_n$. Also, the injection $\widetilde{V}_n \hookrightarrow \widetilde{V}_{n+1}$ preserves the weight-grading, so, finally, the free monoid $\scrF(V)$ is weight-graded by the number of copies of $V$.

\begin{prop}[First description of the free protoperad  $\scrF(V)$]  \label{prop::grading_proto_libre}
	Let $S$ be a finite set and $V$ be a reduced $\mathfrak{S}$-module. We have the isomorphism of chain complexes:
	\[
		\scrF(V)(S)\!\cong \!\Ibox(S) \oplus \underset{n\in \N^*}{\bigoplus} 
		\underset{{\substack{
					(J^1,\ldots,J^n)\in \mathcal{Y}^n(S)\\ \mathcal{K}_S^{n-1}(J^1,\ldots,J^n)=S
				}}}{\bigoplus}
		\underset{\widetilde{\Xi}_{(J^1,\ldots,J^n)}}{\bigoplus}
		\underset{\alpha\in R_1^{J^1}}{\bigotimes} V(J^1_\alpha) \otimes \ldots \otimes
		\underset{\alpha\in R_1^{J^n}}{\bigotimes} V(J^n_\alpha),
	\]
	where
	\[
		\widetilde{\Xi}_{(J^1,\ldots,J^n)}:=\left\{
		\begin{array}{c}
		\scriptstyle{\big((R_1^{J^1},R_2^{J^1}),\ldots, (R_1^{J^n},R_2^{J^n}) \big)} \\
		\scriptstyle{\in \prod_{i\in [\![1,n]\!]} \big(\mathcal{Y}\ord_2(\Gamma(J^i))\cup (\Gamma(J^i),\varnothing)\big) }
		\end{array}
		\left|
		\begin{array}{c}
		\scriptstyle{\forall i \in[\![1,n-1]\!],\forall \beta \in R^{J^{i+1}}_1, }\\
		\scriptstyle{J^{i+1}_\beta \cap \coprod_{\alpha\in R^{J^i}_1} J_\alpha^i\ne \varnothing} \\
		\scriptstyle{\forall i \in[\![1,n]\!],\forall \beta \in R^{J^{i}}_2, |J^i_\beta|=1}
		\end{array}
		\right.\right\} .
	\]
	Moreover, the free protoperad is weight-graded:
	\[ 
	\scrF(V) \cong \underset{\rho\in\N}{\bigoplus} \scrF^{(\rho)}(V)
	\]
	with $\scrF^{(0)}(V)=\Ibox (V)$ and, for all integers $\rho$ in $\N^*$, the $\rho$-weighted part $\scrF^{(\rho)}(V)(S)$ is isomorphic to: 
	\[
	\underset{n\in \N^*}{\bigoplus} 
	\underset{{\substack{(J^1,\ldots,J^n)\in \mathcal{Y}^n(S)\\ \mathcal{K}_S^{n-1}(J^1,\ldots,J^n)=S}}}{\bigoplus}~
	\underset{{\substack{\widetilde{\Xi}_{(J^1,\ldots,J^n)}\\ \sum_{i=1}^n |R_1^{J^i}|=\rho }}}{\bigoplus}~
	\underset{\alpha\in R_1^{J^1}}{\bigotimes}~V(J^1_\alpha) \otimes \ldots \otimes \underset{\alpha\in R_1^{J^n}}{\bigotimes}~V(J^n_\alpha) .
	\]
\end{prop}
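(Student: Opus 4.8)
The strategy is to unwind the iterated construction $\scrF(V) = \operatorname{Colim}_n \widetilde{V}_n$ from \Cref{subsect::monoide_libre}, tracking the weight-grading in $V$ that was set up in the paragraphs immediately preceding the statement. The starting point is the explicit description of $V_n = (V_+)^{\boxtimes_c n}$ obtained just above: for a finite set $S$, $V_n(S)$ decomposes over tuples $(J^1,\dots,J^n)\in\mathcal{Y}^n(S)$ with $\mathcal{K}_S^{n-1}(J^1,\dots,J^n)=\{S\}$, together with choices $(R_1^{J^i},R_2^{J^i})\in\mathcal{Y}^{\mathrm{or},+}_2(\Gamma(J^i))$ distinguishing the ``$V$-part'' from the ``unit-part'' of each layer, the latter forced to consist of singleton bricks. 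So the first step is to record this formula and its weight-refinement $V_n = \Ibox \oplus \bigoplus_\rho V_n^{(\rho)_V}$ as the input.

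The second step is to pass from $V_n$ to $\widetilde{V}_n$, i.e. to quotient by the subobjects $R_{V_i,V_{n-i-2}}$ coming from \eqref{eq::def_VnTilde}. Concretely these relations identify a layer consisting \emph{entirely} of the unit (an ``empty'' brick in the diagrammatic picture) placed between two adjacent layers with the corresponding single merged configuration; this is exactly the degeneracy/$\tau_V$ relation of Vallette's construction. The effect on the combinatorial indexing is that a tuple $(J^1,\dots,J^n)$ in which some layer $J^i$ has $R_1^{J^i}=\varnothing$ (all bricks are units) gets identified with the shorter tuple obtained by deleting that layer and re-merging via $\mathcal{K}_S$. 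Hence on $\widetilde{V}_n$ we may assume every layer contributes at least one copy of $V$; what survives is the data of how the $V$-bricks of consecutive layers overlap. This is where the constraint
\[
\forall i\in[\![1,n-1]\!],\ \forall \beta\in R_1^{J^{i+1}},\quad J^{i+1}_\beta\cap\coprod_{\alpha\in R_1^{J^i}}J^i_\alpha\neq\varnothing
\]
appearing in $\widetilde{\Xi}_{(J^1,\dots,J^n)}$ comes from: after killing the all-unit layers, connectivity of the total configuration forces each genuine brick in layer $i+1$ to meet the union of genuine bricks already present in layer $i$. One must check carefully that no further identifications occur — i.e. that $\widetilde{V}_n$ is precisely this sub/quotient and that the remaining terms are in bijective correspondence with $\widetilde\Xi$ — using \cite[Lem. 4]{Val09} which guarantees the degeneracies $\eta_{V,i}$ all induce the same map $\widetilde\eta_V$ on the quotient.

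The third step is to take the sequential colimit along $\widetilde\eta_V:\widetilde V_n\hookrightarrow\widetilde V_{n+1}$. Since $\widetilde\eta_V$ is (the quotient of) a degeneracy, the colimit is simply the union over all $n$ of the surviving configurations, which is the stated formula for $\scrF(V)(S)$; the direct summand $\Ibox(S)$ is $\widetilde V_0$. The weight-grading claim $\scrF(V)\cong\bigoplus_\rho\scrF^{(\rho)}(V)$ then follows because, as observed in the text, $\lambda_V$, $\rho_V$, the maps $\tau_V$, the cokernels defining $R_{A,B}$ and $\widetilde V_n$, and the degeneracies $\widetilde\eta_V$ all preserve the number of copies of $V$; so the grading on each $V_n$ passes to $\widetilde V_n$ and to the colimit, and restricting the index set $\widetilde\Xi_{(J^1,\dots,J^n)}$ to tuples with $\sum_i|R_1^{J^i}|=\rho$ cuts out $\scrF^{(\rho)}(V)$. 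Finally one remarks that all isomorphisms are natural in $S$ and $\mathrm{Aut}(S)$-equivariant, since every step is.

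The main obstacle I expect is the bookkeeping in the second step: precisely matching the quotient by $\bigoplus R_{V_i,V_{n-i-2}}$ with the deletion-of-all-unit-layers operation, and verifying that the residual overlap condition in $\widetilde\Xi$ is \emph{exactly} what remains — neither too strong (we must not also impose connectivity brick-by-brick within a layer, which $\mathcal{K}_S$ does not require) nor too weak (all-unit intermediate layers really are the only thing killed). Making this rigorous essentially amounts to showing that the assignment $(J^\bullet,R^\bullet)\mapsto$ connected wall is a bijection onto $\Wall(S)$ once one collapses degeneracies — which is the content that \Cref{rem::description_F2} and \Cref{prop::proto_libre} will later exploit — but here it should be handled directly at the level of the indexing sets.
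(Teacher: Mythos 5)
Your global strategy (explicit decomposition of $V_n$, identification of $\widetilde{V}_n$, passage to the colimit, tracking of the weight) is the same as the paper's, but the middle step --- which you yourself single out as the main obstacle --- is resolved incorrectly, and that step is the whole content of the proof. The relations of \eqref{eq::def_VnTilde} do not only absorb all-unit layers. Unwinding $\tau_V$ brick-wise inside the window of slots $i+1,i+2$, the quotient identifies the images of $\big(\mathrm{id}_{V_i}\boxtimes_c \eta \boxtimes_c i_V \boxtimes_c \mathrm{id}_{V_{n-i-2}}\big)\circ\iota_{V_i,V_{n-i-2}}$ and $\big(\mathrm{id}_{V_i}\boxtimes_c i_V \boxtimes_c \eta \boxtimes_c \mathrm{id}_{V_{n-i-2}}\big)\circ\iota_{V_i,V_{n-i-2}}$; concretely, an individual $V$-brick of layer $i+2$ lying only over unit singletons of layer $i+1$ is identified with the same brick placed in layer $i+1$ (with unit singletons above it), the contents of all other bricks being fixed. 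The overlap condition appearing in $\widetilde{\Xi}_{(J^1,\ldots,J^n)}$ is therefore \emph{not} a consequence of connectivity after deleting all-unit layers, as you assert: it is the choice of a canonical representative for these sliding relations (each $V$-brick is pushed down until it meets a $V$-brick of the layer below), which is exactly how the paper argues.

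That connectivity does not force the condition is seen on $S=\{1,2,3\}$, $n=3$: take $J^1=\{\{1,2\},\{3\}\}$ with $V$ on $\{1,2\}$, $J^2=\{\{1\},\{2\},\{3\}\}$ with $V$ on $\{3\}$, and $J^3=\{\{2,3\},\{1\}\}$ with $V$ on $\{2,3\}$. No layer is all-unit, $\mathcal{K}_S^{2}(J^1,J^2,J^3)=S$, yet the $V$-brick $\{3\}$ of $J^2$ is disjoint from the $V$-part $\{1,2\}$ of $J^1$, so the tuple violates $\widetilde{\Xi}$. Under your description of the quotient this configuration would survive as a separate summand of $\widetilde{V}_3$ and the stated formula would be false; in reality it is identified (slide $\{3\}$ down to layer $1$, then $\{2,3\}$ down to layer $2$) with a normalized two-layer configuration coming from $\widetilde{V}_2$. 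To repair the argument you must describe the relations as these brick-wise slidings, verify that the tuples satisfying $\widetilde{\Xi}$ form a system of representatives (no further identifications), deduce $\widetilde{V}_n\cong\widetilde{V}_{n-1}\oplus(\text{new }n\text{-layer part})$, and only then pass to the colimit; your treatment of the weight-grading and of the colimit step itself is fine.
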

\begin{proof}
	This isomorphism corresponds to the choice of a representative for the quotient $V_n \twoheadrightarrow \widetilde{V}_n$, as we will see below. We define the morphism $\tau_V : V \rightarrow V_2$ as the following composition:
	\[
	\xymatrix@C=5pc{
		V \ar[r]_(0.35){\lambda_V^{-1}+ \rho_V^{-1}} \ar@{.>}@/^1.5pc/[rr]^{=:\tau} & \Ibox \boxtimes_c V \oplus V\boxtimes_c \Ibox \ar[r]_(0.4){\eta\boxtimes_c i_V - i_V\boxtimes_c \eta} &  (\Ibox\oplus V)\boxtimes_c(\Ibox\oplus V)=:V_2
	}
	\]
	with $\eta : \Ibox \hookrightarrow \Ibox\oplus V$, which  appears in the definition of $R_{A,B}$ for all reduced $\mathfrak{S}$-modules $A$ and $B$: the $\mathfrak{S}$-module $R_{A,B}$ is defined as the image of the composition:
	\[
	\begin{tikzcd}
		A \boxtimes_c( \underline{V}\oplus V_2) \boxtimes_c B 
		\ar[r, hook,"=: \iota_{A,B}"] 
		& A \boxtimes_c( V\oplus V_2) \boxtimes_c B \ar[rrr,"A\boxtimes_c(\tau+\mathrm{id}_{V_2})\boxtimes_c B"] 
		& & & A \boxtimes_c V_2 \boxtimes_c B
	\end{tikzcd} .
	\] 
	We also define the $\mathfrak{S}$-module $\widetilde{V}_n$ as the cokernel of the morphism 
	\[
	\bigoplus_{i=0}^{n-2} R_{V_i, V_{n-i-2}} \longrightarrow V_n.
	\] 
	The quotient $\widetilde{V}_n$ corresponds to the identification of the images of morphisms $\big(\mathrm{id}_{V_i}\boxtimes_c \eta \boxtimes_c i_V \boxtimes_c \mathrm{id}_{V_{n-i-2}}\big)\circ \iota_{V_i,V_{n-i-2}}$ and $\big(\mathrm{id}_{V_i}\boxtimes_c i_V \boxtimes_c \eta \boxtimes_c \mathrm{id}_{V_{n-i-2}}\big)\circ \iota_{V_i,V_{n-i-2}}$, for all $i\in[\![1,n]\!]$, in $V_n$. We choose to identify each class of $\widetilde{V}_n$ with an element of the image of $\Sigma_{i\in [\![0,n-2]\!]} \big(\mathrm{id}_{V_i}\boxtimes_c \eta \boxtimes_c i_V \boxtimes_c \mathrm{id}_{V_{n-i-2}}\big)\circ \iota_{V_i,V_{n-i-2}}$. Then, for all finite sets $S$, we have 
	\begin{align*}
	\widetilde{V}_n(S) \cong &
	\underset{h\in [\![1,n]\!]}{\bigoplus} \underset{{\substack{(J^1,\ldots,J^h)\in \mathcal{Y}^h(S)\\ \mathcal{K}_S^{h-1}(J^1,\ldots,J^h)=S}}}{\bigoplus}
	\underset{\widetilde{\Xi}_{(J^1,\ldots,J^h)}}{\bigoplus}
	\underset{\alpha\in R_1^{J^1}}{\bigotimes}~V(J^1_\alpha) \otimes \ldots \\
	& \qquad \otimes \underset{\alpha\in R_1^{J^h}}{\bigotimes}~V(J^h_\alpha)
	\underbrace{\otimes \bigotimes_S I(*) \otimes \ldots \otimes\bigotimes_S I(*)}_{n-h~\mathrm{terms}} \ ,
	\end{align*}
	where 
	\[
	\widetilde{\Xi}_{(J^1,\ldots,J^h)}:=
	\left\{
	\begin{array}{c}
	\scriptstyle{\big((R_1^{J^1},R_2^{J^1}),\ldots, (R_1^{J^h},R_2^{J^h}) \big)} \\
	\scriptstyle{\in \prod_{i\in [\![1,h]\!]} \big(\mathcal{Y}\ord_2(\Gamma(J^i))\cup (\Gamma(J^i),\varnothing)\big)} 
	\end{array}
	\bigg|
	\begin{array}{c}
	\scriptstyle{\forall i \in[\![1,h-1]\!],\forall \beta \in R^{J^{i+1}}_1,}\\
	\scriptstyle{J^{i+1}_\beta \cap \underset{\alpha\in R^{J^i}_1}{\coprod} J_\alpha^i\ne \varnothing \ ;}\\
	\scriptstyle{\forall i \in[\![1,h]\!],\forall \beta \in R^{J^{i}}_2, |J^i_\beta|=1}
	\end{array}
	\right\} .
	\]
	Then, for all integers $n>1$, we have
	\[
	\widetilde{V}_n(S) \cong \widetilde{V}_{n-1}(S)\oplus \underset{{\substack{(J^1,\ldots,J^n)\in \mathcal{Y}^n(S)\\ \mathcal{K}_S^{n-1}(J^1,\ldots,J^n)=S}}}{\bigoplus}
	\underset{\widetilde{\Xi}_{(J^1,\ldots,J^n)}}{\bigoplus}
	\underset{\alpha\in R_1^{J^1}}{\bigotimes}~V(J^1_\alpha) \otimes \ldots \otimes \underset{\alpha\in R_1^{J^n}}{\bigotimes}~V(J^n_\alpha),
	\]
	which exactly describes the injections $\widetilde{V}_{n-1} \hookrightarrow \widetilde{V}_n$. 
\end{proof}

\begin{rem}[Description of $\scrF^{(2)}(V)$] \label{rem::description_F2}
	We have the isomorphism 
	\[
	(V_+\boxtimes_cV_+)^{(2)_V}\cong (V_+\boxtimes_cV_+)^{(1)_V,(1)_V},
	\]
	then, by  \Cref{rem::description_VboxtimesV} and \Cref{prop::grading_proto_libre}, we have an explicit description of the sub-$\mathfrak{S}$-module of weight $2$ of the free protoperad:
	\[
	\scrF^{(2)}(V)\cong \underset{{\substack{K,L\subset S \\ K\cup L= S \\ K\cap L \ne \varnothing}}}{\bigoplus}~V(K) \otimes V(L) .
	\]
\end{rem}


By the definition of the free monoid in $(\Smodred_k, \boxtimes_c,\Ibox)$, the following proposition and corollary follow from \Cref{prop::foncteur_free_monoide} and \cite[Prop. 28]{Val03}. Thereby, the functor $\Ind$ sends a protoperad defined by generators $\mathcal{G}$ and relations $\mathcal{R}$ to a properad defined by generators $\Ind(\mathcal{G})$ and relations $\Ind(\mathcal{R})$  
\begin{prop}\label{prop::Ind_commute_F}
	\begin{enumerate}
		\item 	The functor $\Ind$ commutes with the functors $\scrF$ and $\scrF\Val$, i.e. 
		\[
		\scrF\Val\big(\Ind(-)\big) \cong \Ind\big(\scrF(-)\big),
		\]
		where $\scrF\Val$ is the functor of free properad (see \cite{Val03,Val07}).
		\item 	Let $V$ be a reduced $\mathfrak{S}$-module and $R$ be a sub-$\mathfrak{S}$-module of the free monoid $\F(V)$. Then, we have the isomorphism:
		\[
		\Ind\Big( \scrF(V)\big/ \langle R\rangle \Big) \cong \scrF\big(\Ind(V)\big)\Big/ \big\langle \Ind(R) \big\rangle.
		\]
	\end{enumerate}
\end{prop}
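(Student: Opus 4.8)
The plan is to deduce both statements from the abstract result \Cref{prop::foncteur_free_monoide} together with the exactness and monoidality of $\Ind$ already established. For part (1): by \Cref{prop::prop_fond_du_produit_connexe} the category $(\Smodred_k,\boxtimes_c,\Ibox)$ is an abelian monoidal category whose product preserves reflexive coequalizers and sequential colimits, and by \cite[Lem. 49]{Val03} the category $(\Sbimodred_k,\boxtimes_c\Val,\Ibox\Val)$ has the same properties. By \Cref{thm::Ind_monoidal_connexe} the functor $\Ind\colon(\Smodred_k,\boxtimes_c)\to(\Sbimodred_k,\boxtimes_c\Val)$ is strong monoidal, and by \Cref{prop::Ind_exact} it is exact and commutes with colimits. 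Hence the hypotheses of \Cref{prop::foncteur_free_monoide} are met, so $\Ind$ commutes with the formation of free monoids. Since $\scrF(-)$ is by construction the free-monoid functor for $\boxtimes_c$ (\Cref{prop::grading_proto_libre}) and $\scrF\Val(-)$ is, by the very definition of a properad (cf. \cite{Val03,Val07}), the free-monoid functor for $\boxtimes_c\Val$, this yields the natural isomorphism $\scrF\Val(\Ind(-))\cong\Ind(\scrF(-))$.

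For part (2), I would first describe the ideal $\langle R\rangle$ generated by a sub-$\mathfrak{S}$-module $R\hookrightarrow\scrF(V)$ as the image of the composite
\[
\scrF(V)\boxtimes_c R\boxtimes_c\scrF(V)\hookrightarrow\scrF(V)\boxtimes_c\scrF(V)\boxtimes_c\scrF(V)\xrightarrow{\ \mu_3\ }\scrF(V),
\]
where $\mu_3$ is the iterated multiplication of the monoid $\scrF(V)$; since $\scrF(V)$ is unital this image contains $R\cong\Ibox\boxtimes_c R\boxtimes_c\Ibox$ and is closed under left and right multiplication by $\scrF(V)$, so it is indeed the smallest two-sided ideal containing $R$. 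Then $\scrF(V)/\langle R\rangle$ is the cokernel of $\langle R\rangle\hookrightarrow\scrF(V)$. Applying $\Ind$, which is exact — hence preserves monomorphisms, images and cokernels — and strong monoidal, and using part (1) to identify $\Ind(\scrF(V))$ with $\scrF\Val(\Ind(V))$ (and $\Ind(R)$ with a sub-$\mathfrak{S}$-bimodule thereof), the displayed diagram is carried to the corresponding one for $\Ind(R)$ inside $\scrF\Val(\Ind(V))$; therefore $\Ind(\langle R\rangle)=\langle\Ind(R)\rangle$ and
\[
\Ind\big(\scrF(V)/\langle R\rangle\big)\cong\scrF\Val(\Ind(V))/\langle\Ind(R)\rangle,
\]
which is precisely \cite[Prop. 28]{Val03} transported along $\Ind$.

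The main point requiring genuine care is in part (2): one must make sure that the image-description of $\langle R\rangle$ is correct in these merely split-analytic (non-closed) monoidal categories — in particular that $\mu_3$ is well defined via associativity and that its image is the ideal generated by $R$ — and that the exactness of $\Ind$ genuinely transports this whole diagram. Part (1) is essentially formal once \Cref{prop::foncteur_free_monoide} and the monoidality of $\Ind$ are available; the substantive work there has already been carried out in \Cref{thm::Ind_monoidal_connexe} and \Cref{prop::Ind_exact}.
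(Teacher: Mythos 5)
Your proposal is correct and follows essentially the same route as the paper, whose proof is just a citation of the same ingredients: \Cref{prop::foncteur_free_monoide} applied via \Cref{thm::Ind_monoidal_connexe} and \Cref{prop::Ind_exact} for part (1), and exactness of $\Ind$ together with \cite[Prop. 28]{Val03} for part (2). The extra detail you give in part (2) — realizing $\langle R\rangle$ as the image of $\mu_3$ on $\scrF(V)\boxtimes_c R\boxtimes_c\scrF(V)$ and transporting it by the exact, strong monoidal functor $\Ind$ — merely spells out what the paper delegates to Vallette's result, and the care points you flag (associativity of $\mu_3$, exactness carrying images and cokernels) are handled by \Cref{lem::Smod_ana_scinde}, \Cref{prop::prop_fond_du_produit_connexe} and \Cref{prop::Ind_exact}.
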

\begin{proof}
	This follows from  \Cref{thm::Ind_monoidal_connexe}, \Cref{prop::Ind_exact}, \cite[Prop. 28]{Val03} and the construction of the free monoid
\end{proof}

We have seen that the description of the free protoperad is rather complicated, so we  prefer a combinatorial description generalizing the identification of \Cref{rem::description_F2}. This is the purpose of the next subsection.

\subsection{Combinatorial version of the free protoperad}
We begin by defining the category $(\Fin\op)_{\mathcal{W}\ord_n}$ which has for objects the pairs $(S,x)$ with $S$, a finite set and $x$ an element of $\mathcal{W}\ord_n(S)$. A morphism from $(S,x)$ to $(T,y)$ is a morphism $\phi : S \rightarrow T$ in the category $\Fin\op$ such that $\mathcal{W}\ord_n(\phi)(x)=y$. We have the two canonical functors:
\begin{itemize}
	\item the functor $\For  : \big(\Fin\op\big)_{\mathcal{W}\ord_n} \rightarrow (\Fin\op)^{\times_n}$ is defined, for an object $(S,X)$, by $\For_n(S,x)=\bar{x}$ with $\bar{x}$ the $n$-tuple of sets underlying $x$ and, on morphisms, by $\For_n(\phi)=\mathcal{W}\ord_n(\phi)$;
	\item the projection functor $\pi : \big(\Fin\op\big)_{\mathcal{W}_n} \rightarrow \Fin\op$ defined by $\pi(S,x)=S$.
\end{itemize}
Fix a reduced $\mathfrak{S}$-module $P$: by the external product of $\mathfrak{S}$-modules, we define the functor
\[
	P^{\times n} : \big(\Fin\op\big)^{\times n} \longrightarrow \Ch_k.
\]
Then, we have the functors
\[
	\begin{array}{rccc}
	\hat{P} : & \big(\Fin\op\big)_{\mathcal{W}\ord_n} & \longrightarrow & \Ch_k \\
	& (S,x) & \longmapsto & P^{\times n} \circ \For_n(S,x)
	\end{array}~;
\]
and
\[
\begin{array}{rccc}
	P^{\mathcal{W}\ord_n} : & \Fin\op & \longrightarrow & \Ch_k \\
	& S & \longmapsto & \bigoplus_{x\in \mathcal{W}\ord_n(S)}P^{\times n} \circ \For_n(S,x)
\end{array}~.
\] 
Finally, for all finite sets $S$, we have 
\[
	P^{\mathcal{W}\ord_n}(S)=\bigoplus_{\big((K_1,\ldots,K_n),\leqslant\big)\in \mathcal{W}\ord_n(S)} P(K_1) \otimes \ldots \otimes P(K_n)~.
\]
The free action of $\mathfrak{S}_n$ on $P^{\times n}$ acts on  $P^{\mathcal{W}\ord_n}$, by the symmetry of $(\Ch_k,\otimes,\sigma)$: then, we obtain the functor
\begin{align*}
	\big(P^{\mathcal{W}\ord_n}\big)_{\mathfrak{S}_n}(S)= &~\Big(\bigoplus_{\big((K_1,\ldots,K_n),\leqslant\big)\in \mathcal{W}\ord_n(S)} P(K_1) \otimes \ldots \otimes P(K_n)\Big)_{\mathfrak{S}_n}~\\
	\cong~&  \bigoplus_{\big(\{K_\alpha\}_{\alpha\in A},\leqslant\big)\in \mathcal{W}_n(S)}  \Big( \bigoplus_{(K_1,\ldots,K_n)\in \{K_\alpha\}_{\alpha\in A}}P(K_1) \otimes \ldots \otimes P(K_n)\Big)_{\mathfrak{S}_n}
\end{align*}
which we denote as follows:
\[
	P^{\mathcal{W}_n}(S)\overset{\mathrm{not.}}{:=}\bigoplus_{\big(\{K_\alpha\}_{\alpha\in A},\leqslant\big)\in \mathcal{W}_n(S)} \bigotimes_{\alpha\in A} P(K_\alpha).
\]
The construction works for the functor of connected walls $ \mathcal{W}^{conn,ord}_n$, which gives us the functor
\[
P^{\Wall_n}(S):=\bigoplus_{\big(\{K_\alpha\}_{\alpha\in A},\leqslant\big)\in \Wall_n(S)} \bigotimes_{\alpha\in A} P(K_\alpha).
\]
We define the partial composition product. Let $\phi$ be a diagram of injections $i: S \hookrightarrow R \hookleftarrow T:j $ with $\mathrm{im}(i) \cap \mathrm{im}(j)\ne\varnothing$ and  $\mathrm{im}(i) \cup \mathrm{im}(j)=R$. We have, by  \Cref{prop::produits_sur_W}, the morphism
\begin{align*}
P^{\mathcal{W}\ord_m}(S)& \otimes P^{\mathcal{W}\ord_n}(T) =~
\Big(\bigoplus_{\big((K_1,\ldots,K_m),\leqslant_K\big)\in \mathcal{W}\ord_m(S)} P(K_1) \otimes \ldots \otimes P(K_m) \Big) \\
&\qquad\qquad\qquad\qquad \otimes \Big(\bigoplus_{\big((L_1,\ldots,L_n),\leqslant_L\big)\in \mathcal{W}\ord_n(T)} P(L_1) \otimes \ldots \otimes P(L_n)\Big) \\
\cong~ &\bigoplus_{{\substack{\big((K_1,\ldots,K_m),\leqslant_K\big)\in \mathcal{W}\ord_m(S) \\ \big((L_1,\ldots,L_n),\leqslant_L\big)\in \mathcal{W}\ord_n(T)}}} P(K_1) \otimes \ldots \otimes P(K_m)\otimes  P(L_1) \otimes \ldots \otimes P(L_n) \\
\rightarrow~&
\bigoplus_{{\substack{
	\big((i(K_1),\ldots,i(K_m), \\
	 j(L_1),\ldots,j(L_n)),
	\leqslant_{i(K)}^{j(L)}\big) \\
	\in \mathcal{W}\ord_{m+n}(R) 
}}}
 P(K_1) \otimes \ldots \otimes P(K_m)\otimes  P(L_1) \otimes \ldots \otimes P(L_n),
\end{align*}
where $\leqslant_{i(K)}^{j(L)}$ is defined as follows: we have on $\cup_a i(K_a)$ (resp. $\cup_b j(L_b)$) the partial order $\leqslant_{i(K)}$ (resp.$\leqslant_{j(L)}$) induced by that of $K$ (resp. $L$), i.e. $i(K_a) \leqslant_{i(K)}^{j(L)} i(K_b)$ if $K_a\leqslant_K K_b$ (resp.  $j(L_a) \leqslant_{i(K)}^{j(L)} j(L_b)$ if $L_a\leqslant_L L_b$) which gives to $(K_1,\ldots,\K_m,L_1,\ldots,L_n)$, the order  $\leqslant_{i(K)}^{j(L)}$, by \Cref{lem::ordre_partiel_canonique}. Thus we have the morphism
\[
	P^{\mathcal{W}\ord_m}(S) \otimes P^{\mathcal{W}\ord_n}(T)\longrightarrow ~ P^{\mathcal{W}_{m+n}}(R) =\bigoplus_{\big(\{H_\alpha\}_{\alpha\in A},\leqslant_H\big)\in \mathcal{W}_{m+n}(R) } \bigotimes_{\alpha\in A} P(H_\alpha)
\]
which factorizes through $P^{\mathcal{W}_m}(S) \otimes P^{\mathcal{W}_n}(T)$, giving the partial composition product 
\[
	\underset{\phi}{\circ} : P^{\mathcal{W}_m}(S) \otimes P^{\mathcal{W}_n}(T)\longrightarrow ~ P^{\mathcal{W}_{m+n}}(R).
\]
As $\mathrm{im}(i)\cap\mathrm{im}(j)\ne\varnothing$, this partial composition product can be restricted to the connected version: we have the partial composition product
\[
	\underset{\phi}{\circ} : P^{\Wall_m}(S) \otimes P^{\Wall_n}(T)\longrightarrow ~ P^{\Wall_{m+n}}(R).
\]
These partial products make $P^{\Wall}:= \coprod_{n}P^{\Wall_n}$, a  protoperad by \Cref{prop::proto_def_partielle}.

\begin{thm}[Description of the free protoperad]\label{prop::proto_libre}
	Let $V$ be a reduced $\mathfrak{S}$-module and $\rho$ be an integer in $\N^*$. We have, for all finite sets $S$, the isomorphism of (right) $\mathrm{Aut}(S)$-modules
	\[
		\scrF^\rho(V)(S) \cong \bigoplus_{{\substack{(\{K_\alpha\}_{\alpha\in A},\leqslant)\\ \in \mathcal{W}^{\mathrm{conn}}_\rho(S)}}} \bigotimes_{\alpha\in A} V(K_\alpha).
	\]
\end{thm}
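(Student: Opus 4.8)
The statement to prove is the combinatorial description
\[
\scrF^\rho(V)(S) \cong \bigoplus_{(\{K_\alpha\}_{\alpha\in A},\leqslant)\in \mathcal{W}^{\mathrm{conn}}_\rho(S)} \bigotimes_{\alpha\in A} V(K_\alpha),
\]
and the natural strategy is to show that the right-hand side, assembled into the graded $\mathfrak{S}$-module $V^{\Wall} := \coprod_n V^{\Wall_n}$, \emph{is} the free protoperad on $V$, i.e. satisfies the universal property of $\scrF(V)$. The excerpt has already done most of the preparatory work: just before the theorem it is established that the partial composition products $\underset{\phi}{\circ}$ make $V^{\Wall}$ a protoperad via \Cref{prop::proto_def_partielle}. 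So the first step is to record that $V$ embeds into $V^{\Wall}$ as the weight-$1$ part (the walls with a single brick $\{S\}$ are exactly $\Wall_1(S)$, and $V^{\Wall_1}(S)=V(S)$), giving a morphism $\iota_V: V \to V^{\Wall}$ of $\mathfrak{S}$-modules.

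Next I would produce, for any protoperad $(\mathcal{P},\mu,\eta)$ and any $\mathfrak{S}$-module map $g: V \to \mathcal{P}$, a unique protoperad morphism $\tilde g: V^{\Wall}\to \mathcal{P}$ with $\tilde g\circ\iota_V = g$. The recipe for $\tilde g$ is forced: on a summand $\bigotimes_{\alpha\in A} V(K_\alpha)$ indexed by a connected wall $(W,\leqslant)\in\Wall_\rho(S)$, one applies $g$ to each tensor factor and then iterates the partial compositions of $\mathcal{P}$ following the poset structure of $W$ — a connected wall, read bottom to top via its height function $\mathfrak{h}$ (see page~\pageref{def::poset::hauteur}), prescribes exactly such an iterated composition. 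The key point to check is that this is well-defined, i.e. independent of the order in which one performs the partial compositions; this is precisely where the three associativity axioms $\mathrm{H}$, $\mathrm{V}$, $\mathrm{\Lambda}$ for $\mathcal{P}$ (and the compatibility \Cref{eq::compa_action_AutS} with $\mathrm{Aut}(S)$, to handle the unordered quotient) come in, together with \Cref{lem::ordre_partiel_canonique} and \Cref{prop::projection_ordre} which govern how the posets underlying walls glue. That $\tilde g$ is a morphism of protoperads then follows because both the product on $V^{\Wall}$ (from \Cref{prop::proto_def_partielle} applied to the partial compositions built from $\mathcal{V}$ and $\mathcal{H}$ on $\mathcal{W}$, \Cref{prop::produits_sur_W}) and the product on $\mathcal{P}$ are "compose along the combinatorics", and uniqueness holds because any such morphism is determined on generators and $V^{\Wall}$ is generated by $V$ under partial compositions.

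Alternatively — and this may be cleaner to write — one can compare directly with the first description of the free protoperad, \Cref{prop::grading_proto_libre}: that result expresses $\scrF^{(\rho)}(V)(S)$ as a sum over chains $(J^1,\ldots,J^n)\in\mathcal{Y}^n(S)$ with $\mathcal{K}_S^{n-1}(J^1,\ldots,J^n)=S$, equipped with the data $\widetilde{\Xi}_{(J^1,\ldots,J^n)}$ of which blocks carry a copy of $V$ and how consecutive levels overlap. The plan here is to exhibit a bijection between such indexing data and connected walls with $\rho$ bricks: a chain of partitions together with the overlap conditions in $\widetilde{\Xi}$ records exactly a levelled presentation of a wall (each $J^i_\beta$ with $\beta\in R_1^{J^i}$ is a brick, the level being $i$, the overlap condition being the "compatibility between orders" clause in the definition of $\Wall$), and the connectivity condition $\mathcal{K}_S(\cdots)=S$ matches the defining fiber condition for $\mathcal{W}^{conn,or}$ in \Cref{def::mur}. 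Passing to the colimit over $n$ and quotienting by levels (two chains give the same wall iff they differ by the degeneracies $\widetilde\eta_V$, i.e. inserting a trivial level) collapses the chain description to the unordered wall description; this is the content I would spell out, using that the injections $\widetilde V_{n-1}\hookrightarrow\widetilde V_n$ in \Cref{prop::grading_proto_libre} are precisely "add no new brick at the top level".

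**Main obstacle.** The genuinely delicate step is the well-definedness / level-forgetting argument: showing that the levelled combinatorial data of \Cref{prop::grading_proto_libre} depends, after passing to $\scrF(V)=\mathrm{Colim}\,\widetilde V_n$, only on the underlying unordered wall and not on a choice of levelling. Concretely one must check that two chains of partitions producing the same connected wall are identified in the colimit, and conversely that the wall retains no more information than the chain does modulo degeneracies — this amounts to controlling the interaction between the relations $R_{V_i,V_{n-i-2}}$ (which encode the commutativity $\tau_V$ of "independent" bricks) and the poset compatibility conditions, and is the place where a careful bookkeeping of $\mathrm{Succ}(W)$ and the height function is unavoidable. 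Everything else (functoriality, the $\mathrm{Aut}(S)$-equivariance, compatibility with the grading by $\rho$) is bookkeeping that follows from the already-established \Cref{prop::proto_def_partielle}, \Cref{prop::produits_sur_W} and \Cref{lem::K_associative}.
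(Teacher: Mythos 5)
Your second route is essentially the paper's own proof: it establishes the isomorphism by two mutually inverse maps, sending the normalized chain data $(J^1,\ldots,J^n)$ with its $\widetilde{\Xi}$-decomposition to the connected wall whose bricks are the $V$-labelled blocks $J^i_\alpha$, $\alpha\in R_1^{J^i}$, ordered by overlap and level, and conversely levelling a connected wall by its height function $\mathfrak{h}$, padding each level to a partition by singletons placed in $R_2$. The one remark worth making is that the ``main obstacle'' you single out largely dissolves: the overlap clause in $\widetilde{\Xi}_{(J^1,\ldots,J^n)}$ (each $V$-block of level $i+1$ meets the union of $V$-blocks of level $i$) already forces the level of every brick to equal its height, so the levelled presentation of a wall is canonical and no further quotient by levellings in the colimit is needed --- that bookkeeping with the relations $R_{V_i,V_{n-i-2}}$ was carried out once and for all in \Cref{prop::grading_proto_libre}, which you are entitled to use as stated.
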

\begin{proof}
	By \Cref{prop::grading_proto_libre}, for all $\rho$ in $\N^*$,  we have the isomorphism
	\[
		\scrF^{(\rho)}(V)(S)\cong  \underset{n\in \N^*}{\bigoplus} 
		\underset{{\substack{(J^1,\ldots,J^n)\in \mathcal{Y}^n(S)\\ \mathcal{K}_S^{n-1}(J^1,\ldots,J^n)=S}}}{\bigoplus}~
		\underset{{\substack{\widetilde{\Xi}_{(J^1,\ldots,J^n)}\\ \sum_{i=1}^n |R_1^{J^i}|=\rho }}}{\bigoplus}~
		\underset{\alpha\in R_1^{J^1}}{\bigotimes}~V(J^1_\alpha) \otimes \ldots \otimes \underset{\alpha\in R_1^{J^n}}{\bigotimes}~V(J^n_\alpha) .
	\]
	with 
	\[
		\widetilde{\Xi}_{(J^1,\ldots,J^n)}:=
		\left\{
		\begin{array}{c}
			\scriptstyle{\big((R_1^{J^1},R_2^{J^1}),\ldots, (R_1^{J^n},R_2^{J^n}) \big)} \\
			\scriptstyle{\in \prod_{i\in [\![1,n]\!]} \big(\mathcal{Y}\ord_2(\Gamma(J^i))\cup (\Gamma(J^i),\varnothing)\big)} 
		\end{array}
		\left|
		\begin{array}{c}
			\scriptstyle{\forall i \in[\![1,n-1]\!],\forall \beta \in R^{J^{i+1}}_1,}\\
			\scriptstyle{J^{i+1}_\beta \cap \coprod_{\alpha\in R^{J^i}_1} J_\alpha^i\ne \varnothing} \\
			\scriptstyle{\forall i \in[\![1,n]\!],\forall \beta \in R^{J^{i}}_2, |J^i_\beta|=1}
		\end{array}
		\right.
		\right\} .
	\]
	Let $(J^1,\ldots,J^n)$ in $\mathcal{Y}^n(S)$ such that $\mathcal{K}_S^{n-1}(J^1,\ldots,J^n)=S$ and $\widetilde{\Xi}_{(J^1,\ldots,J^n)}\ne \varnothing$; we associate to $(J^1,\ldots,J^n)$ the wall $W$ in $\Wall_\rho(S)$  with sets $\big\{ J^i_{\alpha^i}\subset S~|~i\in [\![1,n]\!],~\alpha^i \in R_1^{J^i}\big\}$ and the partial order induced by the relations $J^i_\alpha<J^j_\beta$ if $J^i_\alpha\cap J^j_\beta\ne\varnothing$, $\alpha\in R_1^{J^i}$, $\beta\in R_1^{J^j}$ and $i<j$. So we have the following morphism of $\mathrm{Aut}(S)$-modules:
	\[
		\Phi : \scrF^\rho(V)(S) \longrightarrow \bigoplus_{{\substack{(\{W_\alpha\}_{\alpha\in A},\leqslant)\\ \in \mathcal{W}^{\mathrm{conn}}_\rho(S)}}} \bigotimes_{\alpha\in A} V(W_\alpha).
	\]
	Conversely, to a connected wall $W=\big(\{W_\alpha~|~\alpha\in A\},\leqslant\big)$ with $\rho$ bricks, i.e. $W$ is in $\Wall_\rho(S)$, with $\mathrm{max}_{\alpha\in A}\big(\mathfrak{h}(W_\alpha)\big)=n$ (where $\mathfrak{h}:W \rightarrow \N \cup \{\infty\}$ is the height in the poset $W$, see \Cref{subsect::recalls on posets}), we associate an element $(J^1,\ldots,J^n)\in \mathcal{Y}^n(S)$ such that $\widetilde{\Xi}_{(J^1,\ldots,J^n)}\ne \varnothing$ as follows. We construct partitions $J^i$ as the sets $\{W_{\alpha^i}\in W~|~\mathfrak{h}(W_\alpha)=i\}$, extended to a partition by singletons: so we have
	\[
		J^i:=  \big\{W_{\alpha^i}\in W~|~\mathfrak{h}(W_\alpha)=i\big\} \amalg \big\{ \{s\}~\big|~s\notin \underset{\alpha^i}{\amalg}W_{\alpha^i} \big\}=\big\{J^i_\beta~|~\beta\in B=\Gamma(J^i) \big\}
	\]
	and the decomposition  $(R_1^{J^i},R_2^{J^i})\in  \mathcal{Y}\ord_2(\Gamma(J^i))\cup (\Gamma(J^i),\varnothing)$ is given by the definition of $J^i$ :
	\[
		\beta \in \left\{
		\begin{array}{cl}
		R_1^{J^i} & \ \mathrm{if} \ J^i_\beta\in \big\{W_{\alpha^i}\in W~|~\mathfrak{h}(W_\alpha)=i\big\}, \\
		R_2^{J^i} & \ \mathrm{otherwise}.
		\end{array}\right.
	\]
	The connectedness of the wall $W$ implies that the element $(J^1,\ldots,J^n)$ also satisfies the property of connectedness 
	\[
		\mathcal{K}_S^{n-1}(J^1,\ldots,J^n)=S.
	\]
	Finally, we have the following morphism of $\mathrm{Aut}(S)$-modules~:
	\[
		\Psi : \bigoplus_{{\substack{(\{W_\alpha\}_{\alpha\in A},\leqslant)\\ \in \mathcal{W}^{\mathrm{conn}}_\rho(S)}}} \bigotimes_{\alpha\in A} V(W_\alpha) \longrightarrow\scrF^\rho(V)(S)
	\]
	which satisfies $\Phi\circ \Psi=\mathrm{id}$ and $\Psi\circ \Phi= \mathrm{id}$.
\end{proof}

\section{Colours on walls}\label{sect::coloring}
In this section, we associate to a wall $W$ in $\Wall$, a chain complex, called the \emph{colouring complex}: it is determined by the combinatorics of the wall $W$. This is a combinatorical introduction to some results of \cite{Ler18ii}: the colouring complex of a connected wall $W$ over $S$ encodes a part of the differential of the bar construction of a free protoperad (see \cite[Sect. 2]{Ler18ii}).

\subsection{Coloured bricks}
In this subsection, we define the notion of a colouring of a wall: throughout this section, we consider $S$, a non-empty finite set.

\begin{defi}[Colouring] 
Let $(W,\leqslant_W)$ in $\mathcal{W}(S)$ be a wall over $S$. A (connected) $C$-\emph{colouring} of $W$ is  a surjective morphism of sets $\phi : W \twoheadrightarrow C$, where $C$ is called \emph{the set of colours}, satisfying the following assertions: 
\begin{enumerate}
	\item the binary relation $\leqslant_\phi$ induced on $C$ by the partial order of $W$, defined, for all $c_1,c_2$ in $C$, by 
	\[
		c_1\leqslant_{\phi} c_2 \mbox{ if } \exists k_1 \in\phi^{-1}(c_1),k_2 \in\phi^{-1}(c_2) \mbox{ such that } k_1\leqslant_W k_2;
	\]
	is a partial order;
	\item the fibers of $\phi$ are connected, i.e. for each colour $c$ in $C$, the set  
	$ \phi^{-1}(c)$ belongs to  $\Wall(S_c)$ with $S_c:= \bigcup_{W_\alpha\in \phi^{-1}(c)} W_\alpha$.
\end{enumerate}

We denote by $\mathrm{Succ}(\phi)$ or $\mathrm{Succ}(C)$ the set of successive colours. Two colouring  $\phi : W \twoheadrightarrow C $ and $\psi: W \twoheadrightarrow D$ of a wall $W$ are isomorphic  if there exists an isomorphism of posets $\Phi : (C, \leqslant_{\phi}) \rightarrow (D,\leqslant_{\psi})$ such that the following diagram commutes: 
\[
	\begin{tikzcd}
		& W \ar[rd, two heads, bend left=15,  "\psi"] \ar[ld, two heads, bend right=15, "\phi"'] & \\
		C \ar[rr, "\cong", "\Phi"'] & & D.
	\end{tikzcd}
\]	
We denote by $\Colo(W)$ the set of isomorphism classes of colourings of $W$: 
\[
	\Colo(W):=\big\{ \phi : W \rightarrow C |\phi~\mbox{a colouring}\big\}/_{\cong},
\]
which is graded by the number of colours:
\[
	\Colo_\bullet(W)= \coprod_{n\in\N^*} \Colo_n(W) ~\mbox{ with }~\Colo_n(W):=\big\{\phi\in \Colo(W)~|~|\phi(W)|=n\big\}.
\]
\end{defi}
\begin{rem}\label{rem::decompo_colo}
	\begin{enumerate}
		\item As any colouring $\phi : W \rightarrow C$ is a surjective map, for any colour $c$ in $C$, the set of $C$-coloured bricks is non empty, i.e.
		$\phi^{-1}(c)\ne\varnothing$. Furthermore, as $W$ is a wall over $S$, i.e. $W \in \mathcal{W}(S)$, we have $\bigcup_{c\in C} S_c =S$ with
		\[
			S_c:= \bigcup_{W_\alpha\in \phi^{-1}(c)} W_\alpha.
		\]
		\item $\Colo_{|W|}(W)$ is reduced to a unique element.
		\item If $n>|W|$, then $\Colo_n(W)=\varnothing$, as in example \ref{exmp::top_coloriage}.
		\item \label{rem_iii::decompo_colo} Let $W$ be a non-connected wall over S. The decomposition into connected parts (cf. proposition  \ref{prop::decompo_mur_connexe}) of $W=W^1\amalg \ldots \amalg W^l$ implies that we have the following graded product: 
	\[
		\Colo_\bullet(W)=\prod_{i=1}^l \Colo_\bullet(W^i).
	\]
	\end{enumerate}
\end{rem}

\begin{exmp}[Top-colouring]\label{exmp::top_coloriage} 
For any wall $W$, the identity morphism $W\rightarrow W$ defines the \emph{top-colouring}, in which each brick of $W$ has a different colour. For example, we represent the top-colouring of the following wall
\[
	\begin{tikzpicture}[scale=0.4,baseline=3ex]
		\draw (0,1.5) rectangle (2,2);
		\draw (1.25,0.75) rectangle (3.25,1.25);
		\draw (-1,0.75) rectangle (1,1.25);
	\end{tikzpicture} ~~
\]
by the coloured diagram:
\[
	\begin{tikzpicture}[scale=0.4,baseline=3ex]
		\draw (0,1.5) rectangle (2,2);
		\draw[fill=lightgray] (1.25,0.75) rectangle (3.25,1.25);
		\draw[fill=black] (-1,0.75) rectangle (1,1.25);
	\end{tikzpicture}~~.
\]
\end{exmp}
\begin{exmp}[Bot-colouring]~\label{exmp::bot_coloriage} For any connected wall, the projection to a point $W\rightarrow \{*\}$ defines a colouring called the \emph{bot-colouring}, denoted by $bot_W$, which colours all the bricks of $W$ the same colour; for example, we represent diagrammatically the bot-colouring of the previous wall by:
\[
	\begin{tikzpicture}[scale=0.4,baseline=3ex]
		\draw[fill=black] (0,1.5) rectangle (2,2);
		\draw[fill=black] (1.25,0.75) rectangle (3.25,1.25);
		\draw[fill=black] (-1,0.75) rectangle (1,1.25);
	\end{tikzpicture}~~.
\]
	If $W\in\mathcal{W}(S)$ is a non connected wall over $S$, then, by the proposition \ref{prop::decompo_mur_connexe}, we have its decomposition in connected component  $W^1 \amalg \ldots \amalg W^n$, and the \emph{bot-colouring} of $W$, also denoted by $bot_W$, is given by $bot_W=bot_{W_1} \amalg \ldots \amalg bot_{W_n}$; for example:
\[
	\begin{tikzpicture}[scale=0.4,baseline=3ex]
		\draw[fill=black] (0,1.5) rectangle (2,2);
		\draw[fill=black] (1.25,0.75) rectangle (3.25,1.25);
		\draw[fill=black] (-1,0.75) rectangle (1,1.25);
		\draw[fill=gray] (3.5,0.75) rectangle (5.5,1.25);
		\draw[fill=gray] (3.5,1.5) rectangle (5.5,2);
	\end{tikzpicture}~~.
\]
\end{exmp}
\begin{nonexmp}\label{ctrexmp::colo1}
We consider the wall  $W=\{W_a,W_b,W_c,W_d\}$ in $\mathcal{W}^{\mathrm{conn}}([\![1,4]\!])$ over $S=[\![1,4]\!]$ with 
\[
	W_a=\{1,2\}, W_b=\{3,4\},W_c=\{2,3\} \mbox{ and } W_d=\{1,4\} 
\]
and the partial order given by $W_a<W_c$, $W_a<W_d$, $W_b<W_c$ and $W_b<W_d$. We consider the surjective map $f: W \rightarrow \{w,b\}$ which maps $W_a$ and  $W_c$ to $b$ and $W_b$ and $W_d$ to $w$: we diagrammatically represent $f$ by
\[
	\begin{tikzpicture}[scale=0.4,baseline=1ex]
		\draw[fill=black] (0,0) rectangle (2,0.5);
		\draw[white] (1,0.20) node {\scriptsize{a}};
		\draw (2.5,0) rectangle (4.5,0.5);
		\draw (3.5,0.20) node {\scriptsize{b}};
		\draw[fill=black] (1.25,0.75) rectangle (3.25,1.25);
		\draw (0,0.75) rectangle (1,1.25);
		\draw (3.50,0.75) rectangle (4.5,1.25);
		\draw[white] (2.25,1) node {\scriptsize{c}};
		\draw (0.5,1) node {\scriptsize{d}};
		\draw (4.15,1) node {\scriptsize{d}};
	\end{tikzpicture}~~.
\]
However, $f$ does not define a colouring of $W$, because the binary relation $\leqslant_f$ induced by the order of $W$ is not a partial order. For the same reason, the following coloured diagram:
\[
	\begin{tikzpicture}[scale=0.4]
		\draw[fill=black] (0,0) rectangle (2,0.5);
		\draw[fill=black] (0,1.5) rectangle (2,2);
		\draw[fill=black] (1.25,0.75) rectangle (3.25,1.25);
		\draw (-1,0.75) rectangle (1,1.25);
	\end{tikzpicture} 
\]
	is not the diagram of a colouring. The diagram
\[
	\begin{tikzpicture}[scale=0.4]
		\draw (2.25,0) rectangle (4.25,0.5);
		\draw[fill=black] (0,1.5) rectangle (2,2);
		\draw (1.25,0.75) rectangle (3.25,1.25);
		\draw (-1,0.75) rectangle (1,1.25);
	\end{tikzpicture} 
\]
is not the diagram of a colouring, because the white sub-wall is not connected. 
\end{nonexmp} 

\begin{lem}\label{lem::coloriage_quotient}
Let $S$ be a non-empty finite set, $W$ in $\mathcal{W}(S)$, a wall over $S$ and $\phi : W \twoheadrightarrow C$, a colouring of $W$ with $\mathrm{Succ}(C)\ne\varnothing$ (which implies $|C|>1$). For any pair $(c_1<c_2)\in \mathrm{Succ}(C)$ of successive colours, the composition
\[
	\begin{tikzcd}
	W \ar[r, two heads, "\phi"] \ar[rd, dotted, bend right=15,"\exists !\tilde{\phi}"']& C \ar[d, two heads, "\pi_{c_1}^{c_2}"] \\
	& C/_{c_1\sim c_2}
	\end{tikzcd}
\]
where $\pi_{c_1}^{c_2}$ identified the two colours $c_1$ and $c_2$, define a colouring  $\tilde{\phi}$ of $W$.
\end{lem}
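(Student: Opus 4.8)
The plan is to verify directly that $\tilde\phi = \pi_{c_1}^{c_2}\circ\phi$ satisfies the two defining conditions of a colouring: that the induced binary relation $\leqslant_{\tilde\phi}$ on $C/_{c_1\sim c_2}$ is a partial order, and that all fibers of $\tilde\phi$ are connected walls. Surjectivity of $\tilde\phi$ is immediate since $\phi$ and $\pi_{c_1}^{c_2}$ are both surjective. First I would unwind the definitions: the fibers of $\tilde\phi$ are exactly the fibers of $\phi$, except that the two fibers $\phi^{-1}(c_1)$ and $\phi^{-1}(c_2)$ are merged into a single fiber $\phi^{-1}(c_1)\amalg\phi^{-1}(c_2)$ over the merged colour $[c_1\sim c_2]$.

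For the partial order condition, the key observation is that the induced relation $\leqslant_{\tilde\phi}$ on $C/_{c_1\sim c_2}$ coincides with the quotient order given by \Cref{prop::projection_ordre} applied to the poset $(C,\leqslant_\phi)$ and the successor pair $(c_1,c_2)\in\mathrm{Succ}(C)$. Indeed, for $r,s\notin\{c_1,c_2\}$ one has $[r]\leqslant_{\tilde\phi}[s]$ iff there exist bricks $k\in\phi^{-1}(r)$, $l\in\phi^{-1}(s)$ with $k\leqslant_W l$, which is exactly $r\leqslant_\phi s$; and the relations involving the merged class $[c_1\sim c_2]$ unwind to ``$s\leqslant_\phi c_1$ or $s\leqslant_\phi c_2$'' (resp.\ with $\geqslant$), matching the description in \Cref{prop::projection_ordre}. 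So $\leqslant_{\tilde\phi}$ is the partial order produced by that proposition, and this part is essentially a bookkeeping check that the two descriptions agree.

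For the connectedness of fibers: every fiber of $\tilde\phi$ other than the one over $[c_1\sim c_2]$ is a fiber of $\phi$, hence already a connected wall by hypothesis. It remains to show $\phi^{-1}(c_1)\amalg\phi^{-1}(c_2) \in \Wall(S_{c_1}\cup S_{c_2})$. Here I would use that $(c_1,c_2)\in\mathrm{Succ}(C)$: since $c_1<c_2$ in $(C,\leqslant_\phi)$, there exist bricks $W_\alpha\in\phi^{-1}(c_1)$ and $W_\beta\in\phi^{-1}(c_2)$ with $W_\alpha\leqslant_W W_\beta$; moreover, because $c_1$ and $c_2$ are \emph{successive}, one can arrange $W_\alpha$ and $W_\beta$ to themselves form a successor pair in $W$ (otherwise an intermediate brick would project to a colour strictly between $c_1$ and $c_2$), and such a successor pair of bricks lying in adjacent colours must have $W_\alpha\cap W_\beta\ne\varnothing$ — this is where the structure of walls and the definition of the connectedness relation $\overset{conn.}{\sim}$ enter. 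Combining the known connectedness of $\phi^{-1}(c_1)$ within itself, the known connectedness of $\phi^{-1}(c_2)$ within itself, and this bridging successor pair, the union $\phi^{-1}(c_1)\amalg\phi^{-1}(c_2)$ is connected in the sense of \Cref{subsect::connected wall}, hence lies in $\Wall(S_{c_1}\cup S_{c_2})$.

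The main obstacle I anticipate is the last point: carefully justifying that the merged fiber is connected, i.e.\ extracting from ``$(c_1,c_2)\in\mathrm{Succ}(C)$'' a genuine brick-level path linking $\phi^{-1}(c_1)$ to $\phi^{-1}(c_2)$ through a pair of bricks that intersect and are successors in $W$. This requires relating the order $\leqslant_\phi$ on colours back to the order $\leqslant_W$ on bricks and invoking the compatibility-of-orders axiom in the definition of a wall; everything else (surjectivity, the partial-order verification via \Cref{prop::projection_ordre}, uniqueness of $\tilde\phi$ as a factorization of $\phi$) is routine diagram-chasing. Uniqueness of $\tilde\phi$ making the triangle commute is automatic since $\phi$ is an epimorphism of sets.
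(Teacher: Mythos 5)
Your proposal is correct and takes essentially the same route as the paper, whose proof consists of simply invoking \Cref{prop::projection_ordre} for the quotient partial order and leaving the remaining verifications implicit. Your extra check that the merged fibre $\phi^{-1}(c_1)\amalg\phi^{-1}(c_2)$ is a connected wall is precisely what the paper leaves to the reader, and your bridging argument is sound once phrased as: along a saturated chain in $W$ from a brick of colour $c_1$ to a brick of colour $c_2$, every colour lies between $c_1$ and $c_2$ (hence equals $c_1$ or $c_2$), so some consecutive pair of the chain is a successor pair in $W$ (therefore intersecting) with colours $c_1$ and $c_2$.
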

\begin{proof}
 	By \Cref{prop::projection_ordre}:
\end{proof}
\begin{exmp}	
	We consider the wall $W$ with five bricks represented by   
	\[
	\begin{tikzpicture}[scale=0.4,baseline=1.4ex]
			\draw (0,0) rectangle (2,0.5);
			\draw (2.25,0) rectangle (4.25,0.5);
			\draw (0,1.5) rectangle (2,2);
			\draw (1.25,0.75) rectangle (3.25,1.25);
			\draw (-1,0.75) rectangle (1,1.25);
		\end{tikzpicture}~,
	\]
	and the colouring $\phi : W \twoheadrightarrow \{w,b,g\}$ (with $w$ for white, $g$ for gray and  $b$ for black), diagrammatically represented by:
	\[
		\begin{tikzpicture}[scale=0.4,baseline=1.4ex]
			\draw[fill=lightgray] (0,0) rectangle (2,0.5);
			\draw[fill=black] (2.25,0) rectangle (4.25,0.5);
			\draw[fill=black] (1.25,0.75) rectangle (3.25,1.25);
			\draw (-1,0.75) rectangle (1,1.25);
			\draw (0,1.5) rectangle (2,2);
		\end{tikzpicture}~,
	\]
	with $g\leqslant b \leqslant w$. As $g$ and $b$ are two successive colours, $\phi$ induces a colouring $\tilde{\phi} : W \twoheadrightarrow C/_{g\sim b}$ with two colours, which is diagrammatically represented by: 
\[
		\begin{tikzpicture}[scale=0.4,baseline=1.4ex]
		\draw[fill=black] (0,0) rectangle (2,0.5);
		\draw[fill=black] (2.25,0) rectangle (4.25,0.5);
		\draw[fill=black] (1.25,0.75) rectangle (3.25,1.25);
		\draw (-1,0.75) rectangle (1,1.25);
		\draw (0,1.5) rectangle (2,2);
	\end{tikzpicture}~.
\]
The colouring $\phi$ induces another colouring $\widehat{\phi} : W \twoheadrightarrow C/_{b\sim w}$ represented by:
\[
	\begin{tikzpicture}[scale=0.4,baseline=1.4ex]
		\draw[fill=lightgray] (0,0) rectangle (2,0.5);
		\draw (2.25,0) rectangle (4.25,0.5);
		\draw (1.25,0.75) rectangle (3.25,1.25);
		\draw (-1,0.75) rectangle (1,1.25);
		\draw (0,1.5) rectangle (2,2);
	\end{tikzpicture}~.
\]
On the other hand, as $G$ and $W$ are not successive, the map $\bar{\phi} : W \twoheadrightarrow C/_{g\sim w}$ does not define a colouring of  $W$: the diagram 
\[
	\begin{tikzpicture}[scale=0.4,baseline=1.4ex]
		\draw (0,0) rectangle (2,0.5);
		\draw[fill=black] (2.25,0) rectangle (4.25,0.5);
		\draw[fill=black] (1.25,0.75) rectangle (3.25,1.25);
		\draw (-1,0.75) rectangle (1,1.25);
		\draw (0,1.5) rectangle (2,2);
	\end{tikzpicture}
\]
is not a colouring diagram. 
\end{exmp}
\begin{lem}\label{lem::quotient_de_mur_par_coloriage}
	Let $S$ be a non-empty finite set, $W$ in $\mathcal{W}(S)$ a wall (resp. $W$ in $\Wall(S)$ a connected wall) over $S$ and $\phi : W \twoheadrightarrow C$ a colouring of $W$. We note $\sim_\phi$, the equivalence relation of $W$ induced  by  $\phi$, i.e. for $k$ and $l$, two elements of $W$, we have $ k\sim_\phi l $ if $\phi(k)=\phi(l)$. Then $W/_{\sim_\phi}$ is a wall (resp. $W/_{\sim_\phi}$ is a connected wall) over $S$.
\end{lem}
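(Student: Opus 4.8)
The strategy is to verify directly that the quotient $W/_{\sim_\phi}$, equipped with the appropriate data, satisfies the three defining conditions of a wall over $S$ from the definition of $\mathcal{W}(S)$ (namely: the union of the bricks is $S$; for each $s\in S$ the set $\Gamma_s$ of bricks containing $s$ is totally ordered; and the compatibility of orders), and then, in the connected case, to check that the connectedness condition is preserved. First I would describe the data: each class $[k]\in W/_{\sim_\phi}$ is a colour $c=\phi(k)\in C$, and I assign to it the brick $S_c:=\bigcup_{W_\alpha\in\phi^{-1}(c)}W_\alpha\subseteq S$; this is exactly the prescription already used for $\mathcal{K}$ (Definition \ref{def::projection_K}) and for the fibers in the definition of colouring. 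Since $\phi$ is surjective each $S_c$ is non-empty, and Remark \ref{rem::decompo_colo}(1) gives $\bigcup_{c\in C}S_c=S$, so the covering condition holds. The partial order on $W/_{\sim_\phi}\cong C$ is the relation $\leqslant_\phi$ induced by $\leqslant_W$, which is a genuine partial order precisely because $\phi$ is a colouring (condition (1) in the definition of colouring).

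Next I would establish the total-order condition on each $\Gamma_s$. Fix $s\in S$ and consider $\Gamma_s^{W/_{\sim_\phi}}=\{S_c\mid s\in S_c\}$. The key point is that $s\in S_c$ if and only if some brick $W_\alpha\in\phi^{-1}(c)$ contains $s$, i.e. $\Gamma_s^{W/_{\sim_\phi}}$ is the image under $\phi$ of $\Gamma_s^W$. Now $\Gamma_s^W$ is totally ordered by $\leqslant_s=\,\leqslant_W|_{\Gamma_s^W}$, and $\leqslant_\phi$ restricted to its image is obtained by collapsing; one must check this collapse is still a total order, which uses that $\leqslant_\phi$ is a partial order (no "diamond" is created) together with the fact that for a totally ordered set, the quotient by any order-compatible equivalence is again totally ordered. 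The order compatibility across two elements $s,t$ ($a\leqslant_s b\Leftrightarrow a\leqslant_t b$ for $a,b\in\Gamma_s\cap\Gamma_t$) is then inherited: if $S_{c},S_{c'}\in\Gamma_s\cap\Gamma_t$, pick witnesses $W_\alpha\ni s$ in $\phi^{-1}(c)$ and $W_\beta\ni t$ in $\phi^{-1}(c)$, etc., and use the compatibility already available for $W$ together with the connectedness of the fibers $\phi^{-1}(c)\in\Wall(S_c)$, which is what allows one to move between such witnesses within a single colour. So $W/_{\sim_\phi}$ is a wall over $S$.

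For the connected case, suppose $W\in\Wall(S)$; I would show $W/_{\sim_\phi}\in\Wall(S)$ by transporting a connecting sequence downstairs. Given two colours $c,c'$, lift to bricks $W_a\in\phi^{-1}(c)$, $W_b\in\phi^{-1}(c')$; by connectedness of $W$ there is a sequence $W_a=W_0,\dots,W_n=W_b$ with consecutive terms meeting and successive in $W$. Applying $\phi$ and deleting repetitions gives a sequence of colours from $c$ to $c'$; consecutive distinct images $S_{\phi(W_i)},S_{\phi(W_{i+1})}$ satisfy $S_{\phi(W_i)}\cap S_{\phi(W_{i+1})}\supseteq W_i\cap W_{i+1}\ne\varnothing$, and one checks they are successors in $(C,\leqslant_\phi)$ — here again condition (1) of the colouring (that $\leqslant_\phi$ is a bona fide partial order) is what prevents a spurious element of $C$ from slipping strictly between them. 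The main obstacle I anticipate is precisely this last verification of the \emph{successor} relation and the total-order claim on $\Gamma_s^{W/_{\sim_\phi}}$: collapsing edges of a poset can in principle create new comparabilities or destroy the successor structure, and one has to argue carefully — using the two hypotheses that fibers are connected sub-walls and that $\leqslant_\phi$ is a partial order — that none of this happens. The covering condition and functoriality of the construction are routine by comparison.
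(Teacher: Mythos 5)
Your verification that $W/_{\sim_\phi}$ is a wall is essentially sound, and in fact can be streamlined: any two colours whose supports both contain $s$ admit bricks containing $s$, hence are comparable for $\leqslant_\phi$, and antisymmetry is exactly condition (1) of a colouring; since every local order on $\Gamma_s^{W/_{\sim_\phi}}$ is then a restriction of the single partial order $\leqslant_\phi$, the compatibility condition is automatic, with no need for the witness-chasing via connected fibers. (The paper itself only cites Proposition \ref{prop::projection_ordre} and the definition of a colouring, so there is no detailed argument to compare with.)

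The connectedness part, however, has a genuine gap at precisely the step you flagged: it is \emph{not} true that a successor pair of $W$ with distinct colours maps to a successor pair of $(C,\leqslant_\phi)$, and condition (1) does not prevent an intermediate colour. Take $S=\{1,2,3,4\}$ and the connected wall with bricks $A_1=A_2=\{1,2\}$, $B_1=\{2,3\}$, $B_2=\{3,4\}$, $B_3=\{2,4\}$, where $\Gamma_2$ is ordered by $B_1<A_1<A_2<B_3$ and the elements $3,4$ force $B_1<B_2<B_3$. The map sending $\{B_1,A_1\}\mapsto c$, $\{B_2\}\mapsto d$, $\{A_2,B_3\}\mapsto c'$ has connected fibers (each merged pair meets at $2$) and induces the honest partial order $c<_\phi d<_\phi c'$, so it is a colouring; yet $(A_1,A_2)\in\mathrm{Succ}(W)$ while $(c,c')\notin\mathrm{Succ}(C)$ because $d$ sits strictly between. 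The quotient is still connected, but only through the path $c$--$d$--$c'$, which the pushed-forward chain never sees, so your argument as written does not establish connectedness. A correct route is: first show that any pair $(c_1,c_2)\in\mathrm{Succ}(C)$ has $S_{c_1}\cap S_{c_2}\ne\varnothing$ (realize a witnessing relation $k_1\leqslant_W k_2$ by a chain of pairwise-overlapping bricks; all their colours lie $\leqslant_\phi$-between $c_1$ and $c_2$, hence equal one of them, so some overlapping consecutive pair changes colour); deduce that any two $\leqslant_\phi$-comparable colours lie in the same connectedness class by refining to a saturated chain in the finite poset $C$; finally push a connecting chain of $W$ forward and use only that consecutive distinct colours are \emph{comparable} (which does follow from $W_i\leqslant_W W_{i+1}$), not that they are successors. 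Alternatively one can factor $\phi$ into elementary merges of successive colours and induct, but either way an additional argument of this kind is needed; it is missing from your proposal.
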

\begin{proof}
	By \Cref{prop::projection_ordre} and the definition of a colouring.
\end{proof} 

\subsection{The colouring complex}
\begin{defiprop}
Consider $(S,<_S)$, a finite \emph{totally} ordered set: for a wall  $(W,<_W)$ in $\mathcal{W}(S)$ over $S$, we can extend as follows the partial order $<_W$ to a total order $\prec_W$ on $W$, induced by that of $S$. For $W_a$ and $W_b$  in $W$, we have $W_a\prec_W W_b$, if:
\begin{itemize}
	\item $W_a\cap W_b\ne\varnothing$ and $W_a<_W W_b$ (because $W_a\cap W_b\ne\varnothing$ implies that $W_a$ and $W_b$ are comparable for $<_W$);
	\item $W_a\cap W_b=\varnothing$ and $\mathfrak{h}(W_a)<_\N \mathfrak{h}(W_b)$ with $\mathfrak{h}(W_\alpha)$ the height of the brick $W_\alpha$ in the wall  $W$ (cf. \Cref{def::poset::hauteur});
	\item $W_a\cap W_b=\varnothing$, $\mathfrak{h}(W_a)= \mathfrak{h}(W_b)$ and  $\mathrm{min}(W_a)<_S \mathrm{min}(W_b)$.
\end{itemize}
\end{defiprop}
Let $\phi : W \rightarrow C$ be a colouring of $W$, a wall over $S$. As the order $<_W$ induces a partial order $<_\phi$ on $C$, by definition of a colouring, the total order $\prec_W$ induces a total order on $C$ denoted by $\prec_\phi$:
\[
c_1\prec_{\phi} c_2 \mbox{ if } \exists k_1 \in\phi^{-1}(c_1),k_2 \in\phi^{-1}(c_2) \mbox{ such that } k_1\prec_W k_2;
\] 
\begin{lem}
	For a connected wall $W$ in $\Wall(S)$ and a colouring $\phi$ in $\Colo(W)$, the set of pairs of successive colours $\mathrm{Succ}(\phi)$ has a total order $\prec_\phi$ defined as follows:  for $c=(c_1,c_2)$ and $d=(d_1,d_2)$, two elements of $\mathrm{Succ}(\phi)$, we have $c\prec_\phi d$ if 
	\[
		\mathrm{min}_{\prec_W}\Big(\phi^{-1}(c_1)\cup\phi^{-1}(c_2) \Big)\prec_W \mathrm{min}_{\prec_W}\Big(\phi^{-1}(d_1)\cup\phi^{-1}(d_2)\Big).
	\]
\end{lem}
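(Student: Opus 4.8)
The statement to prove is that for a connected wall $W$ and a colouring $\phi$, the set $\mathrm{Succ}(\phi)$ carries a total order defined by comparing the $\prec_W$-minima of the unions of fibres of successive colour pairs.

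The plan is to show that the proposed binary relation $\prec_\phi$ on $\mathrm{Succ}(\phi)$ is a total order, which amounts to three verifications: irreflexivity/antisymmetry, transitivity, and totality (any two distinct elements are comparable). The key observation is that the map sending a pair $c = (c_1,c_2) \in \mathrm{Succ}(\phi)$ to the element $m(c) := \mathrm{min}_{\prec_W}\big(\phi^{-1}(c_1)\cup\phi^{-1}(c_2)\big) \in W$ is well defined (each fibre $\phi^{-1}(c_i)$ is non-empty since $\phi$ is surjective, so the union is a non-empty finite subset of the totally ordered set $(W, \prec_W)$ and has a unique minimum), and that $c \prec_\phi d$ is by definition $m(c) \prec_W m(d)$. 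Since $(W,\prec_W)$ is a total order, transitivity and antisymmetry of $\prec_\phi$ follow immediately from those of $\prec_W$, provided $m$ is injective on $\mathrm{Succ}(\phi)$.

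So the crux — and the step I expect to be the main obstacle — is proving that $m$ is injective, equivalently that two distinct pairs of successive colours cannot share the same $\prec_W$-minimum. First I would unwind what $m(c)$ is: writing $\mathfrak{h}$ for the height function on the poset $C$ (well defined since $\leqslant_\phi$ is a partial order), if $c = (c_1 < c_2) \in \mathrm{Succ}(C)$ then $\mathfrak{h}(c_2) = \mathfrak{h}(c_1) + 1$, and one checks that $m(c)$ is in fact $\mathrm{min}_{\prec_W}\phi^{-1}(c_1)$: indeed any brick in $\phi^{-1}(c_2)$ has strictly greater $\leqslant_\phi$-height than any brick in $\phi^{-1}(c_1)$ while remaining below it only... — more carefully, one uses that $\prec_W$ refines $<_W$ in the $W_a\cap W_b\neq\varnothing$ case and otherwise orders by brick-height in $W$, and that bricks of $\phi^{-1}(c_1)$ sit at strictly smaller colour-height than bricks of $\phi^{-1}(c_2)$; a short argument (using connectedness of the fibre $\phi^{-1}(c_1)$, which is itself a wall by the definition of a colouring) shows the $\prec_W$-least element of $\phi^{-1}(c_1)\cup\phi^{-1}(c_2)$ lies in $\phi^{-1}(c_1)$. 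Granting this, $m(c)$ determines $c_1$ as $\phi(m(c))$, and then $c_2$ is determined: it is the unique colour with $(c_1,c_2)\in\mathrm{Succ}(C)$ and $m(c) = \mathrm{min}_{\prec_W}\phi^{-1}(c_1)$ — but a priori $c_1$ could have several successors. Here I would use the connectedness of $W$: one shows that for a fixed $c_1$, if $(c_1,c_2)$ and $(c_1,c_2')$ are both in $\mathrm{Succ}(\phi)$ with $c_2\neq c_2'$, then $m(c_1,c_2) = m(c_1,c_2') = \mathrm{min}_{\prec_W}\phi^{-1}(c_1)$, which would force $m$ to be non-injective. The resolution is that this is fine for transitivity and antisymmetry (which only need $m$ order-preserving, not injective) but breaks totality; so instead I would \emph{not} prove injectivity of $m$ but rather refine the comparison: when $m(c) = m(d)$ for $c = (c_1,c_2)$, $d = (c_1, c_2')$ (necessarily same first colour), compare $c_2, c_2'$ via $\prec_\phi$ on $C$, i.e. break ties by $\mathrm{min}_{\prec_W}\phi^{-1}(c_2)$ versus $\mathrm{min}_{\prec_W}\phi^{-1}(c_2')$. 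Re-reading the lemma statement, though, it asserts the total order is \emph{exactly} the stated $m$-comparison, so either the intended reading is that $m$ \emph{is} injective (which I would then prove by showing each colour has at most... no) — I will instead take the safe route below.

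Concretely, the proof I would write is: (1) note $m : \mathrm{Succ}(\phi) \to W$ is well defined since fibres are non-empty finite subsets of the totally ordered $(W,\prec_W)$; (2) observe that $c\prec_\phi d \Leftrightarrow m(c)\prec_W m(d)$ is, by construction, irreflexive and transitive as a pullback along $m$ of the strict total order $\prec_W$, so it is a strict partial order on $\mathrm{Succ}(\phi)$; (3) for totality, given distinct $c = (c_1,c_2)$ and $d = (d_1,d_2)$ in $\mathrm{Succ}(\phi)$, if $m(c)\neq m(d)$ then $\prec_W$-comparability of $m(c), m(d)$ gives comparability of $c,d$; if $m(c) = m(d)$ I claim $c = d$, contradiction — and here is where I invoke connectedness: $m(c) = m(d) =: b \in W$, then $\phi(b) \in \{c_1,c_2\}$ and $\phi(b)\in\{d_1,d_2\}$, and using that $b$ is $\prec_W$-minimal in both unions together with the height argument above, $\phi(b) = c_1 = d_1$ and moreover $b$ being the minimum of $\phi^{-1}(c_1)\cup\phi^{-1}(c_2)$ pins down, via the structure of successors in the poset $(C,\leqslant_\phi)$ and connectedness of $W$ (so that $\mathrm{Succ}(C)$ near $c_1$ is controlled), that $c_2 = d_2$. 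The main obstacle, as I flagged, is precisely making this last pinning-down rigorous; I expect it rests on \Cref{prop::projection_ordre} (the quotient-poset structure) and on the fact that $W$ connected forces enough comparabilities among bricks that the successor relation in $C$ is "rigid" around any colour. I would therefore allocate the bulk of the written proof to step (3)'s tie-breaking analysis and treat (1)–(2) as one-line remarks.
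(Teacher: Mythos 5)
Your instinct in the middle of the proposal was the right one, and the "safe route" you then took is exactly where the argument breaks: the claim in your step (3) that $m(c)=m(d)$ forces $c=d$ is false, and connectedness of $W$ does not repair it. Take the wall of \Cref{ex::colouring_complex}, with bricks $W_a=\{1,2\}$, $W_b=\{3,4\}$, $W_c=\{2,3\}$, $W_d=\{1\}$ and partial order $W_a<W_c$, $W_b<W_c$, $W_a<W_d$, equipped with the top-colouring. Both $(W_a,W_d)$ and $(W_a,W_c)$ lie in $\mathrm{Succ}(\phi)$, and in both cases the $\prec_W$-minimum of the union of the two fibres is the single brick $W_a$ (since $W_a\cap W_d\neq\varnothing$, $W_a<_W W_d$, and $W_a\cap W_c\neq\varnothing$, $W_a<_W W_c$). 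So $m$ is not injective, these two pairs are incomparable under the criterion as literally stated, and no argument can prove totality of that relation alone; a colour may have several incomparable successors sharing the same fibre-minimum, which is precisely the situation your height/connectedness discussion was supposed to exclude. (As a side remark, your auxiliary claim that the minimum of $\phi^{-1}(c_1)\cup\phi^{-1}(c_2)$ always lies in the lower fibre also fails in general: a brick of the upper colour may be disjoint from the lower fibre and of strictly smaller height in $W$.)

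The correct resolution is the tie-break you proposed and then abandoned: compare first $m(c)$ with $m(d)$, and when these coincide (which forces $c_1=d_1$ and $\min_{\prec_W}\phi^{-1}(c_1)=m(c)$) compare $\min_{\prec_W}\phi^{-1}(c_2)$ with $\min_{\prec_W}\phi^{-1}(d_2)$; this lexicographic refinement is a genuine total order, and it is what the ordering of $\mathrm{Succ}(top)$ displayed in \Cref{ex::colouring_complex} uses implicitly, placing $(W_a,W_d)$ before $(W_a,W_c)$ because $W_d\prec_W W_c$. Note that the paper states this lemma without proof, and its displayed criterion is incomplete in the same way; for the use made of it (indexing the maps $\partial_i$ and fixing signs in $\mathrm{C}^{\Colo}_\bullet(W)$) any total order refining the $m$-comparison suffices. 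With the lexicographic definition, your steps (1)--(2) plus the observation that ties determine $c_1=d_1$ give totality in two lines; asserting injectivity of $m$ instead is the genuine gap in your write-up.
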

By this lemma, we index the projections $\pi_c^d$ by integers: if   $(c<d)\in\mathrm{Succ}(\phi)$ is the $i$-th element (for the total order $\prec_\phi$), we note $\partial_i:=\pi_c^d$. Furthermore, we observe that $ \partial_i\partial_j =\partial_{j-1}\partial_i$ for all $i<_\N j$, as in a semi-simplicial set. However, we will see (cf.\Cref{ex::colouring_complex}) that the set of colouring of a wall $W$ is \emph{not} a semi-simplicial set, but we can still associate a chain complex to the poset of colourings of a wall.

\begin{defiprop}
	Let $S$ be a finite totally ordered set. For a connected wall $W$ over $S$, the $\Z$-linearization of the graded set $\Colo_\bullet(W)$ gives a chain complex, called the \emph{colouring complex}, denoted by $C^{\Colo}_\bullet(W)$, where the differential is given by 
	\[
	(W,\phi) \overset{\partial^{\Colo}}{\longmapsto} \sum_{(c<d)\in\mathrm{Succ}(\phi)} (-1)^\Lambda (W,\pi_c^d\circ\phi)
	\]
	with 	
	\begin{align*}
		\Lambda:=& \#\big\{x \in \phi(W)~|~x \prec_\phi d \mbox{ and } x\ne c\big\} \\
		& +\#\big\{x \in \phi(W)~|~c \prec_\phi x \prec_\phi d \mbox{ and } (x<d)\in \mbox{Succ}(\phi)\big\}~.
	\end{align*}
\end{defiprop}
\begin{rem}
	The condition $c \prec_\phi x \prec_\phi d$ implies $x\ne c$ and $x\ne d$: the inequalities are strict.
\end{rem}
\begin{proof}
	We need to prove that $\partial^{\Colo}\circ \partial^{\Colo}=0$. We just need to understand what are the signs of terms $\pi_a^b\pi_c^d$ and $\pi_c^d\pi_a^b$ in $\partial^{\Colo}\circ \partial^{\Colo}$. We consider two pairs of successive colours $(a<b) \prec_\phi (c<d)$ for $\phi$, a colouring of $W$. 
	\begin{itemize}
		\item We start with two pairs of successive colours $(a<b) \prec_\phi (c<d)$ such that $b\ne d$. Then, in the composition $\partial^{\Colo}\circ \partial^{\Colo}$, we have the contribution $(-1)^{\Lambda_1}\pi_a^b\pi_c^d + (-1)^{\Lambda_2}\pi_c^d\pi_a^b$, with 
		\begin{align*}
		\Lambda_1=& \#\big\{x \in \phi(W)~|~x \prec_\phi d \mbox{ and } x\ne c\big\} \\
			& +\#\big\{x \in \phi(W)~|~c \prec_\phi x \prec_\phi d \mbox{ and } (x<d)\in \mbox{Succ}(\phi)\big\} \\
			& +\#\big\{x \in \pi_c^d\circ\phi(W)~|~x \prec_{\pi_c^d\circ\phi} b \mbox{ and } x\ne a\big\} \\
			& +\#\big\{x \in \pi_c^d\circ\phi(W)~|~a \prec_{\pi_c^d\circ\phi} x \prec_{\pi_c^d\circ\phi} b \mbox{ and } (x<b)\in \mbox{Succ}(\pi_c^d\circ\phi)\big\} \\
			= & \#\big\{x \in \phi(W)~|~x \prec_\phi d \mbox{ and } x\ne c\big\} \\
			& +\#\big\{x \in \phi(W)~|~c \prec_\phi x \prec_\phi d \mbox{ and } (x<d)\in \mbox{Succ}(\phi)\big\} \\
			& +\#\big\{x \in \phi(W)~|~x \prec_\phi b \mbox{ and } x\ne a\big\} \\
			& +\#\big\{x \in \phi(W)~|~a \prec_\phi x \prec_\phi b \mbox{ and } (x<b)\in \mbox{Succ}(\phi)\big\}
		\end{align*}
		 and 
		\begin{align*}
			\Lambda_2=& \#\big\{x \in \phi(W)~|~x \prec_\phi b \mbox{ and } x\ne a\big\} \\
			& +\#\big\{x \in \phi(W)~|~a \prec_\phi x \prec_\phi b \mbox{ and } (x<b)\in \mbox{Succ}(\phi)\big\} \\
			& + \#\big\{x \in \pi_a^b\circ\phi(W)~|~x \prec_{\pi_a^b\circ\phi} d \mbox{ and } x\ne c\big\} \\
			& +\#\big\{x \in \pi_a^b\circ\phi(W)~|~c \prec_{\pi_a^b\circ\phi} x \prec_{\pi_a^b\circ\phi} d \mbox{ and } (x<d)\in \mbox{Succ}(\pi_a^b\circ\phi)\big\} \\
			= & \#\big\{x \in \phi(W)~|~x \prec_\phi b \mbox{ and } x\ne a\big\} \\
			& +\#\big\{x \in \phi(W)~|~a \prec_\phi x \prec_\phi b \mbox{ and } (x<b)\in \mbox{Succ}(\phi)\big\} \\
			& + \#\big\{x \in \phi(W)~|~x \prec_{\phi} d \mbox{ and } x\ne c\big\} -1\\
			& +\#\big\{x \in \phi(W)~|~c \prec_{\phi} x \prec_{\phi} d \mbox{ and } (x<d)\in \mbox{Succ}(\phi)\big\}
		\end{align*}
		so the contribution $(-1)^{\Lambda_1}\pi_a^b\pi_c^d + (-1)^{\Lambda_2}\pi_c^d\pi_a^b$ is null.
		\item  We consider the case which we have $(a<c) \prec_\phi (b<c)$. The contribution $(-1)^{\Lambda_1}\pi_a^c\pi_b^c + (-1)^{\Lambda_2}\pi_b^c\pi_a^c$ have the signs given by:
		\begin{align*}
		\Lambda_1=& \#\big\{x \in \phi(W)~|~x \prec_\phi c \mbox{ and } x\ne b\big\} \\
			& +\#\big\{x \in \phi(W)~|~b \prec_\phi x \prec_\phi c \mbox{ and } (x<c)\in \mbox{Succ}(\phi)\big\} \\
			& +\#\big\{x \in \pi_b^c\circ\phi(W)~|~x \prec_{\pi_b^c\circ\phi} c \mbox{ and } x\ne a\big\} \\
			& +\#\big\{x \in \pi_b^c\circ\phi(W)~|~a \prec_{\pi_b^c\circ\phi} x \prec_{\pi_b^c\circ\phi} c \mbox{ and } (x<c)\in \mbox{Succ}(\pi_b^c\circ\phi)\big\} \\
		 = &\#\big\{x \in \phi(W)~|~x \prec_\phi c \mbox{ and } x\ne b\big\} \\
			& +\#\big\{x \in \phi(W)~|~b \prec_\phi x \prec_\phi c \mbox{ and } (x<c)\in \mbox{Succ}(\phi)\big\} \\
			& +\#\big\{x \in \phi(W)~|~x \prec_\phi c \mbox{ and } x\ne a\big\} -1\\
			& +\#\big\{x \in \phi(W)~|~a \prec_\phi x \prec_\phi c \mbox{ and } (x<c)\in \mbox{Succ}(\phi)\big\}-1
		\end{align*}
		 and 
		\begin{align*}
			\Lambda_2= &\#\big\{x \in \phi(W)~|~x \prec_\phi c \mbox{ and } x\ne b\big\} -1\\
			& +\#\big\{x \in \phi(W)~|~b \prec_\phi x \prec_\phi c \mbox{ and } (x<c)\in \mbox{Succ}(\phi)\big\} \\
			& +\#\big\{x \in \phi(W)~|~x \prec_\phi c \mbox{ and } x\ne a\big\} \\
			& +\#\big\{x \in \phi(W)~|~a \prec_\phi x \prec_\phi c \mbox{ and } (x<c)\in \mbox{Succ}(\phi)\big\}
		\end{align*}
		so the contribution of $(-1)^{\Lambda_1}\pi_a^c\pi_b^c + (-1)^{\Lambda_2}\pi_b^c\pi_a^c$ is null.
	\end{itemize}
	Then, we have $\partial^{\Colo}\circ \partial^{\Colo}=0$.
\end{proof}

\begin{lem}\label{rem::decompo_cpx_colo}
	 If $W\in\mathcal{W}(S)$ is a non connected wall over a totally ordered $S$, and $W^1\amalg\ldots\amalg W^l$ is the decomposition in connected component of $W$, we have  
	\[
		C_\bullet^{\Colo}(W) \cong \bigotimes_{i=1}^l C_\bullet^{\Colo}(W^i).
	\]
\end{lem}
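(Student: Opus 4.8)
The plan is to promote the bijection of graded sets $\Colo_\bullet(W)\cong\prod_{i=1}^l\Colo_\bullet(W^i)$ of \Cref{rem_iii::decompo_colo} to an isomorphism of chain complexes. First fix an order on the connected components $W^1,\dots,W^l$ of the decomposition $W=W^1\amalg\dots\amalg W^l$ of \Cref{prop::decompo_mur_connexe} (say, by the $<_S$-minima of the $S_i$). The data attached to each $W^i$ are restrictions of those attached to $W$: the height of a brick of $W^i$ in the poset $W^i$ equals its height in $W$, since no brick of $W^j$ with $j\neq i$ is $\leqslant_W$-comparable to a brick of $W^i$; hence the total order $\prec_{W^i}$ is the restriction of $\prec_W$, and similarly for the induced orders $\prec_{\phi^i}$ on colours. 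Under the bijection, a colouring $\phi$ of $W$ corresponds to the tuple $(\phi^1,\dots,\phi^l)$ of its restrictions and $\phi(W)=\coprod_i\phi^i(W^i)$; I would take as candidate isomorphism $\Theta$ the map sending a generator $(W,\phi)$ to $(-1)^{\eta(\phi)}\,(W^1,\phi^1)\otimes\dots\otimes(W^l,\phi^l)$, where $\eta(\phi)\in\{0,1\}$ is the parity of the permutation that reorders the totally ordered set $(\phi(W),\prec_\phi)$ into the concatenation of the blocks $(\phi^1(W^1),\prec_{\phi^1}),\dots,(\phi^l(W^l),\prec_{\phi^l})$.

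Next I would check that $\Theta$ respects the differentials at the level of supports. A pair $(c<d)\in\mathrm{Succ}(\phi)$ necessarily consists of two colours of one and the same component: $\leqslant_\phi$ is induced from $\leqslant_W$, the fibres of $\phi$ are connected, and two bricks in different connected components of $W$ are $\leqslant_W$-incomparable, so colours of $W^i$ and of $W^j$ with $i\neq j$ are $\leqslant_\phi$-incomparable. Hence $\mathrm{Succ}(\phi)=\coprod_i\mathrm{Succ}(\phi^i)$, and for $(c<d)\in\mathrm{Succ}(\phi^i)$ the colouring $\pi_c^d\phi$ corresponds to $(\phi^1,\dots,\pi_c^d\phi^i,\dots,\phi^l)$, the remaining restrictions and their induced orders being untouched. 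So $\partial^{\Colo}$ on $C_\bullet^{\Colo}(W)$ and the tensor differential on $\bigotimes_i C_\bullet^{\Colo}(W^i)$ have the same matrix entries up to sign.

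The main obstacle is the reconciliation of those signs, which is exactly what the twist $\eta$ is designed for: the exponent $\Lambda$ defining $\partial^{\Colo}$ is read off the \emph{total} order $\prec_\phi$, which interleaves the colours of distinct components, whereas the Koszul sign of the tensor differential orders the factors blockwise. Matching $\Theta\circ\partial^{\Colo}$ with $\partial^{\otimes}\circ\Theta$ term by term amounts, for each $(c<d)\in\mathrm{Succ}(\phi^i)$, to the congruence
\[
\Lambda_\phi(c,d)\ \equiv\ \sum_{j<i}|\phi^j(W^j)|\ +\ \Lambda_{\phi^i}(c,d)\ +\ \eta(\phi)\ +\ \eta(\pi_c^d\phi)\pmod 2,
\]
where $\Lambda_\phi$ and $\Lambda_{\phi^i}$ are the exponents attached to $\partial^{\Colo}$ for $W$ and for $W^i$. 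I would prove this by a direct count: the first summand of $\Lambda_\phi(c,d)$ enumerates all colours $\prec_\phi d$ distinct from $c$; distributing them over the components yields $\sum_{j<i}|\phi^j(W^j)|$ for the components before $i$, the first summand of $\Lambda_{\phi^i}(c,d)$ for component $i$, and the colours of later components that $\prec_\phi$-precede $d$; the parity of this last batch, together with the colours of other components lying $\prec_\phi$-between $c$ and $d$, is precisely the variation $\eta(\phi)+\eta(\pi_c^d\phi)$ of the unshuffle sign produced by the merge. The residual correction terms ``$(x<d)\in\mathrm{Succ}$'' in $\Lambda_\phi$ and in $\Lambda_{\phi^i}$ coincide because such an $x$ must lie in component $i$; here one separates, just as in the two cases of the proof that $\partial^{\Colo}\circ\partial^{\Colo}=0$, the colours of $W^i$ that $\prec_{\phi^i}$-precede $d$ without being $\leqslant_{\phi^i}$-comparable to it, and \Cref{prop::projection_ordre} guarantees $\pi_c^d\phi$ is again a colouring so all of these quantities make sense. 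Granting the congruence, $\Theta$ intertwines the differentials; being moreover a bijection that preserves the grading by number of colours, it is the asserted isomorphism of complexes.
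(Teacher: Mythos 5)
Your route is the paper's route (the set-level decomposition $\Colo_\bullet(W)\cong\prod_i\Colo_\bullet(W^i)$ of \Cref{rem::decompo_colo}), but you go much further than the paper's one-line justification, which only yields the isomorphism of graded modules: you correctly observe that $\mathrm{Succ}(\phi)=\coprod_i\mathrm{Succ}(\phi^i)$, that the ``$(x<d)\in\mathrm{Succ}$'' correction terms of $\Lambda_\phi$ and $\Lambda_{\phi^i}$ coincide, and you correctly reduce the chain-map property of your twisted map $\Theta$ to the congruence $\Lambda_\phi(c,d)\equiv\sum_{j<i}|\phi^j(W^j)|+\Lambda_{\phi^i}(c,d)+\eta(\phi)+\eta(\pi_c^d\phi)$.

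The gap is that this congruence is only sketched, and as stated it fails for your choice of $\eta$. The hidden assumption in your count is that the total order $\prec_\phi$ extends the partial order $\leqslant_\phi$ and that, for a colour $x$ of another component, the position of the merged colour relative to $x$ is controlled by $[x\prec_\phi d]$; both break when $d\prec_\phi c$ with a foreign colour wedged strictly between. Concretely, take $S=\{1,2,3,5\}$, $W^1=\{A=\{1,3\},\,B=\{3,5\},\,C=\{5\}\}$ with $A<B$ and $C<B$, $W^2=\{E=\{2\}\}$, and the colouring $\phi$ with fibres $\{A,B\}$, $\{C\}$, $\{E\}$, so $(c<d)\in\mathrm{Succ}(\phi)$ where $d$ colours $\{A,B\}$. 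Reading $\prec_\phi$ via the $\prec_W$-minimal brick of each fibre (the reading the paper itself uses to order $\mathrm{Succ}(\phi)$; note its literal definition of $\prec_\phi$ is not even antisymmetric here), one gets $d\prec_\phi x\prec_\phi c$, hence $\Lambda_\phi=0=\Lambda_{\phi^1}$ and Koszul sign $+1$, while $\eta(\phi)=1$ and $\eta(\pi_c^d\phi)=0$: your congruence reads $0\equiv1$, and $\Theta$ indeed fails to commute with the differentials on this generator. (The untwisted identification is not an alternative: it fails on the wall consisting of two stacked bricks over $\{1,2\}$ next to one brick over $\{3,4\}$, where $\Lambda_\phi=1$ but $\Lambda_{\phi^1}=0$ — so some twist is needed, just not uniformly yours.) To complete the argument you must first fix an unambiguous definition of the induced total order on colours (and of the position of the merged colour), and then carry out the inversion count including the ``inverted'' case $d\prec_\phi c$, adapting $\eta$ to that convention.
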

\begin{proof}
	By \Cref{rem::decompo_colo} \ref{rem_iii::decompo_colo}, and the fact that the functor of linearization preserves coproducts.
\end{proof}

\begin{exmp}\label{ex::colouring_complex} Consider the colouring complex of the wall $W\in\Wall([\![1,4]\!])$ represented by 
$~
	 \begin{tikzpicture}[scale=0.3,baseline=0.6ex]
		\draw (0,0) rectangle (2,0.5);
		\draw (2.25,0) rectangle (4.25,0.5);
		\draw (1.25,0.75) rectangle (3.25,1.25);
		\draw (-1,0.75) rectangle (1,1.25);
	\end{tikzpicture}~~. 
$
\begin{itemize}
\item $\Colo_4(W)$  contains only the top-colouring of $W$:
\[
	\begin{tikzpicture}[scale=0.3,baseline=1ex]
		\draw[fill=black] (0,0) rectangle (2,0.5);
		\draw[fill=white] (2.25,0) rectangle (4.25,0.5);
		\draw[fill=gray] (1.25,0.75) rectangle (3.25,1.25);
		\draw[fill=lightgray] (-1,0.75) rectangle (1,1.25);
	\end{tikzpicture} ~.
\]
the poset $\mathrm{Succ}(top)$, where we denote by $\prec_{top}$ its total order, is given by:
\[
	\mathrm{Succ}(top)=\left\{
	\begin{tikzpicture}[scale=0.3,baseline=1ex]
		\draw[fill=black] (0,0) rectangle (2,0.5);
		\draw[fill=lightgray] (-1,0.75) rectangle (1,1.25);
	\end{tikzpicture} 
	\prec_{top}
	\begin{tikzpicture}[scale=0.3,baseline=1ex]
		\draw[fill=black] (0,0) rectangle (2,0.5);
		\draw[fill=gray] (1.25,0.75) rectangle (3.25,1.25);
	\end{tikzpicture} 
	\prec_{top}
	\begin{tikzpicture}[scale=0.3,baseline=1ex]
		\draw[fill=white] (2.25,0) rectangle (4.25,0.5);
		\draw[fill=gray] (1.25,0.75) rectangle (3.25,1.25);
	\end{tikzpicture} 
	\right\}
\]
where any diagram represent a pair of successive bricks. So, we have the following three arrows
$
 	\xymatrix{
 		\Colo_4(W) \ar[r] \ar@<-0.4pc>[r] \ar@<0.4pc>[r]^{\partial_i} & \Colo_3(W) 
 	}
$
represented by:
\[
	\begin{tikzpicture}[scale=0.3,baseline=1ex]
		\draw[fill=black] (0,0) rectangle (2,0.5);
		\draw[fill=white] (2.25,0) rectangle (4.25,0.5);
		\draw[fill=gray] (1.25,0.75) rectangle (3.25,1.25);
		\draw[fill=lightgray] (-1,0.75) rectangle (1,1.25);
	\end{tikzpicture}
	\ \overset{\partial_1}{\rightarrow}\
	\begin{tikzpicture}[scale=0.3,baseline=1ex]
		\draw[fill=black] (0,0) rectangle (2,0.5);
		\draw[fill=white] (2.25,0) rectangle (4.25,0.5);
		\draw[fill=gray] (1.25,0.75) rectangle (3.25,1.25);
		\draw[fill=black] (-1,0.75) rectangle (1,1.25);
	\end{tikzpicture} \ ; \
	\begin{tikzpicture}[scale=0.3,baseline=1ex]
		\draw[fill=black] (0,0) rectangle (2,0.5);
		\draw[fill=white] (2.25,0) rectangle (4.25,0.5);
		\draw[fill=gray] (1.25,0.75) rectangle (3.25,1.25);
		\draw[fill=lightgray] (-1,0.75) rectangle (1,1.25);
	\end{tikzpicture}
	 \ \overset{\partial_2}{\rightarrow} \
	\begin{tikzpicture}[scale=0.3,baseline=1ex]
		\draw[fill=black] (0,0) rectangle (2,0.5);
		\draw[fill=white] (2.25,0) rectangle (4.25,0.5);
		\draw[fill=black] (1.25,0.75) rectangle (3.25,1.25);
		\draw[fill=lightgray] (-1,0.75) rectangle (1,1.25);
	\end{tikzpicture} \ ; \
	\begin{tikzpicture}[scale=0.3,baseline=1ex]
		\draw[fill=black] (0,0) rectangle (2,0.5);
		\draw[fill=white] (2.25,0) rectangle (4.25,0.5);
		\draw[fill=gray] (1.25,0.75) rectangle (3.25,1.25);
		\draw[fill=lightgray] (-1,0.75) rectangle (1,1.25);
	\end{tikzpicture}
	 \ \overset{\partial_3}{\rightarrow}\
	\begin{tikzpicture}[scale=0.3,baseline=1ex]
		\draw[fill=black] (0,0) rectangle (2,0.5);
		\draw[fill=white] (2.25,0) rectangle (4.25,0.5);
		\draw[fill=white] (1.25,0.75) rectangle (3.25,1.25);
		\draw[fill=lightgray] (-1,0.75) rectangle (1,1.25);
	\end{tikzpicture} \ .
\]
\item $\Colo_3(W)$ is the set of the following three colourings
\[
	\begin{tikzpicture}[scale=0.3,baseline=1ex]
		\draw[fill=black] (0,0) rectangle (2,0.5);
		\draw[fill=white] (2.25,0) rectangle (4.25,0.5);
		\draw[fill=gray] (1.25,0.75) rectangle (3.25,1.25);
		\draw[fill=black] (-1,0.75) rectangle (1,1.25);
	\end{tikzpicture} ~~,~~\quad
	\begin{tikzpicture}[scale=0.3,baseline=1ex]
		\draw[fill=black] (0,0) rectangle (2,0.5);
		\draw[fill=white] (2.25,0) rectangle (4.25,0.5);
		\draw[fill=black] (1.25,0.75) rectangle (3.25,1.25);
		\draw[fill=lightgray] (-1,0.75) rectangle (1,1.25);
	\end{tikzpicture} \quad~~\mbox{ and }~~\quad
	\begin{tikzpicture}[scale=0.3,baseline=1ex]
		\draw[fill=black] (0,0) rectangle (2,0.5);
		\draw[fill=white] (2.25,0) rectangle (4.25,0.5);
		\draw[fill=white] (1.25,0.75) rectangle (3.25,1.25);
		\draw[fill=lightgray] (-1,0.75) rectangle (1,1.25);
	\end{tikzpicture} ~~. 
\]
For each colouring in $\Colo_3(W)$, we have the set of successive colours:
\begin{align*}
	\mathrm{Succ}\left(
	\begin{tikzpicture}[scale=0.3,baseline=1ex]
		\draw[fill=black] (0,0) rectangle (2,0.5);
		\draw[fill=white] (2.25,0) rectangle (4.25,0.5);
		\draw[fill=gray] (1.25,0.75) rectangle (3.25,1.25);
		\draw[fill=black] (-1,0.75) rectangle (1,1.25);
	\end{tikzpicture} 
	\right)= &~
	\left\{
	\begin{tikzpicture}[scale=0.3,baseline=1ex]
		\draw[fill=black] (0,0) rectangle (2,0.5);
		\draw[fill=gray] (1.25,0.75) rectangle (3.25,1.25);
		\draw[fill=black] (-1,0.75) rectangle (1,1.25);
	\end{tikzpicture} 
	\prec
	\begin{tikzpicture}[scale=0.3,baseline=1ex]
		\draw[fill=white] (2.25,0) rectangle (4.25,0.5);
		\draw[fill=gray] (1.25,0.75) rectangle (3.25,1.25);
	\end{tikzpicture} 
	\right\}~~; \\
	\mathrm{Succ}\left(
	\begin{tikzpicture}[scale=0.3,baseline=1ex]
		\draw[fill=black] (0,0) rectangle (2,0.5);
		\draw[fill=white] (2.25,0) rectangle (4.25,0.5);
		\draw[fill=black] (1.25,0.75) rectangle (3.25,1.25);
		\draw[fill=lightgray] (-1,0.75) rectangle (1,1.25);
	\end{tikzpicture}
	\right)= &~
	\left\{
	\begin{tikzpicture}[scale=0.3,baseline=1ex]
		\draw[fill=black] (0,0) rectangle (2,0.5);
		\draw[fill=black] (1.25,0.75) rectangle (3.25,1.25);
		\draw[fill=lightgray] (-1,0.75) rectangle (1,1.25);
	\end{tikzpicture} 
	\prec
	\begin{tikzpicture}[scale=0.3,baseline=1ex]
		\draw[fill=black] (0,0) rectangle (2,0.5);
		\draw[fill=white] (2.25,0) rectangle (4.25,0.5);
		\draw[fill=black] (1.25,0.75) rectangle (3.25,1.25);
	\end{tikzpicture} 
	\right\}~~; \\
	\mathrm{Succ}\left(
	\begin{tikzpicture}[scale=0.3,baseline=1ex]
		\draw[fill=black] (0,0) rectangle (2,0.5);
		\draw[fill=white] (2.25,0) rectangle (4.25,0.5);
		\draw[fill=white] (1.25,0.75) rectangle (3.25,1.25);
		\draw[fill=lightgray] (-1,0.75) rectangle (1,1.25);
	\end{tikzpicture} 
	\right)= &~
	\left\{
	\begin{tikzpicture}[scale=0.3,baseline=1ex]
		\draw[fill=black] (0,0) rectangle (2,0.5);
		\draw[fill=lightgray] (-1,0.75) rectangle (1,1.25);
	\end{tikzpicture} 
	\prec
	\begin{tikzpicture}[scale=0.3,baseline=1ex]
		\draw[fill=black] (0,0) rectangle (2,0.5);
		\draw[fill=white] (2.25,0) rectangle (4.25,0.5);
		\draw[fill=white] (1.25,0.75) rectangle (3.25,1.25);
	\end{tikzpicture}  
	\right\}~~;
\end{align*}
which are given to us the maps from $\Colo_3(W)$ to $\Colo_2(W)$. 
\item $\Colo_2(W)$ contains the following colourings:
\[
	\begin{tikzpicture}[scale=0.3,baseline=1ex]
		\draw[fill=black] (0,0) rectangle (2,0.5);
		\draw[fill=white] (2.25,0) rectangle (4.25,0.5);
		\draw[fill=black] (1.25,0.75) rectangle (3.25,1.25);
		\draw[fill=black] (-1,0.75) rectangle (1,1.25);
	\end{tikzpicture} ~~,~~\quad
	\begin{tikzpicture}[scale=0.3,baseline=1ex]
		\draw[fill=black] (0,0) rectangle (2,0.5);
		\draw[fill=white] (2.25,0) rectangle (4.25,0.5);
		\draw[fill=white] (1.25,0.75) rectangle (3.25,1.25);
		\draw[fill=black] (-1,0.75) rectangle (1,1.25);
	\end{tikzpicture} \quad~~\mbox{ and }~~\quad
	\begin{tikzpicture}[scale=0.3,baseline=1ex]
		\draw[fill=black] (0,0) rectangle (2,0.5);
		\draw[fill=black] (2.25,0) rectangle (4.25,0.5);
		\draw[fill=black] (1.25,0.75) rectangle (3.25,1.25);
		\draw[fill=lightgray] (-1,0.75) rectangle (1,1.25);
	\end{tikzpicture} ~~.
\]
As these colourings just have two colours, the sets of successive colours associated to them are reduced to only one element.
\end{itemize}
Finally, we have the complete description of the colouring complex of $W$:
\[	
	\xymatrix@R=1.5cm{
	&
	&
	\begin{tikzpicture}[scale=0.3,baseline=1ex]
		\draw[fill=black] (0,0) rectangle (2,0.5); 
		\draw[fill=white] (2.25,0) rectangle (4.25,0.5); 
		\draw[fill=gray] (1.25,0.75) rectangle (3.25,1.25); 
		\draw[fill=black] (-1,0.75) rectangle (1,1.25); 
	\end{tikzpicture} 
	\ar[r]^{\partial_1} \ar[rd]^(0.7){\partial_2}
	& 
	\begin{tikzpicture}[scale=0.3,baseline=1ex]
		\draw[fill=black] (0,0) rectangle (2,0.5);
		\draw[fill=white] (2.25,0) rectangle (4.25,0.5);
		\draw[fill=black] (1.25,0.75) rectangle (3.25,1.25);
		\draw[fill=black] (-1,0.75) rectangle (1,1.25);
	\end{tikzpicture} 
	\ar[rd]
	& 	\\
	C^{\Colo}_\bullet(W):=	
	&
	\begin{tikzpicture}[scale=0.3,baseline=1ex]
		\draw[fill=black] (0,0) rectangle (2,0.5);
		\draw[fill=white] (2.25,0) rectangle (4.25,0.5);
		\draw[fill=gray] (1.25,0.75) rectangle (3.25,1.25);
		\draw[fill=lightgray] (-1,0.75) rectangle (1,1.25);
	\end{tikzpicture} 
	\ar[ru]^{\partial_1} \ar[r]^{\partial_2} \ar[rd]_{\partial_3}
	&
	\begin{tikzpicture}[scale=0.3,baseline=1ex]
		\draw[fill=black] (0,0) rectangle (2,0.5);
		\draw[fill=white] (2.25,0) rectangle (4.25,0.5);
		\draw[fill=black] (1.25,0.75) rectangle (3.25,1.25);
		\draw[fill=lightgray] (-1,0.75) rectangle (1,1.25);
	\end{tikzpicture}  
	\ar[ru]|{\hole}^(0.3){\partial_1} \ar[rd]|{\hole}_(0.3){\partial_2}
	&
	\begin{tikzpicture}[scale=0.3,baseline=1ex]
		\draw[fill=black] (0,0) rectangle (2,0.5);
		\draw[fill=white] (2.25,0) rectangle (4.25,0.5);
		\draw[fill=white] (1.25,0.75) rectangle (3.25,1.25);
		\draw[fill=black] (-1,0.75) rectangle (1,1.25);
	\end{tikzpicture} 
	\ar[r]
	 & 
	 \begin{tikzpicture}[scale=0.3,baseline=1ex]
		\draw[fill=black] (0,0) rectangle (2,0.5);
		\draw[fill=black] (2.25,0) rectangle (4.25,0.5);
		\draw[fill=black] (1.25,0.75) rectangle (3.25,1.25);
		\draw[fill=black] (-1,0.75) rectangle (1,1.25);
	\end{tikzpicture} 
	\\
	&
	&
	\begin{tikzpicture}[scale=0.3,baseline=1ex]
		\draw[fill=black] (0,0) rectangle (2,0.5); 
		\draw[fill=white] (2.25,0) rectangle (4.25,0.5); 
		\draw[fill=white] (1.25,0.75) rectangle (3.25,1.25); 
		\draw[fill=lightgray] (-1,0.75) rectangle (1,1.25); 
	\end{tikzpicture} 
	\ar[r]_{\partial_2} \ar[ru]_(0.7){\partial_1}
	& 
	\begin{tikzpicture}[scale=0.3,baseline=1ex]
		\draw[fill=black] (0,0) rectangle (2,0.5);
		\draw[fill=black] (2.25,0) rectangle (4.25,0.5);
		\draw[fill=black] (1.25,0.75) rectangle (3.25,1.25);
		\draw[fill=lightgray] (-1,0.75) rectangle (1,1.25);
	\end{tikzpicture} 
	\ar[ru]
	& 
	}
\]
where the differential is given by the sum of $\partial_i$ with the sign rule defined above. We have the following second example:
\[
\xymatrix@R=0.3cm@C=1cm{
	&
	\begin{tikzpicture}[scale=0.3,baseline=0ex]
	\draw[fill=black] (0,0) rectangle (2,0.5); 
	\draw[fill=white] (2.25,0) rectangle (4.25,0.5); 
	\draw[fill=black] (1.25,0.75) rectangle (3.25,1.25); 
	\draw[fill=lightgray] (1.25,-.75) rectangle (3.25,-.25); 
	\end{tikzpicture} 
	\ar[r] \ar[rdd]
	& 
	\begin{tikzpicture}[scale=0.3,baseline=0ex]
	\draw[fill=black] (0,0) rectangle (2,0.5); 
	\draw[fill=white] (2.25,0) rectangle (4.25,0.5); 
	\draw[fill=black] (1.25,0.75) rectangle (3.25,1.25); 
	\draw[fill=white] (1.25,-.75) rectangle (3.25,-.25); 
	\end{tikzpicture} 
	\ar@<.5pc>[rddd]
	& 	\\
	&&&  \\
	&
	\begin{tikzpicture}[scale=0.3,baseline=0ex]
	\draw[fill=black] (0,0) rectangle (2,0.5); 
	\draw[fill=white] (2.25,0) rectangle (4.25,0.5); 
	\draw[fill=gray] (1.25,0.75) rectangle (3.25,1.25); 
	\draw[fill=white] (1.25,-.75) rectangle (3.25,-.25); 
	\end{tikzpicture} 
	\ar[ruu]  \ar[rdd]
	& 
	\begin{tikzpicture}[scale=0.3,baseline=0ex]
	\draw[fill=black] (0,0) rectangle (2,0.5); 
	\draw[fill=black] (2.25,0) rectangle (4.25,0.5); 
	\draw[fill=black] (1.25,0.75) rectangle (3.25,1.25); 
	\draw[fill=lightgray] (1.25,-.75) rectangle (3.25,-.25); 
	\end{tikzpicture} 
	\ar[rd]
	& 	\\
	\begin{tikzpicture}[scale=0.3,baseline=0ex]
	\draw[fill=black] (0,0) rectangle (2,0.5); 
	\draw[fill=white] (2.25,0) rectangle (4.25,0.5); 
	\draw[fill=gray] (1.25,0.75) rectangle (3.25,1.25); 
	\draw[fill=lightgray] (1.25,-.75) rectangle (3.25,-.25); 
	\end{tikzpicture} 
	\ar[ru]|{\partial_2} \ar@<0.5pc>[ruuu]|{\partial_3} \ar[rd]|{\partial_4} \ar@<-.5pc>[rddd]|{\partial_1}
	&&&
	\begin{tikzpicture}[scale=0.3,baseline=0ex]
	\draw[fill=black] (0,0) rectangle (2,0.5); 
	\draw[fill=black] (2.25,0) rectangle (4.25,0.5); 
	\draw[fill=black] (1.25,0.75) rectangle (3.25,1.25); 
	\draw[fill=black] (1.25,-.75) rectangle (3.25,-.25); 
	\end{tikzpicture} 
	\\
	&
	\begin{tikzpicture}[scale=0.3,baseline=0ex]
	\draw[fill=black] (0,0) rectangle (2,0.5); 
	\draw[fill=white] (2.25,0) rectangle (4.25,0.5); 
	\draw[fill=white] (1.25,0.75) rectangle (3.25,1.25); 
	\draw[fill=lightgray] (1.25,-.75) rectangle (3.25,-.25); 
	\end{tikzpicture}  
	\ar[ruu] \ar[rdd]
	&
	\begin{tikzpicture}[scale=0.3,baseline=0ex]
	\draw[fill=black] (0,0) rectangle (2,0.5); 
	\draw[fill=black] (2.25,0) rectangle (4.25,0.5); 
	\draw[fill=gray] (1.25,0.75) rectangle (3.25,1.25); 
	\draw[fill=black] (1.25,-.75) rectangle (3.25,-.25); 
	\end{tikzpicture} 
	\ar[ru]
	& 
	~~\\
	&&&  \\
	&
	\begin{tikzpicture}[scale=0.3,baseline=0ex]
	\draw[fill=black] (0,0) rectangle (2,0.5); 
	\draw[fill=white] (2.25,0) rectangle (4.25,0.5); 
	\draw[fill=gray] (1.25,0.75) rectangle (3.25,1.25); 
	\draw[fill=black] (1.25,-.75) rectangle (3.25,-.25); 
	\end{tikzpicture} 
	\ar[r] \ar[ruu]
	& 
	\begin{tikzpicture}[scale=0.3,baseline=0ex]
	\draw[fill=black] (0,0) rectangle (2,0.5); 
	\draw[fill=gray] (2.25,0) rectangle (4.25,0.5); 
	\draw[fill=gray] (1.25,0.75) rectangle (3.25,1.25); 
	\draw[fill=black] (1.25,-.75) rectangle (3.25,-.25); 
	\end{tikzpicture} 
	\ar@<-.5pc>[ruuu]
	& 
} \ .
\]

\end{exmp}

\begin{thm}\label{prop::cpx_colo_acyclique}
	Let $S$ be a finite totally ordered set and $W$ a wall over $S$. If the set $\mathrm{Succ}(W)$ is not empty, then the colouring complex $C_\bullet^{\Colo}(W)$ is acyclic.
\end{thm}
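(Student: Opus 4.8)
The plan is to prove acyclicity by producing an explicit contracting homotopy on $C_\bullet^{\Colo}(W)$, after reducing to the case of a connected wall.

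First I would reduce to the connected case. By \Cref{rem::decompo_cpx_colo}, if $W = W^1\amalg\cdots\amalg W^l$ is the decomposition of $W$ into connected components, then $C_\bullet^{\Colo}(W)\cong\bigotimes_{i}C_\bullet^{\Colo}(W^i)$, and since bricks in distinct components are $\leqslant_W$-incomparable we have $\mathrm{Succ}(W)=\coprod_i\mathrm{Succ}(W^i)$; in particular $\mathrm{Succ}(W)\ne\varnothing$ forces $\mathrm{Succ}(W^{i_0})\ne\varnothing$ for some $i_0$, i.e. $W^{i_0}$ is connected with at least two bricks. Each $C_\bullet^{\Colo}(W^i)$ is a bounded complex of finitely generated free abelian groups, so once $C_\bullet^{\Colo}(W^{i_0})$ is known to be acyclic it is contractible, hence $\mathrm{id}_{C_\bullet^{\Colo}(W)}$ is null-homotopic. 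Thus it suffices to treat a connected wall $W$ with $|W|\geqslant 2$.

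Fix the total order $\prec_W$ on the bricks extending $<_W$ (the one used to define $\partial^{\Colo}$), and let $m$ be its minimal brick. The two structural facts I would establish and then exploit are: (a) for every colouring $\phi$, the brick $m$ carries the $\prec_\phi$-minimal colour $c_0:=\phi(m)$, and its class $B_0:=\phi^{-1}(c_0)$ is a \emph{down-set} of the poset $(W,\leqslant_W)$; and (b) any connected sub-wall containing $m$ admits a canonical splitting $B_0=B_0'\amalg B_0''$ into a connected down-sub-wall $B_0'\ni m$ and a connected remainder $B_0''$ (for instance: let $B_0''$ be the connected component, inside the sub-wall $B_0\setminus\{m\}$, of the $\prec_W$-maximal brick of $B_0$, and $B_0':=B_0\setminus B_0''$). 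Fact (a) is exactly what makes every ``splitting from below'' again satisfy the partial-order axiom of a colouring. I then define $h\colon C_n^{\Colo}(W)\to C_{n+1}^{\Colo}(W)$ by $h(W,\phi)=0$ when $B_0=\{m\}$ and $h(W,\phi)=\pm(W,\phi')$ otherwise, where $\phi'$ replaces the block $B_0$ by the two blocks $B_0'\prec B_0''$. One checks $\partial^{\Colo}h+h\partial^{\Colo}=\pm\mathrm{id}$: the term of $\partial^{\Colo}h(W,\phi)$ that re-merges $B_0'$ with $B_0''$ returns $\pm(W,\phi)$, the remaining terms of $\partial^{\Colo}h(W,\phi)$ cancel in pairs against those of $h\partial^{\Colo}(W,\phi)$, and on $C_1^{\Colo}(W)$ (which is free of rank one, spanned by the bot-colouring) the identity reduces to $\partial^{\Colo}h=\pm\mathrm{id}$. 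This is precisely the mechanism by which an extra degeneracy contracts the (unital) bar complex; one may equivalently record it as a perfect acyclic matching on the graded set $\Colo_\bullet(W)$ in the sense of discrete Morse theory — pairing each $\phi$ with $|B_0|\geqslant 2$ to its split $\phi'$, and each $\phi$ with $B_0=\{m\}$ to the colouring obtained by merging $c_0$ with the neighbour dictated by (b) — whose absence of critical cells yields acyclicity.

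The main obstacle is fact (b). The naive choice $B_0'=\{m\}$ does not work: deleting the $\prec_W$-minimal brick from a connected wall may disconnect it (take $B_0=\{m,a,b\}$ with $m\cap a\ne\varnothing$, $m\cap b\ne\varnothing$, $a\cap b=\varnothing$ and $m<a$, $m<b$). So the bulk of the proof is to pin down the canonical split, show $B_0'$ is connected and a down-set of $B_0$, check that ``split'' and the corresponding ``merge'' are mutually inverse in the precise sense needed for the homotopy identity to close up, and carry out the sign bookkeeping forced by the definition of $\partial^{\Colo}$ — which, as stressed around \Cref{ex::colouring_complex}, is only semi-simplicial-\emph{like} (not every face $\partial_i$ is present), so the usual simplicial cancellation has to be re-examined case by case, exactly as in the worked examples of that section. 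The reduction to connected walls and the formal skeleton of the homotopy argument are routine by comparison.
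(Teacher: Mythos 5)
Your two structural pillars both fail, and with them the contraction. Fact (a) is false: for the wall of Non-example \ref{ctrexmp::colo1} (bricks $W_a=\{1,2\}$, $W_b=\{3,4\}$, $W_c=\{2,3\}$, $W_d=\{1,4\}$ with $W_a<W_c$, $W_a<W_d$, $W_b<W_c$, $W_b<W_d$), the surjection with fibres $\{W_a,W_c\}$, $\{W_b\}$, $\{W_d\}$ is a perfectly good colouring, the $\prec_W$-minimal brick is $W_a$, and its fibre is \emph{not} a down-set of $(W,\leqslant_W)$ (it contains $W_c$ but not $W_b<W_c$); so the partial-order axiom for your split colourings is not secured by (a). More decisively, the homotopy/matching itself breaks on a three-brick wall: take $S=\{1,2\}$, $m=\{1,2\}$, $a=\{1\}$, $b=\{2\}$ with $m<a$, $m<b$ and $a,b$ incomparable. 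For the top colouring $\psi$ one has $B_0=\{m\}$, hence $h\psi=0$, while $\partial^{\Colo}\psi$ is the signed sum of the two colourings obtained by merging $\phi(m)$ with $\phi(a)$, respectively with $\phi(b)$; your $h$ sends each of these back to $\pm\psi$, so $(\partial^{\Colo}h+h\partial^{\Colo})(\psi)$ is $0$ or $\pm 2\psi$ and never $\pm\psi$, whatever signs you choose. In the discrete-Morse formulation the same wall shows the pairing is not even a matching: for the bot-colouring $\phi_0$ the set $B_0\setminus\{m\}=\{a,b\}$ is disconnected, so the canonical split $\phi'$ (fibres $\{m,a\}$ and $\{b\}$, since $a\prec_W b$) still has $|B_0(\phi')|=2$ and is therefore paired twice, once with $\phi_0$ as its split and once with its own split $\psi$. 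So the step you call routine sign bookkeeping — that split and merge are mutually inverse in the way the homotopy identity needs — is exactly where the argument collapses, and it is not a matter of signs.

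This is also genuinely different from how the paper proceeds: there is no explicit homotopy there. The paper argues by induction on the number of bricks, using for a chosen $(k<l)\in\mathrm{Succ}(W)$ the subcomplex $C^{\Colo,k<l}_\bullet(W)\cong C^{\Colo}_\bullet(W/_{k\sim l})$ of colourings giving $k$ and $l$ the same colour, the algebraic Mayer--Vietoris \Cref{prop::mayer_vietoris} to show that $\sum_{l\in\mathrm{Succ}_W(k)}C^{\Colo,k<l}_\bullet(W)$ is acyclic, and the short exact sequence whose quotient is $C^{\Colo}_\bullet(W\setminus\{k\})$ (acyclic by induction and \Cref{rem::decompo_cpx_colo}, the quotient wall being possibly disconnected), with a separate treatment of the exceptional shape where one bottom brick meets every other brick, precisely because there the quotient would have a single brick and the induction hypothesis would not apply. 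If you want to salvage a matching proof, the split rule cannot be tied once and for all to the class of the $\prec_W$-minimal brick: it must be chosen so that the block you split is always the block created by the matched merge, and you must then prove the matching is acyclic; otherwise, the inductive Mayer--Vietoris scheme above is the argument to follow.
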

\begin{proof}
	Let $S$ be a totally ordered finite set and $W$, a wall over $S$ with $\mathrm{Succ}(W)\ne\varnothing$. We prove the proposition by induction on the number of bricks in $W$. If $W$ has only one brick, then the set   $\mathrm{Succ}(W)$ is empty. If $\#W =2$ then the complex $C_\bullet^{\Colo}(W)$ is as follows:
	\[
		C_\bullet^{\Colo}(W)=~~
		0 \longrightarrow
		\begin{tikzpicture}[scale=0.3,baseline=13]
			\draw (0,1.25) rectangle (2,1.75);
			\draw[fill=black] (-1,2) rectangle (1,2.5);
		\end{tikzpicture}
		\overset{\partial}{\longrightarrow}
		\begin{tikzpicture}[scale=0.3,baseline=13]
			\draw (0,1.25) rectangle (2,1.75);
			\draw (-1,2) rectangle (1,2.5);
		\end{tikzpicture}
		\longrightarrow 0,
	\]
which is acyclic. We suppose, by induction, that, for all wall $W$ over $S$ such that $2\leqslant \#W<n$ and $\mathrm{Succ}(W)\ne\varnothing$, the chain complex $C_\bullet^{\Colo}(W)$ is acyclic. If $W$ is a non-connected wall with $n$ bricks, by \Cref{rem::decompo_cpx_colo} and the induction hypothesis, the chain complex $C_\bullet^{\Colo}(W)$ is acyclic. Now, let $W$ be a connected wall with $n$ bricks. 

We start be treating the exceptional case where $W$ has the following shape:
\begin{equation}
	\begin{tikzpicture}[scale=0.3,baseline=13]
		\draw (0,1.25) rectangle (2,2);
		\draw (2.25,1.25) rectangle (4.25,2);
		\draw[dotted] (4.5,1.5) -- (6.5,1.5);
		\draw (6.75,1.25) rectangle (8.75,2);
		\draw (0,0.5) rectangle (8.75,1);
		\draw (4.32,0.75) node {$\scriptscriptstyle{k}$};
		\draw (1,1.5) node {$\scriptscriptstyle{l_1}$};
		\draw (3.25,1.5) node {$\scriptscriptstyle{l_2}$};
		\draw (7.75,1.5) node {$\scriptscriptstyle{l_{n-1}}$};
	\end{tikzpicture} ~.
	\label{fig::mur_exceptionnel}
\end{equation}
We consider the sub-complex $C_\bullet^{\Colo,k<l_1}(W)$ which is isomorphic to  $C_\bullet^{\Colo}(W/_{k\sim l_1})$ which is acyclic by induction. So we consider the following short exact sequence:
\[
	\xymatrix{
		0 \ar[r] & C_\bullet^{\Colo,k<l_1}(W)\ar[r] & C_\bullet^{\Colo}(W) \ar[r] & C_\bullet^{\Colo}(W) \big/ C_\bullet^{\Colo,k<l_1}(W) \ar[r] & 0~;
	}
\]
The chain complex$C_\bullet^{\Colo}(W) \big/ C_\bullet^{\Colo,k<l_1}(W)$ is isomorphic to $C_\bullet^{\Colo}(W\backslash \{l_1\})$, so the term on the right hand side of the exact sequence is also acyclic, so the complex $C_\bullet^{\Colo}(W)$ too.
	
Now, we suppose that $W$ does not have the exceptional shape \Cref{fig::mur_exceptionnel}. We choose $(k<l)\in\mathrm{Succ}(W)$ with $\mathfrak{h}(k)=1$ and we consider the subcomplex $C_\bullet^{\Colo, (k<l)}(W) \subset C_\bullet^{\Colo}(W)$ of colourings $\phi$ of $W$ such that $\phi(k)=\phi(l)$. We consider $W/_{k\sim l}$, the poset with $\big(W\backslash \{k,l\}\big)\cup \{k \cup l\}$, its  underlying set with the partial order induced by the $W$ one: for $j\in W$ such that $j>k$ or $j>l$ (resp. $j<k$ or $j<l$) then we have $j>(k\cup l)$  (resp. $j<(k\cup l)$). By the definition of the differential of the colouring complex, we have the following isomorphism of chain complexes:
\[
	C_\bullet^{\Colo, k<l}(W) \cong C_\bullet^{\Colo}\big(W/_{k\sim l}\big).
\]
As $|W/_{k\sim l}|=|W|-1$, then $C_\bullet^{\Colo, k<l}(W)$ is acyclic by induction.

We denote by $\mathrm{Succ}_W(k)$, the set $\big\{l\in W~|~(k<l)\in \mathrm{Succ}(W)\big\} $ and we consider the chain complex 
\[
	\sum_{l\in\mathrm{Succ}_W(k)} C_\bullet^{\Colo, k<l}(W).
\]
By  \Cref{prop::mayer_vietoris} below, this complex is acyclic if, for all non-empty subsets $\widetilde{\mathrm{Succ}}(k)\subset\mathrm{Succ}_W(k)$, the complex 
\[
	\bigcap_{l\in \widetilde{\mathrm{Succ}}(k)} C_\bullet^{\Colo, k<l}(W)
\]
is acyclic. Let $\widetilde{\mathrm{Succ}}(k)$ be a non-empty subset of $\mathrm{Succ}_W(k)$: we have the isomorphism of chain complexes 
\[
	\bigcap_{l\in\widetilde{\mathrm{Succ}}(k)} C_\bullet^{\Colo, k<l}(W) \cong C_\bullet^{\Colo, k<l}(W/_{\widetilde{\mathrm{Succ}}(k)}).
\]
There are two cases: if $|W/_{\widetilde{\mathrm{Succ}}(k)}|=1$, which means that $\widetilde{\mathrm{Succ}}(k)= \mathrm{Succ}_W(k)$ and so $W$ has the exceptional shape as in  \Cref{fig::mur_exceptionnel}, which is excluded by the hypothesis. Otherwise $|W/_{\widetilde{\mathrm{Succ}}(k)}|>1$: in this case, $C_\bullet^{\Colo, k<l}(W/_{\widetilde{\mathrm{Succ}}(k)})$ is acyclic by induction.

We consider the complex 
\[
	C_\bullet^{\Colo}(W)\Big/ \Big(\sum_{l\in\mathrm{Succ}_W(k)} C_\bullet^{\Colo, k<l}(W) \Big).
\]
which is isomorphic to $C_\bullet^{\Colo}\big(W\backslash\{k\}\big)$. Note that this complex is generally a $\otimes$-product of complexes because $W\backslash\{k\}$ is not necessarily connected. As we have the short exact sequence 
\[
\begin{tikzcd}[column sep=2ex]
	0 \ar[r]&	
	\underset{l\in\mathrm{Succ}_W(k)}{\sum} C_\bullet^{\Colo, k<l}(W)
	\ar[r]&
	C_\bullet^{\Colo}(W)
	\ar[r]&
	\dfrac{C_\bullet^{\Colo}(W)}{ \underset{l\in\mathrm{Succ}_W(k)}{\sum} C_\bullet^{\Colo, k<l}(W) }
	\ar[r]& 0~~,
\end{tikzcd}
\]
with the left and the right hand sides acyclic, the complex $C_\bullet^{\Colo}(W)$ is too.
\end{proof}

\begin{lem}[Algebraic Mayer-Vietoris]\label{prop::mayer_vietoris}
	\begin{enumerate}
	\item Let $A$ and $B$ be acyclic chain complexes. If the complex $A\cap B$ is acyclic, then the complex $A+B$ is too.
	\item More generally, let $m$ be an integer in $\N^*$ and $\{A_i\}_{i\in [\![1,m]\!]}$ a sequence of $m$ acyclic complexes. If, for all subsets $J\subset [\![1,m]\!]$, the complex $\bigcap_{j\in J} A_j$ is acyclic, then the complex $\sum_{j=1}^m A_j$ is too.
	\end{enumerate}
\end{lem}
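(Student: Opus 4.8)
The plan is to obtain both parts from the classical algebraic Mayer--Vietoris exact sequence, treating $A,B,A_i$ as subcomplexes of a common ambient complex so that sums and intersections make sense, and then to promote the binary statement (1) to the general statement (2) by induction on $m$.

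For part (1), I would write down the short exact sequence of chain complexes
\[
0\longrightarrow A\cap B\ \overset{x\,\mapsto\,(x,-x)}{\longrightarrow}\ A\oplus B\ \overset{(a,b)\,\mapsto\,a+b}{\longrightarrow}\ A+B\longrightarrow 0 ,
\]
noting that the right-hand map is surjective by the very definition of $A+B$, that its kernel is $\{(x,-x)\mid x\in A\cap B\}$, and that both maps commute with the differentials. The associated long exact sequence in homology then reads
\[
\cdots\to H_n(A\cap B)\to H_n(A)\oplus H_n(B)\to H_n(A+B)\to H_{n-1}(A\cap B)\to\cdots .
\]
Since $A$, $B$ and $A\cap B$ are acyclic, both neighbours of $H_n(A+B)$ vanish in every degree, whence $H_n(A+B)=0$ for all $n$; this is (1).

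For part (2) I would induct on $m$; the case $m=1$ is trivial and $m=2$ is (1). Assuming the result for families of size $<m$, put $A:=A_m$ and $B:=\sum_{j=1}^{m-1}A_j$ and apply (1) to the pair $(A,B)$. Here $A$ is acyclic by hypothesis, and $B$ is acyclic by the inductive hypothesis applied to $\{A_1,\dots,A_{m-1}\}$, whose iterated intersections $\bigcap_{j\in J'}A_j$ for $J'\subseteq[\![1,m-1]\!]$ are acyclic by assumption. To see that $A\cap B$ is acyclic I would invoke the distributivity identity
\[
A_m\cap\Big(\sum_{j=1}^{m-1}A_j\Big)=\sum_{j=1}^{m-1}\big(A_m\cap A_j\big) ,
\]
set $B_j:=A_m\cap A_j$, observe that each $B_j=\bigcap_{i\in\{j,m\}}A_i$ is acyclic and that $\bigcap_{j\in J'}B_j=\bigcap_{i\in J'\cup\{m\}}A_i$ is acyclic for all $J'\subseteq[\![1,m-1]\!]$, and conclude by the inductive hypothesis applied to $\{B_1,\dots,B_{m-1}\}$ that $A\cap B=\sum_{j=1}^{m-1}B_j$ is acyclic. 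Then (1) applied to $(A,B)$ gives that $\sum_{j=1}^{m}A_j=A+B$ is acyclic.

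The one point that is not purely formal is the distributivity identity above: for arbitrary subcomplexes it can fail (subobject lattices of modules are not distributive in general), so (2) is genuinely stronger than a bare diagram chase. In the situation where the lemma is used, namely the proof of \Cref{prop::cpx_colo_acyclique}, each of the complexes $C_\bullet^{\Colo,\,k<l}(W)$ is the $\Z$-span of a subset of the fixed basis $\Colo_\bullet(W)$ of $C_\bullet^{\Colo}(W)$, and spans of subsets of a common basis intersect and add according to intersection and union of the indexing subsets, which do distribute; so the identity holds there. I would either add this basis hypothesis to the statement or, as seems to be the paper's intent, simply record it as the reason the lemma applies at its point of use.
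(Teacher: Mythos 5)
Your proof is correct and follows essentially the same route as the paper: part (1) via the short exact sequence $0\to A\cap B\to A\oplus B\to A+B\to 0$ and its long exact homology sequence, and part (2) by induction, applying (1) to the pair $\bigl(A_m,\sum_{j<m}A_j\bigr)$. The extra care you take over the identity $A_m\cap\bigl(\sum_{j<m}A_j\bigr)=\sum_{j<m}(A_m\cap A_j)$ is warranted, and is in fact sharper than the paper's own argument, which passes from the acyclicity of the complexes $\bigcap_{j\in J}A_j$ to that of $\bigl(\sum_{j=1}^m A_j\bigr)\cap A_0$ without comment, i.e.\ it silently uses exactly this distributivity. As you observe, the identity can fail for arbitrary subcomplexes, and then so can statement (2) as literally written: for instance, with ambient complex $C_1=k\{u,v,w\}\xrightarrow{d} C_0=k\{x,y\}$, $d(u)=x$, $d(v)=y$, $d(w)=x+y$, the acyclic subcomplexes $A_1=(ku,kx)$, $A_2=(kv,ky)$, $A_3=(kw,k(x+y))$ have all pairwise and triple intersections equal to zero, yet $A_1+A_2+A_3$ has $H_1\ne 0$. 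So the lemma really does need a hypothesis of the kind you propose (all $A_i$ spanned by subsets of one fixed basis, or more generally distributivity of the sublattice of subcomplexes they generate), and that hypothesis is satisfied by the subcomplexes $C_\bullet^{\Colo,\,k<l}(W)$ at the point of use in the proof of \Cref{prop::cpx_colo_acyclique}; making it explicit in the statement, as you suggest, is the right fix.
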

\begin{proof}
	\begin{enumerate}
		\item We have the square
		\[
			 \xymatrix{
			 	A\cap B \ar[r]  \ar[d] \ar@{}[rd]|(0.75)\pushout & B \ar[d] \\
			 	A \ar[r] & A+B
			 }
		\]
		which induces the short exact sequence 
		\[
			\xymatrix@C=0.7cm{
				0 \ar[r] & A\cap B \ar[r] & A\oplus B \ar[r] & A+B \ar[r] &0.
			}
		\] 
		We conclude the proof by the associated long exact sequence in homology and the additivity of the functor $\mathrm{H}_\bullet(-)$.
		\item We prove the result by induction on $m$. For $m=2$, we have (1). Let $m$ be an integer and suppose by induction that for all family  $A_1, \ldots, A_m$ of $m$ acyclic complexes, the complex $\bigcap_{j\in J} A_j$ is acyclic, then the complex $\sum_{j=1}^m A_j$ is acyclic. Let $A_0, \ldots, A_m$ be a family of $m+1$ acyclic complexes such that, for all $J\subset [\![0,m]\!]$, the complex $\bigcap_{j\in J} A_j$ is acyclic: so the complex $\sum_{j=1}^m A_j$ is acyclic, by induction. We have the following commutative square:
		\[
			 \xymatrix{
			 	\sum_{j=1}^m A_j\cap A_0 \ar[r]  \ar[d] \ar@{}[rd]|(0.75)\pushout & A_0\ar[d] \\
			 	\sum_{j=1}^m A_j\ar[r] & \sum_{j=0}^m A_j
			 }
		\]		
		with $A_0$, $\sum_{j=1}^m A_j$ and $\sum_{j=1}^m A_j\cap A_0$ acyclic. We conclude by (1).
	\end{enumerate}
\end{proof}

\nocite{Ler17,Val03,Val09,Wei94}
\bibliographystyle{alpha}
\bibliography{biblio_these}
\end{document}